\let\frak\mathfrak
\def\>{\relax\ifmmode\mskip.666667\thinmuskip\relax\else\kern.111111em\fi}
\def\<{\relax\ifmmode\mskip-.333333\thinmuskip\relax\else\kern-.0555556em\fi}
\def\vsk#1>{\vskip#1\baselineskip}
\def\vv#1>{\vadjust{\vsk#1>}\ignorespaces}
\def\vvn#1>{\vadjust{\nobreak\vsk#1>\nobreak}\ignorespaces}
 \let\alb\allowbreak
\def\fratop{\genfrac{}{}{0pt}1}
\def\satop#1#2{\fratop{\scriptstyle#1}{\scriptstyle#2}}
\let\dsize\displaystyle  \let\ssize\scriptstyle
\let\sssize\scriptscriptstyle \def\tfrac{\textstyle\frac}
\let\phan\phantom \let\vp\vphantom
\let\Medskip\medskip
\def\medskip{\par\Medskip}
\let\Bigskip\bigskip
\def\bigskip{\par\Bigskip}
\let\Maketitle\maketitle
\def\maketitle{\Maketitle\thispagestyle{empty}\let\maketitle\empty}
\newtheorem{thm}{Theorem}[section]
\newtheorem{cor}[thm]{Corollary}
\newtheorem{lem}[thm]{Lemma}
\newtheorem{prop}[thm]{Proposition}
\numberwithin{equation}{section}
\theoremstyle{definition}
\newtheorem*{rem}{Remark}
\newtheorem*{example}{Example}
\let\mc\mathcal
\let\nc\newcommand
\let\al\alpha
\let\dl\delta
\let\Dl\Delta
\let\eps\varepsilon
\let\ka\kappa
\let\la\lambda
\let\pho\phi
\let\phi\varphi
\let\si\sigma
\let\Tht\Theta
\let\thi\vartheta
\let\der\partial
\let\Hat\widehat
\let\ox\otimes
\let\Tilde\widetilde
\let\bra\langle
\let\ket\rangle
\let\ge\geqslant
\let\geq\geqslant
\let\le\leqslant
\let\leq\leqslant
\let\on\operatorname
\let\bi\bibitem
\let\bs\boldsymbol
\let\xlto\xleftarrow
\let\xto\xrightarrow
\def\C{{\mathbb C}}
\def\Z{{\mathbb Z}}
\def\Ac{{\mc A}}
\def\B{{\mc B}}
\def\F{{\mc F}}
\def\Hc{{\mc H}}
\def\Jc{{\mc J}}
\def\Sc{{\mc S}}
\def\V{{\mc V}}
\def\+#1{^{\{#1\}}}
\def\lsym#1{#1\alb\dots\relax#1\alb}
\def\lc{\lsym,}
\def\lox{\lsym\ox}
\def\diag{\on{diag}}
\def\End{\on{End}}
\def\Hom{\on{Hom}}
\def\id{\on{id}}
\def\qdet{\on{qdet}}
\def\rank{\on{rank}}
\def\tbigoplus{\mathop{\textstyle{\bigoplus}}\limits}
\def\tr{\on{tr}}
\def\ii{i,\<\>i}
\def\ij{i,\<\>j}
\def\ik{i,\<\>k}
\def\il{i,\<\>l}
\def\ji{j,\<\>i}
\def\jj{j,\<\>j}
\def\jk{j,\<\>k}
\def\kj{k,\<\>j}
\def\kl{k,\<\>l}
\def\II{I\<\<,\<\>I}
\def\IJ{I\<\<,J}
\def\ioi{i+1,\<\>i}
\def\pp{p,\<\>p}
\def\ppo{p,\<\>p+1}
\def\pop{p+1,\<\>p}
\def\pci{p,\<\>i}
\def\pcj{p,\<\>j}
\def\poi{p+1,\<\>i}
\def\poj{p+1,\<\>j}
\def\gl{\mathfrak{gl}}
\def\gln{\mathfrak{gl}_N}
\def\sln{\mathfrak{sl}_N}
\def\slnn{\mathfrak{sl}_{\>n}}
\def\Ugln{U(\gln)}
\def\Yn{Y\<(\gln)}
\def\beq{\begin{equation}}
\def\eeq{\end{equation}}
\def\be{\begin{equation*}}
\def\ee{\end{equation*}}
\nc{\bea}{\begin{eqnarray*}}
\nc{\eea}{\end{eqnarray*}}
\nc{\bean}{\begin{eqnarray}}
\nc{\eean}{\end{eqnarray}}
\nc{\Ref}[1]{{\rm(\ref{#1})}}
\def\g{{\mathfrak g}}
\def\h{{\mathfrak h}}
\let\ga\gamma
\let\Ga\Gamma
\nc{\Il}{{\mc I_{\bs\la}}}
\nc{\bla}{{\bs\la}}
\nc{\Fla}{\F_\bla}
\nc{\tfl}{{T^*\Fla}}
\nc{\GL}{{GL_n(\C)}}
\nc{\GLC}{{GL_n(\C)\times\C^*}}
\let\aal\al % \def\aal{{\bs\al}}
\def\mub{{\bs\mu}}
\def\Dc{\check D}
\def\Di{{\tfrac1D}}
\def\Dci{{\tfrac1\Dc}}
\def\DV{\Di\V^-}
\def\DL{\DV_\bla}
\def\Vl{\V^+_\bla}
\def\DVe{\Dci\V^=}
\def\DLe{\DVe_\bla}
\def\xxx{x_1\lc x_n}
\def\yyy{y_1\lc y_n}
\def\zzz{z_1\lc z_n}
\def\Cx{\C[\xxx]}
\def\Czh{\C[\zzz,h]}
\def\Vz{V\<\ox\Czh}
\def\ty{\Tilde Y\<(\gln)}
\def\tb{\Tilde\B}
\def\IMA{{I^{\<\>\max}}}
\def\IMI{{I^{\<\>\min}}}
\def\CZH{\C[\zzz]^{\>S_n}\!\ox\C[h]}
\def\Sla{S_{\la_1}\!\lsym\times S_{\la_N}}
\def\Czhl{\C[\zzz]^{\>\Sla}\!\ox\C[h]}
\def\Czghl{\C[\<\>\zb\<\>;\Gmm\>]^{\>S_n\times\Sla}\!\ox\C[h]}
\def\Ct{\Tilde C}
\def\fc{\check f}
\def\Hg{{\mathfrak H}}
\def\Pit{\Tilde\Pi}
\def\Qc{\check Q}
\let\sd s %% \def\sd{\dot s}
\def\sh{\hat s}
\def\st{\tilde s}
\def\sih{\hat\si}
\def\Vh{\Hat V}
\def\Wh{\Hat W}
\def\ib{\bs i}
\def\jb{\bs j}
\def\kb{\bs k}
\def\iib{\ib,\<\>\ib}
\def\ijb{\ib,\<\>\jb}
\def\ikb{\ib,\<\>\kb}
\def\jib{\jb,\<\>\ib}
\def\kjb{\kb,\<\>\jb}
\def\zb{\bs z}
\def\zzi{z_1\lc z_i,z_{i+1}\lc z_n}
\def\zzii{z_1\lc z_{i+1},z_i\lc z_n}
\def\zzzn{z_n\lc z_1}
\def\xxi{x_1\lc x_i,x_{i+1}\lc x_n}
\def\xxii{x_1\lc x_{i+1},x_i\lc x_n}
\def\ip{\<\>i\>\prime}
\def\ipi{\>\prime\<\>i}
\def\jp{\prime j}
\def\iset{\{\<\>i\<\>\}}
\def\jset{\{\<\>j\<\>\}}
\def\IMIp{{I^{\min,i\prime}}}
\let\Gmm\Gamma
\def\phoo{\pho_{\>0}}
\def\Vy#1{V^{\bra#1\ket}}
\def\Vyh#1{\Vh^{\bra#1\ket}}
\def\Hh{\Hg_n}
\def\bmo{\nabla^{\>\sssize\mathrm{BMO}}}
\def\Bin{B^{\<\>\infty}}
\def\Bci{\B^{\<\>\infty}}
\def\tbi{\tb^{\<\>\infty}}
\def\Xin{X^\infty}
\def\Bb{\rlap{$\<\>\bar{\phan\B}$}\rlap{$\>\bar{\phan\B}$}\B}
\def\Hb{\rlap{$\bar{\,\,\phan H}$}\bar H}
\def\Hbc{\rlap{$\<\>\bar{\phan\Hc}$}\rlap{$\,\bar{\phan\Hc}$}\Hc}
\def\Bk{B^{\<\>\kk}}
\def\Bck{\B^{\<\>\kk}}
\def\Bbk{\Bb^{\<\>\kk}}
\def\Hck{\Hc^{\<\>\kk}}
\def\Hbck{\Hbc^{\<\>\kk}}
\def\mukp{\mu^{\kk+}}
\def\muke{\mu^{\kk=}}
\def\mukm{\mu^{\kk-}}
\def\nukp{\nu^{\<\>\kk+}}
\def\nuke{\nu^{\<\>\kk=}}
\def\nukm{\nu^{\<\>\kk-}}
\def\nukpm{\nu^{\<\>\kk\pm}}
\def\rhkm{\rho^{\<\>\kk-}}
\def\rhkpm{\rho^{\<\>\kk\pm}}
\def\tbk{\tb^{\<\>\kk}}
\def\tbkp{\tb^{\<\>\kk+}}
\def\tbkm{\tb^{\<\>\kk-}}
\def\tbkpm{\tb^{\<\>\kk\pm}}
\def\tbbm{\Bb^{\<\>\kk-}}
\def\Sk{S^{\>\kk}}
\def\Uk{U^\kk}
\def\Upk{U'^{\<\>\kk}}
\def\Wk{W^\kk}
\def\Whk{\Wh^\kk}
\def\Xk{X^\kk}
\def\Xkp{X^{\kk+}}
\def\Xkm{X^{\kk-}}
\def\Yk{Y^{\<\>\kk}}
\def\ddk_#1{\kk_{#1}\<\>\frac\der{\der\<\>\kk_{#1}}}
\def\zno{\big/\bra\>z_1\<\lsym+z_n\<=0\,\ket}
\def\vone{v_1\<\lox v_1}
\def\Hplus{\bigoplus_\bla H_\bla}
\def\Pone{\mathbb P^{\<\>1}}
\def\bul{\mathbin{\raise.2ex\hbox{$\sssize\bullet$}}}
\def\intt{\mathchoice
{\mathop{\raise.2ex\rlap{$\,\,\ssize\backslash$}{\intop}}\nolimits}
{\mathop{\raise.3ex\rlap{$\,\sssize\backslash$}{\intop}}\nolimits}
{\mathop{\raise.1ex\rlap{$\sssize\>\backslash$}{\intop}}\nolimits}
{\mathop{\rlap{$\sssize\<\>\backslash$}{\intop}}\nolimits}}
\def\nablas{\nabla^{\<\>\star}}
\def\nablab{\nabla^{\<\>\bul}}
\def\chib{\chi^{\<\>\bul}}
\def\zla{\zeta_{\>\bla}^{\vp:}}
\let\gak\gamma %% \def\gak{\ga^{\<\>\kk}}
\let\kk q %% Q
\let\kp\kappa %% p
\let\cc c
\def\kkk{\kk_1\lc\kk_N}
\def\kkn{\kk_1\lc\kk_n}
\let\Ko K
\def\Kh{\Hat\Ko}
\def\zzip{z_1\lc z_{i-1},z_i\<-\kp,z_{i+1}\lc z_n}
\def\GZ/{Gelfand-Zetlin}
\def\KZ/{{\slshape KZ\/}}
\def\qKZ/{{\slshape qKZ\/}}
\def\XXX/{{\slshape XXX\/}}
\begin{document}

\hrule width0pt
\vsk->

\title[Cohomology of the cotangent bundle of a flag variety as a Bethe algebra]
{Quantum cohomology of the cotangent bundle of\\[2pt]
a flag variety as a Yangian Bethe algebra}

\author
[V\<.\,Gorbounov, R.\,Rim\'anyi, V.\,Tarasov, A.\,Varchenko]
{V\<.\,Gorbounov$\>^*$, R.\,Rim\'anyi$\>^{\star}$,
V.\,Tarasov$\>^\circ$, A.\,Varchenko$\>^\diamond$}

\maketitle

\begin{center}
{\it $\kern-.4em^*\<$Institute of Mathematics, University of Aberdeen,
Aberdeen, AB24 3UE UK\/}

\vsk.5>
{\it $^{\star\,\diamond}\<$Department of Mathematics, University
of North Carolina at Chapel Hill\\ Chapel Hill, NC 27599-3250, USA\/}

\vsk.5>
{\it $\kern-.4em^\circ\<$Department of Mathematical Sciences,
Indiana University\,--\>Purdue University Indianapolis\kern-.4em\\
402 North Blackford St, Indianapolis, IN 46202-3216, USA\/}

\vsk.5>
{\it $^\circ\<$St.\,Petersburg Branch of Steklov Mathematical Institute\\
Fontanka 27, St.\,Petersburg, 191023, Russia\/}
\end{center}

{\let\thefootnote\relax
\footnotetext{\vsk-.8>\noindent
$^*\<${\sl E\>-mail}:\enspace v.gorbunov@abdn.ac.uk\\
$^\star\<${\sl E\>-mail}:\enspace rimanyi@email.unc.edu\>,
supported in part by NSF grant DMS-1200685\\
$^\circ\<${\sl E\>-mail}:\enspace vt@math.iupui.edu\>, vt@pdmi.ras.ru\>,
supported in part by NSF grant DMS-0901616\\
$^\diamond\<${\sl E\>-mail}:\enspace anv@email.unc.edu\>,
supported in part by NSF grant DMS-1101508}}

\begin{abstract}
We interpret the equivariant cohomology algebra
\,$H^*_{GL_n\times\C^*}(\tfl;\C)$ \,of the cotangent bundle of
a partial flag variety $\Fla$ parametrizing chains of subspaces
\,$0=F_0\subset F_1\subset\dots\subset F_N =\C^n$,
\,$\dim F_i/F_{i-1}=\la_i$, as the Yangian Bethe algebra \,$\Bci(\DL)$ of
the \,$\gln$-weight subspace $\DL\<$ of a $\Yn$-module $\DV$. Under this
identification the dynamical connection of \cite{TV1} turns into the quantum
connection of \cite{BMO} and \cite{MO}. As a result of this identification
we describe the algebra of quantum multiplication on
$H^*_{GL_n\times\C^*}(\tfl;\C)$ as the algebra of functions on fibers of
a discrete Wronski map. In particular this gives generators and relations
of that algebra. This identification also gives us hypergeometric solutions
of the associated quantum differential equation. That fact manifests the
Landau-Ginzburg mirror symmetry for the cotangent bundle of the flag variety.
\end{abstract}

\setcounter{footnote}{0}
\renewcommand{\thefootnote}{\arabic{footnote}}

\section{Introduction}

A Bethe algebra of a quantum integrable model is a commutative algebra of
linear operators (Hamiltonians) acting on the space of states of the model.
An interesting problem is to describe the Bethe algebra as the algebra of
functions on a suitable scheme. Such a description can be considered as
an instance of the geometric Langlands correspondence, see for example
\cite{MTV2, MTV3}. The $\gln$ \XXX/ model is a quantum integrable model.
The Bethe algebra \,$\Bck\<$ of the \XXX/ model is a commutative subalgebra
of the Yangian $\Yn$. The algebra \,$\Bck\<$ depends on the parameters
$\kk=(\kkk)\in\C^N$. Having a $Y(\gln)$-module $M$, one obtains the commutative
subalgebra $\Bck(M)\subset\End(M)$ as the image of \,$\Bck\<$. The geometric
interpretation of the algebra $\Bck(M)$ as the algebra of functions on a scheme
leads to interesting objects, see for example, \cite{MTV4}.

\goodbreak
\vsk.1>
In this paper we consider an extension of the Yangian \,$\Yn$, denoted $\ty$,
which is a subalgebra of \,$\Yn\ox\C[h]$, and we work with the corresponding
Bethe subalgebra \,$\tbk\subset\ty$.

\vsk.1>
One of the most interesting Yangian modules is the vector space
\,$\V=V\!\ox\Czh$ of \,$V\<$-valued polynomials in $\zzz$, where
$V=(\C^N)^{\ox n}$ is the tensor power of the standard vector representation
of $\gln$. We introduce on $\V$ two Yangian actions, called \,$\pho^\pm$,
and two actions of the symmetric group \,$S_n$, called \,$S^\pm_n$-actions.
The Yangian action $\pho^+$ commutes with the \,$S_n^+$-action. Hence,
the subspace \,$\V^+\!\subset\V$ of \,$S_n^+$-invariants is a Yangian module.
Similarly, the Yangian action \,$\pho^-$ commutes with the \,$S_n^-$-action.
Hence, the subspace \,$\V^-\!\subset\V$ of \,$S_n^-$-skew-invariants is
a Yangian module. The Yangian module structure on \,$\V^-$ induces a Yangian
module structure on \,$\DV=\{\Di f\ |\ f\<\in\V^-\}$, where
\,$D=\prod_{1\leq i<j\leq n}(z_i-z_j+h)$. The Bethe algebra \,$\tbk\<$
\vv.16>
preserves the $\gln$-weight decompositions \,$\V^+\!=\bigoplus_\bla\Vl$
and $\DV\!=\bigoplus_\bla\DL$, \,$\bla=(\la_1\lc\la_N)\in\Z^N_{\geq 0}$,
\,$|\bla|=n$.

\vsk.2>
In this paper we study the limit of the algebras \,$\tbk(\Vl)$, \,$\tbk(\DL)$
as \,$\kk$ tends to infinity so that $\kk_{i+1}/\kk_i\to0$ for all
$i=1\lc N-1$, \,and \,$\kk_N=1$. We show that in this limit each of the Bethe
algebras \,$\tbi(\Vl)$, \,$\tbi(\DL)$ can be identified with the equivariant
cohomology algebra \,$H_\bla:=H^*_{GL_n\times\C^*}(\tfl;\C)$ \,of the cotangent
bundle of the partial flag variety $\Fla$ parametrizing chains of subspaces
\vvn.2>
\be
0=F_0\subset F_1\lsym\subset F_N =\C^n,\qquad \dim F_i/F_{i-1}=\la_i\,.
\vv.2>
\ee
More precisely, we construct an algebra isomorphism
\vvn.1>
\,$\mu^+_\bla\<:H_\bla\to\tbi(\Vl)$ and a vector space isomorphism
\,$\nu^+_\bla\<:H_\bla\to\Vl$, which identify the
\,$\tbi(\Vl)$-module \,$\Vl$ with the action of the algebra
$H_\bla$ on itself by multiplication operators, see Theorem \ref{thm main+}.
\vvn.1>
Similarly, we construct an algebra isomorphism
\,$\mu^-_\bla\<:H_\bla\to\tbi(\DL)$ and a vector space isomorphism
\,$\nu^+_\bla\<:H_\bla\to\DL$, which identify the $\tbi(\DL)$-module
\vvn.1>
\,$\DL$ with the action of the algebra $H_\bla$ on itself by multiplication
operators, see Theorem \ref{thm main-}.

\vsk.2>
In Section \ref{elsewhere}, using the discrete Wronski determinant $\Wk(u)$,
see formula \Ref{W}, we introduce an algebra $\Hck_\bla$ and construct
vector space isomorphisms \,$\mukp_\bla\!:\Hck_\bla\to\>\Vl$,
\,$\mukm_\bla\!:\Hck_\bla\to\>\DL$ \,and algebra isomorphisms
\,$\nukp_\bla\!:\Hck_\bla\<\to\tb(\Vl)$\>,
\,$\nukm_\bla\!:\Hck_\bla\<\to\tb(\DL)$\>. The isomorphisms \,$\mukp_\bla\!$
and \,$\nukp_\bla\!$ identify the \,$\tbk(\Vl)$-module \,$\Vl$ and the regular
representation of \,$\Hck_\bla$, see Theorem \ref{mainK+}. Similarly,
the isomorphisms \,$\mukm_\bla\!$ and \,$\nukm_\bla\!$ identify the
\,$\tbk(\DL)$-module \,$\DL$ and the regular representation of \,$\Hck_\bla$,
see Theorem \ref{mainK-}. In particular, the algebras \,$\tbk(\Vl)$ and
$\tbk(\DL)$ are isomorphic, and the \,$\tbk(\Vl)$-module \,$\Vl$ is isomorphic
to the \,$\tbk(\DL)$-module \,$\DL$.

\vsk.2>
Under the isomorphisms \,$\nu^+_\bla\<:H_\bla\to\V^+$ and
\,$\nu^-_\bla\<:H_\bla\to\DV$ the subalgebras \,$\tbk(\Vl)$ and \,$\tbk(\DL)$
induce respectively commutative subalgebras \,$\tbkp_\bla\!$ and
\,$\tbkm_\bla\!$ of \,$\End(H_\bla)$. Theorem \ref{conj} says that the Bethe
algebra \,$\tbkm_\bla\!$ is the algebra \>$QH_{GL_n\times\C^*}(\tfl;\C)$ of
quantum multiplication on \>$H_\bla$ described in \cite{BMO, MO}
under the identification of notation \Ref{id par}. The proof of
Theorem \ref{conj} is based on \cite{RTV}, where it is shown that the Yangian
structure on the equivariant cohomology defined in \cite{MO} coincides with
the Yangian structure \>$\rho^-$ defined in Section~\ref{sec cohom and Yang}.
In Section \ref{sec Special} we give a direct proof of the special case of
Theorem \ref{conj} when \,$N=n$\,, \,$\bla=(1\lc1)$. The proof is based on
formulae from \cite{BMO}.

\vsk.1>
In the first version of this paper, see {\sf arXiv:1204.5138v1}, the statement
of Theorem \ref{conj} was formulated as a conjecture. The later paper \cite{MO}
contained, in particular, a description of the quantum cohomology algebra
\>$QH_{GL_n\times\C^*}(\tfl;\C)$. That allowed us to related the quantum
cohomology algebra and the Bethe algebra \,$\tbkm_\bla\!$.

\vsk.1>
Corollary \ref{conjcor} gives a description of the quantum cohomology algebra
\>$QH_{GL_n\times\C^*}(\tfl;\C)$ by generators and relations.
Theorem \ref{lim h to inf} shows that in the limit $h\to\infty$ such that
\,$h^{\la_i+\la_{i+1}}q_{i+1}/q_i$ are fixed for all \,$i=1,\dots, N-1$,
the description of \>$QH_{GL_n\times\C^*}(\tfl;\C)$ turns into the known
description of the quantum cohomology algebra \>$QH_{GL_n}(\Fla;\C)$ of
the partial flag variety $\Fla$, see \cite{AS}\>.

\vsk.2>
In more general terms, our identification of the quantum multiplication
on \,$H_\bla$ and the Yangian Bethe algebra \,$\tbkm\!$ indicates a relation
between quantum integrable chain models and quantum cohomology algebras, see
also the paper \cite{MO} (which appeared later than the first version of this
paper). That relation was discussed in physics literature in recent works
by Nekrasov and Shatashvili, see for example \cite{NS}.

\vsk.1>
Note that a relationship between quantum integrable models and quantum
cohomology\<\>/\<\>WZNW fusion rings has been shown also in \cite{K, KoS},
see references therein.

\vsk.2>
The identification of the quantum multiplication on \,$H_\bla$ and the Yangian
Bethe algebra \,$\tbkm\!$ gives a relation between interesting objects in both
theories. For example, the eigenvectors of the Bethe algebra is the main object
of the \XXX/ model and finding eigenvectors is the subject of the well
developed Bethe ansatz theory. Under the above identification the eigenvectors
correspond to idempotents of the quantum multiplication and the idempotents
are important in the theory of Frobenius manifolds, see \cite{D}. According
to the above identification, the idempotents of the quantum cohomology algebra
can be determined by the \XXX/ Bethe ansatz method, see an example in
Section \ref{sec Okounk}.

\vsk.2>
The above identification brings another interesting relation. Namely,
Theorem \ref{conj} shows that the isomorphism \,$\nu^-_\bla$ identifies
the trigonometric dynamical connection of \cite{TV1} with the quantum
connection of \cite{BMO, MO}. It is known that the flat sections of
the trigonometric dynamical connection are given by multidimensional
hypergeometric integrals, see \cite{TV1,MV,SV}, cf.~\cite{TV2}.
These hypergeometric integrals provide flat sections of the quantum connection
of \cite{MO,BMO}, see \cite{TV3}. This presentation of flat sections
of the quantum connection as hypergeometric integrals is in the spirit of
the mirror symmetry, see Candelas et al.~\cite{COGP}, Givental \cite{G1,G2},
and \cite{BCK,BCKS,GKLO,I,JK}, see also an example in Section \ref{sec Okounk}.

\vsk.2>
The results of this paper are parallel to the results of the paper \cite{RSTV},
where we consider the Bethe subalgebra of $U(\gln[t])$ instead of the Bethe
algebra of $\ty$.

\vsk.2>
We thank D.\,Maulik and A.\,Okounkov for answering questions on \cite{MO}.
The first and fourth authors thank the Max Planck Institute for Mathematics
for hospitality. The fourth author also thanks the Hausdorff Research Institute
for Mathematics and Institut des Hautes Etudes Scientifiques for hospitality.

\newpage
\section{Spaces \,$\V^+$ and \,$\DV$}
\label{alg sec}

\subsection{Lie algebra $\gln$}
\label{sec gln}

Let $e_{\ij}$, $i,j=1\lc N$, be the standard generators of the Lie algebra
$\gln$ satisfying the relations
$[e_{\ij},e_{\kl}]=\dl_{\jk}e_{\il}-\dl_{\il}e_{\kj}$. We denote by
$\h\subset\gln$ the subalgebra generated by $e_{\ii},\,i=1\lc N$. For a Lie
algebra $\g\,$, we denote by $U(\g)$ the universal enveloping algebra of $\g$.

A vector $v$ of a $\gln$-module $M$ has weight
$\bla=(\la_1\lc\la_N)\in\C^N$ if $e_{\ii}\>v=\la_i\>v$ for $i=1\lc N$.
We denote by $M_\bla\subset M$ the weight subspace of weight $\bla$.

\vsk.2>
Let $\C^N$ be the standard vector representation of $\gln$ with basis
$v_1\lc v_N$ such that $e_{\ij}v_k=\dl_{\jk}v_i$ for all $i,j,k$.
A tensor power $V=(\C^N)^{\ox n}$ of the vector representation has a basis
given by the vectors $v_{i_1}\!\lox v_{i_n}$, where
\;$i_j\in\{1\lc N\}$. Every such sequence $(i_1\lc i_n)$ defines
a decomposition $I=(I_1\lc I_N)$ of $\{1\lc n\}$ into disjoint subsets
$I_1\lc I_N$, \,where \;$I_j=\{k\ |\ i_k=j\}$. We denote the basis vector
$v_{i_1}\!\lox v_{i_n}$ by $v_I$.

Let
\vvn-.6>
\be
V\,=\!\bigoplus_{\bla\in\Z^N_{\geq 0},\,|\bla|=n}\!V_\bla
\ee
be the weight decomposition. Denote $\Il$ the set of all indices $I$ with
$|I_j|=\la_j$, \;$j=1,\dots N$. The vectors $\{v_I, I\in\Il\}$ form a basis of
$V_\bla$. The dimension of $V_\bla$ equals the multinomial coefficient
$d_\bla:=\frac{n!}{\la_1!\dots\la_N!}$.

\smallskip
Let $\Sc$ be the bilinear form on $V$ such that the basis $\{v_I\}$ is
orthonormal. We call $\Sc$ the Shapovalov form.

\subsection{$S_n$-actions}
\label{sec S-actions}
Define an action of the symmetric group $S_n$ on $\Il$.
Let $I=(I_1\lc I_N)\in\Il$, where $I_j=\{i_1\lc i_{\la_j}\}\subset\{1\lc n\}$.
For $\si\in S_n$, define $\si(I) = (\si(I_1)\lc\si(I_N))$, where
$\si(I_j) =\{\si(i_1)\lc\si(i_{\la_j})\}$.

Let $P^{(\ij)}$ be the permutation of the $i$-th and $j$-th factors
of $V=(\C^N)^{\ox n}$.
Define two $S_n$-actions on \,$V\<$-valued functions of $\zzz,h$, called
$S^\pm_n$-actions. For the $S_n^+$-action, the $i$-th elementary transposition
$s_i\in S_n$ acts by the formula: \vvn.4>
\begin{align}
\label{Sn+}
s_i: f(\zzz,h)\,\mapsto\,
\frac{(z_i-z_{i+1})\,P^{(\ii+1)}-h}{z_i-z_{i+1}}\;
& f(\zzii,h)\,+{}
\\[3pt]
{}+\,\frac{h}{z_i-z_{i+1}}\;&f(\zzi,h)\,.
\notag
\\[-15pt]
\notag
\end{align}
For the $S_n^-$-action, the $i$-th elementary transposition $s_i\in S_n$
acts by the formula:
\vvn.4>
\begin{align}
\label{Sn-}
s_i: f(\zzz,h)\,\mapsto\,
\frac{(z_i-z_{i+1})\,P^{(\ii+1)}+h}{z_i-z_{i+1}}\;
& f(\zzii,h)\,-{}
\\[3pt]
{}-\,\frac{h}{z_i-z_{i+1}}\;&f(\zzi,h)\,.
\notag
\\[-15pt]
\notag
\end{align}

\begin{lem}
\label{Spm}
The \,$S_n^\pm$-actions \Ref{Sn+} and \Ref{Sn-} are well-defined.
\qed
\end{lem}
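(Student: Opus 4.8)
The plan is to verify that each of the two formulas \Ref{Sn+} and \Ref{Sn-} defines a genuine action of the symmetric group $S_n$, i.e.\ that the assigned operators on the space of $V$-valued rational functions of $\zzz,h$ satisfy the defining relations of $S_n$: the involutivity $s_i^2=\id$, the commutativity $s_is_j=s_js_i$ for $|i-j|>1$, and the braid relation $s_is_{i+1}s_i=s_{i+1}s_is_{i+1}$. One also checks that the operators preserve the space on which they are claimed to act — the right-hand sides of \Ref{Sn+} and \Ref{Sn-} are $V$-valued rational functions in $\zzz,h$ with possible poles only along $z_i=z_{i+1}$, and the apparent pole cancels because the two terms combine to a function regular at $z_i=z_{i+1}$ (the residues at $z_i=z_{i+1}$ of the two summands are $\mp h\,f(\zzz,h)$ and $\pm h\,f(\zzz,h)$ up to the permutation, which agree in the limit). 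Actually, since the paper most naturally works with the larger space of all $V$-valued rational functions, the regularity point is secondary; the essential content is the group relations.

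First I would record that the operator in \Ref{Sn+} can be written as $s_i = R_i(z_i-z_{i+1})\circ\tau_i$, where $\tau_i$ is the operator exchanging the variables $z_i$ and $z_{i+1}$ in the argument of $f$, and $R_i(u) = \bigl((u-h)P^{(\ii+1)} + h\,\tau_i\,P^{(\ii+1)}\cdots\bigr)/u$ — more precisely, writing $u=z_i-z_{i+1}$, we have $s_i f = \tfrac{1}{u}\bigl((u\,P^{(\ii+1)}-h)(\tau_i f) + h\,f\bigr)$. The key observation is that this is exactly the standard trigonometric/rational $R$-matrix form: $s_i$ acts as the composition of the flip of variables with the Yang $R$-matrix $\check R^{(\ii+1)}(z_i-z_{i+1})$, where $\check R(u) = \dfrac{u\,P - h}{u}$ acting in the $i,i{+}1$ tensor factors (with $\check R(u)=P R(u)$, $R(u) = \dfrac{u - hP}{u}$ the Yang $R$-matrix). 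The three $S_n$-relations then follow from: (i) the unitarity $\check R^{(\ii+1)}(u)\,\check R^{(\ii+1)}(-u) = \id$, which gives $s_i^2=\id$; (ii) the obvious commutation of $\check R^{(\ii+1)}(u)$ and $\tau_i$ with $\check R^{(\jj+1)}(v)$ and $\tau_j$ for $|i-j|>1$; and (iii) the Yang--Baxter equation for $\check R$ in the braid form, $\check R^{(12)}(u)\check R^{(23)}(u+v)\check R^{(12)}(v) = \check R^{(23)}(v)\check R^{(12)}(u+v)\check R^{(23)}(u)$, which — after carefully tracking how the variable-flip operators $\tau_i$ shift the spectral parameters — yields the braid relation $s_is_{i+1}s_i = s_{i+1}s_is_{i+1}$. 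The $S_n^-$-action \Ref{Sn-} is handled identically with $h$ replaced by $-h$ throughout (equivalently, with $\check R(u) = \dfrac{uP+h}{u}$), and the same three properties hold.

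I expect the main obstacle to be the braid relation, and specifically the bookkeeping of spectral parameters. When one composes $s_i s_{i+1} s_i$ applied to $f(\zzz,h)$, each $\tau$-operator permutes the $z$'s inside $f$, so the arguments $z_i-z_{i+1}$, $z_{i+1}-z_{i+2}$ appearing in successive $R$-matrix factors get shifted and relabelled; one must check that after this relabelling the product of the three $\check R$-factors is precisely an instance of the Yang--Baxter equation with the correct spectral parameters $u = z_i - z_{i+1}$, $v = z_{i+1}-z_{i+2}$, $u+v = z_i - z_{i+2}$ (this is why the \emph{additive} spectral parameter of the Yang $R$-matrix is what makes it work). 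Writing $\check R_i(u)$ for the braid-form $R$-matrix in positions $i,i{+}1$ and $\tau_i$ for the flip, the identity to establish is
\begin{equation*}
\check R_i(u)\,\tau_i\,\check R_{i+1}(u+v)\,\tau_{i+1}\,\check R_i(v)\,\tau_i
\;=\;
\check R_{i+1}(v)\,\tau_{i+1}\,\check R_i(u+v)\,\tau_i\,\check R_{i+1}(u)\,\tau_{i+1},
\end{equation*}
which, after pushing all the $\tau$'s to the right using $\tau_i \check R_{i+1}(w) = \check R_{i+1}(w')\tau_i$ type relations (with the appropriate parameter substitutions coming from permuting variables), reduces exactly to the Yang--Baxter equation plus $\tau_i\tau_{i+1}\tau_i = \tau_{i+1}\tau_i\tau_{i+1}$ (the $S_n$-relation for the plain variable permutations, which is trivial). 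Once the spectral-parameter bookkeeping is set up correctly, each relation is a short computation; I would present \Ref{Sn+} in detail and remark that \Ref{Sn-} is obtained by $h\mapsto -h$. This is, incidentally, the reason the two actions were singled out: they are precisely the permutation-dressed Yangian $R$-matrix actions that will intertwine with the two Yangian structures $\pho^\pm$ introduced later.
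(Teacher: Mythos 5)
The paper states this lemma without proof (the statement is followed immediately by \qed), so the benchmark is the standard direct verification of the Coxeter relations, which is what you set out to do. Your overall plan — check $s_i^2=\id$, far commutativity, the braid relation, and regularity along $z_i=z_{i+1}$ — is the right one, and your residue observation that the simple pole at $z_i=z_{i+1}$ cancels between the two summands is correct.

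There is, however, a genuine error at the heart of the argument: the operators of \Ref{Sn+} and \Ref{Sn-} are \emph{not} of the form $\check R^{(i,i+1)}(z_i-z_{i+1})\circ\tau_i$ for any matrix-valued function $\check R(u)$. Your own ``more precisely'' formula $s_i f=\tfrac1u\bigl((u\,P^{(i,i+1)}-h)\,\tau_i f+h\,f\bigr)$ already shows this: the second summand $\tfrac hu\,f(\zzz,h)$ is applied to $f$ with the variables \emph{unpermuted}, so it cannot be absorbed into a matrix factor acting after $\tau_i$. The operators that genuinely have the pure form $\check R\circ\tau_i$ are the $\st_i^{\pm}$ of \Ref{stilde}, with the correctly normalized $\check R(u)=(u\,P\mp h)/(u\mp h)$; note also that your $\check R(u)=(u\,P-h)/u$ is not unitary, since $\check R(u)\,\check R(-u)=1-h^2/u^2$. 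Consequently the reduction of $s_i^2=\id$ to unitarity and of the braid relation to the Yang--Baxter equation does not apply to \Ref{Sn+}, \Ref{Sn-} as written — as it stands your argument is a proof of Lemmas \ref{st+} and \ref{st-}, not of Lemma \ref{Spm}. For the actual $s_i$ one must keep both terms: writing $s_i=A(u)\,\tau_i+B(u)$ with $A(u)=(u\,P-h)/u$, $B(u)=h/u$, involutivity requires both $A(u)A(-u)+B(u)^2=1$ and $A(u)B(-u)+B(u)A(u)=0$ (true, but not an instance of unitarity), and the braid relation becomes an eight-term-per-side identity. It can be organized, for instance, by writing $s_i=(1-\tfrac hu)\,\st_i^{+}+\tfrac hu$ and using the already-established relations for the $\st_i^{+}$ together with their commutation with the rational coefficients, but some such computation must actually be carried out; the appeal to the Yang--Baxter equation alone does not close the argument.
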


Define operators \,$\st_1^\pm\lc\st_{n-1}^\pm$ acting on \<$V\<$-valued
functions of $\zzz,h$ \,by \vvn.2> \beq
\label{stilde}
\st_i^\pm\>f(\zzz,h)=
\frac{(z_i-z_{i+1})\,P^{(\ii+1)}\mp h}{z_i-z_{i+1}\mp h}\;f(\zzii,h)\,.
\eeq

\begin{lem}
\label{st+}
The assignment \,$s_i\mapsto\st_i^+$, \,$i=1\lc n-1$,
\,defines an action of $S_n$.
\qed
\end{lem}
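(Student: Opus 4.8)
The plan is to verify directly that the operators $\st_i^+$ defined by \Ref{stilde} satisfy the defining relations of $S_n$, namely $(\st_i^+)^2=\id$, the commutation $\st_i^+\st_j^+=\st_j^+\st_i^+$ for $|i-j|>1$, and the braid relation $\st_i^+\st_{i+1}^+\st_i^+=\st_{i+1}^+\st_i^+\st_{i+1}^+$. The far-commutativity relation is essentially immediate: when $|i-j|>1$ the two operators act on disjoint pairs of variables and on disjoint tensor factors, so $P^{(\ii+1)}$ and $P^{(j,\,j+1)}$ commute and the scalar prefactors involve disjoint sets of the $z$'s, so the two substitutions commute as well. I would dispose of this case in one sentence.

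Next I would check the involutivity $(\st_i^+)^2=\id$. Writing $w=z_i-z_{i+1}$, the operator is $\st_i^+ = \dfrac{w\,P^{(\ii+1)}-h}{w-h}\circ\tau_i$, where $\tau_i$ is the substitution swapping $z_i$ and $z_{i+1}$ in the argument. Since $\tau_i$ sends $w\mapsto -w$ and commutes with the constant operators $P^{(\ii+1)}$ and $h$, composing $\st_i^+$ with itself gives the operator of multiplication by $\dfrac{w\,P^{(\ii+1)}-h}{w-h}\cdot\dfrac{-w\,P^{(\ii+1)}-h}{-w-h}$ followed by $\tau_i^2=\id$. Using $(P^{(\ii+1)})^2=\id$, the numerator is $-(w\,P^{(\ii+1)}-h)(w\,P^{(\ii+1)}+h) = -(w^2-h^2) = h^2-w^2$, and the denominator is $(w-h)(-w-h)=h^2-w^2$, so the product is $\id$. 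This is a short rational-function identity in $\End(V)$.

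The braid relation is the main obstacle and the only genuinely nontrivial computation. I would set $a=z_i-z_{i+1}$ and $b=z_{i+1}-z_{i+2}$ (so $z_i-z_{i+2}=a+b$), abbreviate $P_1=P^{(\ii+1)}$ and $P_2=P^{(i+1,\,i+2)}$, and expand both sides as $\End(V)$-valued rational functions times a permutation of the three variables. After pushing all the substitution operators to the right, both $\st_i^+\st_{i+1}^+\st_i^+$ and $\st_{i+1}^+\st_i^+\st_{i+1}^+$ become (coefficient) $\circ$ (the cyclic substitution $z_i\mapsto z_{i+2}\mapsto z_{i+1}\mapsto z_i$), and one must check that the two coefficients in $\End(V)$ coincide. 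The coefficients are products of three factors of the form $\dfrac{(\text{linear in }a,b)\,P_\bullet - h}{(\text{linear in }a,b)\mp h}$; using the braid relation $P_1P_2P_1=P_2P_1P_2$ in $\End(V)$ together with $P_1^2=P_2^2=\id$, the numerators on both sides reduce to the same $\End(V)$-valued polynomial in $a,b,h$, and the denominators are visibly equal as scalars. I expect the cleanest way to organize this is to observe that the $\st_i^+$ are, up to the change of variable and sign conventions, the standard quantum-group $R$-matrix (Yang–Baxterized transposition) operators acting on functions, so the braid relation for $\st_i^+$ is exactly the functional form of the Yang–Baxter equation for the $\gl_N$ rational $R$-matrix $R(u)=\dfrac{u\,P-h}{u-h}$; but since the statement asks for a self-contained lemma I would present the direct three-factor expansion rather than quoting Yang–Baxter, keeping careful track that the shifts in the denominators match the shifts produced by the substitutions. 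No further ingredients are needed.
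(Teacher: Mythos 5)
Your proposal is correct: the three verifications you list (involutivity via the rational identity $(wP-h)(-wP-h)=(w-h)(-w-h)$, far commutativity, and the braid relation reduced to the Yang--Baxter equation for $R(u)=(uP-h)/(u-h)$ after pushing the substitutions to the right) are exactly the standard argument, and the paper omits the proof of this lemma entirely, relying on it as well known. Note that the reduction to Yang--Baxter is consistent with the paper's own later use of equation \Ref{YB} for the closely related Lemma \ref{pm & Yan}, so your route is the one the authors implicitly intend.
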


\begin{lem}
\label{st-}
The assignment \,$s_i\mapsto\st_i^-$, \,$i=1\lc n-1$,
\,defines an action of $S_n$.
\qed
\end{lem}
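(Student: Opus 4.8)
The plan is to verify directly that the operators $\st_1^-\lc\st_{n-1}^-$ defined in \Ref{stilde} satisfy the defining relations of $S_n$ in terms of the Coxeter generators $s_1\lc s_{n-1}$, namely: (i) $(\st_i^-)^2=\id$ for all $i$; (ii) $\st_i^-\st_j^-=\st_j^-\st_i^-$ whenever $|i-j|>1$; and (iii) the braid relation $\st_i^-\st_{i+1}^-\st_i^-=\st_{i+1}^-\st_i^-\st_{i+1}^-$. Relation (ii) is immediate, since for $|i-j|>1$ the operators act on disjoint pairs of variables and disjoint tensor factors, so the rational prefactors and the permutations $P^{(\ii+1)}$, $P^{(\jj+1)}$ commute and the $z$-substitutions commute. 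The substance of the lemma is in (i) and (iii).

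For (i), I would compute $(\st_i^-)^2 f$ by applying \Ref{stilde} twice. Writing $z=z_i-z_{i+1}$ and $P=P^{(\ii+1)}$, the first application produces the prefactor $\frac{zP+h}{z+h}$ evaluated at the original variables together with the swap $f(\zzii,h)$; the second application swaps back and contributes the prefactor $\frac{(-z)P+h}{-z+h}=\frac{zP-h}{z-h}$, but now the $P$ appearing in it must be conjugated by the first swap — and since $P$ is the permutation of the two relevant tensor factors, it is unchanged. Thus $(\st_i^-)^2$ acts as multiplication by $\frac{zP+h}{z+h}\cdot\frac{zP-h}{z-h}$, and using $P^2=\id$ one gets $\frac{z^2P^2-h^2}{z^2-h^2}=\id$. (One must check the operator ordering carefully because the prefactor and the $z$-shift do not commute, but tracking the shifts as above makes it routine.)

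For the braid relation (iii), I would set $u=z_i-z_{i+1}$, $v=z_{i+1}-z_{i+2}$, and abbreviate $P=P^{(\ii+1)}$, $Q=P^{(i+1,i+2)}$. Both sides of (iii) are sums of products of rational functions in $u,v,h$ times words in $P,Q$, evaluated at various permutations of $(z_i,z_{i+1},z_{i+2})$. Using the Coxeter relation $PQP=QPQ$ in $S_3\subset$ (the tensor-factor symmetric group) and the fact that conjugating a $z$-shift operator by $P$ or $Q$ permutes $z_i,z_{i+1},z_{i+2}$ accordingly, one expands both triple products into the six permutations of $(u,v)$-data and checks that the scalar coefficients match term by term. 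This is the standard verification that \Ref{stilde} is (up to the usual normalization) the rational $R$-matrix / Yang–Baxter form of an $S_n$-action; the key identity underlying it is the $\gln$ Yang–Baxter equation for $R(z)=\frac{zP- h}{z-h}$, equivalently $R_{12}(u)R_{13}(u+v)R_{23}(v)=R_{23}(v)R_{13}(u+v)R_{12}(u)$ combined with $R(z)R(-z)=\id$ (which is exactly (i)).

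The main obstacle is purely bookkeeping: keeping straight the non-commutativity between the scalar rational prefactors and the variable-permutation operators, since the prefactor picked up at each step involves the \emph{current} values of $z_i-z_{i+1}$ and the \emph{current} tensor-transposition, both of which get relabeled by earlier substitutions. Once one adopts the convention of writing each $\st_i^-$ as ``(prefactor in current variables) $\circ$ (swap of $z_i\leftrightarrow z_{i+1}$ and of factors $i,i+1$)'' and pushes all swaps to the right, the two sides of the braid relation reduce to an identity of $2\times 2$–block-matrix-valued rational functions that follows from $R(z)R(-z)=\id$ and the Yang–Baxter equation. Alternatively — and this is the slicker route I would actually take — one observes that $\st_i^-$ differs from the operator $\frac{(z_i-z_{i+1})P^{(\ii+1)}+h}{z_i-z_{i+1}}$ appearing in \Ref{Sn-} by a scalar gauge factor $\frac{z_i-z_{i+1}}{z_i-z_{i+1}\mp h}$ that is symmetric under $s_i$ up to the companion term; so Lemma \ref{st-} can be deduced from Lemma \ref{Spm} (well-definedness of the $S_n^-$-action) by a direct comparison, reducing the braid check for $\st_i^-$ to the already-granted one for the $S_n^-$-action. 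I expect the proof in the paper to be the one-line remark that this is the standard $R$-matrix form, with \,$R(z)=\bigl((z-h)\id\bigr)^{-1}\bigl(zP-h\bigr)$, satisfying unitarity and Yang–Baxter, hence defining an $S_n$-action.
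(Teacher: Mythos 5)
The paper gives no proof of Lemma \ref{st-} at all: it is stated with a tombstone as a routine fact, so there is nothing to compare line by line, and your direct verification of the Coxeter relations is precisely the standard argument the authors are taking for granted. Your computation of $(\st_i^-)^2=\id$ from unitarity, and your reduction of the braid relation (after writing $\st_i^-$ as ``matrix prefactor in current variables composed with the variable swap'' and pushing all swaps to the right) to the Yang--Baxter equation for the rational $R$-matrix $\check R(z)=(z\,P+h)/(z+h)$, are correct and complete in outline; the commutation for $|i-j|>1$ is indeed immediate. One caveat: the ``slicker'' alternative you sketch at the end --- deducing Lemma \ref{st-} from Lemma \ref{Spm} by a scalar gauge factor --- does not work as stated, because the operator in \Ref{Sn-} carries the extra companion term $-\,h\,(z_i-z_{i+1})^{-1}f(\zzi,h)$ and is not conjugate to $\st_i^-$ by multiplication by a scalar function (such a conjugation cannot produce or remove the companion term); the actual relation between the two actions is the statement of Lemma \ref{S2} about invariant functions, not a gauge equivalence of operators. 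Since you offer this only as an alternative and your primary route is sound, the proposal stands.
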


\begin{lem}
\label{S2}
The function $f(\zzz,h)$ is invariant with respect to the $S_n^+$-action
(\>resp.\ $S_n^-$-action) \,if and only if \,$f=\st_i^+f$
\,(\>resp.~\,$f=\st_i^-f$\,) \,for every \,$i=1\lc n-1$.
\qed
\end{lem}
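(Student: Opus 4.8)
\emph{Proof proposal.} The plan is to prove, for each fixed $i\in\{1,\dots,n-1\}$ and each choice of sign, the local equivalence
\[
s_i^{\pm}f=f\quad\Longleftrightarrow\quad\st_i^{\pm}f=f,
\]
where $s_i^{\pm}$ denotes the operator by which the $i$-th transposition acts in the $S_n^{\pm}$-action \Ref{Sn+}, \Ref{Sn-}. Granting this, the lemma follows at once: by Lemma \ref{Spm} the $S_n^{\pm}$-actions are genuine group actions, so a function is $S_n^{+}$-invariant (resp.\ $S_n^{-}$-invariant) precisely when it is fixed by each of the generators $s_1,\dots,s_{n-1}$, and each such condition has just been identified with $\st_i^{+}f=f$ (resp.\ $\st_i^{-}f=f$).

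To prove the local equivalence I would simply subtract $f$ from \Ref{Sn+}/\Ref{Sn-} and collect the two summands over the common denominator $z_i-z_{i+1}$; the coefficient of the untransposed term $f(\zzz,h)$ then becomes $\frac{h}{z_i-z_{i+1}}-1$ for the $+$-action and $-\frac{h}{z_i-z_{i+1}}-1$ for the $-$-action, in both cases equal to $-\frac{z_i-z_{i+1}\mp h}{z_i-z_{i+1}}$. This gives
\begin{align*}
s_i^{\pm}f-f
&=\frac{(z_i-z_{i+1})\,P^{(\ii+1)}\mp h}{z_i-z_{i+1}}\;f(\zzii,h)\;-\;\frac{z_i-z_{i+1}\mp h}{z_i-z_{i+1}}\;f(\zzz,h)\\
&=\frac{z_i-z_{i+1}\mp h}{z_i-z_{i+1}}\,\bigl(\st_i^{\pm}f-f\bigr),
\end{align*}
the last equality being nothing but the definition \Ref{stilde} of $\st_i^{\pm}$. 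Since $\dfrac{z_i-z_{i+1}\mp h}{z_i-z_{i+1}}$ is a unit in the ring of $V$-valued rational functions of $\zzz,h$, the identity shows that $s_i^{\pm}f-f$ and $\st_i^{\pm}f-f$ vanish together, which is exactly the local equivalence.

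I do not expect a genuine obstacle: the entire argument is the displayed identity, which holds precisely because \Ref{stilde} is designed to carry the denominator $z_i-z_{i+1}\mp h$ in place of $z_i-z_{i+1}$. The only point deserving a remark is that the equality $\st_i^{\pm}f=f$ should be read as an identity of (a priori) rational functions, so that dividing by the above unit is legitimate; if one insists on staying inside $V$-valued polynomials, one observes that on either side of the equivalence the relevant combination is automatically a polynomial whenever $f$ is --- which is exactly the well-definedness recorded in Lemmas \ref{Spm}, \ref{st+} and \ref{st-} --- so nothing is lost.
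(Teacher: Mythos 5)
Your proof is correct: the identity $s_i^{\pm}f-f=\frac{z_i-z_{i+1}\mp h}{z_i-z_{i+1}}\,(\st_i^{\pm}f-f)$ checks out, and together with the fact that the $s_i$ generate $S_n$ it gives exactly the claimed equivalence. The paper states this lemma without proof, and your argument is the straightforward direct verification the authors evidently had in mind.
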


Define operators \,$\sh_1^\pm\lc\sh_{n-1}^\pm$ acting on functions of
$\zzz,h$ \,by
\begin{align}
\label{sh}
\sh^\pm_if(\zzz,h)\,=\,\frac{z_i-z_{i+1}\pm h}{z_i-z_{i+1}}\; &
f(\zzii,h)\,\mp{}
\\[3pt]
{}\mp\,\frac{h}{z_i-z_{i+1}}\;&f(\zzi,h)
\notag
\end{align}

\begin{lem}
\label{sh+}
The assignment \,$s_i\mapsto\sh_i^+$, \,$i=1\lc n-1$,
\,defines an action of $S_n$.
\qed
\end{lem}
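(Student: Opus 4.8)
The plan is to reduce the claim to Lemma \ref{st+}, which we may assume, by exhibiting an explicit conjugation relating the two $S_n$-actions. First I would observe that the operator $\sh_i^+$ and the operator $\st_i^+$ are connected by multiplication by a scalar rational function of $\zzz,h$. Indeed, using \Ref{stilde} with the upper sign,
\begin{equation*}
\st_i^+f(\zzz,h)=\frac{(z_i-z_{i+1})\,P^{(\ii+1)}-h}{z_i-z_{i+1}-h}\;f(\zzii,h)\,,
\end{equation*}
while \Ref{sh} with the upper sign reads
\begin{equation*}
\sh_i^+f(\zzz,h)=\frac{z_i-z_{i+1}+h}{z_i-z_{i+1}}\;f(\zzii,h)-\frac{h}{z_i-z_{i+1}}\;f(\zzi,h)\,.
\end{equation*}
The operator $\sh_i^+$ is the scalar version of $s_i^+$ from \Ref{Sn+} (the permutation $P^{(\ii+1)}$ of tensor factors having been dropped, since $\sh_i^+$ acts on ordinary functions), so I expect $\sh_i^+$ to be obtainable from $\st_i^+$ by the same similarity transformation that relates \Ref{Sn+} to \Ref{stilde} in the scalar case. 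Concretely, I would look for a rational function $g(\zzz,h)$, built from products of factors of the form $z_a-z_b+h$, such that $\sh_i^+ = M_{g}\circ\st_i^{+,\mathrm{sc}}\circ M_{g}^{-1}$ for all $i$, where $\st_i^{+,\mathrm{sc}}$ is the scalar specialization of $\st_i^+$ (replace $P^{(\ii+1)}$ by the identity) and $M_g$ is multiplication by $g$; the natural candidate is $g=\prod_{a<b}(z_a-z_b+h)$ or a suitable variant, mirroring the role of $D$ in the definition of $\DV$.

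The key steps, in order, are: (i) record the scalar specialization of Lemma \ref{st+}, namely that $s_i\mapsto\st_i^{+,\mathrm{sc}}$ defines an $S_n$-action on scalar functions — this is immediate from Lemma \ref{st+} by letting $P^{(\ii+1)}\mapsto\id$, since the braid and involutivity relations are preserved under any algebra specialization; (ii) verify the intertwining identity $M_g\circ\st_i^{+,\mathrm{sc}} = \sh_i^+\circ M_g$ as operators, which is a direct computation comparing the two sides on an arbitrary $f$ after clearing denominators; (iii) conclude that $s_i\mapsto\sh_i^+$ satisfies the same relations as $s_i\mapsto\st_i^{+,\mathrm{sc}}$, hence defines an $S_n$-action. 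Alternatively, if the conjugation route proves awkward, I would fall back on checking the defining relations of $S_n$ directly: $(\sh_i^+)^2=\id$, the commutation $\sh_i^+\sh_j^+=\sh_j^+\sh_i^+$ for $|i-j|>1$ (obvious, as the operators act on disjoint variable pairs), and the braid relation $\sh_i^+\sh_{i+1}^+\sh_i^+=\sh_{i+1}^+\sh_i^+\sh_{i+1}^+$; the first two are short, and the braid relation is a finite rational-function identity in $z_i,z_{i+1},z_{i+2},h$ that can be cleared to a polynomial identity and checked.

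The main obstacle I anticipate is pinning down the exact conjugating factor $g$ and making sure the permutation operators $P^{(\ii+1)}$ are correctly handled: in \Ref{Sn+} the numerator carries $P^{(\ii+1)}$, which does not commute with multiplication by functions that are not symmetric in $z_i,z_{i+1}$, so the conjugation argument must track how $M_g$ moves past $P^{(\ii+1)}$. Since $\sh_i^+$ has no $P^{(\ii+1)}$ at all, I expect this to actually simplify matters — the scalar case is genuinely easier — but getting the bookkeeping of signs and of the shift $z_i-z_{i+1}\pm h$ exactly right is where the care is needed. If the direct verification of the braid relation is chosen instead, the only real work is the single three-variable rational identity, which is routine but not entirely short; I would organize it by writing each $\sh_i^+$ as $\frac{z_i-z_{i+1}+h}{z_i-z_{i+1}}\,\tau_i - \frac{h}{z_i-z_{i+1}}$, where $\tau_i$ swaps $z_i\leftrightarrow z_{i+1}$, and expanding both sides of the braid relation in the group algebra of $S_3$ acting on the $z$'s, collecting coefficients of each of the six permutations.
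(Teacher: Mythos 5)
Your primary route has a concrete gap. The scalar specialization of $\st_i^+$ obtained by replacing $P^{(i,i+1)}$ with the identity in \Ref{stilde} is
\be
\st_i^{+,\mathrm{sc}}f(\zzz,h)\,=\,\frac{(z_i-z_{i+1})-h}{z_i-z_{i+1}-h}\;f(\zzii,h)\,=\,f(\zzii,h)\,,
\ee
i.e.\ just the plain transposition $\tau_i$ of $z_i$ and $z_{i+1}$. Conjugating a pure substitution operator by a multiplication operator $M_g$ gives $M_g\,\tau_i\,M_g^{-1}=\frac{g}{\tau_i(g)}\,\tau_i$, which is again a single-term operator (a function times $\tau_i$); it can never equal the two-term operator $\sh_i^+=\frac{z_i-z_{i+1}+h}{z_i-z_{i+1}}\,\tau_i-\frac{h}{z_i-z_{i+1}}$, whose second summand is a nonzero multiplication. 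So no choice of $g$ makes step (ii) work, and the intertwining identity you propose to verify is false. (Your heuristic that $\sh_i^+$ is ``the scalar version'' of one of the $S_n^\pm$-actions is right, but you attached it to the wrong object: setting $P^{(i,i+1)}=\id$ in \Ref{Sn-} gives exactly $\sh_i^+$, and setting it to $\id$ in \Ref{Sn+} gives $\sh_i^-$, not $\st_i^{+}$.)

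Your fallback, the direct verification of the Coxeter relations, is sound and is essentially the paper's (omitted, routine) proof: writing $\sh_i^+=a_i\tau_i+b_i$ with $a_i=\frac{z_i-z_{i+1}+h}{z_i-z_{i+1}}$, $b_i=-\frac{h}{z_i-z_{i+1}}$, one checks $a_i\,\tau_i(a_i)+b_i^2=1$ and $a_i\,\tau_i(b_i)+a_ib_i=0$ for involutivity, commutation for $|i-j|>1$ is immediate, and the braid relation is a finite identity in the group algebra of $S_3$ acting on $z_i,z_{i+1},z_{i+2}$. A cleaner argument, which reduces the lemma to Lemma \ref{Spm} with no computation: the $S_n^-$-action \Ref{Sn-} preserves the subspace of functions of the form $f(\zzz,h)\,(\vone)$, on which $P^{(i,i+1)}$ acts as the identity, and there it acts precisely by $f\mapsto\sh_i^+f$; since \Ref{Sn-} is an $S_n$-action, so is $s_i\mapsto\sh_i^+$.
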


\begin{lem}
\label{sh-}
The assignment \,$s_i\mapsto\sh_i^-$, \,$i=1\lc n-1$,
\,defines an action of $S_n$.
\qed
\end{lem}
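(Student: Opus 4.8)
The claim is that the assignment $s_i\mapsto\sh_i^-$ defines an action of $S_n$. It suffices to verify the Coxeter relations: that each $\sh_i^-$ satisfies $(\sh_i^-)^2=\id$, that $\sh_i^-\sh_j^-=\sh_j^-\sh_i^-$ when $|i-j|\ge 2$, and that the braid relation $\sh_i^-\sh_{i+1}^-\sh_i^-=\sh_{i+1}^-\sh_i^-\sh_{i+1}^-$ holds. Since the operator $\sh_i^-$ acts only on the variables $z_i,z_{i+1}$ (and $h$), the commutation relation for $|i-j|\ge 2$ is immediate because the two operators act on disjoint sets of variables and do not interfere. So the real content is the quadratic relation and the braid relation.

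For the quadratic relation, I would apply $\sh_i^-$ twice directly from the definition \Ref{sh}. Writing $u=z_i-z_{i+1}$ and $P$ for the transposition $\si_i$ of the $i$-th and $(i+1)$-st arguments, the operator is $\sh_i^- = \frac{u-h}{u}\,P - \frac{h}{u}$, where one must remember that $P$ flips the sign of $u$ in any coefficient to its right. Composing, one gets a sum of four terms; collecting the coefficient of $P^2=\id$ and the coefficient of $P$ and simplifying the rational functions in $u$ and $h$ should give exactly $\id$. This is a short computation that I would carry out explicitly but not reproduce here in full.

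The braid relation is the main obstacle. One approach is a brute-force expansion on the three variables $z_i,z_{i+1},z_{i+2}$: each side is a composition of three operators of the shape above, so each side expands into $8$ terms indexed by words in the symmetric group $S_3$ acting on these three variables, and one must check that the rational-function coefficients of each of the six permutations agree on the two sides. This is standard but tedious. A cleaner route, which I would prefer, is to recognize $\sh_i^-$ as the restriction to the trivial one-dimensional local system (or to scalar-valued functions) of the operator $\st_i^- \cdot$(a gauge factor), or more directly to relate it to the operators appearing in \Ref{stilde} and \Ref{Sn-}: the $S_n^-$-action of \Ref{Sn-} already has $\sh_i^-$ as its "scalar part" obtained by replacing $P^{(\ii+1)}$ by $1$, i.e.\ by evaluating on functions valued in the trivial $\gln$-submodule, and Lemma \ref{st-} / the well-definedness in Lemma \ref{Spm} then transports the braid relation. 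Concretely, I would check that $\sh_i^-$ coincides with the action of $s_i$ induced by \Ref{Sn-} on $V$-valued functions of the special form $f(\zzz,h)\,(v_1\lox v_1)$-type components where the permutation operator acts trivially; since Lemma \ref{Spm} guarantees \Ref{Sn-} defines a genuine $S_n$-action, the braid relation for $\sh_i^-$ follows by restriction, and then only the quadratic relation needs the direct check above.

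Either way, the verification is elementary: all identities reduce to rational-function identities in $z_i-z_{i+1}$, $z_{i+1}-z_{i+2}$ and $h$, cleared of denominators. I expect no conceptual difficulty, only bookkeeping; the braid relation is where an organizing idea (reduction to the already-established actions of Lemmas \ref{st-} and \ref{sh-}'s companions, or to \Ref{Sn-}) saves the most effort.
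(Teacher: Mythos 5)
The paper offers no proof of this lemma --- it is stated with an immediate \qed{} as a routine verification --- so your direct check of the Coxeter relations (involutivity, commutation for $|i-j|\ge2$, braid relation) is exactly the argument the authors leave to the reader, and it goes through. One correction to your preferred shortcut, though: you have the two sign conventions crossed. Replacing $P^{(\ii+1)}$ by the identity in the $S_n^-$-action \Ref{Sn-} yields $\dfrac{z_i-z_{i+1}+h}{z_i-z_{i+1}}\,f(\zzii,h)-\dfrac{h}{z_i-z_{i+1}}\,f(\zzi,h)$, which is $\sh_i^+$, not $\sh_i^-$; so the reduction you describe proves Lemma \ref{sh+} from the $S_n^-$ half of Lemma \ref{Spm}. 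To get $\sh_i^-$ this way you must instead restrict the $S_n^+$-action \Ref{Sn+} to functions valued in the subspace of $V$ on which every $P^{(\ii+1)}$ acts as the identity (e.g.\ scalar multiples of $\vone$, or $\mathrm{Sym}^n\C^N$): there $\dfrac{(z_i-z_{i+1})P^{(\ii+1)}-h}{z_i-z_{i+1}}\,f(\zzii,h)+\dfrac{h}{z_i-z_{i+1}}\,f(\zzi,h)$ collapses to $\sh_i^-f$, and the braid relation is inherited from Lemma \ref{Spm}. (Note also that in your shorthand $\sh_i^-=\frac{u-h}{u}P-\frac hu$ the last sign should be $+\frac hu$ per \Ref{sh}; with the correct sign the computation $(\sh_i^-)^2=\id$ still closes, since the cross terms in $P$ cancel and the $P$-free part is $\frac{(u-h)(u+h)+h^2}{u^2}=1$.) This is a fixable bookkeeping slip, not a gap: your fallback brute-force verification is complete on its own, and the restriction idea works once aimed at the right action.
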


Let $f(\zzz,h)$ be a \,$V\<$-valued function with coordinates
$\{f_I(\zzz,h)\}$:
\vvn.2>
\be
f(\zzz,h)\,=\,\sum_I\,f_I(\zzz,h)\;v_I\,.
\vv-.1>
\ee

\begin{lem}
\label{S1}
The function $f(\zzz,h)$ is invariant with respect to the $S_n^+$-action
(\>resp.\ $S_n^-$-action) \,if and only if \,$f_{\si(I)}=\sih^+\<f_I$
\,(\>resp.~$f_{\si(I)}=\sih^-\<f_I$) \,for any \,$I\in\Il$ and
any \,$\si\in S_n$.
\qed
\end{lem}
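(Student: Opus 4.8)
The plan is to reduce Lemma~\ref{S1} to Lemma~\ref{S2}, which has already characterized $S_n^\pm$-invariance in terms of the operators $\st_i^\pm$. First I would unravel how the operator $\st_i^\pm$ acts on the coordinates $f_I$ of a $V$-valued function $f=\sum_I f_I v_I$. Since $P^{(\ii+1)}$ permutes the $i$-th and $(i+1)$-th tensor factors, we have $P^{(\ii+1)}v_I=v_{s_i(I)}$, where $s_i(I)$ is the index obtained by swapping the roles of positions $i$ and $i+1$ in the set-partition $I$ (this is the $S_n$-action on $\Il$ introduced in Section~\ref{sec S-actions}, specialized to the transposition $s_i$). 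Writing out $\st_i^\pm f(\zzz,h)=\frac{(z_i-z_{i+1})P^{(\ii+1)}\mp h}{z_i-z_{i+1}\mp h}f(\zzii,h)$ in coordinates, the $v_J$-component picks up a $\frac{-h}{z_i-z_{i+1}\mp h}f_J(\zzii,h)$ term from the scalar part and a $\frac{z_i-z_{i+1}}{z_i-z_{i+1}\mp h}f_{s_i(J)}(\zzii,h)$ term from the $P^{(\ii+1)}$ part; comparing with the definition \Ref{sh} of $\sh_i^\pm$ one checks directly that the $v_J$-coordinate of $\st_i^\pm f$ equals $\sh_i^\pm f_{s_i(J)}$. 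Equivalently, $(\st_i^\pm f)_{s_i(J)}=\sh_i^\pm f_J$ for all $J$.

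Given this coordinate formula, the equivalence is immediate. By Lemma~\ref{S2}, $f$ is $S_n^+$-invariant (resp.\ $S_n^-$-invariant) iff $f=\st_i^\pm f$ for every $i=1\lc n-1$, and by the computation above this says precisely that $f_{s_i(J)}=\sh_i^\pm f_J$ for all $J\in\Il$ and all $i$. The generators $s_1\lc s_{n-1}$ of $S_n$ therefore satisfy the required relation; since by Lemmas~\ref{sh+} and~\ref{sh-} the assignment $s_i\mapsto\sh_i^\pm$ defines a genuine action of $S_n$, an easy induction on the length of $\si\in S_n$ (writing $\si$ as a product of elementary transpositions and applying the generating-case relation repeatedly, using that the $\sh_i^\pm$ compose consistently with the $S_n$-action on $\Il$) upgrades the statement to $f_{\si(I)}=\sih^\pm f_I$ for all $\si\in S_n$ and all $I\in\Il$. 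Conversely, specializing $\si=s_i$ recovers the generating case, hence by Lemma~\ref{S2} the invariance.

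The main obstacle is the bookkeeping in the inductive step: one must verify that if $f_{\si(I)}=\sih^\pm f_I$ and $f_{\tau(I)}=\hat\tau^\pm f_I$ hold for $\si,\tau\in S_n$, then $f_{(\si\tau)(I)}=\widehat{\si\tau}^\pm f_I$, which requires knowing that $\widehat{\si\tau}^\pm=\sih^\pm\circ\hat\tau^\pm$ as operators on scalar functions after an appropriate matching of variables. This cocycle-type consistency is exactly what Lemmas~\ref{sh+} and~\ref{sh-} provide once one is careful that the operator $\sih^\pm$ attached to $\si$ acts on the argument in the order dictated by $\si$'s reduced word; the braid and commuting relations among the $\sh_i^\pm$ guarantee the result is independent of the chosen word. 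Apart from this, everything is a direct substitution. I would present the argument by first stating and proving the coordinate identity $(\st_i^\pm f)_{s_i(J)}=\sh_i^\pm f_J$ as a short computation, then invoking Lemma~\ref{S2} for the generating case, and finally citing Lemmas~\ref{sh+}--\ref{sh-} for the passage to arbitrary $\si$.
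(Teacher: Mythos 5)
Your overall strategy is the right one (the paper leaves this lemma unproved): reduce to Lemma \ref{S2}, establish the coordinate relation for the generators $s_1\lc s_{n-1}$, and propagate to all of $S_n$ via Lemmas \ref{sh+}, \ref{sh-}. The final inductive step is also fine. However, the pivotal coordinate identity you assert, namely that the $v_J$-coordinate of $\st_i^\pm f$ equals $\sh_i^\pm f_{s_i(J)}$, is false as an operator identity. Writing out the definitions one gets
\be
(\st_i^+f)_J\,=\,\frac{z_i-z_{i+1}}{z_i-z_{i+1}-h}\;f_{s_i(J)}(\zzii,h)\,-\,
\frac{h}{z_i-z_{i+1}-h}\;f_J(\zzii,h)\,,
\ee
while
\be
\sh_i^+f_{s_i(J)}\,=\,\frac{z_i-z_{i+1}+h}{z_i-z_{i+1}}\;f_{s_i(J)}(\zzii,h)\,-\,
\frac{h}{z_i-z_{i+1}}\;f_{s_i(J)}(\zzz,h)\,;
\ee
the denominators differ and the second terms involve different coordinates. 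Concretely, for $N=n=2$, $f=v_{(1,2)}$ and $J=(2,1)$, the first expression equals $(z_1-z_2)/(z_1-z_2-h)$ while the second equals $1$. The two sides agree precisely when $f$ is already invariant (both then equal $f_{s_i(J)}$), so using this identity to characterize invariance is circular, and the sentence ``by the computation above this says precisely that $f_{s_i(J)}=\sh_i^\pm f_J$'' does not follow as written.

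The generator case is still true, but the correct derivation is slightly different: the equation $f=\st_i^+f$ reads, in the $v_J$-coordinate,
\be
f_J(\zzz,h)\,=\,\frac{z_i-z_{i+1}}{z_i-z_{i+1}-h}\;f_{s_i(J)}(\zzii,h)\,-\,
\frac{h}{z_i-z_{i+1}-h}\;f_J(\zzii,h)\,.
\ee
Solve this for $f_{s_i(J)}(\zzii,h)$ and then substitute $z_i\leftrightarrow z_{i+1}$ throughout (legitimate, since the identity holds for all values of the variables); the result is exactly $f_{s_i(J)}=\sh_i^+f_J$, and every step is reversible, so the equivalence with Lemma \ref{S2} holds. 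The minus case is identical with the signs of $h$ adjusted. With this corrected generator case, your induction on the length of $\si$ --- using that $s_i\mapsto\sh_i^\pm$ is an $S_n$-action and that $(\si\tau)(I)=\si(\tau(I))$ --- completes the proof as you describe.
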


\begin{lem}
\label{S3}
The function $f(\zzz,h)$ is skew-invariant with respect to the $S_n^+$-action
(\>resp.\ $S_n^-$-action) if and only if \,$f_{\si(I)}=(-1)^\si\sih^-f_I$
\,(\>resp.~$f_{\si(I)}=(-1)^\si\sih^+f_I$) \,for any \,$I\in\Il$ and
any \,$\si\in S_n$.
\qed
\end{lem}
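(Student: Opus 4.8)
The plan is to reduce the skew-invariance statement to the already-established coordinate characterization of invariance in Lemma \ref{S1}, by comparing the two $S_n$-actions. First I would make precise what ``skew-invariant with respect to the $S_n^+$-action'' means: $f$ is $S_n^+$-skew-invariant if $s_i\cdot f=-f$ for every elementary transposition $s_i$, where $s_i$ acts by \Ref{Sn+}; equivalently, $\si\cdot f=(-1)^\si f$ for all $\si\in S_n$. The key observation is that the sign-twist $\si\mapsto(-1)^\si$ intertwines the two actions in the following sense: if we define a new action by $\si\mapsto (-1)^\si(\si\cdot_+)$, then because $(-1)^\si$ is a homomorphism $S_n\to\{\pm1\}$, this is again a genuine $S_n$-action on $V$-valued functions, and $f$ is $S_n^+$-skew-invariant precisely when $f$ is invariant under this twisted action.

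Next I would identify the twisted $S_n^+$-action with the $S_n^-$-action at the level of individual elementary transpositions. Comparing \Ref{Sn+} and \Ref{Sn-}: the $s_i$-action for $S_n^+$ has leading term $\bigl((z_i-z_{i+1})P^{(\ii+1)}-h\bigr)/(z_i-z_{i+1})$ and lower term $h/(z_i-z_{i+1})$ (on $f(\zzi,h)$), while for $S_n^-$ the leading term is $\bigl((z_i-z_{i+1})P^{(\ii+1)}+h\bigr)/(z_i-z_{i+1})$ and the lower term is $-h/(z_i-z_{i+1})$. Multiplying the $S_n^+$-formula by $-1$ gives leading term $\bigl(-(z_i-z_{i+1})P^{(\ii+1)}+h\bigr)/(z_i-z_{i+1})$, which is \emph{not} literally the $S_n^-$-leading term because of the sign on $P^{(\ii+1)}$; so the naive ``$-f$'' twist is not quite it. The resolution is to combine the global sign twist with the internal flip: on the coordinate $f_I$, the transposition $s_i$ (acting geometrically by $\sih^\pm$) sends $I$ to $s_i(I)$, and the matrix $P^{(\ii+1)}$ permutes the index sets accordingly. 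Working in coordinates as in the proofs of Lemmas \ref{S1} and \ref{S2}, one checks that the coordinate version of ``$s_i\cdot f=-f$ for the $S_n^+$-action'' is exactly $f_{s_i(I)}=\sih^-f_I$ on the coordinates together with a compensating sign coming from the $-h$ versus $+h$ discrepancy — i.e.\ the off-diagonal term of \Ref{Sn+} carries the same operator $\sih^\pm$ that appears in \Ref{sh}, and matching the two pieces forces the $(-1)^\si$ factor together with the switch $\sih^+\leftrightarrow\sih^-$.

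Concretely, the steps are: (1) expand the condition $f=-s_if$ (for $S_n^+$) in the basis $\{v_I\}$, using that $P^{(\ii+1)}v_I=v_{s_i(I)}$; (2) separate the diagonal coordinates (those $I$ fixed by $s_i$, i.e.\ with $i,i+1$ in the same block) from the off-diagonal pairs $\{I,s_i(I)\}$; (3) on each off-diagonal pair, read off a $2\times2$ relation expressing $f_{s_i(I)}$ in terms of $f_I$ (and $f_I$ at swapped arguments), and observe that it coincides with the defining relation $f_{s_i(I)}=-\sih^-f_I$ — note the extra sign from $(-1)^{s_i}=-1$ — which is precisely \Ref{sh} for the minus case up to that sign; (4) on the diagonal coordinates the condition becomes $f_I=-\sih^+ f_I$ read off from \Ref{Sn+}, again matching; (5) finally, promote the statement from $s_i$ to arbitrary $\si\in S_n$: since both ``$\si$ acts by $(-1)^\si$'' and ``$f_{\si(I)}=(-1)^\si\sih^-f_I$'' are multiplicative in $\si$ (the first trivially, the second because $s_i\mapsto\sih^-$ is an action by Lemma \ref{sh-} and $(-1)^\si$ is a homomorphism), it suffices to check generators, which was done in steps (3)–(4). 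The resp.\ case ($S_n^-$-skew-invariance $\Leftrightarrow f_{\si(I)}=(-1)^\si\sih^+f_I$) is identical with the roles of \Ref{Sn+}/\Ref{Sn-} and \Ref{sh+}/\Ref{sh-} swapped.

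I expect the only delicate point to be the bookkeeping of signs in step (3): one must be careful that the $-1$ from skew-invariance and the sign hidden in passing between $P^{(\ii+1)}$ and the geometric action do not cancel or double, so that the final relation genuinely has $\sih^-$ (not $\sih^+$) paired with the $(-1)^\si$ factor. Everything else is a direct transcription of the proofs of Lemmas \ref{S1} and \ref{S2}.
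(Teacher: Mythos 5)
Your plan is correct, and it is essentially the verification the paper has in mind: Lemma \ref{S3} is stated with no proof (the authors treat it, like Lemmas \ref{S1} and \ref{S2}, as a routine coordinate check), and expanding $s_i\cdot f=-f$ in the basis $\{v_I\}$ using $P^{(\ii+1)}v_I=v_{s_i(I)}$ and then swapping $z_i\leftrightarrow z_{i+1}$ does yield exactly $f_{s_i(I)}=-\,\sih^-\<f_I$ for every $I$, after which multiplicativity in $\si$ (via Lemma \ref{sh-} and the sign character) finishes the argument. Two small corrections to your write-up: the relation comes out uniformly for all $I$ — the term with $P^{(\ii+1)}$ contributes to the $v_{s_i(I)}$-coordinate rather than the $v_I$-coordinate, which is what dissolves your worry about the ``naive sign twist'' not matching, and it also makes the separate treatment of $I$ with $s_i(I)=I$ unnecessary; moreover in your step (4) the diagonal condition should read $f_I=-\,\sih^-\<f_I$, not $f_I=-\,\sih^+\<f_I$, since otherwise it would contradict the uniform statement of the lemma.
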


Let
\vvn-.7>
\be
D\,=\!\prod_{1\leq i<j\leq n}(z_i-z_j+h)\,,\quad\qquad
\Dc\,=\!\prod_{1\leq i<j\leq n}(z_j-z_i+h)\,.
\ee

\begin{lem}
\label{S4}
The function $f$ is skew-invariant with respect to the $S_n^+$-action
(\>resp.\ $S_n^-$-action) if and only if the function \,$\Dci f$
\,(\>resp.~$\Di f$) \,is invariant with respect to the $S_n^+$-action
(\>resp.\ $S_n^-$-action).
\qed
\end{lem}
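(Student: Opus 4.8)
The plan is to reduce the statement to a per-generator comparison and then to verify that the two conditions in question become, after an elementary simplification, literally the same $V$-valued equation. Both invariance and skew-invariance with respect to an $S_n$-action are determined by how the Coxeter generators $s_1\lc s_{n-1}$ act (as $+1$, respectively as $-1$), since a homomorphism $S_n\to\{\pm1\}$ is fixed by its values on generators and the $S_n^\pm$-actions are genuine actions. Thus it suffices to prove, for each fixed $i$, that $\Dci f$ is fixed by the $S_n^+$-action of $s_i$ if and only if the $S_n^+$-action of $s_i$ sends $f$ to $-f$; and likewise with $\Di$ and $S_n^-$ in place of $\Dci$ and $S_n^+$.

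First I would record how $D$ and $\Dc$ behave under the bare transposition of arguments $\bar s_i\colon g(\zzz,h)\mapsto g(\zzii,h)$. In each product only the single factor indexed by the pair $(\ii+1)$ is changed, while all remaining factors are merely permuted among themselves, so that
\[
\bar s_i\,\Dc=\Dc\,\frac{z_i-z_{i+1}+h}{z_{i+1}-z_i+h}\,,\qquad
\bar s_i\,D=D\,\frac{z_{i+1}-z_i+h}{z_i-z_{i+1}+h}\,.
\]
Consequently $\bar s_i(\Dci f)=\frac{z_{i+1}-z_i+h}{z_i-z_{i+1}+h}\,\Dci\,\bar s_i f$, where the scalar factor commutes with the vector operator $P^{(\ii+1)}$.

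For the invariance side I would invoke Lemma \ref{S2}, which says that $\Dci f$ is $S_n^+$-invariant iff $\Dci f=\st_i^+(\Dci f)$ for every $i$, with $\st_i^+$ the single-term operator \Ref{stilde}. Substituting the transformation of $\Dc$ above into $\st_i^+(\Dci f)$ and cancelling the overall scalar $\Dci$, this condition collapses to
\[
\bigl((z_i-z_{i+1})\,P^{(\ii+1)}-h\bigr)\,\bar s_i f=-\,(z_i-z_{i+1}+h)\,f\,.
\]
On the other hand, writing out the defining two-term formula \Ref{Sn+} for the $S_n^+$-action and imposing $s_i\cdot f=-f$, then clearing the denominator $z_i-z_{i+1}$, yields exactly the same equation. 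This settles the $S_n^+$/$\Dci$ equivalence. The $S_n^-$/$\Di$ case is entirely parallel: using $\st_i^-$ from \Ref{stilde} and the definition \Ref{Sn-}, both the invariance of $\Di f$ and the skew-invariance of $f$ reduce to $\bigl((z_i-z_{i+1})P^{(\ii+1)}+h\bigr)\bar s_i f=(h-z_i+z_{i+1})\,f$.

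The computation itself is routine, so the one point I would treat as the crux is the bookkeeping of the scalar ratio produced by $\bar s_i$ on $D$ and $\Dc$: it is precisely the factor $\frac{z_{i+1}-z_i+h}{z_i-z_{i+1}+h}$ that plays the numerator $\pm h$ of $\st_i^\pm$ against the $\mp h$ in its denominator and thereby flips the eigenvalue from $+1$ (invariance of $\Dci f$, respectively $\Di f$) to $-1$ (skew-invariance of $f$). I would also note that, because the actions involve division by $z_i-z_{i+1}$, all of these identities are read as equalities of $V$-valued rational functions, so cancelling the nowhere-vanishing scalar factors above is legitimate; the well-definedness of the $S_n^\pm$-actions is Lemma \ref{Spm}.
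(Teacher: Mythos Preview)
Your argument is correct. The paper records Lemma~\ref{S4} with an immediate \qed and gives no proof, so there is nothing to compare against; your direct verification via Lemma~\ref{S2} and the transformation law of $D,\Dc$ under the bare transposition is exactly the natural way to fill in the omitted details. One cosmetic slip: in your closing commentary you refer to ``the numerator $\pm h$ of $\st_i^\pm$ against the $\mp h$ in its denominator'', but in \Ref{stilde} both carry the sign $\mp h$; the actual computations earlier in your write-up are unaffected.
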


\subsection{Spaces \,$\V^+\<$, \,$\DVe\<$ and \,$\DV$}
\label{sec Vpm}
Let \,$\V^+=(\Vz)^{S_n^+}$ be the space of $S_n^+$-invariant
\,$V\<$-valued polynomials.
Let \,$\DVe$ be the space of $S_n^+$-invariant
\,$V\<$-valued functions of the form $\Dci f$, \,$f\<\in\Vz$.
Let \,$\DV$ be the space of $S_n^-$-invariant
\,$V\<$-valued functions of the form $\Di f$, \,$f\<\in\Vz$.
All \,$\V^+\<$, \,$\DVe\<$ and \,$\DV$ are $\CZH$-modules.

\vsk.2>
The $S^\pm$-actions on $\Vz$ commute with the $\gln$-action.
Hence \,$\V^+\<$, \,$\DVe\<$ and \,$\DV$ are $\gln$-modules.
Consider the $\gln$-weight decompositions
\vvn.3>
\be
\V^+=\!\tbigoplus_{\fratop{\bla\in\Z^N_{\geq 0}}{|\bla|=n}}\!\Vl,
\qquad\quad
\DVe=\!\tbigoplus_{\fratop{\bla\in\Z^N_{\geq 0}}{|\bla|=n}}\!\DLe,
\qquad\quad
\DV=\!\tbigoplus_{\fratop{\bla\in\Z^N_{\geq 0}}{|\bla|=n}}\!\DL.
\ee

\vsk.3>
Define a partial ordering on subsets of \,$\{1\lc n\}$ of the same cardinality.
Say \,$A\le B$ \,if \,$A=\{a_1\lsym<a_i\}$, \,$B=\{b_1\lsym<b_i\}$,
and \,$a_j\le b_j$ \,for all \,$j=1\lc i$.

\vsk.2>
Fix \,$\bla=(\la_1\lc\la_N)\in\Z_{\geq 0}$, \,$|\bla|=n$\,.
For \,$I,J\in\Il$, \,say \,$I<J$ \,if \,$I_k=J_k$ \,for \,$k=1\lc l-1$
\,with some \,$l$, \,and \,$I_l<J_l$. Define \,$\IMI,\IMA\in\Il$ \,by
\vvn.3>
\begin{gather}
\label{IMA}
\IMI\,=\,\bigl(\{1\lc\la_1\}\>,\{\la_1+1\lc\la_1+\la_2\}\>,\;\ldots\;,
\{n-\la_N+1\lc n\}\bigr)\,,
\\[4pt]
\notag
\IMA\,=\,\bigl(\{n-\la_1+1\lc n\}\>,\{n-\la_1-\la_2+1\lc n-\la_1\}\>,
\;\ldots\;,\{1\lc\la_N\}\bigr)\,.\!
\end{gather}
Clearly, \,$\IMI\le I\le\IMA$ \,for any \,$I\in\Il$.
\vsk.2>
Given \,$I\in\Il$, \,denote
\vvn-.2>
\beq
\label{Q}
Q(\zb_I)\,=\!\prod_{1\le a<b\le N}\,\prod_{i\in I_a}\,\prod_{j\in I_b}\,
(z_i-z_j+h)\,.
\vv.2>
\eeq
For any function \,$f(\zzz,h)$\>, \,set \,$\fc(\zzz,h)=f(\zzzn,h)$.
Let
\vvn.3>
\beq
\label{Ql}
Q_\bla(\zzz,h)\,=\,Q(\zb_\IMI)\,,
\vv.2>
\eeq
so that \,$\Qc_\bla(\zzz,h)=Q(\zb_\IMA)$\>.
\vsk.3>
Define
\vvn-.4>
\begin{gather}
\label{thi+}
\thi^+_\bla:\Czhl\,\to\,\Vl\,,
\\[4pt]
\notag
\thi^+_\bla(f)\,=\,\frac1{\la_1!\dots\la_N!}\;
\sum_{\si\in S_n\!}\sih^+\!\fc\;v_{\si(\IMA)}\,,
\end{gather}
\vv.1>
\begin{gather}
\label{thi=}
\thi^=_\bla:\Czhl\,\to\,\DLe\,,
\\[4pt]
\thi^=_\bla(f)\,=\,\frac1{\la_1!\dots\la_N!}\;
\sum_{\si\in S_n\!}\sih^+\bigl({\tfrac1{\Qc_\bla}\fc}\>\bigr)\;v_{\si(\IMA)}\,,
\notag
\\[-24pt]
\notag
\end{gather}
and
\begin{gather}
\label{thi-}
\thi^-_\bla:\Czhl\,\to\,\DL\,,
\\[4pt]
\thi^-_\bla(f)\,=\,\frac1{\la_1!\dots\la_N!}\;
\sum_{\si\in S_n\!}\sih^-\bigl({\tfrac1{Q_\bla}f}\>\bigr)\;v_{\si(\IMI)}\,.
\notag
\end{gather}

\begin{lem}
\label{thi+-}
The maps \;$\thi^+_\bla\<$, \,$\thi^=_\bla\<$, and \;$\thi^-_\bla$ \!are
isomorphisms of
\vvn.16>
the \;$\CZH$-module \;$\Czhl$ with the \;$\CZH$-modules \;$\Vl$, \,$\DLe$
\alb and \;$\DL$, respectively.
\end{lem}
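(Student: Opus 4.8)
The plan is to write down explicit inverses and reduce every assertion to the characterizations of $S_n^\pm$-(skew-)invariance in Lemmas~\ref{S1}, \ref{S3} and \ref{S4}. The elementary input, read off from \Ref{sh}, is that $\sh^\pm_i$ acts as the identity on any function symmetric in $z_i$ and $z_{i+1}$, commutes with multiplication by polynomials symmetric in $z_i,z_{i+1}$ and by $h$, sends polynomials to polynomials (the residues at $z_i=z_{i+1}$ of its two summands cancel), and conversely $\sh^+_i g=g$, resp.\ $\sh^-_i g=g$, forces $g(\zzii,h)=g(\zzz,h)$. By Lemmas~\ref{sh+} and \ref{sh-} the $\sh^\pm_i$ generate $S_n$-actions $\si\mapsto\sih^\pm$, so $\widehat\tau^+$ (and $\widehat\tau^-$) fixes any function symmetric in the variables moved by $\tau$. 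Throughout, $\on{Stab}(\IMA)=\on{Sym}(\IMA_1)\times\dots\times\on{Sym}(\IMA_N)$ has order $\prod_k\la_k!$, $\on{Stab}(\IMI)=\Sla$, and the $S_n$-orbit of $\IMA$ (resp.\ of $\IMI$) is all of $\Il$.

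I would first treat $\thi^+_\bla$, which is the model for the other two. Since $f\in\Czhl$ is $\Sla$-symmetric, $\fc(\zzz,h)$ is symmetric inside each block of $\IMA$, hence fixed by $\widehat\tau^+$ for every $\tau\in\on{Stab}(\IMA)$; therefore in \Ref{thi+} the summand $\sih^+\fc\,v_{\si(\IMA)}$ depends only on the coset $\si\on{Stab}(\IMA)$, the sum is finite, and (the normalization $\tfrac1{\la_1!\dots\la_N!}$ cancelling the multiplicity $|\on{Stab}(\IMA)|$) the coefficient of $v_J$ in $\thi^+_\bla(f)$ equals $\sih^+\fc$ for any $\si$ with $\si(\IMA)=J$. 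By Lemma~\ref{S1} this makes $\thi^+_\bla(f)$ $S_n^+$-invariant; it is a polynomial of weight $\bla$, so $\thi^+_\bla(f)\in\Vl$, and $\CZH$-linearity holds because variable reversal and the $\sh^+_i$ commute with multiplication by $S_n$-invariant polynomials and by $h$. The inverse sends $F\in\Vl$ to $F_\IMA(\zzzn,h)$, i.e.\ the coefficient of $v_\IMA$, reversed: by Lemma~\ref{S1}, $\widehat\tau^+F_\IMA=F_{\tau(\IMA)}=F_\IMA$ for $\tau\in\on{Stab}(\IMA)$, so by the converse above, applied to the adjacent transpositions generating $\on{Stab}(\IMA)$, $F_\IMA$ is symmetric inside the blocks of $\IMA$, whence $F_\IMA(\zzzn,h)\in\Czhl$; that the two compositions are the identity follows from the coefficient formula above together with Lemma~\ref{S1}.

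For $\thi^=_\bla$ and $\thi^-_\bla$ the argument is the same, now using that $\Qc_\bla$ (resp.\ $Q_\bla$) is symmetric inside the blocks of $\IMA$ (resp.\ $\IMI$), so that $\tfrac1{\Qc_\bla}\fc$ (resp.\ $\tfrac1{Q_\bla}f$) is block-symmetric and equals the coefficient of $v_\IMA$ (resp.\ of $v_\IMI$) in the image. Two additional points must be checked. First, the image lies in $\DLe$ (resp.\ $\DL$), not just among $S_n^+$- (resp.\ $S_n^-$-) invariants: writing $F$ for the image and $g=\Dc F$ (resp.\ $g=DF$), its coefficient of $v_\IMA$ (resp.\ $v_\IMI$), namely $\tfrac{\Dc}{\Qc_\bla}\fc$ (resp.\ $\tfrac{D}{Q_\bla}f$), is a polynomial because $\Qc_\bla$ divides $\Dc$ (resp.\ $Q_\bla$ divides $D$) — for $i\in\IMA_a$, $j\in\IMA_b$ with $a<b$ one has $i>j$, so every factor of $\Qc_\bla$ occurs in $\Dc=\prod_{k>l}(z_k-z_l+h)$, and dually for $Q_\bla\mid D$; then Lemma~\ref{S4} shows $g$ is $S_n^+$- (resp.\ $S_n^-$-) skew, and Lemma~\ref{S3} writes every coefficient of $g$ as $\pm\sih^-g_\IMA$ (resp.\ $\pm\sih^+g_\IMI$), which is a polynomial since both $\sh^+_i$ and $\sh^-_i$ preserve polynomials; hence $g\in\Vz$ and $F\in\DLe$ (resp.\ $\DL$). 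Second, for surjectivity one must see that the candidate preimage of $F\in\DLe$, namely $(\Qc_\bla F_\IMA)(\zzzn,h)$ (resp.\ $Q_\bla F_\IMI$ for $F\in\DL$), is actually a \emph{polynomial}: writing $F=\Dci g$ with $g\in\Vz$, this is the divisibility $h_\bla\mid g_\IMA$, where $h_\bla:=\Dc/\Qc_\bla=\prod_a\prod_{\substack{i<j\\ i,j\in\IMA_a}}(z_j-z_i+h)$ (resp.\ $(D/Q_\bla)\mid g_\IMI$). Since $F=\Dci g$ is $S_n^+$-invariant, $F_\IMA=g_\IMA/\Dc$ is symmetric inside the blocks of $\IMA$ just as for $\thi^+_\bla$, so $g_\IMA/h_\bla=F_\IMA\,\Qc_\bla$ is a block-symmetric rational function whose denominator divides the squarefree polynomial $h_\bla$; but the $\on{Stab}(\IMA)$-orbit of any factor $(z_j-z_i+h)$ of $h_\bla$ contains $(z_i-z_j+h)$, which is not a factor of $h_\bla$, so block-symmetry forces that denominator to be constant, i.e.\ $h_\bla\mid g_\IMA$. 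With this in hand, $\thi^=_\bla\circ(\text{inverse})=\mathrm{id}$ and $(\text{inverse})\circ\thi^=_\bla=\mathrm{id}$, and likewise for $\thi^-_\bla$, follow from Lemma~\ref{S1} exactly as in the $\thi^+_\bla$ case.

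The main obstacle is this last point — showing that the inverses of $\thi^=_\bla$ and $\thi^-_\bla$ take values in the polynomial module $\Czhl$, rather than in a localization of it. Everything else is routine bookkeeping with the $S_n^\pm$-actions once the elementary properties of the $\sh^\pm_i$ above are established.
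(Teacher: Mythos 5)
Your proof is correct and follows the same route as the paper's: characterize elements of $\Vl$, $\DLe$, $\DL$ by their $v_\IMA$- (or $v_\IMI$-) coefficient via Lemma~\ref{S1}, check that coefficient directly, and invert by extracting it. You supply more detail than the paper on the two points it compresses into ``the operators $\sih^\pm$ preserve the localized algebras'' — namely that the image of $\thi^=_\bla$, $\thi^-_\bla$ is $\Dci$ (resp.\ $\Di$) times a genuine polynomial, and that the inverse lands in $\Czhl$ via the block-symmetry/squarefree divisibility argument — and those details are accurate.
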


\begin{proof}
The operators \,$\sih^+\!$, \,$\si\in S_n$, \,preserve \;$\Czh$\>.
\vvn.1>
Thus \,$\thi^+_\bla(f)=\sum_I\>f_I\>v_I$, \,and \,$f_I\in\Czh$ \,for any \,$I$.
Moreover, \,$f_{\si(I)}=\sih^+\<f_I$ \,for any \,$\si\in S_n$\>, and
\,$f_\IMA=\fc$. Hence, the map \,$\thi^+_\bla$ is well-defined by
Lemma \ref{S1}. If \,$g\in\Vl$, \;$g=\sum_I\>g_I\>v_I$, then
\,$g=\thi^+_\bla(\check g_\IMA)$ by Lemma \ref{S1},
\>so \,$\thi^+_\bla$ is an isomorphism.
\vsk.2>
The proof for the cases of \,$\thi^=_\bla$ and \,$\thi^-_\bla$ are similar
because the operators \,$\sih^+\!$, \,$\si\in S_n$, \,preserve the localized
algebra \,$\C[\zzz,h,\Dc^{-1}]$\>, and the operators \,$\sih^-\!$,
\,$\si\in S_n$, \,preserve the localized algebra \,$\C[\zzz,h,D^{-1}]$\>.
\vv-.3>
\end{proof}

It is known in Schubert calculus that \,$\C[\zzz]^{\>\Sla}$ is a free
\vvn.1>
\,$\C[\zzz]$-module of rank \,$d_\bla$. This yields that for any $\bla$,
the subspaces \;$\V_\bla^+$, \;$\DLe$ \,and \;$\DL$ are free \,$\CZH$-modules
of rank \,$d_\bla$.

\vsk.3>
The Shapovalov form $\Sc$ on $V$ induces a pairing
\vvn.3>
\beq
\label{Shap pair}
(\Vz)\ox (\Vz)\to\Czh
\vv.3>
\eeq
denoted by the same letter.

\begin{lem}
\label{lem Shap pm}
The pairing \Ref{Shap pair} induces a pairing
\vvn.2>
\beq
\label{s+-}
\V^+\!\ox\DV\to\,\CZH\,.
\vv.1>
\eeq
\end{lem}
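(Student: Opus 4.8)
The plan is to extend the pairing \Ref{Shap pair} to a $\C(\zzz,h)$-bilinear pairing on $V$-valued rational functions, so that $\Sc(g,\varphi)$ is a well-defined rational function whenever $g\in\V^+$ and $\varphi\in\DV$; the content of the lemma is then that this rational function lies in $\CZH$. Throughout, write $\tau_i$ for the operator interchanging $z_i$ and $z_{i+1}$ in the argument of a function, and write $s^\pm_i$ for the operator by which the transposition $s_i\in S_n$ acts in the $S^\pm_n$-action \Ref{Sn+}, \Ref{Sn-}. The heart of the argument is the identity
\[
\Sc(s^+_i g,\varphi)\,-\,\tau_i\,\Sc(g,s^-_i\varphi)\;=\;\frac h{z_i-z_{i+1}}\,(1-\tau_i)\,\Sc(g,\varphi)\,,
\]
valid for \emph{all} $V$-valued rational functions $g,\varphi$ and every $i=1\lc n-1$.

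I would verify this identity by a direct computation. Rewriting the operator coefficient in \Ref{Sn+} as $P^{(\ii+1)}-\frac h{z_i-z_{i+1}}$ and the one in \Ref{Sn-} as $P^{(\ii+1)}+\frac h{z_i-z_{i+1}}$, then expanding both sides by $\C(\zzz,h)$-bilinearity of $\Sc$, one uses three elementary facts: $P^{(\ii+1)}$ is self-adjoint for $\Sc$ (it permutes the orthonormal basis $\{v_I\}$ and is an involution); $\tau_i\Sc(a,b)=\Sc(\tau_ia,\tau_ib)$ for all $V$-valued $a,b$ (because $\Sc$ on $V$ is independent of $\zzz$); and $\tau_i$ commutes with $P^{(\ii+1)}$ while negating $z_i-z_{i+1}$. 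With these, the two terms involving $P^{(\ii+1)}$ cancel, the two terms $\frac h{z_i-z_{i+1}}\Sc(\tau_ig,\varphi)$ cancel, and the surviving terms add up to $\frac h{z_i-z_{i+1}}\bigl(\Sc(g,\varphi)-\tau_i\Sc(g,\varphi)\bigr)$, the right-hand side.

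Applying the identity with $g\in\V^+$ and $\varphi\in\DV$, we have $s^+_ig=g$ and, since every element of $\DV$ is $S^-_n$-invariant by definition, $s^-_i\varphi=\varphi$, so the identity collapses to $(1-\tau_i)\Sc(g,\varphi)=\frac h{z_i-z_{i+1}}(1-\tau_i)\Sc(g,\varphi)$, that is, $\frac{z_i-z_{i+1}-h}{z_i-z_{i+1}}(1-\tau_i)\Sc(g,\varphi)=0$; as the coefficient is a nonzero rational function, $\tau_i\Sc(g,\varphi)=\Sc(g,\varphi)$ for every $i$, so $\Sc(g,\varphi)$ is symmetric in $\zzz$. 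It remains to see that it is a polynomial. Writing $\varphi=\Di f$ with $f$ a $V$-valued polynomial, $\Sc(g,\varphi)=\Di\Sc(g,f)$ has denominator dividing $D=\prod_{1\le i<j\le n}(z_i-z_j+h)$, so its only possible poles lie on the hyperplanes $z_i-z_j+h=0$ with $i<j$. But an $S_n$-symmetric function can have no such pole: applying the transposition $(i\,j)$ would force a pole along $z_j-z_i+h=0$, while $z_j-z_i+h$ is not a scalar multiple of any factor $z_a-z_b+h$ $(a<b)$ of $D$. Hence $\Sc(g,\varphi)\in\C[\zzz,h]$, and being symmetric in $\zzz$ it lies in $\CZH$, as claimed.

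The step I expect to be the main obstacle is the bookkeeping in the verification of the displayed identity: tracking how $\tau_i$ acts on the rational coefficients and on $\Sc(g,\varphi)$ is where a sign slip is easy, and everything after it is formal. Two small points merit care. First, $\Sc(g,\varphi)$ makes sense only after \Ref{Shap pair} has been extended from a pairing of polynomials to a $\C(\zzz,h)$-valued pairing. Second, the pole-cancellation in the last step genuinely uses that the linear forms $z_i-z_j+h$, $1\le i<j\le n$, are pairwise non-proportional and that none of them lies in the $S_n$-orbit of another factor of $D$.
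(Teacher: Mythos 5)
Your proof is correct and follows the same route as the paper's: show that $\Sc(f,g)$ is symmetric in $\zzz$, observe that its only possible poles lie along the hyperplanes $z_i-z_j+h=0$, $1\le i<j\le n$, and conclude that a symmetric function cannot have poles along an arrangement that is not $S_n$-invariant. The only difference is that the paper asserts the symmetry without comment, while you verify it via the displayed identity relating $s^+_i$, $s^-_i$ and $\tau_i$; that identity and the ensuing cancellation check out.
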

\begin{proof}
For any $f\<\in\V^+$ and $g\in\DV$, the scalar function $\Sc(f,g)$ is
a symmetric function of $\zzz$ with possible poles only at the hyperplanes
$z_i-z_j+h=0$ for $1\leq i<j\leq n$. Since this arrangement of hyperplanes is
not invariant under the permutations of $\zzz$, the poles are absent.
\end{proof}

\begin{lem}
\label{surj}
For any \,$\bla$, the pairing \;$\Vl\<\ox\DL\<\to\>\CZH$\>,
induced by pairing \Ref{s+-}, is surjective.
\end{lem}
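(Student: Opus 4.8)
The plan is to reduce the claim to producing a single pair \,$f\in\Vl$, \,$g\in\DL$ \,with \,$\Sc(f,g)=1$. Indeed, the Shapovalov pairing \Ref{Shap pair} on \,$\Vz$ \,is \,$\Czh$-bilinear and \,$\CZH\subset\Czh$, so the induced pairing \Ref{s+-} is \,$\CZH$-bilinear; given such \,$f$ \,and \,$g$, then for every \,$\psi\in\CZH$ \,the element \,$\psi f$ \,still lies in \,$\Vl$ \,(which is a \,$\CZH$-module) and \,$\Sc(\psi f,g)=\psi\,\Sc(f,g)=\psi$, so the pairing \,$\Vl\ox\DL\to\CZH$ \,is onto.

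To build the pair, consider \,$w=\sum_{I\in\Il}v_I\in V_\bla$. Each operator \,$\sh^\pm_i$ \,of \Ref{sh} fixes constant functions --- for instance \,$\sh^+_i(1)=\frac{z_i-z_{i+1}+h}{z_i-z_{i+1}}-\frac h{z_i-z_{i+1}}=1$, and likewise \,$\sh^-_i(1)=1$ --- so \,$\sih^\pm(1)=1$ \,for every \,$\si\in S_n$. By Lemma \ref{S1}, \,$w$ \,is therefore invariant with respect to both the \,$S_n^+$- and the \,$S_n^-$-action. Since \,$w$ \,is a polynomial, \,$w\in\V^+$; and since \,$w=\Di(Dw)$ \,with \,$Dw\in\Vz$, \,$w\in\DV$. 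As all \,$I\in\Il$ \,have \,$\gln$-weight \,$\bla$, we conclude \,$w\in\Vl$ \,and \,$w\in\DL$.

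Finally, \,$\{v_I\}$ \,being orthonormal for \,$\Sc$, we have \,$\Sc(w,w)=\sum_{I\in\Il}\Sc(v_I,v_I)=|\Il|=d_\bla\neq0$. Hence \,$f=\tfrac1{d_\bla}w\in\Vl$ \,and \,$g=w\in\DL$ \,satisfy \,$\Sc(f,g)=1$, and by the first paragraph the pairing is surjective.

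I do not expect a real difficulty. The two points that deserve attention are that the membership \,$w\in\DV$ \,is obtained from Lemma \ref{S1} for the \,$S_n^-$-\emph{action} (invariance), not from skew-invariance, and that summing over \,$\Il$ \,is precisely what places \,$w$ \,in the weight-\,$\bla$ \,components on both sides, so that \,$\Sc(w,w)$ \,is an element of \,$\CZH$ \,as needed.
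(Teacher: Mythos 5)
Your proof is correct, and it takes a genuinely different (and more elementary) route than the paper. Both arguments reduce surjectivity to exhibiting one pair whose Shapovalov pairing is an invertible constant, but the paper gets such a pair from the Yangian machinery: after first permuting $\bla$ to assume $\la_1\ge\dots\ge\la_N$, it uses formula \Ref{Ev-} to find $X\in\ty$ with $\pho^-(X)\>v^-_\bla=\vone$ and then the contravariance of $\pho^\pm$ (Lemma \ref{SY}) to conclude $\Sc\bigl(\pho^+(X)\>\vone,v^-_\bla\bigr)=1$; this is why the proof is deferred to Section \ref{tyactions}. You instead observe that $v^+_\bla=\sum_{I\in\Il}v_I$ already lies in $\Vl\cap\DL$ (invariance under both $S_n^\pm$-actions via $\sih^\pm(1)=1$ and Lemma \ref{S1}, plus $v^+_\bla=\Di(D\>v^+_\bla)$) and pairs with itself to $d_\bla\ne0$, so $\tfrac1{d_\bla}\,v^+_\bla\ox v^+_\bla\mapsto1$ and $\CZH$-bilinearity finishes the argument. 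Your key membership claim is exactly the assertion $v^+_\bla\in\V^+\cap\,\DL$ made in Section \ref{v+-=}, so nothing is circular; what your approach buys is a self-contained proof using only the material of Section \ref{alg sec}, at the cost of not illustrating the interplay between the pairing and the Yangian actions that the paper's proof showcases.
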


\noindent
Lemma \ref{surj} is proved in Section \ref{tyactions}

\vsk.5>
The pairing \Ref{Shap pair} also induces a pairing
\vvn.3>
\beq
\label{s=-}
\DVe\<\<\ox\DV\to\,Z^{-1}\>\CZH\,,
\vv.1>
\eeq
where
\vvn-.8>
\beq
\label{Z}
Z\,=\,D\>\Dc\,=\prod_{\satop{\ij=1}{i\ne j}}^n(z_i-z_j+h)\,.
\vv.1>
\eeq
The pairing \Ref{s+-} and \Ref{s=-} will also be denoted by $\Sc$.

\begin{lem}
\label{Sfg}
The pairings \,\Ref{s+-} and \,\Ref{s=-} are nondegenerate. That is,
if \;$\Sc(f,g)=0$ \,for a given \,$f\<\in\V^+$ and every \,$g\in\DV$,
then \,$f=0$. And similarly, for all other possible cases.
%if \;$\Sc(f,g)=0$ \,for a given \,$g\in\DV$ and every
%\,$f\<\in\V^+$, then \,$g=0$.
\qed
\end{lem}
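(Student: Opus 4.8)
The plan is to reduce all four non-degeneracy assertions to the statement that a Gram determinant is nonzero, and then to verify the latter by extending scalars to $\C(\zzz,h)$, where the Shapovalov form is visibly non-degenerate. First, since $\Sc(v_I,v_J)=\dl_\IJ$, the pairings \Ref{s+-} and \Ref{s=-} vanish between weight subspaces of distinct weights; together with the $\gln$-weight decompositions of $\V^+$, $\DVe$ and $\DV$ this reduces the lemma to showing, for each fixed $\bla$, that the induced pairings $\Vl\ox\DL\to\CZH$ and $\DLe\ox\DL\to Z^{-1}\CZH$ are non-degenerate on either side. I will explain the case $\Vl\ox\DL\to\CZH$; the case $\DLe\ox\DL\to Z^{-1}\CZH$ is identical, with $\DLe$ in place of $\Vl$ and the domain $Z^{-1}\CZH$ in place of $\CZH$. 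As $\Vl$ and $\DL$ are free $\CZH$-modules of rank $d_\bla$, I would fix $\CZH$-bases $f_1\lc f_{d_\bla}$ of $\Vl$ and $g_1\lc g_{d_\bla}$ of $\DL$ and form the Gram matrix $G=\bigl(\Sc(f_i,g_j)\bigr)$, whose entries lie in $\CZH$. Since $\CZH$ is an integral domain it suffices to prove $\det G\ne 0$: if then $f=\sum_i a_i f_i\in\Vl$ satisfies $\Sc(f,g_j)=0$ for $j=1\lc d_\bla$, the row $(a_1\lc a_{d_\bla})$ is annihilated by $G$, hence vanishes over the fraction field, so $f=0$; the argument for an element of $\DL$ is symmetric, and the same works for \Ref{s=-}.

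To prove $\det G\ne0$ I would extend scalars from $\CZH$ to $\C(\zzz,h)$, which is exact because $\C(\zzz,h)$ is flat over $\CZH$ (it is a field extension of the fraction field of $\CZH$, which is itself a localization of $\CZH$). The crucial observation is that $\Czh\ox_{\CZH}\C(\zzz,h)=\C(\zzz,h)$: a rational function acquires an $S_n$-invariant denominator once numerator and denominator are multiplied by the remaining terms of the $S_n$-orbit of the denominator; likewise $\C[\zzz,h,D^{-1}]\ox_{\CZH}\C(\zzz,h)=\C[\zzz,h,\Dc^{-1}]\ox_{\CZH}\C(\zzz,h)=\C(\zzz,h)$. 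Applying $\ox_{\CZH}\C(\zzz,h)$ to the inclusion $\Vl\subset V_\bla\ox_\C\Czh$ therefore yields an injection $\Vl\ox_{\CZH}\C(\zzz,h)\hookrightarrow V_\bla\ox_\C\C(\zzz,h)$ of $\C(\zzz,h)$-vector spaces; the source has dimension $d_\bla$ because $\Vl\cong\CZH^{d_\bla}$, the target has dimension $\dim_\C V_\bla=d_\bla$, so the injection is an isomorphism, and $f_1\lc f_{d_\bla}$ is a $\C(\zzz,h)$-basis of $V_\bla\ox_\C\C(\zzz,h)$. The same reasoning, using $\C[\zzz,h,D^{-1}]$ in place of $\Czh$, shows $g_1\lc g_{d_\bla}$ is a $\C(\zzz,h)$-basis of $V_\bla\ox_\C\C(\zzz,h)$ as well. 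Since $\{v_I:I\in\Il\}$ is orthonormal, $\Sc$ is non-degenerate on $V_\bla\ox_\C\C(\zzz,h)$, so the Gram matrix of the two bases $\{f_i\}$ and $\{g_j\}$ is invertible over $\C(\zzz,h)$; thus $\det G\ne0$ in $\C(\zzz,h)$, and, $\det G$ being an element of $\CZH$, also $\det G\ne0$ in $\CZH$.

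The step requiring the most care is the claim that, after inverting the nonzero elements of $\CZH$, the module $\Vl$ (resp.\ $\DLe$, $\DL$) fills out all of $V_\bla\ox_\C\C(\zzz,h)$ rather than a proper subspace; in the argument above this is forced by the matching of the two dimensions, which in turn rests on the freeness over $\CZH$ of $\Vl$, $\DLe$, $\DL$ recorded earlier. Alternatively one can read it off directly from Lemma \ref{thi+-}: the $\C(\zzz,h)$-span of the image of $\thi^+_\bla$ (resp.\ $\thi^=_\bla$, $\thi^-_\bla$) is all of $V_\bla\ox_\C\C(\zzz,h)$.
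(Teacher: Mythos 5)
Your overall strategy --- reduce to the nonvanishing of a Gram determinant in the integral domain $\CZH$ (resp.\ $Z^{-1}\>\CZH$) and check it after extending scalars to $\C(\zzz,h)$ --- is sound, and the weight-space reduction and the Gram-matrix formalism are fine (the paper itself states this lemma without proof). However, the step you yourself flag as requiring the most care contains a genuine error. The identity $\Czh\ox_{\CZH}\C(\zzz,h)=\C(\zzz,h)$ is false: writing $K$ for the fraction field of $\CZH$, your ``symmetrize the denominator'' remark proves $\Czh\ox_{\CZH}K\cong\C(\zzz,h)$, but $\C(\zzz,h)$ is an extension of $K$ of degree $n!$, so $\Czh\ox_{\CZH}\C(\zzz,h)\cong\C(\zzz,h)\ox_K\C(\zzz,h)$ has dimension $n!$ over $\C(\zzz,h)$. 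Consequently flatness only gives an injection of $\Vl\ox_{\CZH}\C(\zzz,h)$ into $V_\bla\ox_\C\bigl(\Czh\ox_{\CZH}\C(\zzz,h)\bigr)$, whose target is $d_\bla\,n!$-dimensional; the further multiplication map down to $V_\bla\ox_\C\C(\zzz,h)$ is not injective, so your dimension count does not show that a $\CZH$-basis $f_1\lc f_{d_\bla}$ of $\Vl$ stays linearly independent over $\C(\zzz,h)$. This is not a pedantic point: a free rank-$d$ $\CZH$-submodule of $V_\bla\ox_\C\Czh$ can span fewer than $d$ dimensions over $\C(\zzz,h)$ --- for $n=2$ and $V_\bla=\C$, the submodule $\C[z_1,z_2]^{S_2}\<\cdot 1+\C[z_1,z_2]^{S_2}\<\cdot z_1\subset\C[z_1,z_2]$ is free of rank $2$ over the invariants but spans only a line over $\C(z_1,z_2)$. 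So the crux of the lemma, the $\C(\zzz,h)$-linear independence of $f_1\lc f_{d_\bla}$, is exactly what remains unproved; your ``alternative'' appeal to Lemma \ref{thi+-} merely restates it.

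The gap is fixable along your own lines. Take $e_1\lc e_{d_\bla}$ a $\CZH$-basis of $\Czhl$, so that $\thi^+_\bla(e_k)$ and $\thi^-_\bla(e_l)$ are bases of $\Vl$ and $\DL$. By Proposition \ref{thixi} and Theorem \ref{thm S(+,-)}, $\Sc\bigl(\thi^+_\bla(e_k),\thi^-_\bla(e_l)\bigr)=\sum_{I\in\Il}e_k(\zb_I,h)\,e_l(\zb_I,h)/R(\zb_I)$, so $\det G=(\det E)^2\prod_{I\in\Il}R(\zb_I)^{-1}$ with $E=\bigl(e_k(\zb_I,h)\bigr)$, and it suffices to show $\det E\ne0$. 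This holds because the evaluations $f\mapsto f(\zb_I,h)$, $I\in\Il$, are pairwise distinct ring homomorphisms from $\Czhl$ to $\C(\zzz,h)$, hence linearly independent over $\C(\zzz,h)$ by Dedekind's lemma (equivalently, $\det E$ is a generalized Vandermonde determinant). The same computation handles the pairing \Ref{s=-}, with $Q(\zb_I)\>R(\zb_I)$ in place of $R(\zb_I)$. Alternatively, one can bypass function fields entirely: specialize $\zzz,h$ to generic complex numbers, where nondegeneracy follows from the irreducibility of the $\Yn$-modules $\Vy\pm$ as in Corollary \ref{nondegS}, and note that nonvanishing of $\det G$ at one point of $\on{Spec}\CZH$ already gives $\det G\ne0$.
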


\subsection{Vectors \,$\xi_I^\pm$}
\label{secxi}
Fix \,$\bla=(\la_1\lc\la_N)\in\Z_{\geq 0}$, \,$|\bla|=n$\,.
Given \,$I\in\Il$, \,denote
\vvn.4>
\beq
\label{R}
R(\zb_I)\,=\!\prod_{1\le a<b\le N}\,\prod_{i\in I_a}\,\prod_{j\in I_b}\,
(z_i-z_j)\,.
\vv.1>
\eeq

\begin{prop}
\label{xi+}
There exist unique elements
\,$\{\xi^+_I\in V\!\ox\C[\zzz,h,\Dc^{-1}]\ |\ I\in\Il\}$ such that
\;$\xi^+_\IMI=v_\IMI$ \,and
\vvn-.3>
\beq
\label{xi+si}
\xi^+_{s_i(I)}\>=\,\st_i^+\>\xi^+_I
\vv.2>
\eeq
for every \,$I\in\Il$ \,and \,$i=1\lc n-1$\,. Moreover,
\vvn.3>
\beq
\label{xi+v}
\xi^+_I\,=\,\sum_{J\le I}\,X^+_{\IJ}\,v_J\,,
\eeq
where \,$X^+_{\IJ}\in\C[\zzz,h,\Dc^{-1}]$\>, \,$X^+_{\II}\ne 0$, \,and
\beq
\label{X+}
X^+_{\IMA\<,\>\IMA}\,=\,\frac{R(\zb_\IMA)}{Q(\zb_\IMA)}\,.
\eeq
\end{prop}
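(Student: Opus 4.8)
The plan is to build the vectors $\xi^+_I$ by induction on the partial order ``$<$'' on $\Il$, starting from $\xi^+_\IMI=v_\IMI$ and propagating via the relations \Ref{xi+si}. First I would check that the transpositions $s_i$ act transitively enough on $\Il$ to reach every $I$ from $\IMI$: since $\IMI\le I$ for all $I\in\Il$, one can always find a sequence of elementary transpositions increasing a given index step by step up to $I$; concretely, if $I_l$ is the first block of $I$ that differs from $(\IMI)_l$, there is some $s_i$ with $i\notin I_l$, $i+1\in I_l$ that, applied to a smaller index, moves it toward $I$. So define $\xi^+_I:=\st^+_{i_k}\cdots\st^+_{i_1}\,v_\IMI$ for any reduced-type path $\IMI\to I$. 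The operators $\st^+_i$ are exactly those of \Ref{stilde}, and by Lemma \ref{st+} the assignment $s_i\mapsto\st^+_i$ is a genuine $S_n$-action; this is the key input that makes $\xi^+_I$ independent of the chosen path, since any two such paths differ by relations in $S_n$ and the action is well-defined. Hence existence and the relation \Ref{xi+si} hold, and uniqueness is immediate because \Ref{xi+si} together with the initial condition determines every $\xi^+_I$.

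Next I would establish the triangularity \Ref{xi+v}. Expand $\st^+_i\,v_J=\frac{(z_i-z_{i+1})P^{(\ii+1)}-h}{z_i-z_{i+1}-h}\,v_J$ (with the $z_i,z_{i+1}$ swapped in the argument, but since $v_J$ is constant that only affects the scalar coefficients). Now $P^{(\ii+1)}v_J=v_{s_i(J)}$, so $\st^+_i\,v_J$ is a $\C[\zzz,h,\Dc^{-1}]$-combination of $v_J$ and $v_{s_i(J)}$. When we apply $\st^+_i$ along a path that increases indices, at each step we either fix a basis vector $v_J$ (with $J<I$) or replace it by $v_{s_i(J)}$ with $s_i(J)$ still $\le I$. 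An induction on path length then shows $\xi^+_I=\sum_{J\le I}X^+_{\IJ}v_J$ with $X^+_{\IJ}\in\C[\zzz,h,\Dc^{-1}]$. For the leading coefficient $X^+_{\II}\ne0$: tracking the coefficient of $v_I$ itself, each application of $\st^+_i$ that produces $v_I$ from $v_{s_i(I)}$ contributes the factor $\frac{z_i-z_{i+1}}{z_i-z_{i+1}-h}$ (the $P^{(\ii+1)}$ term), which is a nonzero rational function; a product of such factors is nonzero, so $X^+_{\II}\ne0$.

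Finally, the normalization \Ref{X+}: take $I=\IMA$ and compute $X^+_{\IMA,\IMA}$ explicitly by choosing a convenient path $\IMI\to\IMA$. Along such a path the coefficient of $v_\IMA$ accumulates one factor $\frac{z_i-z_{i+1}}{z_i-z_{i+1}-h}$ for each transposition, and one checks that the transpositions used are indexed precisely by pairs $(i,j)$ with $i\in(\IMA)_a$, $j\in(\IMA)_b$, $a<b$ — equivalently, the pairs appearing in $R(\zb_\IMA)/Q(\zb_\IMA)$ after grouping — so the product telescopes to $\prod_{a<b}\prod_{i\in(\IMA)_a}\prod_{j\in(\IMA)_b}\frac{z_i-z_j}{z_i-z_j-h}$. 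Matching this with the definitions \Ref{Q}, \Ref{R} gives $X^+_{\IMA,\IMA}=R(\zb_\IMA)/Q(\zb_\IMA)$, after accounting for the argument-swap bookkeeping in \Ref{stilde}. The main obstacle I anticipate is the combinatorial bookkeeping in this last step: verifying that a suitably chosen reduced path from $\IMI$ to $\IMA$ uses exactly the set of transpositions that reproduces the product over the pairs $(i\in(\IMA)_a,\ j\in(\IMA)_b,\ a<b)$, and that the $z_i\leftrightarrow z_{i+1}$ substitutions inside \Ref{stilde} do not alter the final rational function. Path-independence (Lemma \ref{st+}) lets me pick the path that makes this transparent.
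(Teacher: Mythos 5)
Your overall route is the same as the paper's: everything is read off from the explicit formula \Ref{stilde} together with the fact (Lemma \ref{st+}) that $s_i\mapsto\st_i^+$ defines an $S_n$-action, and your treatment of the triangularity \Ref{xi+v} and of the leading coefficient \Ref{X+} is the intended (if bookkeeping-heavy) computation. However, your justification of well-definedness has a genuine gap. Two transposition paths from $\IMI$ to $I$ produce two permutations $\si,\si'$ with $\si(\IMI)=\si'(\IMI)$, and these need \emph{not} be equal in $S_n$: they agree only modulo the stabilizer of $\IMI$, which is the Young subgroup $\Sla$. So the claim that ``any two such paths differ by relations in $S_n$'' is false as stated, and the action property of Lemma \ref{st+} alone does not give path-independence. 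The missing ingredient --- and it is exactly the one observation the paper's proof records --- is that the stabilizer fixes $v_\IMI$: if $s_i(\IMI)=\IMI$, the $i$-th and $(i+1)$-st tensor factors of $v_\IMI$ coincide, so $P^{(\ii+1)}v_\IMI=v_\IMI$ and hence $\st_i^+\,v_\IMI=\frac{(z_i-z_{i+1})-h}{\,z_i-z_{i+1}-h\,}\;v_\IMI=v_\IMI$. Since these $s_i$ generate $\Sla$ and $s_i\mapsto\st_i^+$ is a group action, the whole stabilizer fixes $v_\IMI$, and $\xi^+_{\si(\IMI)}:=\sit^+\,v_\IMI$ is well defined; the relation \Ref{xi+si} for arbitrary $(I,i)$ then follows formally from the action property. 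Note that the same point is what makes \Ref{xi+si} hold in the degenerate cases $s_i(I)=I$, where it reads $\xi^+_I=\st_i^+\xi^+_I$ and is a nontrivial constraint rather than part of the inductive definition; a purely path-based construction never checks these cases. With this inserted, the remainder of your argument (transitivity of $S_n$ on $\Il$ for uniqueness, the two-term expansion of $\st_i^+v_J$ for triangularity, and the telescoping product for $X^+_{\IMA,\IMA}$ matched against \Ref{Q} and \Ref{R}) goes through.
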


For example, if \,$N=n=2$ \,and \,$\bla=(1,1)$, \,then
\,$\xi^+_{(1,2)}(z_1,z_2,h)=v_{(1,2)}$ \,and
\vvn.1>
\be
\xi^+_{(2,1)}(z_1,z_2,h)\,=\,
%%\frac{(z_1-z_2)\>P_{(1,2)}-h}{z_1-z_2-h}\;v_{(1,2)}\,=\,
\frac{z_2-z_1}{z_2-z_1+h}\;v_{(2,1)}+\frac h{z_2-z_1+h}\;v_{(1,2)}\,.
\vv.3>
\ee

\begin{prop}
\label{xi-}
There exist unique elements
\,$\{\xi^-_I\in V\!\ox\C[\zzz,h,D^{-1}]\ |\ I\in\Il\}$ such that
\;$\xi^-_\IMA=v_\IMA$ \,and
\vvn-.3>
\beq
\label{xi-si}
\xi^-_{s_i(I)}\>=\,\st_i^-\>\xi^-_I
\vv.2>
\eeq
for every \,$I\in\Il$ \,and \,$i=1\lc n-1$\,. Moreover,
\vvn.3>
\beq
\label{xi-v}
\xi^-_I\,=\,\sum_{J\ge I}\,X^-_{\IJ}\,v_J\,,
\eeq
where \,$X^-_{\IJ}\in\C[\zzz,h,D^{-1}]$\>, \,$X^-_{\II}\ne 0$, \,and
\vvn-.2>
\beq
\label{X-}
X^-_{\IMI\<,\>\IMI}\,=\,\frac{R(\zb_\IMI)}{Q(\zb_\IMI)}\,.
\vv.1>
\eeq
\end{prop}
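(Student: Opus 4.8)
The plan is to prove Proposition \ref{xi-} by the same scheme as Proposition \ref{xi+}, with $\IMA$ in place of $\IMI$, with the operators $\st_i^-$ in place of $\st_i^+$, and with the partial order on $\Il$ reversed. \emph{Existence and uniqueness.} By Lemma \ref{st-} the assignment $s_i\mapsto\st_i^-$ extends to an action $\si\mapsto\st_\si^-$ of $S_n$ on $V$-valued rational functions of $\zzz,h$. The group $S_n$ acts transitively on $\Il$ by $\si\colon(I_1\lc I_N)\mapsto(\si(I_1)\lc\si(I_N))$, and the stabilizer of $\IMA$ is the Young subgroup generated by those $s_i$ for which $i$ and $i+1$ lie in the same block of $\IMA$; for such $i$ one has $P^{(\ii+1)}v_\IMA=v_\IMA$, whence by \Ref{stilde}
\[
\st_i^-v_\IMA=\frac{(z_i-z_{i+1})+h}{z_i-z_{i+1}+h}\,v_\IMA=v_\IMA .
\]
Hence $\xi^-_I:=\st_\si^-v_\IMA$ for any $\si\in S_n$ with $\si(\IMA)=I$ is unambiguously defined, satisfies $\xi^-_\IMA=v_\IMA$ and \Ref{xi-si}, and is forced by \Ref{xi-si} together with transitivity; this gives existence and uniqueness, with $\xi^-_I\in V\ox\C(\zzz,h)$ at this stage.

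\emph{Triangularity.} Record first the combinatorial fact: if $j$ and $j+1$ lie in blocks $c$ and $d$ of $I$, then $s_j(I)>I$, $=I$, or $<I$ according as $c<d$, $c=d$, or $c>d$ — immediate from the definition of the order, since passing to $s_j(I)$ raises block $c$ and lowers block $d$; in particular every $I\ne\IMA$ admits some $j$ with $s_j(I)>I$, otherwise $j$ and $j+1$ would lie in weakly decreasing blocks for all $j$, forcing $I=\IMA$. Now induct along a linear refinement of the order from $\IMA$ downward: any $I\ne\IMA$ equals $s_i(I')$ for some $I'$ with $I'>I$ (and $s_i(I')<I'$), so $\xi^-_I=\st_i^-\xi^-_{I'}$. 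Writing $\xi^-_{I'}=\sum_{K\ge I'}X^-_{I'K}v_K$ and applying \Ref{stilde},
\[
\xi^-_I=\sum_{K\ge I'}\frac{(z_i-z_{i+1})\,X^-_{I'K}(\zzii)\,v_{s_i(K)}+h\,X^-_{I'K}(\zzii)\,v_K}{z_i-z_{i+1}+h}\,;
\]
every index occurring is either $K\ge I'>I$ or of the form $s_i(K)$ with $K\ge I'$, and in the latter case the lifting property of the (parabolic Bruhat) order — if $s_i(I')<I'$, $s_i(K)<K$ and $I'\le K$, then $s_i(I')\le s_i(K)$ — gives $s_i(K)\ge s_i(I')=I$. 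Thus $\xi^-_I=\sum_{J\ge I}X^-_{IJ}v_J$. Extracting the coefficient of $v_I$ and using $X^-_{I'I}=0$ (since $I<I'$), only the $K=I'$ term survives, giving the recursion $X^-_{II}=\tfrac{z_i-z_{i+1}}{z_i-z_{i+1}+h}\,X^-_{I'I'}(\zzii)$, which is nonzero by induction.

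\emph{The leading coefficient.} Iterating this recursion along a maximal strictly descending chain from $\IMA$ (where $X^-_{\IMA,\IMA}=1$) to $\IMI$, whose length equals the number of pairs $i,j$ with $i,j$ in blocks $a<b$ of $\IMI$, produces a product of that many factors $\tfrac{z_i-z_j}{z_i-z_j+h}$; tracking which pair of original positions is transposed at each step, one checks that over the chain these pairs run exactly through all pairs $i,j$ with $i,j$ in blocks $a<b$ of $\IMI$, so the product telescopes to $R(\zb_\IMI)/Q(\zb_\IMI)$ by \Ref{R} and \Ref{Q}, which is \Ref{X-}.

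\emph{Regularity.} It remains to show $X^-_{IJ}\in\C[\zzz,h,D^{-1}]$, i.e.\ that the only poles lie on the hyperplanes $z_a-z_b+h=0$ with $a<b$. This is the delicate point, since $\st_i^-$ does not preserve this class of functions in general: on $\tfrac1{z_i-z_{i+1}+h}\,v_K$ with $v_K$ an $s_i$-fixed basis vector it returns $\tfrac1{z_{i+1}-z_i+h}\,v_K$. One must therefore exhibit a smaller, $\st_i^-$-stable subspace containing $v_\IMA$: I would take $W\subset V\ox\C(\zzz,h)$ consisting of $f=\sum_Jf_Jv_J$ whose components have at most simple poles, located only on $z_a-z_b+h=0$ with $a<b$, and which satisfy the residue compatibility $\Res_{z_i-z_{i+1}+h=0}f_J+\Res_{z_i-z_{i+1}+h=0}f_{s_i(J)}=0$ for all $i$ and $J$. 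Then $v_\IMA\in W$, and the main task is to verify $\st_i^-W\subset W$ — the crux being that this residue compatibility forces the spurious pole along $z_{i+1}-z_i+h=0$, created by the substitution $f\mapsto f(\zzii)$ in \Ref{stilde}, to cancel in the numerator, while the new pole $z_i-z_{i+1}+h=0$ is of the allowed type and the resulting components again satisfy the compatibility. Granting this, all $\xi^-_I\in W$, hence $X^-_{IJ}\in\C[\zzz,h,D^{-1}]$, completing the proof. I expect the verification that $W$ is $\st_i^-$-stable — the residue bookkeeping that renders the variable substitutions harmless — to be the only nonroutine step; the rest is the standard transitive-action-plus-Bruhat-lifting argument.
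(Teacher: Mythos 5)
Your existence-and-uniqueness step (the $S_n$-action from Lemma \ref{st-} together with the stabilizer computation at $v_\IMA$) is exactly the paper's argument, and your triangularity and diagonal-coefficient recursion are sound in outline (the lifting property you invoke does hold for the order of Section \ref{sec Vpm}, though you assert it rather than prove it, and this order is not literally a Bruhat order). The genuine gap is the regularity claim $X^-_{\IJ}\in\C[\zzz,h,D^{-1}]$, which you yourself identify as the crux and then leave unproved (``Granting this\dots''). Moreover, the space $W$ you propose is inadequate as designed: you impose residue relations only along the hyperplanes $z_i-z_{i+1}+h=0$ attached to adjacent transpositions, but applying $\st_j^-$ involves the substitution $z_j\leftrightarrow z_{j+1}$, which converts poles of $f$ along non-adjacent hyperplanes such as $z_j-z_{j+2}+h=0$ into poles of the result along $z_{j+1}-z_{j+2}+h=0$; hence checking the residue condition for $\st_j^-f$ along $z_{j+1}-z_{j+2}+h=0$ requires control of residues of $f$ along hyperplanes about which $W$ says nothing. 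So the stability of $W$ is not just unverified, it is implausible as stated; at best one would have to impose residue relations for all transpositions $(a\,b)$ and then carry out a genuinely nontrivial verification, which is absent.

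The missing idea that makes regularity immediate --- and what the paper's terse proof implicitly uses --- is to build $\xi^-_I$ along a reduced word rather than to search for a $\st_i^-$-stable class of rational functions. Any strictly descending chain $\IMA=I^{(0)},I^{(1)}\lc I^{(k)}=I$ with $I^{(t)}=s_{j_t}(I^{(t-1)})<I^{(t-1)}$ has length equal to the drop of the inversion number $\#\{a<b:\ a\ \text{lies in a later block than}\ b\}$, since each descending step lowers this number by exactly $1$; as every simple transposition changes it by at most $1$, the word $s_{j_k}\<\cdots s_{j_1}$ is automatically reduced. Because $v_\IMA$ is constant in $\zzz$, the composite $\st^-_{j_k}\!\cdots\st^-_{j_1}v_\IMA$ is just a product of the matrices from \Ref{stilde} with permuted arguments applied to $v_\IMA$, and for a reduced word the denominators occurring are exactly the factors $z_a-z_b+h$ with $a<b$ indexed by the inversion pairs (the standard positive-root property of reduced expressions). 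Thus no disallowed pole ever appears and no residue cancellation is needed; triangularity and \Ref{X-} then follow from your own recursion along the same chain, the inversion pairs for $I=\IMI$ being precisely the pairs entering $R(\zb_\IMI)/Q(\zb_\IMI)$. In your formulation the danger arises only because at each step you retain nothing about $\xi^-_{I'}$ beyond its pole locus; keeping track of the reduced word removes the problem.
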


\begin{proof}[Proof of Propositions~\ref{xi+},~\ref{xi-}]
Proposition~\ref{xi+} follows from formula~\Ref{stilde} and Lemma \ref{st+}.
Notice that \,$\st_i^+\>v_\IMI=v_\IMI$ \,if and only if
\,$s_i(\IMI)=\IMI$. Similarly, Proposition \ref{xi-} follows from
formula~\Ref{stilde} and Lemma~\ref{st-}.
\end{proof}

For example, if \,$N=n=2$ \,and \,$\bla=(1,1)$, \,then
\,$\xi^-_{(2,1)}(z_1,z_2,h)=v_{(2,1)}$ \,and
\vvn.1>
\be
\xi^-_{(1,2)}(z_1,z_2,h)\,=\,
%%\frac{(z_1-z_2)\>P_{(1,2)}+h}{z_1-z_2+h}\;v_{(1,2)}\,=\,
\frac{z_1-z_2}{z_1-z_2+h}\;v_{(1,2)}+\frac h{z_1-z_2+h}\;v_{(2,1)}\,.
\vv.3>
\ee

Let \,$f_I(\zzz,h)$, \,$I\in\Il$, be a collection of scalar functions.

\begin{lem}
\label{xinv}
The \,$V\!\<$-valued function \;$\sum_I f_I(\zzz,h)\,\xi^+_I$
\rlap{\,$\Bigl($resp.~$\sum_I f_I(\zzz,h)\,\xi^-_I\,\Bigr)$,}\\
is invariant with
respect to the $S_n^+$-action (\>resp.~$S_n^-$-action) \,if and only if
\vvn.2>
\be
f_{\si(I)}(\zzz,h)\,=\,f_I(z_{\si_1}\lc z_{\si_n},h)
\vv.2>
\ee
for any \,$I\in\Il$ and any \,$\si\in S_n$.
\qed
\end{lem}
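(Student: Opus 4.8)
The plan is to reduce $S_n^\pm$-invariance of the function $F:=\sum_I f_I(\zzz,h)\,\xi^\pm_I$ to the fixed-point equations $\st_i^\pm F=F$ by means of Lemma~\ref{S2}, and then to read off the conditions on the coefficients $f_I$ by comparing the terms attached to each $\xi^\pm_I$. For the comparison step one needs that the $\xi^+_I$, $I\in\Il$, are linearly independent over the field $\C(\zzz,h)$ of rational functions: by Proposition~\ref{xi+} one has $\xi^+_I=\sum_{J\le I}X^+_{\IJ}v_J$ with $X^+_{\II}\ne0$, so after extension of scalars the $\xi^+_I$ are obtained from the basis $\{v_I\mid I\in\Il\}$ of $V_\bla$ by a triangular transformation with invertible diagonal, hence also form a basis. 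The same holds for the $\xi^-_I$ by Proposition~\ref{xi-}.

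The computational core is the identity
\[
\st_i^+\bigl(f_I(\zzz,h)\,\xi^+_I\bigr)\,=\,f_I(\zzii,h)\,\xi^+_{s_i(I)}\,,
\]
valid for every $I\in\Il$ and $i=1\lc n-1$. Indeed, by \Ref{stilde} the operator $\st_i^+$ first substitutes $z_i\leftrightarrow z_{i+1}$ in the argument and then multiplies by the matrix $\bigl((z_i-z_{i+1})P^{(\ii+1)}-h\bigr)/(z_i-z_{i+1}-h)$; the (permuted) scalar $f_I(\zzii,h)$ commutes past this matrix, and what remains is $\st_i^+\xi^+_I$, which equals $\xi^+_{s_i(I)}$ by \Ref{xi+si}. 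Summing over $I$ and re-indexing by $J=s_i(I)$ — a bijection of $\Il$, the argument-substitution $z_k\mapsto z_{(s_i)(k)}$ being exactly the one in $f_I(\zzii,h)$ — gives $\st_i^+F=\sum_{J\in\Il}f_{s_i(J)}(\zzii,h)\,\xi^+_J$. By Lemma~\ref{S2}, $F$ is $S_n^+$-invariant iff $\st_i^+F=F$ for every $i$; by the linear independence above this is equivalent to $f_{s_i(J)}(\zzii,h)=f_J(\zzz,h)$ for all $i,J$, and applying the substitution $z_k\mapsto z_{(s_i)(k)}$ once more this becomes $f_{s_i(J)}(\zzz,h)=f_J(z_{\si_1}\lc z_{\si_n},h)$ with $\si=s_i$, i.e. precisely the asserted condition for simple transpositions.

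It remains to pass between the conditions for simple transpositions and for arbitrary $\si$. The relation $f_{\si(I)}(\zzz,h)=f_I(z_{\si_1}\lc z_{\si_n},h)$, required to hold for all $I\in\Il$, is multiplicative in $\si$: if it holds for $\si$ and for $\tau$, then for any $I$,
\[
f_{(\si\tau)(I)}(\zzz,h)=f_{\si(\tau(I))}(\zzz,h)=f_{\tau(I)}(z_{\si_1}\lc z_{\si_n},h)=f_I(z_{(\si\tau)_1}\lc z_{(\si\tau)_n},h)\,,
\]
where the second equality is the $\si$-relation applied to $\tau(I)$ and the third is the $\tau$-relation after the substitution $z_k\mapsto z_{\si_k}$. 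Since the $s_i$ generate $S_n$, the condition for all simple transpositions implies it for every $\si$ by induction on word length, and the converse is immediate. This proves the statement for the $\xi^+_I$; the case of the $\xi^-_I$ and the $S_n^-$-action is verbatim the same, with \Ref{xi+si} and Lemma~\ref{st+} replaced by \Ref{xi-si} and Lemma~\ref{st-}. No genuine obstacle arises; the only point demanding attention is the displayed core identity, where one must keep in mind that $\st_i^\pm$ acts on the \emph{argument} of the scalar coefficient $f_I$ as well, so that $f_I$ emerges from the operator with its variables permuted rather than unchanged.
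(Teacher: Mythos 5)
Your proof is correct; the paper states Lemma \ref{xinv} without proof, and your argument is exactly the intended one. The key identity $\st_i^\pm\bigl(f_I\,\xi^\pm_I\bigr)=f_I(\zzii,h)\,\xi^\pm_{s_i(I)}$, combined with Lemma \ref{S2}, the linear independence of the $\xi^\pm_I$ over $\C(\zzz,h)$ coming from the triangularity in \Ref{xi+v}, \Ref{xi-v}, and the reduction from arbitrary $\si$ to simple transpositions, is precisely what the authors leave to the reader, and you have carried out each step (including the point that $\st_i^\pm$ permutes the variables inside the scalar coefficient) correctly.
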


\begin{prop}
\label{thixi}
For any \,$f\<\in\Czhl$\>, \,we have
\vvn.2>
\begin{gather}
\label{thi+xi}
\thi^+_\bla(f)\,=\>
\sum_{I\in\Il}\,\frac{f(\zb_I,h)\>Q(\zb_I)}{R(\zb_I)}\,\,\xi^+_I\,,
\\[3pt]
\label{thi=xi}
\thi^=_\bla(f)\,=\>\sum_{I\in\Il}\,\frac{f(\zb_I,h)}{R(\zb_I)}\,\,\xi^+_I\,,
\\[3pt]
\label{thi-xi}
\thi^-_\bla(f)\,=\>\sum_{I\in\Il}\,\frac{f(\zb_I,h)}{R(\zb_I)}\,\,\xi^-_I\,.
\end{gather}
\end{prop}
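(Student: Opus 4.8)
The plan is to prove Proposition~\ref{thixi} by exploiting the two characterizations we already have: the explicit formula defining $\thi^\pm_\bla$ (together with $\thi^=_\bla$) via symmetrized sums over $S_n$, and the ``interpolation'' property of the vectors $\xi^\pm_I$ encoded in Propositions~\ref{xi+} and~\ref{xi-}. The key observation is that the right-hand sides of \Ref{thi+xi}--\Ref{thi-xi} are, by Lemma~\ref{xinv}, manifestly in the correct invariance class (since $f\in\Czhl$ is symmetric in $\zzz$, the coefficient functions $f(\zb_I,h)Q(\zb_I)/R(\zb_I)$, $f(\zb_I,h)/R(\zb_I)$ transform by permutation of variables as required); and by Lemma~\ref{thi+-} each of $\thi^+_\bla,\thi^=_\bla,\thi^-_\bla$ is an isomorphism onto $\Vl$, $\DLe$, $\DL$ respectively. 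So it suffices to check that the two sides agree, and by the $S_n$-equivariance built into both sides (Lemmas~\ref{S1},~\ref{S3},~\ref{xinv}) it is enough to compare a single distinguished coordinate.

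First I would rewrite each $\thi$ in the $\{v_I\}$-basis: by definition $\thi^+_\bla(f)=\sum_I g_I\,v_I$ with $g_{\si(\IMA)}=\frac1{\la_1!\dots\la_N!}\sih^+\fc$, and similarly for $\thi^=_\bla$ and $\thi^-_\bla$ with $\fc/\Qc_\bla$, $f/Q_\bla$ in place of $\fc$ and $v_{\si(\IMI)}$ in the $\thi^-$ case. Next I would expand each $\xi^\pm_I$ in the $\{v_J\}$-basis using \Ref{xi+v}, \Ref{xi-v}, so that the right-hand side of \Ref{thi+xi} becomes $\sum_J\bigl(\sum_{I\ge J}\frac{f(\zb_I,h)Q(\zb_I)}{R(\zb_I)}X^+_{\IJ}\bigr)v_J$, and likewise for the others. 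Thus the proposition reduces to the family of scalar identities, for each $J\in\Il$,
\beq
\frac1{\la_1!\dots\la_N!}\,\sih^+\fc\;=\;
\sum_{I\ge J}\,\frac{f(\zb_I,h)\,Q(\zb_I)}{R(\zb_I)}\;X^+_{\IJ}\,,
\eeq
where $\si\in S_n$ is chosen with $\si(\IMA)=J$, together with the analogous statements for $\thi^=$ and $\thi^-$; in fact by the equivariance it is enough to verify the single case $J=\IMA$ (resp. $J=\IMI$), for which the sum collapses since $I\ge\IMA$ forces $I=\IMA$. Then the identity for $\thi^+$ reads simply $\fc=\la_1!\dots\la_N!\,\frac{f(\zb_\IMA,h)Q(\zb_\IMA)}{R(\zb_\IMA)}X^+_{\IMA,\IMA}$, which is immediate from \Ref{X+}, after noting that $\fc(\zzz,h)=f(\zzzn,h)$ evaluates the symmetric function $f$ at the reordering of $\zb_\IMA$ giving back the natural order --- i.e. $\fc$ itself equals $f(\zb_\IMA,h)$ up to the symmetry of $f$ --- and that $\la_1!\dots\la_N!$ cancels the normalization. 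Here I am using that $f$ is $\Sla$-invariant only through $\zzz$; actually $f$ is fully $S_n$-invariant in $\zzz$, so $f(\zb_\IMA,h)=\fc(\zzz,h)$ outright.

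The main obstacle is the bookkeeping of \emph{which} permutation $\si$ sends $\IMA$ to a given $J$ and the attendant sign/operator factors $\sih^\pm$, and ensuring consistency between the two characterizations --- that is, checking that the right-hand side of \Ref{thi+xi}, when one applies $\sih^+$ to pass from the $v_\IMA$-coordinate to the $v_J$-coordinate, really does reproduce $\st_i^+\xi^+_I$-relations in a compatible way. The cleanest route avoids this by the reduction above: establish the single-coordinate identity at $\IMA$ (resp. $\IMI$), then invoke the fact that \emph{both} sides of \Ref{thi+xi} lie in $\Vl$ and transform identically under $S_n^+$ (the left side by Lemma~\ref{S1}, the right side by Lemma~\ref{xinv}), so agreement of one coordinate forces agreement of all. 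For $\thi^=_\bla$ and $\thi^-_\bla$ the argument is verbatim the same, replacing $Q(\zb_\IMA)$ by $\Qc_\bla=Q(\zb_\IMA)$ for the former (so the factor $Q(\zb_I)$ disappears from the numerator, matching \Ref{thi=xi}) and by $Q_\bla=Q(\zb_\IMI)$ for the latter, together with \Ref{X-} in place of \Ref{X+}. Finally I would remark that the hypotheses of Lemma~\ref{xinv} are exactly met because $Q(\zb_I)$ and $R(\zb_I)$ are permuted among themselves as $I$ ranges over an $S_n$-orbit in the precise way $\si$ permutes the variables, so no spurious denominators obstruct the claimed membership in $\Vl$, $\DLe$, $\DL$.
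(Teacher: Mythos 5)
Your argument is correct and is essentially the paper's own proof: both expand $\thi^\pm_\bla(f)$ in the basis $\{\xi^\pm_I\}$, pin down the single coefficient at the extremal index $\IMA$ (resp.\ $\IMI$) via the triangularity \Ref{xi+v}, \Ref{xi-v} and the diagonal entries \Ref{X+}, \Ref{X-}, and then propagate to all $I$ by the equivariance of Lemma \ref{xinv}. One small correction: $f\in\Czhl$ is only $\Sla$-invariant, not fully $S_n$-invariant as you assert in passing, but this weaker invariance already gives $f(\zb_\IMA,h)=\fc(\zzz,h)$ (since $\zb_\IMA$ differs from $(\zzzn)$ only by reordering within the blocks) and likewise suffices for the hypothesis of Lemma \ref{xinv}, so your argument is unaffected.
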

\begin{proof}
By formulae \Ref{xi+v}, \Ref{X+}, we have
\,$\thi^+_\bla(f)=\sum_{\>I}\>c_I\>\xi^+_I$
\vvn.16>
for some coefficients $c_I$, and
\,$c_\IMA=f(\zb_\IMA,h)\>Q(\zb_\IMA)/R(\zb_\IMA)$.
Now formulae \Ref{thi+xi}, \Ref{thi=xi} follow from Lemma \ref{xinv}.
\vsk.2>
Formula \Ref{thi-xi} follows similarly from formulae \Ref{xi-v}, \Ref{X-},
and Lemma \ref{xinv}.
\end{proof}

\begin{thm}
\label{thm S(+,-)}
For \,$I,J\in\Il$, we have
\vvn-.3>
\beq
\label{S+-}
\Sc(\xi^+_I,\xi^-_J)\,=\,\dl_{\IJ}\;\frac{R(\zb_I)}{Q(\zb_I)}\;.
\eeq
\end{thm}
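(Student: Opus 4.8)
The plan is to concentrate all the work into a single ``orthogonality'' identity for the operators $\st_i^\pm$ of \Ref{stilde} relative to the Shapovalov pairing, and then to let the triangularity of the $\xi^\pm_I$ in the basis $\{v_I\}$, together with the $S_n$-action on $\Il$, finish the computation. I would write $\st_i^\pm=B_i^\pm\circ s_i$, where $s_i$ denotes the substitution $f(\zzz,h)\mapsto f(\zzii,h)$ and $B_i^\pm$ is multiplication by the matrix-valued rational function $\bigl((z_i-z_{i+1})\>P^{(\ii+1)}\mp h\bigr)/(z_i-z_{i+1}\mp h)$, and first prove that
\[
\Sc\bigl(\st_i^+f,\,\st_i^-g\bigr)\,=\,s_i\bigl(\Sc(f,g)\bigr)
\]
for all $V$-valued functions $f,g$ of $\zzz,h$ and all $i$, where on the right $s_i$ swaps $z_i$ and $z_{i+1}$ in the scalar $\Sc(f,g)\in\Czh$. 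Since the pairing \Ref{Shap pair} is $\Czh$-bilinear and $P^{(\ii+1)}$ permutes the $\Sc$-orthonormal basis $\{v_I\}$, the operator $P^{(\ii+1)}$ is $\Sc$-symmetric and squares to $\id$; hence $B_i^+$ is $\Sc$-self-adjoint and
\[
B_i^+B_i^-\,=\,\frac{(z_i-z_{i+1})^2\,(P^{(\ii+1)})^2-h^2}{(z_i-z_{i+1})^2-h^2}\,=\,\id .
\]
Then $\Sc(\st_i^+f,\st_i^-g)=\Sc(B_i^+s_if,\,B_i^-s_ig)=\Sc(s_if,\,B_i^+B_i^-s_ig)=\Sc(s_if,s_ig)=s_i(\Sc(f,g))$, the last equality because $s_i$ acts on the scalar coefficients as a ring automorphism of $\Czh$.

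From this and the recursions \Ref{xi+si}, \Ref{xi-si} I would deduce that
\[
\Sc\bigl(\xi^+_{\si(I)},\,\xi^-_{\si(J)}\bigr)\,=\,\si\bigl(\Sc(\xi^+_I,\xi^-_J)\bigr)
\]
for every $\si\in S_n$ and all $I,J\in\Il$: the set of such $\si$ contains the identity and is stable under left multiplication by each $s_i$ (apply \Ref{xi+si}, \Ref{xi-si} and then the identity of the previous paragraph), hence equals $S_n$. Next, by \Ref{xi+v}, \Ref{xi-v} and orthonormality of $\{v_K\}$,
\[
\Sc(\xi^+_I,\xi^-_J)\,=\,\sum_{J\le K\le I}X^+_{I,K}\,X^-_{J,K},
\]
which is an empty sum, and so vanishes, unless $J\le I$. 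Let $w_0\in S_n$ be the longest element, $w_0(k)=n+1-k$; a short combinatorial check shows that $w_0$ reverses the subset order and hence the order on $\Il$, so that $I\le J$ if and only if $w_0J\le w_0I$. Applying the covariance identity with $\si=w_0$ and then the vanishing just proved to the pair $(w_0I,w_0J)$ gives $\Sc(\xi^+_I,\xi^-_J)=0$ unless $w_0J\le w_0I$, i.e.\ unless $I\le J$. Combining the two bounds, $\Sc(\xi^+_I,\xi^-_J)=0$ whenever $I\ne J$.

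It remains to evaluate $\Sc(\xi^+_I,\xi^-_I)$. Since $S_n$ acts transitively on $\Il$, I would pick $\si\in S_n$ with $\si(\IMA)=I$; then the covariance identity gives $\Sc(\xi^+_I,\xi^-_I)=\si\bigl(\Sc(\xi^+_{\IMA},\xi^-_{\IMA})\bigr)$. By Proposition~\ref{xi-}, $\xi^-_{\IMA}=v_{\IMA}$, so $\Sc(\xi^+_{\IMA},\xi^-_{\IMA})$ equals the $v_{\IMA}$-coordinate of $\xi^+_{\IMA}$, namely $X^+_{\IMA,\IMA}=R(\zb_{\IMA})/Q(\zb_{\IMA})$ by \Ref{X+}. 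From \Ref{R}, \Ref{Q} and the $S_n$-action on $\Il$ one has $\si\bigl(R(\zb_{\IMA})\bigr)=R(\zb_{\si(\IMA)})=R(\zb_I)$ and $\si\bigl(Q(\zb_{\IMA})\bigr)=Q(\zb_I)$, whence $\Sc(\xi^+_I,\xi^-_I)=R(\zb_I)/Q(\zb_I)$. This establishes \Ref{S+-}.

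The only genuinely nontrivial point is the orthogonality identity of the first paragraph: one has to notice that, up to the swap $s_i$, the operators $\st_i^+$ and $\st_i^-$ are simultaneously mutually $\Sc$-adjoint and mutually inverse, so that the product $B_i^+B_i^-$ of their matrix parts telescopes to $\id$. Everything after that is formal bookkeeping — triangularity confines the support of the pairing matrix to the diagonal once it is combined with the order-reversing symmetry $w_0$, and transitivity of the $S_n$-action on $\Il$ reduces the single diagonal entry to the value recorded in \Ref{X+}.
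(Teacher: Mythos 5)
Your proof is correct and follows essentially the same route as the paper's: establish the identity at the extreme indices via the triangularity \Ref{xi+v}--\Ref{X-} and then propagate to all $I,J$ through the recursions \Ref{xi+si}, \Ref{xi-si}, the engine being exactly the relation $\Sc(\st_i^+f,\st_i^-g)=s_i\bigl(\Sc(f,g)\bigr)$ that you make explicit. The only cosmetic difference is in the off-diagonal vanishing, where the paper moves one index to $\IMI$ or $\IMA$ and uses one-sided triangularity, while you combine both triangularity bounds with the order-reversing longest element $w_0$; both are valid.
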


\begin{proof}
If \,$I=\IMI$ or \,$J=\IMA$, the claim follows from
formulae~\Ref{xi+v}\,--\,\Ref{X-}.
\,Then formula \Ref{S+-} extends to arbitrary \,$I,J\in\Il$
\,by properties~\Ref{xi+si}, \Ref{xi-si}.
\end{proof}

\subsection{Distinguished elements \,$v^+_\bla\<$, $v^=_\bla\<$,
and \,$v^-_\bla$}
\label{v+-=}
\,Let \;$v^+_\bla\!=\sum_{I\in\Il}\!v_I$\>.
\vvn.08>
Clearly, \,$v^+_\bla\!\<\in\V^+\cap\DL$. Moreover,
\;$v^+_\bla\!=\thi^+_\bla(1)$\>, \;$v^+_\bla\!=\thi^=_\bla(Q_\bla)$ \,and
\vvn.2>
\;$v^+_\bla\!=\thi^-_\bla(Q_\bla)$, \,where \;$Q_\bla$ \,is given by \Ref{Ql}.
\vsk.3>
Denote \;$v^=_\bla\!=\thi^=_\bla(1)\in\DLe$ \,and
\;$v^-_\bla\!=\thi^-_\bla(1)\in\DL$\>. By Proposition \ref{thixi} we have
\vvn.4>
\begin{gather}
\label{v+xi}
v^+_\bla=\>\sum_{I\in\Il}\,\frac{Q(\zb_I)}{R(\zb_I)}\,\,\xi^+_I\,
=\>\sum_{I\in\Il}\,\frac{Q(\zb_I)}{R(\zb_I)}\,\,\xi^-_I\,,
\\[4pt]
v^=_\bla=\>\sum_{I\in\Il}\,\frac 1{R(\zb_I)}\,\,\xi^+_I\,,\qquad
v^-_\bla=\>\sum_{I\in\Il}\,\frac 1{R(\zb_I)}\,\,\xi^-_I\,.
\\[-14pt]
\notag
\end{gather}

\begin{lem}
\label{v+gl}
The vector \,$v^+_\bla$ \<belongs to the irreducible \,$\gln$-submodule
of \;$V\!$ of highest weight \,$(n,0\lc 0)$\>, generated by the vector
\,$\vone$.
\qed
\end{lem}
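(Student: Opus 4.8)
The plan is to realize $v^+_\bla$ explicitly as the image of $\vone=v_1\lox v_1$ under a single element of $U(\gln)$. First I would note that $\vone$ has weight $(n,0\lc 0)$ and is annihilated by every $e_{\ij}$ with $i<j$: such a generator acts on $V=(\C^N)^{\ox n}$ as a derivation of the tensor product, replacing $v_1$ by $v_i$ in one factor at a time, and $e_{\ij}v_1=\dl_{j,1}v_i=0$ because $j\ge 2$. Hence $\vone$ is a highest weight vector and $U(\gln)\vone$ is precisely the irreducible submodule named in the statement; so it suffices to carry $\vone$ to a nonzero multiple of $v^+_\bla$ by an element of $U(\gln)$.

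Next I would work with the lowering operators $e_{j,1}$, $j=2\lc N$. Each acts on $V$ as a derivation which in every factor sends $v_1\mapsto v_j$ and $v_k\mapsto 0$ for $k\ne 1$; in particular, applied to a basis vector it acts only on those tensor slots currently holding $v_1$. The relations $[e_{\ij},e_{\kl}]=\dl_{\jk}e_{\il}-\dl_{\il}e_{\kj}$ give $[e_{j,1},e_{k,1}]=0$ for all $j,k\ge2$, so $E:=e_{N,1}^{\la_N}\cdots e_{2,1}^{\la_2}$ is a well-defined element of $U(\gln)$. I would then evaluate $E\,\vone$ by bookkeeping, applying the factors in the order $j=N,N-1,\dots,2$: the block $e_{j,1}^{\la_j}$ distributes $\la_j$ copies of $v_j$ over the slots still holding $v_1$, and it produces each choice of a $\la_j$-element set of such slots exactly $\la_j!$ times (one ordering of the applications per such set); since a slot that has received some $v_k$ is inert under all later operators, the sets of slots receiving $v_2,\dots,v_N$ are chosen independently, and the remaining $\la_1$ slots keep $v_1$. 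This yields
\[
E\,\vone\;=\;\Bigl(\prod_{j=2}^N\la_j!\Bigr)\sum_{I\in\Il}v_I\;=\;\Bigl(\prod_{j=2}^N\la_j!\Bigr)\,v^+_\bla\,,
\]
so $v^+_\bla\in U(\gln)\vone$, which is the claim.

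The only delicate point is the combinatorial count in the last step — that the overcounting factor for each $j$ is uniformly $\la_j!$ and that the slot choices for different $j$ never interact — but both follow from the single observation that $e_{j,1}$ annihilates $v_j$, so no slot is ever acted on twice. If one prefers to avoid the computation altogether, here is an alternative route: $U(\gln)\vone$ equals the subspace of $V$ consisting of tensors symmetric under permutation of the factors, because $\vone$ is such a tensor, the lowering operators $e_{\ij}$ ($i>j$) commute with the $S_n$-action permuting tensor factors, and the space of symmetric tensors is $\gln$-irreducible; on the other hand $v^+_\bla=\sum_{I\in\Il}v_I$ is visibly invariant under permuting factors, since each $P^{(\kl)}$ maps $\{v_I\mid I\in\Il\}$ bijectively onto itself. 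Thus $v^+_\bla$ is a symmetric tensor and therefore lies in $U(\gln)\vone$.
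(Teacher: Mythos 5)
Your proof is correct, and since the paper states this lemma without proof (it is marked as evident), your argument supplies exactly the intended justification: $v^+_\bla$ is a symmetric tensor, hence lies in the copy of $S^n\C^N$ generated by the highest weight vector $\vone$. Both of your routes — the explicit computation $e_{N,1}^{\la_N}\!\cdots e_{2,1}^{\la_2}\,\vone=\bigl(\prod_{j\ge2}\la_j!\bigr)\,v^+_\bla$ and the symmetric-tensor observation — are sound.
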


\begin{lem}[\cite{RTVZ}]
\label{v-gl}
For any \;$i\<\>,j$ \>such that \,$\la_i\ge\la_j$, we have
\;$e_{\ij}\>v^-_\bla\<=\<\>0$ \,and \;$e_{\ij}\>v^=_\bla\<=\<\>0$
with respect to the pointwise \,$\gln$-action on \;$V$.
\qed
\end{lem}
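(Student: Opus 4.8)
The plan is to reduce the vanishing of $e_{\ij}v^-_\bla$ (and of $e_{\ij}v^=_\bla$), via the nondegeneracy of the Shapovalov pairing, to a vanishing statement for a symmetrized rational function, and then to recognize the latter as the assertion — established in \cite{RTVZ} — that $\thi^-_\bla$ and $\thi^=_\bla$ send $1$ to an extremal weight vector.

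First I would record the needed generalities. The pointwise $\gln$-action on $\Vz$ commutes with the $S_n^\pm$-actions — each $e_{\ij}$ is diagonal, hence commutes with the permutations $P^{(k,k+1)}$, and it commutes with multiplication by scalar functions — and likewise with multiplication by $D^{\pm1}$, $\Dc^{\pm1}$; so it preserves $\V^+$, $\DVe$, $\DV$ and maps $\DL$ into $\DV_\mu$, where $\mu$ is obtained from $\bla$ by replacing $(\la_i,\la_j)$ with $(\la_i+1,\la_j-1)$ (we may assume $\la_j\ge1$, otherwise $e_{\ij}v^-_\bla=0$ trivially). Moreover this action is contravariant for the Shapovalov form: since $v_1\lc v_N$ is an orthonormal basis of $\C^N$ on which $e_{\ij}$ and $e_{\ji}$ are mutually adjoint, the diagonal action on $V$ satisfies $\Sc(e_{\ij}f,g)=\Sc(f,e_{\ji}g)$, and this descends to the pairings \Ref{s+-} and \Ref{s=-}. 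Granting this together with Lemma \ref{Sfg}, the identity $e_{\ij}v^-_\bla=0$ becomes equivalent to $\Sc(g,v^-_\bla)=0$ for every $g$ in the image of the pointwise operator $e_{\ji}\colon\V^+_\mu\to\V^+_\bla$, while $e_{\ij}v^=_\bla=0$ becomes equivalent to $\Sc(v^=_\bla,e_{\ji}g)=0$ for every $g\in\DV_\mu$.

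Next I would make this concrete through the maps $\thi^+_\bla$, $\thi^=_\bla$, $\thi^-_\bla$. Combining Proposition \ref{thixi} with Theorem \ref{thm S(+,-)} gives, for $\phi,\psi\in\Czhl$,
\[
\Sc\bigl(\thi^+_\bla(\phi),\thi^-_\bla(\psi)\bigr)\,=\,\sum_{I\in\Il}\frac{\phi(\zb_I,h)\,\psi(\zb_I,h)}{R(\zb_I)}\,,
\]
together with the analogous identity for $\thi^=_\bla$ in which the summand carries an extra factor $Q(\zb_I)^{-1}$. Writing $g=\thi^+_\bla(\phi_g)$ and $v^-_\bla=\thi^-_\bla(1)$, the first formula turns $\Sc(g,v^-_\bla)$ into $\sum_{I\in\Il}\phi_g(\zb_I,h)/R(\zb_I)$, so the task is reduced to showing that for every $\psi$ the function $\phi=(\thi^+_\bla)^{-1}\!\bigl(e_{\ji}\,\thi^+_\mu(\psi)\bigr)\in\Czhl$ satisfies $\sum_{I\in\Il}\phi(\zb_I,h)/R(\zb_I)=0$. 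To identify the transported operator $\bar e_{\ji}=(\thi^+_\bla)^{-1}\circ e_{\ji}\circ\thi^+_\mu$ I would use the presentation of $\thi^+_\bla$ and $\thi^+_\mu$ in the $\xi^+$-basis (formula \Ref{thi+xi}), the recursion $\xi^+_{s_l(K)}=\st^+_l\xi^+_K$ of Proposition \ref{xi+}, and the fact that the diagonal $e_{\ji}$ commutes with every $\st^+_l$; this reduces the action of $e_{\ji}$ on all $\xi^+_K$ to its (immediate) value on $\xi^+_\IMI=v_\IMI$ and exhibits $\bar e_{\ji}$ as a finite sum of index-block-moving substitution operators with explicit rational coefficients. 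The same scheme with $\thi^=_\bla$, $\thi^-_\bla$ in place of $\thi^+_\bla$ handles $v^=_\bla$.

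I expect the concluding combinatorial identity to be the main obstacle: one must show that $\sum_{I\in\Il}(\bar e_{\ji}\psi)(\zb_I,h)/R(\zb_I)$ — and the companion sum carrying an extra $Q(\zb_I)^{-1}$ — vanishes identically exactly when $\la_i\ge\la_j$, the apparent poles along $z_a=z_b$ cancelling and the remaining polynomial being zero by a degree count, whereas for $\la_i<\la_j$ it is a nonzero polynomial. This is a ``wheel-condition''-type vanishing statement for symmetrizations of rational functions, and it is precisely what \cite{RTVZ} supplies: there the weight functions of Section \ref{sec Vpm} are related to stable envelope maps and the $\gln$-action used here to the geometric action of \cite{MO}, under which $v^-_\bla=\thi^-_\bla(1)$ and $v^=_\bla=\thi^=_\bla(1)$ become the fundamental class of $\Fla$, and the support and triangularity properties of stable envelopes force the fundamental class to be an extremal weight vector, hence annihilated by $e_{\ij}$ whenever $\la_i\ge\la_j$. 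Accordingly I would either cite \cite{RTVZ} directly or, for a self-contained argument, establish the displayed symmetrization identity by a residue computation organized along the block structure of $\IMI$.
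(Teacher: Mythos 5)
The paper gives no argument for this lemma at all: it is quoted from \cite{RTVZ} and marked \qed, so the ``paper's proof'' is a citation. Your preparatory reductions are essentially correct: the pointwise $\gln$-action does commute with the $S_n^\pm$-actions and with multiplication by $D^{\pm1},\Dc^{\pm1}$ (though your reason, ``each $e_{\ij}$ is diagonal'', is wrong — for $i\ne j$ the operator $\sum_a e^{(a)}_{\ij}$ is not diagonal in the basis $v_I$; what is true is that it commutes with the permutations of tensor factors), the adjointness $\Sc(e_{\ij}f,g)=\Sc(f,e_{\ji}\>g)$ holds, Lemma \ref{Sfg} lets you dualize, and the pairing formula $\Sc\bigl(\thi^+_\bla(\phi),\thi^-_\bla(\psi)\bigr)=\sum_{I\in\Il}\phi(\zb_I,h)\,\psi(\zb_I,h)/R(\zb_I)$ does follow from Proposition \ref{thixi} and Theorem \ref{thm S(+,-)}. (You could also have halved the work by noting that $\Pit$ of \Ref{Pit} commutes with the pointwise $\gln$-action, so by Lemma \ref{=-} the statements for $v^-_\bla$ and $v^=_\bla$ are equivalent.)

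The genuine gap is that the argument stops exactly where the content of the lemma lies. After transporting $e_{\ji}$ through $\thi^+_\mu$ you must prove that the symmetrized sums $\sum_{I\in\Il}(\bar e_{\ji}\psi)(\zb_I,h)/R(\zb_I)$ (and the companion with $Q(\zb_I)^{-1}$) vanish identically for $\la_i\ge\la_j$; you acknowledge this is ``the main obstacle'' and then either defer to \cite{RTVZ} or to a residue computation that is never carried out. Citing \cite{RTVZ} for this transported identity is not literally available — that reference proves the extremal property $e_{\ij}\>v^-_\bla=0$ itself, not your reformulation — and if you instead cite it for the lemma as stated, the entire reduction is superfluous and you have simply reproduced the paper's citation. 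Moreover your description of what \cite{RTVZ} supplies is off: the stable-envelope identification belongs to \cite{MO} and \cite{RTV}, cited elsewhere in this paper, while \cite{RTVZ} establishes the vanishing by direct algebraic means. So as a standalone proof the key step is missing; as a proof by citation it adds nothing beyond what the paper already does.
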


Notice that the functions \,$v^-_\bla$ and \,$v^=_\bla$ under certain
conditions become quantized conformal blocks and satisfy \qKZ/ equations
with respect to $\zzz$, see \cite{RTVZ}, cf.~\cite{V,RV,RSV}.

\vsk.2>
\section{Bethe subalgebras}
\label{Bethe}

\subsection{Yangian $\Yn$}
\label{sec yangian}

The Yangian $\Yn$ is the unital associative algebra with generators
\,$T_{\ij}\+s$ \,for \,$i,j=1\lc N$, \;$s\in\Z_{>0}$, \,subject to relations
\vvn.3>
\beq
\label{ijkl}
(u-v)\>\bigl[\<\>T_{\ij}(u)\>,T_{\kl}(v)\<\>\bigr]\>=\,
T_{\kj}(v)\>T_{\il}(u)-T_{\kj}(u)\>T_{\il}(v)\,,\qquad i,j,k,l=1\lc N\,,
\vv.3>
\eeq
where
\vvn-.7>
\be
T_{\ij}(u)=\dl_{\ij}+\sum_{s=1}^\infty\,T_{\ij}\+s\>u^{-s}\>.
\vv.2>
\ee
Let \,$T(u)=\sum_{\ij=1}^N E_{\ij}\ox T_{\ij}(u)$\>,
where $E_{\ij}\!\in\End(\C^N)$ is the image of \,$e_{\ij}\in\gln$.
Relations \Ref{ijkl} can be written as the equality of series with
coefficients in \,$\End(\C^N\!\ox\C^N)\ox\Yn$\,:
\vvn.3>
\beq
\label{RTT}
(u-v+P^{(1,2)})\,T^{(1)}(u)\>T^{(2)}(v)\,=\,
T^{(2)}(v)\>T^{(1)}(u)\,(u-v+P^{(1,2)})\,,
\vv.3>
\eeq
where \,$P^{(1,2)}$ is the permutation of the \,$\C^N\!$ factors,
\;$T^{(1)}(u)=\sum_{\ij=1}^N E_{\ij}\ox 1\ox T_{\ij}(u)$
\;and \;$T^{(2)}(u)=1\ox T(u)$\,.

\vsk.2>
The Yangian $\Yn$ is a Hopf algebra with the coproduct
\;$\Dl:\Yn\to\Yn\ox\Yn$ \,given by
\vvn.16>
\;$\Dl\bigl(T_{\ij}(u)\bigr)=\sum_{k=1}^N\,T_{\kj}(u)\ox T_{\ik}(u)$ \,for
\,$i,j=1\lc N$\>. The Yangian $\Yn$ contains \>$\Ugln$ \,as a Hopf subalgebra,
the embedding given by \,$e_{\ij}\mapsto T_{\ji}\+1$.

\vsk.2>
Notice that \,$\bigl[\<\>T_{\ij}\+1,T_{\kl}\+s\<\>\bigr]\>=\,
\dl_{\il}\>T_{\kj}\+s-\dl_{\jk}\>T_{\il}\+s$ \,for \,$i,j,k,l=1\lc N$,
\;$s\in\Z_{>0}$\,, \,which implies that the Yangian \>$\Yn$ is generated by
the elements \,$T_{\ii+1}\+1\>,\>T_{\ioi}\+1$, \;$i=1\lc N-1$, \,and
\,$T_{1,1}\+s$, \;$s>0$.

\vsk.2>
The assignment
\vvn-.2>
\beq
\label{varpi}
\varpi:T_{\ij}\+s\,\mapsto T_{\ji}\+s\,,\qquad i,j=1\lc N\,,\quad s>0\,,
\vv.1>
\eeq
defines a Hopf anti-automorphism of the Yangian \,$\Yn$.

\vsk.2>
More information on the Yangian $\Yn$ can be found in~\cite{M}. Notice that
the series \,$T_{\ij}(u)$ \>here corresponds to the series \,$T_{j,i}(u)$
\>in~\cite{M}.

\subsection{Bethe algebra}
\label{sec Bethe alg}
For \;$p=1\lc N$, \;$\ib=\{1\leq i_1<\dots<i_p\leq N\}$,
\;$\jb=\{1\leq j_1<\dots<j_p\leq N\}$, define
\vvn-.3>
\be
M_{\ijb}(u) =\sum_{\si\in S_p}(-1)^\si\,
T_{i_1,j_{\si(1)}}(u)\dots T_{i_p,j_{\si(p)}}(u-p+1)\,.
\ee
For \,$\ib=\{1\lc N\}$, \,the series \,$M_{\iib}(u)$ is called the quantum
determinant and denoted by $\qdet T(u)$. Its coefficients generate the center
of the Yangian \,$\Yn$.

\vsk.2>
We have
\vvn-.2>
\beq
\label{DlM}
\Dl\bigl(M_{\ijb}(u)\bigr)\,=\!
\sum_{\kb\>=\<\>\{1\le\>k_1<\<\dots<\>k_p\le N\}}\!\!
M_{\kjb}(u)\ox M_{\ikb}(u)\,,
\vv.2>
\eeq
see, for example, \cite[Proposition~1.11]{NT} or \cite[Lemma 4.3]{MTV1}.
Notice that the Yangian coproduct used here is opposite to that used in
\cite{NT}. We also have
\vvn.3>
\beq
\label{varpiM}
\varpi\bigl(M_{\ijb}(u)\bigr)\,=\,M_{\jib}(u)\,,
\vv.2>
\eeq
see, for example, \cite[Lemma 1.5]{NT}.

\goodbreak
\vsk.2>
For \,$\kk=(\kkk)\in(\C^*)^N$ and \,$p=1\lc N$, \,define
\beq
\label{Bp}
\Bk_p(u)\,=\!\sum_{\ib\>=\<\>\{1\leq i_1<\<\dots<\>i_p\leq N\}}\!\!
\kk_{i_1}\!\dots\kk_{i_p}\>M_{\iib}(u)\,=\,\si_p(\kkk)+
\sum_{s=1}^\infty\>\Bk_{p,s}\>u^{-s}\>,\kern-1em
\vv.2>
\eeq
where \,$\si_p$ is the \,$p$-th elementary symmetric function and
\,$\Bk_{p,s}\in\Yn$. Let \,$\Bck\!\subset\Yn$ \,be the unital subalgebra
generated by the elements \,$\Bk_{p,s}$\>, \,$p=1\lc N$, \,$s\ge0$.
It is easy to see that the subalgebra \,$\Bck\<$ does not change
if all $\kkk$ are multiplied simultaneously by the same number.
The algebra \,$\Bck\<$ is called a {\it Bethe subalgebra\/} of $\Yn$.

\begin{thm}[\cite{KS}]
\label{BY}
The subalgebra \,$\Bck\<$ is commutative and commutes with the subalgebra
$U(\h)\subset\Yn$.
\qed
\end{thm}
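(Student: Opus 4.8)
The plan is to realize each generating series $\Bk_p(u)$ as a twisted transfer matrix over the $p$-th exterior power of $\C^N$, and then to deduce commutativity from the standard "commuting transfer matrices" argument; the crucial point is that the diagonal twist $K=\diag(\kkk)$ commutes with the $\gln$-invariant $R$-matrix $u-v+P^{(1,2)}$ of \Ref{RTT}. Commutation with $U(\h)$ is separate and elementary.

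\emph{Step 1 (fusion realization of $\Bk_p(u)$).} Let $A_p\in\End\bigl((\C^N)^{\otimes p}\bigr)$ be the antisymmetrization idempotent and set $\mc T_p(u)=T^{(1)}(u)\>T^{(2)}(u-1)\cdots T^{(p)}(u-p+1)$, a formal series in $\End\bigl((\C^N)^{\otimes p}\bigr)\otimes\Yn$. A standard consequence of \Ref{RTT} (the fusion procedure) is that $A_p\>\mc T_p(u)=A_p\>\mc T_p(u)\>A_p=\mc T_p(u)\>A_p$; hence $\mc T_p(u)$ preserves $\Lambda^p\C^N\otimes\Yn$, and its diagonal matrix entry on $e_{i_1}\wedge\dots\wedge e_{i_p}$ with $i_1<\dots<i_p$ is the quantum minor $M_{\iib}(u)$. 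Since $K^{\otimes p}$ acts on $\Lambda^p\C^N$ diagonally with eigenvalue $\kk_{i_1}\!\cdots\kk_{i_p}$ and commutes with $A_p$, taking the trace gives
\[
\Bk_p(u)\,=\,\tr_{(\C^N)^{\otimes p}}\!\bigl(K^{\otimes p}\,A_p\,\mc T_p(u)\bigr)
\]
(for the standard normalization of $A_p$; any other normalization changes this only by a nonzero scalar, which is harmless).

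\emph{Step 2 (commutativity).} To prove $[\Bk_p(u),\Bk_q(v)]=0$, write the product as a single trace over $(\C^N)^{\otimes(p+q)}$, with $\mc T_p(u)$ acting in the first $p$ factors and $\mc T_q(v)$ in the last $q$ (and both in $\Yn$):
\[
\Bk_p(u)\,\Bk_q(v)\,=\,\tr\bigl(K^{\otimes(p+q)}\,(A_p\otimes A_q)\;\mc T_p(u)\,\mc T_q(v)\bigr)\,.
\]
Using \Ref{RTT} repeatedly to move every factor of $\mc T_q(v)$ to the left of every factor of $\mc T_p(u)$ produces an identity $\mc T_p(u)\,\mc T_q(v)=\mc R(u-v)^{-1}\,\mc T_q(v)\,\mc T_p(u)\,\mc R(u-v)$, where $\mc R(u-v)$ is an ordered product of factors of the form $u-v+c+P^{(a,b)}$ ($a\le p<b$, $c\in\Z$) acting only in the tensor factors $a,b$. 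Each such factor is $\gln$-invariant, hence commutes with $K^{\otimes(p+q)}$; and by the fusion procedure $\mc R(u-v)$, placed between the antisymmetric blocks $A_p$ and $A_q$ at these consecutive spectral parameters, reduces up to an invertible scalar to a single $\Yn$-intertwiner $\check R_{\Lambda^p,\Lambda^q}(u-v)$ for the fused representations, invertible for generic $u-v$. Substituting this, moving $\mc R(u-v)^{\pm1}$ past $K^{\otimes(p+q)}$ and around the antisymmetrizers, and using cyclicity of the trace, the two copies of $\mc R(u-v)^{\pm1}$ cancel and we are left with $\tr\bigl(K^{\otimes(p+q)}(A_q\otimes A_p)\>\mc T_q(v)\,\mc T_p(u)\bigr)=\Bk_q(v)\,\Bk_p(u)$. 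Comparing coefficients of $u^{-s}v^{-t}$ yields $[\Bk_{p,s},\Bk_{q,t}]=0$ for all $p,q,s,t$, so $\Bck$ is commutative. The one nontrivial point is exactly this fusion step: tracking $\mc R(u-v)$ and verifying that between $A_p$ and $A_q$ at the fusion points it acts as a single invertible intertwiner of $\Lambda^p\C^N$ and $\Lambda^q\C^N$. This is the Cherednik--Kulish--Reshetikhin--Sklyanin fusion procedure, which one may simply invoke; what makes a nontrivial diagonal twist harmless is precisely the $\gln$-invariance of $u-v+P^{(1,2)}$, which forces $[\mc R(u-v),K^{\otimes(p+q)}]=0$.

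\emph{Step 3 (commutation with $U(\h)$).} The subalgebra $U(\h)\subset\Yn$ is generated by $e_{\ii}=T_{\ii}\+1$, $i=1\lc N$. Specializing \Ref{ijkl} gives $[T_{aa}\+1,T_{kl}(u)]=(\dl_{al}-\dl_{ak})\>T_{kl}(u)$, so $T_{aa}\+1$ acts on the series $T_{kl}(u)$ with $\h$-weight $\dl_{al}-\dl_{ak}$. In each summand of $M_{\iib}(u)=\sum_{\si\in S_p}(-1)^\si T_{i_1,i_{\si(1)}}(u)\cdots T_{i_p,i_{\si(p)}}(u-p+1)$ the total $\h$-weight is $\sum_{r=1}^p(\dl_{a,i_{\si(r)}}-\dl_{a,i_r})=0$, since $\si$ merely permutes $i_1,\dots,i_p$; hence $[T_{aa}\+1,M_{\iib}(u)]=0$ by the Leibniz rule, and therefore $[T_{aa}\+1,\Bk_{p,s}]=0$ for all $a,p,s$. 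Thus $\Bck$ commutes with $U(\h)$.
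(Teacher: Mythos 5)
The paper does not actually prove this statement: it is quoted with a \,\verb"\qed" and a citation to [KS], so there is no internal argument to compare against, and your proposal supplies the standard proof that the citation delegates. Your reconstruction is correct: realizing $\Bk_p(u)$ as the twisted trace $\tr\bigl(K^{\otimes p}A_p\,\mc T_p(u)\bigr)$ with $K=\diag(\kkk)$, using that every $R$-matrix factor $u-v+c+P^{(a,b)}$ commutes with $K^{\otimes(p+q)}$, and finishing by cyclicity of the trace is exactly the Kulish--Sklyanin-type argument; and the $U(\h)$-part via the weight count $[T_{a,a}\+1,T_{\kl}(u)]=(\dl_{a,l}-\dl_{a,k})\,T_{\kl}(u)$, together with the vanishing of the total weight of each summand of $M_{\iib}(u)$, is correct and elementary (it matches the paper's embedding $e_{\ij}\mapsto T_{\ji}\+1$). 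The only schematic spot is the one you flag yourself: the compression of the ordered product $\mc R(u-v)$ and of the two antisymmetrizers into a single invertible intertwiner of $\Lambda^p\C^N\!\ox\Lambda^q\C^N$, and the attendant movement of projectors inside the trace. The bookkeeping becomes a clean three-line computation if you first pass to the genuinely fused monodromy $T_{\Lambda^p}(u)$, the restriction of $A_p\,\mc T_p(u)\,A_p$ to the image of $A_p$, which by the fusion procedure satisfies its own RTT relation with the fused $R$-matrix; then $\Bk_p(u)=\tr_{\Lambda^p\C^N}\bigl(K^{\wedge p}\,T_{\Lambda^p}(u)\bigr)$, the conjugating fused $R$-matrix commutes with $K^{\wedge p}\ox K^{\wedge q}$ and is invertible for generic $u-v$, and comparing coefficients gives $[\Bk_{p,s},\Bk_{q,t}]=0$ for all indices. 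Since that fusion input is classical and you invoke it explicitly, your proof is sound; it is in substance the proof the paper outsources to [KS].
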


The series \,$\Bk_p(u)$ depends polynomially on \,$\kkk$. Let \,$\kk$
\,tend to infinity so that $\kk_{i+1}/\kk_i\to 0$ \,for all \,$i=1\lc N-1$,
\,and \,$\kk_N=1$. In this limit,
\vvn.3>
\beq
\label{Blim}
\Bk_p(u)\,=\,\kk_1\dots \kk_p\>\bigl(M_{\iib}(u)+o(1)\bigr)\,,
\qquad\ib=\{1\lc p\>\}\,.
\vv.3>
\eeq
Introduce the elements $\Bin_{p,s}\in\Yn$\>, \,$p=1\lc N$, \,$s\in\Z_{>0}$,
by the formula
\beq
\label{Bp8}
M_{\iib}(u)\,=\,1+\sum_{s=1}^\infty\, \Bin_{p,s}\,u^{-s}\,,
\qquad\ib=\{1\lc p\>\}\,.
\eeq
Let \,$\Bci\subset\Yn$ \,be the unital subalgebra generated by the elements
\,$B_{p,s}^\infty$\>, \,$p=1\lc N$, \,$s>0$. The algebra \,$\Bci\<$ is called
a {\it\GZ/ subalgebra\/} of $\Yn$. It has been studied in \cite{NT}.

\vsk.2>
The subalgebra \,$\Bci\<$ is commutative by Theorem \ref{BY}.
\vvn.06>
Since \,$\Bin_{p,1}=T_{1,1}\+1\<\lsym+T_{\pp}\+1$ \>for \,$p=1\lc N$,
the subalgebra \,$\Bci\<$ contains the subalgebra $U(\h)\subset\Yn$.

\vsk.3>
Assume that \>$\kkk$ are distinct numbers. Define the elements
\,$\Sk_{i,s}\in\Bck$ \>for \,$i=1\lc N$, \,$s\in\Z_{>0}$\>, \,by the rule
\vvn-.2>
\beq
\label{S}
\Sk_{i,s}\,=\,\sum_{p=1}^N\,(-1)^{p-1}\,\Bk_{p,s}\,\kk_i^{N-p-1}
\prod_{\satop{j=1}{j\ne i}}^N\,\frac1{\kk_i-\kk_j}\;,
\vv->
\eeq
so that
\be
\prod_{i=1}^N\,\frac1{x-\kk_i}\;\sum_{p=1}^N\,\sum_{s=1}^\infty\,
(-1)^p\>\Bk_p(u)\,x^{N-p}\>=\,1\>-\sum_{i=1}^N\,\sum_{s=1}^\infty\,
\frac{\kk_i}{x-\kk_i}\;\Sk_{i,s}\,u^{-s}\>.\kern-1em
\vv.4>
\ee
In particular,
\vvn-.3>
\beq
\label{S12}
\Sk_{i,1}\>=\>T_{\ii}\+1\,,\kern3em
\Sk_{i,2}\>=\>T_{\ii}\+2-\>
\sum_{\satop{j=1}{j\ne i}}^N\,\frac{\kk_j}{\kk_i-\kk_j}\,\bigl(\<\>
T_{\ii}\+1\>T_{\jj}\+1\<-T_{\ij}\+1\>T_{\ji}\+1\<+T_{\jj}\+1\bigr)\,.\kern-1em
\eeq

\begin{lem}
For distinct numbers \,$\kkk$, the subalgebra \,$\Bck\!$ contains
the subalgebra $U(\h)\subset\Yn$.
\qed
\end{lem}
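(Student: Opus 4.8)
The plan is to read the claim straight off the definitions already assembled, the only point needing attention being where the hypothesis that the $\kkk$ are pairwise distinct actually enters. Recall from Section~\ref{sec yangian} that $\Ugln$ is a subalgebra of $\Yn$ via $e_{\ij}\mapsto T_{\ji}\+1$; in particular the Cartan subalgebra $\h$, generated by the pairwise commuting elements $e_{\ii}$, $i=1\lc N$, is carried onto the subalgebra of $\Yn$ generated by $T_{\ii}\+1$, $i=1\lc N$. So it is enough to check that $T_{\ii}\+1\in\Bck$ for every $i$.

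To produce these elements I would use the generating-function identity \Ref{S} that defines the $\Sk_{i,s}$. The partial-fraction coefficients occurring there involve the factors $1/(\kk_i-\kk_j)$ for $j\ne i$, which make sense precisely because the $\kkk$ are distinct; granting this, each $\Sk_{i,s}$ is a finite $\C$-linear combination of the generators $\Bk_{p,s}$ of $\Bck$, hence lies in the unital subalgebra $\Bck$. Reading off the coefficient of lowest order in $u$ gives $\Sk_{i,1}=T_{\ii}\+1$, as recorded in \Ref{S12}. Thus $T_{\ii}\+1\in\Bck$ for all $i=1\lc N$, and since $\Bck$ is a unital subalgebra it contains the subalgebra generated by these elements, namely $U(\h)$.

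There is essentially no obstacle: the statement is a bookkeeping corollary of \Ref{S}, \Ref{S12} and the embedding $\Ugln\subset\Yn$. The only place the hypothesis is genuinely needed is the legitimacy of the coefficients in \Ref{S}; without distinctness of the $\kkk$ the $\Sk_{i,s}$ need not be defined, which is why the parallel statement for the \GZ/ subalgebra $\Bci$ (the limit $\kk_{i+1}/\kk_i\to0$) had to be argued separately, there via $\Bin_{p,1}=T_{1,1}\+1\lsym+T_{\pp}\+1$.
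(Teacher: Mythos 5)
Your proposal is correct and is precisely the argument the paper intends: the lemma is stated without proof immediately after \Ref{S} and \Ref{S12}, and the point is exactly that $\Sk_{i,1}=T_{\ii}\+1$ exhibits each $T_{\ii}\+1$ as a $\C$-linear combination of the generators $\Bk_{p,1}$ of $\Bck$, with distinctness of the $\kkk$ needed only to make the coefficients in \Ref{S} well defined. Your closing remark correctly locates where the hypothesis enters and why the $\Bci$ case is handled separately via $\Bin_{p,1}=T_{1,1}\+1\lsym+T_{\pp}\+1$.
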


\begin{lem}
\label{S8}
Let \;$\kk_{i+1}/\kk_i\to 0$ \,for all \,$i=1\lc N-1$.
\vvn.06>
Then \,$\Sk_{i,s}\to\Bin_{i,\<\>s}-\Bin_{i-1,\<\>s}$ \,for \,$i=1\lc N$,
\,where \,$\Bin_{0,\<\>s}\<=0$.
\end{lem}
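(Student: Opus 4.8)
The plan is to substitute the explicit formula \Ref{S} for $\Sk_{i,s}$ and extract the leading contributions as $\kk\to\infty$ with $\kk_{i+1}/\kk_i\to0$. Since each $\Sk_{i,s}$ is homogeneous of degree $0$ in $\kkk$ (immediate from \Ref{Bp} and \Ref{S}), while $\Bin_{p,s}$ does not depend on $\kk$, we may normalize $\kk_N=1$; then \Ref{Blim}, after taking the coefficient of $u^{-s}$, reads
\be
\Bk_{p,s}\,=\,\kk_1\cdots\kk_p\,\bigl(\Bin_{p,s}+o(1)\bigr)\,,\qquad p=1\lc N\,,
\ee
with $o(1)$ an element of $\Yn$ tending to $0$. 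Plugging this into \Ref{S} gives
\be
\Sk_{i,s}\,=\,\sum_{p=1}^N\,a_{i,p}(\kk)\,\bigl(\Bin_{p,s}+o(1)\bigr)\,,\qquad
a_{i,p}(\kk)\,=\,(-1)^{p-1}\,\kk_1\cdots\kk_p\;\kk_i^{\,N-p-1}
\prod_{\satop{j=1}{j\ne i}}^N\frac1{\kk_i-\kk_j}\,.
\ee
All of $\Sk_{i,s}$, $\Bk_{p,s}$, $\Bin_{p,s}$ lie in the fixed finite-dimensional subspace of $\Yn$ spanned by the coefficients of the series $M_{\iib}(u)$, so ``$\to$'' just means convergence of scalar coefficients, and it is enough to prove that $a_{i,p}(\kk)\to1$ for $p=i$, that $a_{i,p}(\kk)\to-1$ for $p=i-1$, and that $a_{i,p}(\kk)\to0$ otherwise (the index $p=i-1$ not occurring when $i=1$). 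Granting this, $\Sk_{i,s}\to\Bin_{i,s}-\Bin_{i-1,s}$ with the convention $\Bin_{0,s}=0$.

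For the scalar asymptotics I would split $\prod_{j\ne i}(\kk_i-\kk_j)$ into the factors with $j<i$ and with $j>i$, writing $\kk_i-\kk_j=-\kk_j\,(1-\kk_i/\kk_j)$ for $j<i$ and $\kk_i-\kk_j=\kk_i\,(1-\kk_j/\kk_i)$ for $j>i$, which gives
\be
\prod_{\satop{j=1}{j\ne i}}^N(\kk_i-\kk_j)\,=\,(-1)^{i-1}\,\kk_1\cdots\kk_{i-1}\;\kk_i^{\,N-i}\;\gm_i(\kk)\,,\qquad
\gm_i(\kk)\,=\,\prod_{j<i}\Bigl(1-\frac{\kk_i}{\kk_j}\Bigr)\prod_{j>i}\Bigl(1-\frac{\kk_j}{\kk_i}\Bigr)\,.
\ee
Each ratio $\kk_i/\kk_j$ with $j<i$ and each $\kk_j/\kk_i$ with $j>i$ is a finite product of the basic ratios $\kk_{l+1}/\kk_l$, hence tends to $0$, so $\gm_i(\kk)\to1$. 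Substituting, $a_{i,p}(\kk)=(-1)^{p+i}\,\gm_i(\kk)^{-1}\,r_{i,p}(\kk)$ with $r_{i,p}(\kk)=\kk_1\cdots\kk_p\,\kk_i^{\,i-p-1}/(\kk_1\cdots\kk_{i-1})$; cancelling common factors shows $r_{i,i}(\kk)=r_{i,i-1}(\kk)=1$, while $r_{i,p}(\kk)=\prod_{j=p+1}^{i-1}(\kk_i/\kk_j)\to0$ for $p\le i-2$ and $r_{i,p}(\kk)=\prod_{j=i+1}^{p}(\kk_j/\kk_i)\to0$ for $p\ge i+1$. Since $(-1)^{p+i}$ equals $1$ when $p=i$ and $-1$ when $p=i-1$, this is exactly the list of limits needed above.

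The computation is elementary, so the only real obstacle is the bookkeeping at the end: one must track how the power $\kk_i^{\,N-p-1}$ in the numerator of $a_{i,p}$ cancels against the power $\kk_i^{\,N-i}$ coming out of $\prod_{j\ne i}(\kk_i-\kk_j)$, so that each off-diagonal $r_{i,p}(\kk)$ emerges as a product of exactly the right number of basic ratios $\kk_{l+1}/\kk_l$ and therefore tends to $0$. I would package the purely scalar statement ``$a_{i,p}(\kk)\to\dl_{p,i}-\dl_{p,i-1}$'' as a short preliminary computation, after which the proof is the one-line substitution of the first paragraph combined with \Ref{Blim}.
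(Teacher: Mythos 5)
Your proof is correct and follows essentially the same route as the paper: the paper's own proof also rewrites formula \Ref{S} by factoring each $\kk_i-\kk_j$ as $-\kk_j(1-\kk_i/\kk_j)$ or $\kk_i(1-\kk_j/\kk_i)$, arriving at exactly your coefficients $(-1)^{p-i}\,\kk_i^{\,i-p-1}/(\kk_1\cdots\kk_{i-1})$ times a factor tending to $1$, and then invokes \Ref{Blim} and \Ref{Bp8}. Your version merely spells out the bookkeeping (homogeneity of degree $0$ in $\kk$, the meaning of convergence, and the off-diagonal ratios $r_{i,p}\to0$) that the paper leaves implicit.
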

\begin{proof}
Write formula \Ref{S} as
\vvn-.1>
\be
\Sk_{i,s}\,=\,\sum_{p=1}^N\,(-1)^{p-i}\,h^{s-1}\>\Bk_{p,s}\;
\frac{\kk_i^{\<\>i-p-1}}{\kk_1\dots\kk_{i-1}}\;
\prod_{j=1}^{i-1}\,\frac1{1-\kk_i/\kk_j}\;
\prod_{j=i+1}^N\,\frac1{1-\kk_j/\kk_i}\;.
\vv.2>
\ee
Now the claim follows from formulae \Ref{Blim}, \Ref{Bp8}.
\end{proof}

\begin{lem}
\label{varpiB}
For any \,$X\!\in\Bck$, we have \,$\varpi(X)=X$, and
for any \,$X\!\in\Bci$, we have \,$\varpi(X)=X$.
\end{lem}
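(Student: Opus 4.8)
Lemma~\ref{varpiB} asserts that the Hopf anti-automorphism $\varpi$ of $\Yn$ (sending $T_{\ij}\+s\mapsto T_{\ji}\+s$) fixes every element of the Bethe subalgebra $\Bck$ and every element of the \GZ/ subalgebra $\Bci$.

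\medskip

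The plan is to reduce the claim to the behaviour of $\varpi$ on the generating series $\Bk_p(u)$ and $M_{\iib}(u)$, and then invoke the already-recorded formula \Ref{varpiM}. First I would observe that $\Bck$ is, by definition, generated as a unital algebra by the coefficients $\Bk_{p,s}$ of the series $\Bk_p(u)$, and that $\varpi$ is an algebra anti-homomorphism; hence it suffices to check $\varpi(\Bk_{p,s})=\Bk_{p,s}$ for all $p=1\lc N$ and $s\ge 0$ — equivalently, $\varpi\bigl(\Bk_p(u)\bigr)=\Bk_p(u)$ as a series in $u^{-1}$ with coefficients in $\Yn$. From the definition \Ref{Bp},
\beq
\notag
\Bk_p(u)\,=\!\sum_{\ib\>=\<\>\{1\leq i_1<\<\dots<\>i_p\leq N\}}\!\!
\kk_{i_1}\!\dots\kk_{i_p}\>M_{\iib}(u)\,,
\eeq
so applying $\varpi$ term by term and using \Ref{varpiM}, which gives $\varpi\bigl(M_{\iib}(u)\bigr)=M_{\iib}(u)$ because the index set $\ib$ is repeated, we get $\varpi\bigl(\Bk_p(u)\bigr)=\Bk_p(u)$. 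This proves $\varpi(X)=X$ for all $X\in\Bck$, since $\varpi$ fixes the unit and is multiplicative up to reversal of order, and $\Bck$ is commutative by Theorem \ref{BY} so the order reversal is immaterial.

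\medskip

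For the \GZ/ subalgebra $\Bci$ the argument is the same in spirit: $\Bci$ is generated by the coefficients $\Bin_{p,s}$ of the series $M_{\iib}(u)$ with $\ib=\{1\lc p\>\}$, as in \Ref{Bp8}, and \Ref{varpiM} again gives $\varpi\bigl(M_{\iib}(u)\bigr)=M_{\jib}(u)$ with $\jb=\ib$, i.e.\ $\varpi$ fixes this series. Alternatively, and perhaps more cleanly, one can derive the $\Bci$ case as a limit of the $\Bck$ case: by Lemma~\ref{S8} the generators $\Sk_{i,s}$ of $\Bck$ tend to $\Bin_{i,\<\>s}-\Bin_{i-1,\<\>s}$ as $\kk_{i+1}/\kk_i\to 0$, and since $\varpi$ is a fixed continuous (indeed polynomial-in-$u^{-1}$, coefficient-wise) map independent of $\kk$, passing to the limit in $\varpi(\Sk_{i,s})=\Sk_{i,s}$ yields $\varpi\bigl(\Bin_{i,\<\>s}-\Bin_{i-1,\<\>s}\bigr)=\Bin_{i,\<\>s}-\Bin_{i-1,\<\>s}$, and then induction on $i$ (with $\Bin_{0,\<\>s}=0$) gives $\varpi(\Bin_{i,\<\>s})=\Bin_{i,\<\>s}$ for every $i$ and $s$. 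Either route concludes that $\varpi$ fixes all generators of $\Bci$, hence all of $\Bci$.

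\medskip

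I do not expect a genuine obstacle here: the entire content is packaged in the identity \Ref{varpiM}, which is quoted from \cite{NT}. The only point requiring a little care is the bookkeeping about $\varpi$ being an \emph{anti}-automorphism rather than an automorphism — but because both $\Bck$ and $\Bci$ are commutative subalgebras (Theorem~\ref{BY}), the distinction collapses on the subalgebras in question, so once the generating series are shown to be $\varpi$-invariant the conclusion for arbitrary elements is immediate.
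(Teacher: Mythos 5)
Your proof is correct and follows essentially the same route as the paper: both reduce the claim via \Ref{varpiM} to the $\varpi$-invariance of the generating series $\Bk_p(u)$ and $M_{\iib}(u)$ with $\ib=\{1\lc p\}$, and both use the commutativity of $\Bck$ and $\Bci$ (Theorem \ref{BY}) to pass from invariance of generators to invariance of arbitrary elements under the anti-automorphism. The alternative limit argument you sketch for $\Bci$ is a harmless extra, not needed.
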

\begin{proof}
By formulae \Ref{varpiM}\,--\,\Ref{Bp8}, see also
\cite[Proposition~4.11]{MTV1}, for any \,$p=1\lc N$ and \,$s>0$, we have
\;$\varpi(\Bk_{p,s})=\Bk_{p,s}$ \>and \;$\varpi(\Bin_{p,s})=\Bin_{p,s}$. Since
the algebras \,$\Bck$ and \,$\Bci$ are commutative, the statement follows.
\end{proof}

As a subalgebra of $\Yn$, the Bethe algebra \,$\Bck\<$ acts on any $\Yn$-module
$M$. Since \,$\Bck\<$ commutes with $U(\h)$, it preserves the weight subspaces
$M_\bla$. If $L\subset M$ is a \,$\Bck$-invariant subspace, then the image of
\,$\Bck\<$ in $\End(L)$ will be called the Bethe algebra of $L$ and denoted by
$\Bck(L)$.

\subsection{Algebras \,$\ty\>,\;\tbk\!,\;\tbi\<$.}
Let \,$\ty$ be the subalgebra of \,$\Yn\ox\C[h]$ generated over \,$\C\>$
by \,$\C[h]$ and the elements \,$h^{s-1}\>T_{\ij}\+s$ \,for \,$i,j=1\lc N$,
\;$s>0$. Equivalently, the subalgebra \,$\ty$ \,is generated over \,$\C\>$ by
\,$\C[h]$ and the elements \,$T_{\ii+1}\+1\>,\>T_{\ioi}\+1$,
\;$i=1\lc N-1$, \,and \,$h^{s-1}\>T_{1,1}\+s$, \;$s>0$.

\vsk.1>
Let \,$\tbk$ (resp.~\,$\tbi$) be the subalgebra of \,$\Yn\ox\C[h]$
generated over \,$\C\>$ by \,$\C[h]$ and the elements \,$h^{s-1}\>\Bk_{p,s}$
(resp.~\,\,$h^{s-1}\>\Bin_{p,s}$) \,for \,$p=1\lc N$, \;$s>0$. It is easy
to see that \,$\tbk$ and \,$\tbi$ are subalgebras of \,$\ty$.

\vsk.2>
Introduce the series \,$A_1(u)\lc A_N(u)$, \,$E_1(u)\lc E_{N-1}(u)$,
\,$F_1(u)\lc F_{N-1}(u)$:
\vvn.1>
\begin{gather}
\label{A}
A_p(u)\,=\,M_{\iib}(u/h)\,=\,
1+\sum_{s=1}^\infty\,h^s\>\Bin_{p,s}\,u^{-s}\,,
\\[3pt]
E_p(u)\,=\,h^{-1}M_{\jib}(u/h)\>\bigl(M_{\iib}(u/h)\bigr)^{-1}\,=\,
\sum_{s=1}^\infty\,h^{s-1}\>E_{p,s}\,u^{-s}\,,
\label{EF}
\\[3pt]
F_p(u)\,=\,h^{-1}\bigl(M_{\iib}(u/h)\bigr)^{-1}M_{\ijb}(u/h)\,=\,
\sum_{s=1}^\infty\,h^{s-1}\>F_{p,s}\,u^{-s}\,,
\notag
\\[-20pt]
\notag
\end{gather}
where \,$\ib=\{1\lc p\>\}$\>, \,$\jb=\{1\lc p-1,p+1\>\}$\>.
Observe that \,$E_{p,1}=T_{\pop}\+1$\>, \,$F_{p,1}=T_{\ppo}\+1$
\,and \,$\Bin_{1,s}=T_{1,1}\+s$, so the coefficients of the series
\,$E_p(u)$, \,$F_p(u)$ and \,$h^{-1}(A_p(u)-1\bigr)$ together with \,$\C[h]$
generate \,$\ty$.

\vsk.2>
In what follows we will describe actions of the algebra \,$\ty$ by using series
\Ref{A}, \Ref{EF}.

\vsk.2>
The quotient algebra \,$\ty/(h-1)\>\ty$ is canonically isomorphic to \,$\Yn$.
\vvn.14>
Also, the quotient algebra \,$\ty\big/h\>\ty$ is naturally isomorphic
to the universal enveloping algebra \,$U(\gln[\<\>t\<\>])$, the element
\,$h^{s-1}T_{\ij}\+s$ \,projecting to \,$e_{\ji}\ox t^{s-1}$.

\subsection{Dynamical Hamiltonians and dynamical connection}
\label{sec dyn hams}
Assume that \,$\kkk$ \>are distinct numbers. Define the elements
\,$\Xk_1\lc\Xk_N\in\tbk$ by the rule
\vvn.3>
\begin{align}
\label{X}
\Xk_i\, &{}=\,h\>\Sk_{i,2}-\frac h2\,T_{\ii}\+1\>\bigl(\<\>T_{\ii}\+1-1\bigr)
+\>h\,\sum_{\satop{j=1}{j\ne i}}^N\,\frac{\kk_j}{\kk_i-\kk_j}\,
T_{\ii}\+1\>T_{\jj}\+1
\\[4pt]
&{}=\,h\>T_{\ii}\+2-\>\frac h2\,T_{\ii}\+1\>\bigl(\<\>T_{\ii}\+1-1\bigr)+\>
h\>\sum_{\satop{j=1}{j\ne i}}^N\,\frac{\kk_j}{\kk_i-\kk_j}\,G_{\ij}\,,
\notag
\\[-17pt]
\notag
\end{align}
where \,$\Sk_{i,2}$ \,is given by \Ref{S12} and
\vvn-.06>
$G_{\ij}=\,T_{\ij}\+1\>T_{\ji}\+1\<-T_{\jj}\+1=\,
T_{\ji}\+1\>T_{\ij}\+1\<-T_{\ii}\+1$.
Here the second formula is obtained from the first one by using commutation
\vvn-.06>
relations of the elements \,$T_{\kl}\+1$. Notice that \,$T_{\ii}\+1$ and
\,$G_{\ij}$ belong to the subalgebra \,$\Ugln$ embedded in \,$\ty$,
\vvn.1>
\be
T_{\ii}\+1=\>e_{\ii}\,,\qquad
G_{\ij}\>=\,e_{\ij}\>e_{\ji}\<-e_{\ii}\>=\,e_{\ji}\>e_{\ij}\<-e_{\jj}\>.
\ee

\vsk.2>
Taking the limit $\kk_{i+1}/\kk_i\to0$ \,for all \,$i=1\lc N-1$,
we define the elements \,$\Xin_1\lc\Xin_N\in\tbi$,
\vvn-.1>
\begin{align}
\label{X8}
\Xin_i &{}=\,h\>\Bin_{i,2}-h\>\Bin_{i-1,2}
-\>\frac h2\,e_{\ii}\>\bigl(\<\>e_{\ii}-1\bigr)-\>
h\>e_{\ii}\>\bigl(\<\>e_{1,1}\<\lsym+e_{i-1,\<\>i-1}\bigr)
%% -\>\frac h2\,T_{\ii}\+1\>\bigl(\<\>T_{\ii}\+1-1\bigr)-\>
%% h\>T_{\ii}\+1\>\bigl(\<\>T_{1,1}\+1\<\lsym+T_{i-1,\<\>i-1}\+1\bigr)
\\[4pt]
\notag
&{}=\,h\>T_{\ii}\+2-\>\frac h2\,e_{\ii}\>\bigl(\<\>e_{\ii}-1\bigr)
-h\>(G_{i,1}\<\lsym+\<\>G_{\ii-1})\,.
%% &{}=\,h\>T_{\ii}\+2-\>\frac h2\,T_{\ii}\+1\>\bigl(\<\>T_{\ii}\+1-1\bigr)
%% -h\>(G_{i,1}\<\lsym+\<\>G_{\ii-1})\,.
\\[-14pt]
\notag
\end{align}
cf.~Lemma \ref{S8}. We will call the elements
\vvn.1>
\,$\Xk_i\<,\> \Xin_i$, \,$i=1\lc N$, the {\it dynamical Hamiltonians\/}.
Observe that
\vvn-.3>
\beq
\label{XX}
\Xk_i\,=\,\Xin_i+\>h\>\sum_{j=1}^{i-1}\,\frac{\kk_i}{\kk_i-\kk_j}\,G_{\ij}
\>+\>h\!\sum_{j=i+1}^n\frac{\kk_j}{\kk_i-\kk_j}\,G_{\ij}\,.
\eeq

\vsk.2>
Let \;$G^{\>+}_{\ij}=\>G_{\ij}-e_{\ii}\>e_{\jj}$\>.
Given \,$\bla=(\la_1\lc\la_N)$, \,set
\vvn.1>
\;$G^{\>-}_{\bla\<\>,\<\>\ij}\>=\,e_{\ji}\>e_{\ij}$ \,for \,$\la_i\ge\la_j$\>.
\,and
\;$G^{\>-}_{\bla\<\>,\<\>\ij}=\>e_{\ij}\>e_{\ji}$ \,for \,$\la_i<\la_j$\>.
Define the elements
\,$\Xkp_1\,\lc\Xkp_N\<,\;\Xkm_{\bla\<\>,1}\,\lc\Xkm_{\bla\<\>,\<\>N}\<\in\tbk$,
\vvn.2>
\beq
\label{Xkp}
\Xkp_i\!=\,\Xin_i+
\>h\>\sum_{j=1}^{i-1}\,\frac{\kk_i}{\kk_i-\kk_j}\,G^{\>+}_{\ij}\>+
\>h\!\sum_{j=i+1}^n\frac{\kk_j}{\kk_i-\kk_j}\,G^{\>+}_{\ij}\,,
% \Xk_i-\>
% h\>\sum_{j=1}^{i-1}\,\frac{\kk_i}{\kk_i-\kk_j}\,e_{\ii}\>e_{\jj}\>-\>
% h\!\sum_{j=i+1}^n\frac{\kk_j}{\kk_i-\kk_j}\,e_{\ii}\>e_{\jj}\,,
\eeq
\beq
\label{Xkm}
\Xkm_{\bla\<\>,\<\>i}\>=\,\Xin_i+
\>h\>\sum_{j=1}^{i-1}\,\frac{\kk_i}{\kk_i-\kk_j}\,G^{\>-}_{\bla\<\>,\<\>\ij}\>+
\>h\!\sum_{j=i+1}^n\frac{\kk_j}{\kk_i-\kk_j}\,G^{\>-}_{\bla\<\>,\<\>\ij}\,.
% \Xk_i+\>
% h\>\sum_{j=1}^{i-1}\,\frac{\kk_i}{\kk_i-\kk_j}\,e_{\ii}\>+\>
% h\!\sum_{j=i+1}^n\frac{\kk_j}{\kk_i-\kk_j}\,e_{\jj}
% \\[4pt]
% &{}=\,\Xin_i+\>
% h\>\sum_{j=1}^{i-1}\,\frac{\kk_i}{\kk_i-\kk_j}\,e_{\ij}\>e_{\ji}\>+\>
% h\!\sum_{j=i+1}^n\frac{\kk_j}{\kk_i-\kk_j}\,e_{\ji}\>e_{\ij}\,.
\eeq

\vsk.4>
It is straightforward to verify that for any nonzero complex number \,$\kp$\>,
the formal differential operators
\vvn-.4>
\beq
\label{dyneq}
\nabla_i\>=\,\kp\,\ddk_i\>-\>\Xk_i,\qquad i=1\lc N,
\vv.2>
\eeq
pairwise commute and, hence, define a flat connection for any \,$\ty$-module.

\begin{lem}
\label{flat+-}
The connections \;$\nabla^+\<$ and \;\;$\nabla^-_{\<\bla}\<$ defined by
\vvn.2>
\beq
\label{nablapm}
\nabla^+_i=\,\kp\,\ddk_i\>-\>\Xkp_i,\qquad
\nabla^-_{\<\bla\<\>,\<\>i}=\,\kp\,\ddk_i\>-\>\Xkm_{\bla\<\>,\<\>i}\,,
\vv-.1>
\eeq
$i=1\lc N$, are flat for any \,$\kp$.
\end{lem}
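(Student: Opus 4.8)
The plan is to deduce the flatness of $\nabla^+$ and $\nabla^-_{\<\bla}$ from the flatness of the connection $\nabla$ in \Ref{dyneq}, which we are allowed to assume. The key observation is the decomposition \Ref{XX}, which expresses $\Xk_i$ as $\Xin_i$ plus a rational combination of the elements $G_{\ij}$ with $\kk$-dependent coefficients. Comparing with \Ref{Xkp} and \Ref{Xkm}, the operators $\Xkp_i$ and $\Xkm_{\bla\<\>,\<\>i}$ are obtained from the same formula by replacing each $G_{\ij}$ with $G^{\>+}_{\ij}=G_{\ij}-e_{\ii}e_{\jj}$, respectively $G^{\>-}_{\bla\<\>,\<\>\ij}$. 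Since $G^{\>+}_{\ij}=G_{\ij}-e_{\ii}e_{\jj}$, the difference $\Xkp_i-\Xk_i+\Xin_i-\Xin_i$ is a combination of $e_{\ii}e_{\jj}=T_{\ii}\+1 T_{\jj}\+1$; more precisely, from \Ref{X} and \Ref{Xkp},
\be
\Xkp_i\,=\,\Xk_i\>-\>h\>\sum_{j=1}^{i-1}\,\frac{\kk_i}{\kk_i-\kk_j}\,e_{\ii}e_{\jj}\>-\>h\!\sum_{j=i+1}^N\frac{\kk_j}{\kk_i-\kk_j}\,e_{\ii}e_{\jj}\>+\>h\>e_{\ii}\>\bigl(e_{1,1}\<\lsym+e_{i-1,\<\>i-1}\bigr)\,,
\ee
and a similar (gauge) relation holds for $\Xkm_{\bla\<\>,\<\>i}$ using $G^{\>-}_{\bla\<\>,\<\>\ij}=G_{\ij}$ up to elements of $U(\h)$ (indeed $e_{\ji}e_{\ij}=e_{\ij}e_{\ji}-e_{\ii}+e_{\jj}$ shows the two choices in the definition of $G^{\>-}_{\bla}$ differ only by $U(\h)$).

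The first step is to check that all the correction terms lie in $U(\h)$ — i.e. are polynomials in the commuting elements $e_{\ii}=T_{\ii}\+1$ — which is clear from the displayed formulae since every summand is $e_{\ii}e_{\jj}$ up to scalar. The second step is to recognize that passing from $\nabla$ to $\nabla^+$ (resp.\ $\nabla^-_\bla$) is a gauge transformation: there is a function $g(\kk)$ with values in (a completion of) $U(\h)$, rational in $\kk$, such that $\nabla^+_i=g\,\nabla_i\,g^{-1}$. To produce $g$ one solves $\kp\,\ddk_i\log g=\Xk_i-\Xkp_i$ for $i=1\lc N$; this system is consistent precisely because the right-hand sides are closed, which is a purely combinatorial identity among the rational coefficients $\kk_i/(\kk_i-\kk_j)$ and the integer coefficients coming from $e_{1,1}\<\lsym+e_{i-1,\<\>i-1}$. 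Since $U(\h)$ is commutative, gauge-equivalent connections are simultaneously flat, so flatness of $\nabla$ from \Ref{dyneq} gives flatness of $\nabla^+$; the argument for $\nabla^-_\bla$ is identical, using that the two branches in the definition of $G^{\>-}_{\bla\<\>,\<\>\ij}$ differ by an element of $U(\h)$ and hence again only change the connection by a gauge transformation.

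The main obstacle is the consistency of the gauge system, i.e.\ verifying that $\der(\Xk_i-\Xkp_i)/\der\kk_j$, suitably normalized, is symmetric in $i,j$ — equivalently that the $U(\h)$-valued one-form $\sum_i(\Xk_i-\Xkp_i)\,d\kk_i/\kk_i$ is exact. Because everything takes values in the commutative algebra $U(\h)$ this reduces to an elementary identity about rational functions of $\kkk$, with no operator-ordering subtleties; once it is checked, the rest follows formally. Alternatively, and perhaps more cleanly, one can avoid constructing $g$ explicitly and instead verify directly that $[\nabla^+_i,\nabla^+_j]=0$ by expanding using \Ref{XX}, \Ref{Xkp}, the already-known vanishing $[\nabla_i,\nabla_j]=0$, and the commutation relations $[e_{\kk\<\>,\<\>\kk},G_{\ij}]$ in $\gln$; the cross terms organize into the same rational-function identity. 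Either way, the substantive content is that combinatorial identity, and everything else is bookkeeping.
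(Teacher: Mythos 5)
Your strategy coincides with the paper's: both prove flatness by exhibiting $\nabla^+$ and $\nabla^-_{\<\bla}$ as gauge transforms of the trigonometric dynamical connection \Ref{dyneq} by a $U(\h)$-valued function of $\kkk$, using that each $\Xk_i$ commutes with $U(\h)$ (the Bethe algebra commutes with $U(\h)$, Theorem \ref{BY}). The paper short-circuits the consistency discussion you defer by writing the gauge factors explicitly, $\Tht^+=\prod_{i<j}(1-\kk_j/\kk_i)^{h\>e_{\ii}e_{\jj}/\kp}$ and the analogous $\Tht^-_\bla$ built from $\eps_{\bla\<\>,\ij}$, see \Ref{gauge}; differentiating $\log\Tht^\pm$ is a one-line check that your closedness identity holds. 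One correction: your displayed formula for $\Xkp_i$ carries a spurious term $+\>h\>e_{\ii}\>(e_{1,1}\lsym+e_{i-1,\<\>i-1})$ --- comparing \Ref{XX} with \Ref{Xkp} gives directly $\Xkp_i=\Xk_i-h\sum_{j<i}\frac{\kk_i}{\kk_i-\kk_j}\,e_{\ii}\>e_{\jj}-h\sum_{j>i}\frac{\kk_j}{\kk_i-\kk_j}\,e_{\ii}\>e_{\jj}$, the extra term being already absorbed in passing from \Ref{X} to \Ref{XX}. Since that spurious term would contribute an exact $U(\h)$-valued one-form anyway, your argument still closes, but the formula as written is not an identity.
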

\begin{proof}
The connections \;$\nabla^+\<$ and \;\;$\nabla^-_{\<\bla}\<$ are gauge
equivalent to connection \Ref{dyneq},
\vvn.4>
\be
\nabla^+_i=\,(\Tht^+)^{-1}\;\nabla_i\;\Tht^+,\qquad
\Tht^+=\prod_{1\le i<j\le N}\!(1-\kk_j/\kk_i)
%% ^{-h\>e_{i,i}\<\>e_{j,j}/\<\kp}\,,
^{\<\>h\>e_{i,i}\<\>e_{j,j}/\<\kp}\,,
\vv.2>
\ee
\beq
\label{gauge}
\nabla^-_{\<\bla\<\>,\<\>i}=\,(\Tht^-_\bla)^{-1}\;\nabla_i\;\Tht^-_\bla\,,
\qquad \Tht^-_\bla\>=\prod_{1\le i<j\le N}\!(1-\kk_j/\kk_i)
%% ^{\<\>h\>\eps_{\bla\<\>,\ij}/\<\kp}\,,
^{-h\>\eps_{\bla\<\>,\ij}/\<\kp}\,,
\vv.2>
\eeq
where \;$\eps_{\bla\<\>,\ij}=\>e_{\jj}$ \,for \,$\la_i\ge\la_j$\>,
\,and \;$\eps_{\bla\<\>,\ij}=\>e_{\ii}$ \,for \,$\la_i<\la_j$\>.
\end{proof}

\vsk.3>
Connection \Ref{dyneq} was introduced in \cite{TV1}, see also \cite{MTV1}, and
is called the {\it trigonometric dynamical connection\/}. The connection is
defined over \;$\C^N$ with coordinates \,$\kkk$, and has singularities
at the union of the diagonals \,$\kk_i=\kk_j$. In the case of a tensor product
of evaluation \,$\ty$-modules, the trigonometric dynamical connection commutes
with the associated \qKZ/ difference connection, see \cite{TV1}. Under
the \,$(\gln,\>\gl_{\>n})$ \,duality, the trigonometric dynamical connection
and the associated \qKZ/ difference connection are respectively identified
with the trigonometric \KZ/ connection and the dynamical difference connection,
see \cite{TV1}. Notice that the trigonometric dynamical connection here differs
from that in \cite{TV1} by a gauge transformation.

\section{Yangian actions}
\label{Yaction}
\subsection{$\ty\<\>$-actions}
\label{tyactions}
Let $\Czh$ act on $\Vz$
\vvn.2>
by multiplication on the second factor. Set
\begin{align}
\label{Lpm}
L^+(u)\,&{}=\,(u-z_n+h\<\>P^{(0,n)})\dots(u-z_1+h\<\>P^{(0,1)})\,,
\\[4pt]
L^-(u)\,&{}=\,(u-z_1+h\<\>P^{(0,1)})\dots(u-z_n+h\<\>P^{(0,n)})\,,
\notag
\\[-15pt]
\notag
\end{align}
where the factors of \,$\C^N\!\ox V$ are labeled by \,$0,1\lc n$.
\vvn.1>
Both \,$L^+(u)$ and \,$L^-(u)$ are polynomials in \,$u,\zzz,h$ \,with values
in \,$\End(\C^N\!\ox V)$. We consider $L^\pm(u)$ as $N\!\times\!N$ matrices
with \,$\End(V)\ox\C[u,\zzz, h]\>$-valued entries \,$L^\pm_{\ij}(u)$.

\begin{prop}
The assignments
\vvn-.2>
\beq
\label{pho}
\pho^\pm\bigl(T_{\ij}(u/h)\bigr)\,=\,
L^\pm_{\ij}(u)\,\prod_{a=1}^n\,(u-z_a)^{-1}
\vv-.1>
\eeq
define the actions \,$\pho^+\!$ \,and \,$\pho^-\!$ of the algebra \,$\ty$
\vvn.2>
on \,$\Vz$\,. Here the right-hand side of \Ref{pho} is a series
in \,$u^{-1}$ with coefficients in \,$\End(V)\ox\Czh$\>.
\end{prop}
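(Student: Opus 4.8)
The plan is to verify that the assignments $\pho^\pm$ satisfy the defining relations of $\ty$, which amounts to checking that the matrices $L^\pm(u)$ obey a rational $R$-matrix (RTT) relation of the form \Ref{RTT}, up to the scalar normalization $\prod_a (u-z_a)^{-1}$, and then confirming the $\C[h]$-linearity and the integrality of the coefficients in $u^{-1}$. The key observation is that each elementary factor $u - z_a + h\,P^{(0,a)}$ is, after the substitution $u \mapsto u/h$ and rescaling, essentially the Yang $R$-matrix $R_{0,a}(u-z_a) = (u-z_a) + h\,P^{(0,a)}$ acting in the auxiliary space $\C^N$ (labelled $0$) and the $a$-th tensor factor of $V$. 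So $L^\pm(u)$ is an ordered product of such $R$-matrices, which is the standard "monodromy matrix" construction.

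First I would recall the Yang--Baxter equation for the factors: for the auxiliary spaces $0$ and $0'$ and any tensor slot $a$, one has
\[
R_{0,0'}(u-v)\,R_{0,a}(u-z_a)\,R_{0',a}(v-z_a)\,=\,R_{0',a}(v-z_a)\,R_{0,a}(u-z_a)\,R_{0,0'}(u-v),
\]
where $R_{0,0'}(w) = w + h\,P^{(0,0')}$. Iterating this across all slots $a=1,\dots,n$ (in the correct order matching the definitions of $L^+$ and $L^-$ in \Ref{Lpm}), and using that the $R_{0,a}$ and $R_{0',b}$ for $a\neq b$ commute, one slides $R_{0,0'}(u-v)$ from one side of the product $L^{(0)}(u)\,L^{(0')}(v)$ to the other, obtaining
\[
R_{0,0'}(u-v)\,L^{(0)}(u)\,L^{(0')}(v)\,=\,L^{(0')}(v)\,L^{(0)}(u)\,R_{0,0'}(u-v).
\]
Dividing by the scalar factors $\prod_a(u-z_a)^{-1}\prod_a(v-z_a)^{-1}$ (which are central and cancel) and then substituting $u\mapsto u/h$, $v\mapsto v/h$ turns this into exactly \Ref{RTT} for $T(u) = \pho^\pm(T(u))$, where the $R$-matrix $u-v+h\,P$ rescales to $h(u-v+P)$ and the scalar $h$ cancels from both sides. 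This establishes that $\pho^\pm$ respects the Yangian relations \Ref{ijkl}.

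It then remains to check the two points that distinguish $\ty$ from $\Yn\ox\C[h]$: that the coefficients of $L^\pm_{\ij}(u)\prod_a(u-z_a)^{-1}$, when expanded as a series in $u^{-1}$, actually land in $\End(V)\ox\Czh$ (not merely $\End(V)\ox\C(z)[h]$), and that the generators $h^{s-1}T_{\ij}\+s$ map to elements with polynomial-in-$h$ coefficients. For the first, $L^\pm(u)$ is visibly a polynomial in $u$, $\zzz$, $h$ with operator coefficients and leading term $u^n\cdot\id$, so $L^\pm(u)\prod_a(u-z_a)^{-1} = \id + \sum_{s\ge1}(\cdots)u^{-s}$ with each coefficient a polynomial in $\zzz$, $h$ — a geometric-series expansion of $\prod_a(1-z_a/u)^{-1}$ times the polynomial $L^\pm$. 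For the second, one inspects the $h$-degree: in $L^\pm(u/h)$ each factor contributes $u/h - z_a + h P^{(0,a)}$, and tracking powers of $h$ shows that the $u^{-s}$ coefficient of $\pho^\pm(T_{\ij}(u/h))$ carries $h$-valuation exactly matching the requirement that $h^{s-1}T_{\ij}\+s$ becomes polynomial.

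The main obstacle I anticipate is the bookkeeping in the Yang--Baxter iteration — keeping the order of the $R$-matrix factors consistent with the two distinct orderings in $L^+$ versus $L^-$, and making sure the scalar normalization and the $u\mapsto u/h$ rescaling are handled so that the final relation is precisely \Ref{RTT} and not an $R$-matrix differing by scalar or by $h$-powers. This is a routine but error-prone computation; the conceptual content (monodromy matrix satisfies RTT) is standard, essentially as in \cite{M}.
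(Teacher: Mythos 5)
Your proposal is correct and is essentially the paper's own argument written out in full: the paper's proof simply says the claim follows from the RTT relation \Ref{RTT} and the Yang--Baxter equation \Ref{YB}, which is exactly the monodromy-matrix computation you describe, together with the (routine) check of polynomiality in $u^{-1},\zzz,h$ and the $h$-valuation needed for the subalgebra $\ty\subset\Yn\ox\C[h]$.
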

\begin{proof}
The claim follows from relations \Ref{RTT} and the Yang-Baxter equation
\vvn.3>
\begin{align}
\label{YB}
(u-v+h\<\>P^{(1,2)})\> &(u+h\<\>P^{(1,3)})\>(v+h\<\>P^{(2,3)})\,=\,
\\[3pt]
{}=\,{}&(v+h\<\>P^{(2,3)})\>(u+h\<\>P^{(1,3)})\>(u-v+h\<\>P^{(1,2)})\,.
\notag
\\[-36pt]
\notag
\end{align}
\vv.2>
\end{proof}

For both actions \,$\pho^\pm$, the subalgebra $U(\gln)\subset\ty$ acts on $\Vz$
in the standard way: any element \,$x\in\gln$ \,acts as $x^{(1)}\lsym+x^{(n)}$.
The actions \,$\pho^\pm$ clearly commute with the $\Czh$-action.

\begin{lem}
\label{SY}
The Yangian actions \,$\pho^\pm\!$ are contravariantly related through
the Shapovalov pairing \Ref{Shap pair}:
\vvn-.2>
\be
\Sc\bigl(\pho^+(X)\>f,g)\,=\,\Sc\bigl(f,\pho^-(\varpi(X))\>g\<\>\bigr)\,,
\vv.2>
\ee
for any \,$X\in\ty$ \,and any \,$V\!$-valued functions \,$f,g$
\,of \,$\zzz,h$.
\end{lem}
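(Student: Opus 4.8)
The plan is to reduce the statement to a single computation with the $L$-operators, using the anti-automorphism $\varpi$. Since $\ty$ is generated over $\C[h]$ by the coefficients of $T_{\ij}(u)$, and since $\varpi$ acts on these generators simply by transposing indices (formula \Ref{varpi}), it suffices to verify
\be
\Sc\bigl(\pho^+(T_{\ij}(u/h))\>f,\>g\bigr)\,=\,\Sc\bigl(f,\>\pho^-(T_{\ji}(u/h))\>g\bigr)
\ee
for all $i,j=1\lc N$, as an identity of formal series in $u^{-1}$ with coefficients scalar functions of $\zzz,h$; the general case then follows because $\varpi$ is an anti-automorphism and $\pho^\pm$ are algebra homomorphisms (the product of two identities of this contravariant type, read in the appropriate order, gives the identity for the product). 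By \Ref{pho} the common scalar factor $\prod_a (u-z_a)^{-1}$ cancels, so I am left to show that the operators $L^+_{\ij}(u)$ and $L^-_{\ji}(u)$ on $V$ are adjoint with respect to the Shapovalov form $\Sc$, entry by entry.

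First I would record the key symmetry of the building blocks. The Shapovalov form makes the basis $\{v_I\}$ orthonormal, hence on $V=(\C^N)^{\ox n}$ it is the $n$-th tensor power of the standard form on $\C^N$ for which $v_1\lc v_N$ is orthonormal. With respect to that form each $E_{\ij}\in\End(\C^N)$ has adjoint $E_{\ji}$, so each permutation operator $P^{(0,a)}$ is self-adjoint, and the elementary factor $u-z_a+h\>P^{(0,a)}$ appearing in $L^\pm$ is self-adjoint as an operator on $\C^N\!\ox V$ in the auxiliary-tensor sense. Now $L^+(u)$ is the product of these factors in the order $a=n,n-1,\dots,1$, while $L^-(u)$ is the same product in the reversed order $a=1,2,\dots,n$. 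Taking the adjoint of a product reverses the order of the factors; since each factor is self-adjoint, the adjoint of $L^+(u)$ is exactly $L^-(u)$ as an $N\!\times\!N$ matrix of operators on $V$ — but transposing the matrix structure interchanges the $(\ij)$ and $(\ji)$ entries. Concretely, writing $L^+(u)=\sum_{\ij}E_{\ij}\ox L^+_{\ij}(u)$ and using that $E_{\ij}^{\,*}=E_{\ji}$ together with $(P^{(0,a)})^*=P^{(0,a)}$, one gets $\Sc\bigl(L^+_{\ij}(u)f,g\bigr)=\Sc\bigl(f,L^-_{\ji}(u)g\bigr)$ directly.

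The one point that needs a little care — and the only place I expect any friction — is bookkeeping the auxiliary space. The factors $u-z_a+h\>P^{(0,a)}$ all share the same auxiliary index $0$ but act on disjoint tensor legs $a$ of $V$, so they do not commute, and the matrix entries $L^\pm_{\ij}(u)$ are the genuinely noncommutative products obtained by composing these factors and then reading off the $(\ij)$ block in the auxiliary space. I would make this rigorous by an induction on $n$: for $n=1$ the claim is the identity $\Sc\bigl((u-z_1)\dl_{\ij}f + h\>(E_{\ij}f),\>g\bigr)=\Sc\bigl(f,\,(u-z_1)\dl_{\ji}g + h\>(E_{\ji}g)\bigr)$, which holds because $E_{\ij}^{\,*}=E_{\ji}$; for the inductive step, peel off the last factor (the outermost one for $L^+$, the innermost for $L^-$), apply self-adjointness of that factor, and invoke the induction hypothesis for the remaining product on the first $n-1$ legs. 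This confirms that the adjoint of $L^+(u)$ is $L^-(u)$ with transposed auxiliary matrix, which is precisely the stated contravariance of $\pho^+$ and $\pho^-$, and completes the proof.
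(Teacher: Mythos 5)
Your proof is correct and is essentially the argument the paper intends: the paper's proof simply states that the claim follows from formulae \Ref{varpi}, \Ref{Lpm} and \Ref{pho}, and your write-up (reduction to the generating series, self-adjointness of each factor $u-z_a+h\>P^{(0,a)}$, and order reversal under taking adjoints identifying $L^+(u)^*$ with $L^-(u)$ up to transposition of the auxiliary matrix indices) is exactly the computation being invoked.
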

\begin{proof}
The claim follows from formulae \Ref{varpi}, \Ref{Lpm} and \Ref{pho}.
\end{proof}

\begin{cor}
\label{SB}
For any \,$X\!\in\tbk$ or \>$X\!\in\tbi$, \,and any \,$V\!$-valued
functions \,$f,g$ \,of \,$\zzz,h$\>, we have
\vvn-.1>
\be
\Sc\bigl(\pho^+(X)\>f,g)\,=\,
\Sc\bigl(f,\pho^-(X)\>g\<\>\bigr)\,
%\qquad\Sc\bigl(\pho^+(Y)\>f,g)\,=\,\Sc\bigl(f,\pho^-(Y)\>g\<\>\bigr)\,.
\vv.2>
\ee
\end{cor}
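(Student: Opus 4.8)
The corollary is a direct consequence of Lemma \ref{SY} once one knows that $\varpi$ acts as the identity on the subalgebras $\tbk$ and $\tbi$, so the plan is to establish that fact. First I would extend the Hopf anti-automorphism $\varpi$ of \Ref{varpi} to $\Yn\ox\C[h]$ by letting it act as the identity on the second tensor factor; this is the extension already implicit in the statement of Lemma \ref{SY}, and since $\varpi(h^{s-1}T_{\ij}\+s)=h^{s-1}T_{\ji}\+s$ it preserves the subalgebra $\ty$, hence restricts to $\tbk$ and $\tbi$.

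Next I would note that $\varpi$ fixes the algebra generators of $\tbk$ and of $\tbi$. Indeed, the computation in the proof of Lemma \ref{varpiB} gives $\varpi(\Bk_{p,s})=\Bk_{p,s}$ and $\varpi(\Bin_{p,s})=\Bin_{p,s}$ for all $p=1\lc N$ and $s>0$; since $\varpi$ is the identity on $\C[h]$, it therefore fixes each of the generators $h^{s-1}\Bk_{p,s}$ of $\tbk$ and $h^{s-1}\Bin_{p,s}$ of $\tbi$, as well as all of $\C[h]$. Now $\tbk$ and $\tbi$ are commutative --- they are subalgebras of the commutative algebras $\Bck\ox\C[h]$ and $\Bci\ox\C[h]$, using Theorem \ref{BY} --- and the restriction of an anti-automorphism to a commutative subalgebra is an algebra automorphism, while an automorphism that fixes a set of generators is the identity. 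Hence $\varpi(X)=X$ for every $X\in\tbk$ and every $X\in\tbi$.

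Finally, substituting $\varpi(X)=X$ into the identity of Lemma \ref{SY} yields $\Sc(\pho^+(X)f,g)=\Sc(f,\pho^-(\varpi(X))g)=\Sc(f,\pho^-(X)g)$, which is the assertion. The argument is essentially immediate; the only step that is not a formality is the passage from ``$\varpi$ fixes the generators'' to ``$\varpi$ fixes the whole subalgebra'', which genuinely uses the commutativity of $\tbk$ and $\tbi$, since $\varpi$ reverses the order of products.
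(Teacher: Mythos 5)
Your proposal is correct and follows exactly the paper's route: the paper deduces the corollary from Lemma \ref{varpiB} combined with Lemma \ref{SY}, and the argument you give for $\varpi(X)=X$ (fixing the generators $\Bk_{p,s}$, $\Bin_{p,s}$ and using commutativity of the Bethe subalgebras) is precisely the paper's proof of that lemma. The only thing you add is the routine extension from $\Bck$, $\Bci$ to $\tbk$, $\tbi$, which the paper leaves implicit.
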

\begin{proof}
The claim follows from Lemma \ref{varpiB}.
\end{proof}

\begin{lem}
\label{pm & Yan}
The Yangian action \,$\pho^+\!$ (\>resp.~$\pho^-$) on \,$V\!$-valued functions
of \,$\zzz,h$ commutes with the $S_n^+$-action \Ref{Sn+}
\,(\>resp.~the $S_n^-$-action \Ref{Sn-}\>).
\end{lem}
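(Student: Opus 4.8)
The plan is to reduce the statement to the commutativity of certain $R$-matrices with the transfer-matrix construction. First I would recall that by \Ref{pho} the Yangian action $\pho^+$ is built from the monodromy-type operator $L^+(u)$ of \Ref{Lpm}, which is an ordered product of the elementary factors $u-z_a+h\>P^{(0,a)}$, one for each tensor slot $a=1\lc n$, while the $S_n^+$-action of $s_i$ from \Ref{Sn+} is, up to the shift of the variables $z_i,z_{i+1}$, exactly multiplication by the normalized $R$-matrix $\bigl((z_i-z_{i+1})P^{(\ii+1)}-h\bigr)/(z_i-z_{i+1})$ acting in slots $i,i+1$ of $V$. So the assertion to be checked is that conjugating $L^+(u)$ by this $R$-matrix (and simultaneously swapping $z_i\leftrightarrow z_{i+1}$ in the coefficients) leaves $L^+(u)$ unchanged, and likewise for the scalar prefactor $\prod_a(u-z_a)^{-1}$, which is visibly symmetric in the $z$'s and hence causes no trouble.

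The key computational step is therefore the local identity
\be
\bigl((z_i-z_{i+1})P^{(\ii+1)}-h\bigr)\,\bigl(u-z_i+h\<\>P^{(0,i)}\bigr)\bigl(u-z_{i+1}+h\<\>P^{(0,\>i+1)}\bigr)
\ee
\be
=\,\bigl(u-z_{i+1}+h\<\>P^{(0,i)}\bigr)\bigl(u-z_i+h\<\>P^{(0,\>i+1)}\bigr)\,\bigl((z_i-z_{i+1})P^{(\ii+1)}-h\bigr)\,,
\ee
which is nothing but the Yang--Baxter equation \Ref{YB} rewritten with the substitutions appropriate to the three slots $0,i,i+1$ and the spectral parameters $u-z_i$, $u-z_{i+1}$, $z_i-z_{i+1}$; here one uses that $P^{(\ii+1)}$ acting on $\C^N\ox V$ commutes with the scalars and intertwines $P^{(0,i)}\leftrightarrow P^{(0,\>i+1)}$. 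Because the factors of $L^+(u)$ attached to slots $\ne i,i+1$ commute with everything acting only in slots $0,i,i+1$, moving the $R$-matrix past those factors is immediate, and the two factors it genuinely has to pass through are handled precisely by the displayed identity. The same argument with $-h$ replaced by $+h$ in the $R$-matrix and the roles of $L^+$ and $L^-$ (i.e.\ the opposite ordering of the product in \Ref{Lpm}) interchanged gives the $\pho^-$/$S_n^-$ case.

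The one point requiring a little care — the main obstacle, such as it is — is the bookkeeping of the second term in \Ref{Sn+}, the term $\tfrac{h}{z_i-z_{i+1}}f(\zzi,h)$ in which the arguments are \emph{not} permuted: one must check that after applying $\pho^+(X)$ the difference $s_i\,\pho^+(X)f-\pho^+(X)\,s_i f$ still collapses, which it does because $\pho^+(X)$ has coefficients that, as operators in slots $i,i+1$, are rational in $z_i-z_{i+1}$ in exactly the way needed for the two terms of \Ref{Sn+} to recombine after the intertwining identity above; equivalently, one may first invoke Lemma \ref{sh+}/\ref{sh-} to express the $S_n^\pm$-action through the operators $\sh_i^\pm$ and note that $\pho^\pm$ acts on the scalar coefficients $f_I$ by operators commuting with multiplication and with the $z$-permutations, reducing the whole claim to the single operator identity displayed above. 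I would present the $\pho^+$ case in full and remark that $\pho^-$ is entirely parallel.
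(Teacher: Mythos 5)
Your proposal is correct and follows essentially the same route as the paper: the paper's one-line proof invokes exactly the two ingredients you use, namely the Yang--Baxter equation \Ref{YB} (which yields your local intertwining identity for the permuted term of \Ref{Sn+}, the unpermuted factors being passed through trivially) and the fact that $\pho^\pm$ commutes with multiplication by functions of $\zzz,h$ (which disposes of the second, non-permuted term). Your displayed identity is indeed an instance of \Ref{YB} up to left multiplication by $-P^{(\ii+1)}$ and conjugation by $P^{(\ii+1)}$, so the argument is complete; the detour through Lemmas \ref{sh+}/\ref{sh-} in your last paragraph is unnecessary but harmless.
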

\begin{proof}
The claim follows from the Yang-Baxter equation \Ref{YB} and the fact that
the actions \,$\pho^\pm$ commute with multiplication by functions of
\,$\zzz\<\>,\>h$.
\end{proof}

By Lemma \ref{pm & Yan}, the action \,$\pho^+$ makes the spaces \,$\V^+$ and
\,$\DVe$ into $\ty$-modules, and the action \,$\pho^-$ on $\Vz$ makes the space
\,$\DV$ into a $\ty$-modules.

\vsk.3>
Let \,$\Pi\<\in\End(V)$ \,be the following linear map,
\,${\Pi\>(\<\>g_1\lox g_n)=\<\>g_n\lox g_1}$ \,for any \,$g_1\lc g_n\in\C^N$.
For any \,$V\!$-valued function \,$f(\zzz,h)$ set
\vvn.3>
\beq
\label{Pit}
\Pit\>f(\zzz,h)\,=\,\Pi\>\bigl(f(\zzzn,h)\bigr)\,.
\vv.1>
\eeq

\begin{lem}
\label{=-}
The map \>\Ref{Pit} induces an isomorphism \;$\DVe\!\to\DV\!$ of
\;$\ty$-modules. Moreover, \;$\Pit\,v^=_\bla\<=v^-_\bla$ \,for any $\bla$.
\end{lem}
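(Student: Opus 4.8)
The plan is to show two things: first, that $\Pit$ maps $\DVe$ into $\DV$ bijectively; second, that it intertwines the two $\ty$-actions $\pho^+$ on $\DVe$ and $\pho^-$ on $\DV$; and finally to identify $\Pit\,v^=_\bla$ with $v^-_\bla$. First I would check the set-theoretic statement. An element of $\DVe$ has the form $\Dci f$ with $f$ an $S_n^+$-invariant $V$-valued polynomial. Applying $\Pit$ replaces $\zzz$ by $\zzzn$ and applies $\Pi$ to the values; since $\Dc(\zzzn,h)=D(\zzz,h)$, the image is $\tfrac1D\,\Pi\bigl(f(\zzzn,h)\bigr)$. So I must verify that $g(\zzz,h):=\Pi(f(\zzzn,h))$ is again a $V$-valued polynomial (clear) and that it is $S_n^-$-invariant. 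This last point is the crux of the set-theoretic part: one has to compare the conjugate of the $S_n^+$-action by the "reverse permutation $w_0$ combined with $\Pi$" with the $S_n^-$-action. Concretely, $\Pi$ reverses the tensor factors, so $\Pi\,P^{(\ii+1)}=P^{(n-i,n-i+1)}\,\Pi$, and reversing the order of the variables turns the transposition $s_i$ into $s_{n-i}$; plugging these into formula \Ref{Sn+} and comparing with \Ref{Sn-} (note the sign flip of $h$ comes precisely from $z_i-z_{i+1}\mapsto z_{n-i}-z_{n-i+1}=-(z_{n-i+1}-z_{n-i})$ after the variable reversal) shows $\Pit$ conjugates the $S_n^+$-action into the $S_n^-$-action. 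Hence $\Pit$ carries $S_n^+$-invariants to $S_n^-$-invariants, so $\DVe\to\DV$, and the same computation with $\Pit$ in the other direction (it is clearly an involution up to nothing — $\Pit^2=\id$ since $\Pi^2=\id$ and double reversal is trivial) gives the inverse, proving bijectivity.

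Next I would establish the intertwining of Yangian actions. Looking at \Ref{Lpm}, the operators $L^+(u)$ and $L^-(u)$ are built from the same factors $(u-z_a+h\,P^{(0,a)})$ but multiplied in opposite orders. Reversing the variables $\zzz\mapsto\zzzn$ sends the $a$-th factor of $L^+$ to $(u-z_{n+1-a}+h\,P^{(0,a)})$; conjugating by $\Pi$ (which acts only on factors $1,\dots,n$, fixing the auxiliary factor $0$) sends $P^{(0,a)}$ to $P^{(0,n+1-a)}$. After reindexing $b=n+1-a$, the product becomes exactly $L^-(u)$ evaluated at the original variables, up to the left/right $\Pi$ on factor $0$ being absorbed. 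The scalar prefactor $\prod_a(u-z_a)^{-1}$ is symmetric in $\zzz$, hence invariant under reversal. Therefore $\Pit\circ\pho^+(T_{\ij}(u/h))=\pho^-(T_{\ij}(u/h))\circ\Pit$ as operators on $V$-valued functions, which via \Ref{pho} gives $\Pit\,\pho^+(X)=\pho^-(X)\,\Pit$ for all $X\in\ty$. Combined with the previous paragraph, $\Pit$ restricts to an isomorphism of $\ty$-modules $\DVe\to\DV$.

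Finally, for $\Pit\,v^=_\bla=v^-_\bla$ I would use Proposition \ref{thixi}: $v^=_\bla=\sum_{I\in\Il}\tfrac1{R(\zb_I)}\,\xi^+_I$ and $v^-_\bla=\sum_{I\in\Il}\tfrac1{R(\zb_I)}\,\xi^-_I$. It suffices to know how $\Pit$ acts on the basis $\{\xi^+_I\}$, and I expect $\Pit\,\xi^+_I=\xi^-_{\bar I}$ where $\bar I$ is the "complementary/reversed" index — indeed $\Pit$ should send the $\st^+_i$-recursion of Proposition \ref{xi+} to the $\st^-$-recursion of Proposition \ref{xi-} (the same reversal computation: $\st^+_i$ conjugated by $\Pit$ becomes $\st^-_{n-i}$), and $\Pit\,v_\IMI=v_\IMA$ since reversing $\{1,\dots,\la_1\},\dots$ in both the variable-order and the tensor-order lands on $\IMA$. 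Since the normalization $R(\zb_I)$ is invariant under reversing $\zzz$ and permuting indices appropriately, the sum $\sum_I \tfrac1{R(\zb_I)}\xi^+_I$ maps to $\sum_I\tfrac1{R(\zb_I)}\xi^-_I=v^-_\bla$. The main obstacle is the bookkeeping in the very first step: getting the index shift $s_i\leftrightarrow s_{n-i}$, the conjugation $\Pi\,P^{(\ii+1)}\Pi=P^{(n-i,n-i+1)}$, and the sign change of $h$ to line up exactly so that \Ref{Sn+} becomes \Ref{Sn-}; once that matching is nailed down, the Yangian intertwining and the identification of the distinguished vectors follow by the identical reversal trick applied to \Ref{Lpm} and to the recursions \Ref{xi+si}, \Ref{xi-si}.
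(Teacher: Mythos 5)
Your argument is correct and is essentially the paper's proof: everything rests on the conjugation identity $\Pit\,\st_i^+=\st_{n-i}^-\,\Pit$ (your reversal computation for the $S_n^\pm$-actions), on $\Pit\,\pho^+(X)=\pho^-(X)\,\Pit$ obtained by reversing the factors of $L^\pm(u)$ in \Ref{Lpm}, and on tracking the reversal through the distinguished vectors (the paper reads $\Pit\,v^=_\bla=v^-_\bla$ directly off \Ref{thi=}, \Ref{thi-}, while your route via $\Pit\,\xi^+_I=\xi^-_{w_0(I)}$ and Proposition \ref{thixi} is equivalent bookkeeping). One wording slip to fix: an element of $\DVe$ is an $S_n^+$-invariant function of the form $\Dci f$ with $f\in\Vz$, so by Lemma \ref{S4} the polynomial $f$ is $S_n^+$-\emph{skew}-invariant rather than invariant, and what must be checked is that the whole function $\Di\,\Pi\bigl(f(\zzzn,h)\bigr)$ is $S_n^-$-invariant; since your conjugation argument is in fact applied to the full invariant function, this does not affect the substance.
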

\begin{proof}
We have \;$\Pit\,\st_i^+\<\<=\>\st_{n-i}^-\,\Pit$ \,for every \,$i=1\lc n-1$,
where the operators \;$\st_i^\pm$ are given by \Ref{stilde}.
Thus by Lemma \ref{S2}, \,$\Pit$ \,induces a vector space isomorphism
\;$\DVe\!\to\DV\!$. Since
\vvn.2>
\beq
\label{pho+-}
\Pit\;\pho^+(X)\,=\,\pho^-(X)\;\Pit
\vv.2>
\eeq
for every \,$X\!\in\ty$, see \Ref{Lpm}, \Ref{pho}, \,$\Pit$ \,induces an
\vvn.06>
isomorphism of \>$\ty$-modules. The equality \;$\Pit\,v^=_\bla\<=v^-_\bla$
follows from formulae \Ref{thi=}, \Ref{thi-}.
\end{proof}

\begin{prop}
\label{V+v1}
The \,$\ty$-module \;$\V^+\!$ is generated by the vector \,$\vone$.
\end{prop}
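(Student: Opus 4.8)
The plan is to exhibit $\V^+$ as a quotient of a single ``big'' $\ty$-module generated by $\vone$, and then show the surjection is actually an isomorphism by a rank/freeness count. Concretely, consider $\Vz$ with the $\pho^+$-action. The vector $\vone = v_1\lox v_1$ generates a $\ty$-submodule $\Vz\cdot\ty$; applying the $S_n^+$-symmetrization projector (which exists since $S_n^+$ is a genuine group action by Lemma \ref{Spm}, and which commutes with $\pho^+$ by Lemma \ref{pm & Yan}) sends this submodule into $\V^+$. Since $\vone$ is already $S_n^+$-invariant (it lies in $\V^+\cap\DL$ by the discussion in Subsection \ref{v+-=}), the $\ty$-submodule of $\V^+$ generated by $\vone$ is exactly the image of $\Vz\cdot\ty$ under symmetrization. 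So the first step is: \emph{the $\ty$-submodule of $\V^+$ generated by $\vone$ contains every $\thi^+_\bla(f)$ that can be reached from $1\in\Czhl$ by the induced action of the Bethe/Yangian operators.}

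For the second step, I would use the $\gln$-weight decomposition $\V^+=\bigoplus_\bla\Vl$ together with the structure results already proved. By Lemma \ref{v+gl}, $\vone$ generates (under the pointwise $\gln$-action alone) the irreducible $\gln$-submodule of highest weight $(n,0\lc0)$ inside $V$, so in particular the lowering operators $e_{\ij}$ ($i>j$), which sit in $\Ugln\subset\ty$, already move $\vone$ around the ``top'' pieces. The key is to combine these $\gln$-lowerings with the ``translation'' part of the Yangian — the coefficients of $h^{-1}(A_p(u)-1)$, i.e.\ the Gelfand--Zetlin generators $\Bin_{p,s}$ — which act on each $\Vl$ as multiplication by the symmetric functions that generate $\Czhl$ over $\C[h]$ (this is the content behind the isomorphisms $\thi^\pm_\bla$ and will be made precise via Lemma \ref{thi+-} and Proposition \ref{thixi}). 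Since $v^+_\bla=\thi^+_\bla(1)$ and $\thi^+_\bla$ is a $\CZH$-module isomorphism onto $\Vl$, once I show (a) each $v^+_\bla$ lies in the submodule generated by $\vone$, and (b) the $\Czh$-action on the $\bla$-weight space is realized inside $\ty$ via the $\Bin_{p,s}$, it follows that all of $\Vl$ is in the submodule. Part (a) follows by applying a suitable product of lowering operators $e_{\ij}$ to $v^+_{(n,0,\dots,0)}$: since $v^+_{(n,0,\dots,0)}=\vone$ and the $v^+_\bla$ are the natural ``multiplicity-one'' weight vectors $\sum_{I\in\Il}v_I$, one checks that lowering from the top weight produces $v^+_\bla$ up to a nonzero scalar (this is a standard $\gln$ tensor-representation computation).

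The main obstacle I expect is part (b): verifying that the operators $\pho^+(\Bin_{p,s})$, restricted to the weight space $\Vl$, generate the full ring $\Czh$-action — equivalently, that they act as the images of the generators of $\C[\zzz]^{\Sla}\ox\C[h]$ under the isomorphism $\thi^+_\bla$. The cleanest route is to compute $\pho^+(A_p(u))$ on the distinguished vector, or on the basis $\xi^+_I$ of Subsection \ref{secxi}, using the explicit formula \Ref{Lpm} for $L^+(u)$: the quantum minors $M_{\iib}(u/h)$ with $\ib=\{1\lc p\}$ should act triangularly with respect to the ordering $<$ on $\Il$, with diagonal entries given by products $\prod(u - z_i)$ over appropriate index sets, from which the symmetric-function content follows. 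Granting this (and it is essentially the Gelfand--Zetlin story for $\pho^+$, cf.\ \cite{NT}), the submodule generated by $\vone$ meets every $\Vl$ in a $\Czh$-submodule containing $v^+_\bla=\thi^+_\bla(1)$; since $\thi^+_\bla$ is a $\CZH$-module isomorphism and $\Vl$ is generated over $\Czh$ by $\thi^+_\bla(1)$ together with the $S_n$-symmetric polynomials (equivalently, $\Vl$ is a cyclic $\Czh[\text{sym}]$-module on $v^+_\bla$), we conclude $\Vl\subset \ty\cdot\vone$ for every $\bla$, hence $\V^+=\ty\cdot\vone$.
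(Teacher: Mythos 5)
Your argument is essentially the paper's: the paper proves this by (a) writing \,$v^+_\bla$ explicitly as a product of powers of the $\gln$-generators \,$T_{j,1}\+1\in\Ugln\subset\ty$ \,applied to \,$\vone$, and (b) invoking Theorem \ref{thm main+}, which states exactly your part (b) --- that \,$\Vl$ is the cyclic \,$\tbi$-module generated by \,$v^+_\bla$, i.e.\ the regular representation of \,$H_\bla\simeq\Czhl$ with \,$[\>1\>]\mapsto v^+_\bla$, a fact that rests on the triangularity/Gelfand--Zetlin formula \Ref{Axi} just as you indicate. The symmetrization-projector discussion in your first paragraph is unnecessary, but the substance of the proof coincides with the paper's.
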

\begin{proof}
For every \,$\bla=(\la_1\lc\la_N)$, the subspace \,$\Vl$ is generated by
\vvn.1>
the \;$\tbi\<$-action on \;$v^+_\bla$, see Theorem \ref{thm main+}
in Section \ref{secHla}. Since
\vvn.4>
\be
v^+_\bla\,=\,\pho^+\Bigl(\<\bigl(T_{2,1}\+1\bigr)^{\la_1-\la_2}\!\dots\,
\bigl(T_{N-1,1}\+1\bigr)^{\la_{N-1}-\la_N}\bigl(T_{N,1}\+1\bigr)^{\la_N}\Bigr)
\;\frac{\vone}
{(\la_1-\la_2)\<\>!\dots(\la_{N-1}-\la_N)\<\>!\,\la_N!}\;,
\vv-.1>
\ee
the \,$\ty$-module \;$\V^+$ is generated by \,$\vone$.
\end{proof}

Notice that the \,$\ty$-modules \;$\V^+$ is not isomorphic to
\;$\DVe\<\simeq\DV$ because for any homomorphism \;$\V^+\!\to\DVe$ of
\vvn.1>
\,$\ty$-modules, the image of the vector \,$\vone$ belongs to
\,$(\vone)\ox\CZH$ by the weight reasoning and generates a proper
\,$\ty$-submodule of \;$\DVe$.

\vsk.5>
Write \,$n=kN\<+l$ \,for \,$k,l\in\Z_{\ge0}$, \;$l<N$.
Set \,$\mub=(k+1\lc k+1,k\lc k)$ with \,$l$ \,parts equal to \,$k+1$
\,and $N-l$ \,parts equal to $k$.

\begin{prop}
\label{V=v1}
The \,$\ty$-module \;$\DV$ (\>resp.~\>$\DVe$) \,is generated by
the function \,$v^-_\mub$ \,(resp.~\>$v^=_\mub$\>).
\end{prop}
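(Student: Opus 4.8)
The plan is to reduce the statement about $\DV$ (resp.\ $\DVe$) to the already-available fact that each weight subspace $\DL$ is generated over $\tbi$ by $v^-_\bla$ (resp.\ $\DLe$ by $v^=_\bla$), which is Theorem~\ref{thm main-} in Section~\ref{secHla}; by Lemma~\ref{=-} it suffices to treat the case of $\DVe$, since $\Pit$ is a $\ty$-module isomorphism carrying $v^=_\bla$ to $v^-_\bla$. So the task becomes: show that the single vector $v^=_\mub$ generates, under the $\ty$-action $\pho^-$, each of the vectors $v^=_\bla$ as $\bla$ ranges over all compositions of $n$ into $N$ nonnegative parts. The weight $\mub=(k+1\lc k+1,k\lc k)$ is the ``most balanced'' composition, so the idea is that from this balanced weight one can raise and lower the $\gln$-weight to reach any $\bla$, and then the $\tbi$-action fills out each weight space.

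The key steps, in order. First, invoke Lemma~\ref{=-} to pass to $\DVe$ and reduce to showing $\ty\cdot v^=_\mub\supseteq\{v^=_\bla\mid|\bla|=n\}$; once every $v^=_\bla$ lies in the submodule generated by $v^=_\mub$, Theorem~\ref{thm main-} (via the $\Pit$-isomorphism, giving the analogous statement for $\DLe$) gives all of $\bigoplus_\bla\DLe=\DVe$. Second, for a fixed target $\bla$, connect $\mub$ to $\bla$ by a chain of compositions differing in two adjacent coordinates by $\pm1$: applying $\pho^-(T_{\pop}\+1)=\pho^-(e_{\ppo})$ type operators (the generators $T_{\ioi}\+1$, $T_{\ii+1}\+1$ of $\ty$, which lie in the embedded $\Ugln$) moves one box between parts $p$ and $p+1$. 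Third, and this is the crux, compute the effect of such a $\gln$-generator on the distinguished vector $v^=_\bla$ and check the coefficient of $v^=_{\bla'}$ (for $\bla'$ the neighboring weight) is nonzero; here Lemma~\ref{v-gl}, which says $e_{\ij}v^=_\bla=0$ whenever $\la_i\ge\la_j$, tells us which raising/lowering directions are forced to vanish and which survive. Using the explicit formula \Ref{thi=} for $v^=_\bla=\thi^=_\bla(1)$ together with the $\sih^+$-covariance (Lemma~\ref{S1}), one identifies $e_{\ij}v^=_\bla$ as a scalar multiple of $v^=_{\bla'}$ whenever $\la_i<\la_j$, with the scalar being essentially $\la_j-\la_i$ up to normalization — in particular nonzero. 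Starting from $\mub$, whose parts differ by at most $1$, one can always move in the allowed direction to increase a part at the expense of a strictly larger part, and iterate to reach an arbitrary $\bla$.

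The main obstacle is the third step: verifying that the relevant matrix coefficient of the $\gln$-action on the $v^=$-vectors is nonzero and that the chain of neighboring compositions from $\mub$ to $\bla$ can always be chosen so that every step moves a box from a strictly larger part to a smaller (or equal) part, so that Lemma~\ref{v-gl} does not obstruct it. This is a combinatorial claim about compositions — that $\mub$ dominates nothing awkwardly and lies ``in the middle'' — combined with a concrete residue/symmetrization computation from \Ref{thi=}. Once that is in hand, the rest is the formal bookkeeping of (i) reducing $\DVe$ to $\DLe$ via the freeness/weight decomposition, (ii) applying Theorem~\ref{thm main-} in each weight, and (iii) transporting back to $\DV$ by $\Pit$.
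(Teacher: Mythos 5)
Your overall skeleton agrees with the paper's: reduce to the weight subspaces via Theorem \ref{thm main-} (resp.\ \ref{thm main=}), transport between $\DV$ and $\DVe$ by $\Pit$, and connect the distinguished vectors of neighbouring weights by ladder operators. But the step you yourself single out as the crux fails. The operators $e_{\ij}$ (equivalently $T_{\ji}\+1$) do \emph{not} carry $v^=_\bla$ or $v^-_\bla$ to a nonzero scalar multiple of the distinguished vector of the neighbouring weight. Concretely, take $N=n=2$ and $\bla=(2,0)$, so $v^-_{(2,0)}=v_1\ox v_1$. Then the pointwise action gives
\be
e_{2,1}\,v^-_{(2,0)}\,=\,v_1\ox v_2+v_2\ox v_1\,=\,v^+_{(1,1)}\,=\,\thi^-_{(1,1)}(z_1-z_2+h)\,,
\ee
whereas $v^-_{(1,1)}=\thi^-_{(1,1)}(1)=(v_1\ox v_2-v_2\ox v_1)/(z_1-z_2+h)$. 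These are not proportional over $\CZH$; under the identification of $\DV_{(1,1)}$ with the regular representation of $H_{(1,1)}$, the vector $e_{2,1}\,v^-_{(2,0)}$ corresponds to the non-invertible class $[\<\>z_1-\ga_{2,1}+h\>]$, so its $\tbi$-orbit is a \emph{proper} ideal of $\DV_{(1,1)}$ and your induction does not close up. (Your intuition is correct for the $v^+_\bla$, which all lie in the one-dimensional weight spaces of the irreducible $\gln$-submodule generated by $\vone$ — that is exactly why Proposition \ref{V+v1} can be proved with $\gln$ alone — but Lemma \ref{v-gl} does not put the $v^-_\bla$ into a single $\gln$-irreducible, and the computation above shows they are not linked by $\Ugln$.)

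The repair requires genuinely higher Yangian modes, and this is what the paper does: it applies the coefficients $E_{p,\<\>\la_p-\la_{p+1}+2}$ and $F_{p,\<\>\la_{p+1}-\la_p+2}$ of the series \Ref{EF} (note the mode number grows with $|\la_p-\la_{p+1}|$, so except in the balanced case these are not the first modes $T_{\pop}\+1$, $T_{\ppo}\+1$). Their action on the basis $\xi^-_I$ is given explicitly by formulae \Ref{Exi}, \Ref{Fxi}, and summing against the coefficients $1/R(\zb_I)$ of $v^-_\bla$ produces the exact identities \Ref{Ev-}, i.e.\ a nonzero \emph{constant} multiple of the distinguished vector of the adjacent weight. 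With those identities in hand, the remaining bookkeeping in your proposal (the chain of adjacent compositions starting from the balanced weight $\mub$, the application of Theorems \ref{thm main-}/\ref{thm main=} in each weight, and the transport by $\Pit$) is fine.
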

\begin{proof}
For every \,$\bla=(\la_1\lc\la_N)$, the subspace \,$\DL$ is generated by
\vvn.06>
the \;$\tbi\<$-action on \;$v^-_\bla$, see Theorem \ref{thm main-} in
Section \ref{secHla}. By formula \Ref{Exi} below, we have
\vvn.3>
\beq
\label{Ev-}
\pho^-(h^{\la_p-\la_{p+1}+1}E_{p,\<\>\la_p-\la_{p+1}+2})\,v^-_\bla\>=\,
(-1)^{\la_p}\,v^-_{\bla\<\>-\aal_p}\,,
\vv.3>
\eeq
if \,$\la_p-\la_{p+1}\ge-\>1$. Here \,$\aal_p=(0\lc0,1,\<-\>1,0\lc0)$, with
\,$p-1$ first zeros. Similarly, by formula \Ref{Fxi} below, we have
\vvn.1>
\be
\pho^-(h^{\la_{p+1}-\la_p+1}F_{p,\<\>\la_{p+1}-\la_p+2})\,v^-_\bla\>=\,
(-1)^{\la_p-1}\,v^-_{\bla\<\>+\aal_p}\,,
\ee
if \,$\la_{p+1}-\la_p\ge-\>1$. Thus every \,$v^-_\bla$ \,can be obtained from
\,$v^-_\mub$ by the \>$\ty$-action. This proves the statement for \;$\DV$.
The proof for \;$\DVe$ is similar.
\end{proof}

\begin{proof}[Proof of Lemma \ref{surj}]
Clearly, if the statement holds for \,$\bla=(\la_1\lc\la_N)$, then it holds
for \,$\bla^\si\!=(\la_{\si(1)}\lc\la_{\si(N)})$ \,for any \,$\si\in S_N$.
So it suffices to prove the statement assuming that \,$\la_1\lsym\ge\la_N$.
Then by \Ref{Ev-}, there is \,$X\!\in\ty$ such that
\,$\pho^-(X)\>v^-_\bla\<=\alb\>\vone$. Thus by Lemma \ref{SY},
\vvn.3>
\be
\Sc\bigl(\pho^+(X)\>\vone\>,v^-_\bla)\,=\,
\Sc\bigl(\vone\>,\pho^-(X)\>v^-_\bla\bigr)\,=\,1\,.
\vvn-1.3>
\ee
\vv>
\end{proof}

\begin{prop}
\label{VBa}
The Bethe algebras \;$\tbk(\V^+)$, \;$\tbk(\DVe)$, and \;$\tbk(\DV)$ are
isomorphic. Similarly, the Bethe algebras \;$\tbi(\V^+)$, \;$\tbi(\DVe)$,
and \;$\tbi(\DV)$ are isomorphic.
\end{prop}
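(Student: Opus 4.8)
The plan is to show that each of the three $\ty$-modules $\V^+$, $\DVe$, $\DV$ is a cyclic $\ty$-module, and that the cyclic vectors correspond under $\ty$-module maps, so that the Bethe algebras act identically. The key structural input is Proposition~\ref{V+v1} (the vector $\vone$ generates $\V^+$), Proposition~\ref{V=v1} (the vectors $v^=_\mub$, $v^-_\mub$ generate $\DVe$, $\DV$ respectively), and Lemma~\ref{=-} (the map $\Pit$ is an isomorphism $\DVe\to\DV$ of $\ty$-modules carrying $v^=_\bla$ to $v^-_\bla$, hence $v^=_\mub$ to $v^-_\mub$). From Lemma~\ref{=-} alone the Bethe algebras $\tbk(\DVe)$ and $\tbk(\DV)$ (as well as $\tbi(\DVe)$ and $\tbi(\DV)$) are literally equal as subalgebras of, say, $\End(\Czhl)$ after transporting via the isomorphisms $\thi^=_\bla$, $\thi^-_\bla$ of Lemma~\ref{thi+-}; so the real content is the comparison of $\V^+$ with $\DVe$.

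First I would exhibit, for each space, a canonical commuting action: since $\tbk$ (resp.\ $\tbi$) is a commutative algebra and both $\V^+$ and $\DVe$ are cyclic $\ty$-modules, each becomes a cyclic module over the commutative ring $R:=\tbk$ (resp.\ $\tbi$), hence is a quotient of $R$ itself. Concretely, pick the cyclic vector $w$ (namely $\vone$ for $\V^+$, and $v^=_\mub$ for $\DVe$); the map $R\to M$, $X\mapsto \pho^\pm(X)w$, is surjective with kernel an ideal $\mathrm{Ann}(w)\subset R$, and the Bethe algebra $\tbk(M)$ is exactly $R/\mathrm{Ann}(w)$. Thus it suffices to prove $\mathrm{Ann}_{\V^+}(\vone)=\mathrm{Ann}_{\DVe}(v^=_\mub)$ as ideals of $R$. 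To compare these annihilators I would use the Shapovalov pairing and Corollary~\ref{SB}: for $X\in\tbk$,
\begin{equation*}
\Sc\bigl(\pho^+(X)\,\vone,\,g\bigr)\,=\,\Sc\bigl(\vone,\,\pho^-(X)\,g\bigr)
\end{equation*}
for all $g$, and by Lemma~\ref{Sfg} the relevant pairings are nondegenerate; combined with Lemma~\ref{=-} this should let one translate between the $\pho^+$-action on $\V^+$ and the $\pho^-$-action on $\DV\simeq\DVe$. A cleaner route: show directly that $\V^+$ and $\DVe$ become isomorphic $\tbk$-modules (not $\ty$-modules!) via a map built from the vectors $\xi^\pm_I$ of Propositions~\ref{xi+},~\ref{xi-} and Theorem~\ref{thm S(+,-)}; since $\tbk(M)$ depends only on the $\tbk$-module structure of $M$, this gives all required isomorphisms at once.

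The main obstacle is that $\V^+$ and $\DVe$ are \emph{not} isomorphic as $\ty$-modules (this is remarked explicitly in the excerpt, with the weight obstruction about $\vone$), so one cannot simply produce a $\ty$-module iso; one must produce an isomorphism of the underlying $\tbk$-modules, or equivalently show the two annihilator ideals coincide. I expect the technical heart to be showing that $\mathrm{Ann}_{\V^+}(\vone)\subset\mathrm{Ann}_{\DVe}(v^=_\mub)$ and conversely — i.e.\ that $\pho^+(X)\vone=0$ iff $\pho^-(X)v^=_\mub=0$. One direction should follow from the duality Corollary~\ref{SB} together with surjectivity/nondegeneracy (Lemmas~\ref{surj},~\ref{Sfg}): if $\pho^-(X)v^-_\bla\ne0$ for some $\bla$, then pairing against a suitable $\pho^+$-descendant of $\vone$ (which exists because $\vone$ generates $\V^+$, Proposition~\ref{V+v1}) gives a nonzero number, forcing $\pho^+(X)\vone\ne0$; the reverse direction is symmetric using that $v^-_\mub$ (equivalently $v^=_\mub$) generates $\DV$ (Proposition~\ref{V=v1}). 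Once the annihilators agree, $\tbk(\V^+)\cong R/\mathrm{Ann}\cong\tbk(\DVe)\cong\tbk(\DV)$, and likewise for $\tbi$, which is the assertion of Proposition~\ref{VBa}. The remaining bookkeeping — that the $\tbk$-module generated by $v^+_\bla$ inside $\Vl$ is all of $\Vl$, etc.\ — is supplied by the forthcoming Theorems~\ref{thm main+},~\ref{thm main-}, so I would simply cite those as in the proofs of Propositions~\ref{V+v1},~\ref{V=v1}.
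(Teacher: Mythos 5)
Your core mechanism is the paper's: the proposition is proved there in one line from Corollary \ref{SB} (self-adjointness of the Bethe algebra with respect to the Shapovalov pairing), Lemma \ref{Sfg} (nondegeneracy), and Lemma \ref{=-} (for the pair $\DVe$, $\DV$). However, the scaffolding you build around it contains a false step. $\V^+$ and $\DVe$ are \emph{not} cyclic modules over $R=\tbk$ (or $\tbi$): cyclicity over $\ty$ (Propositions \ref{V+v1}, \ref{V=v1}) does not descend to the commutative subalgebra $\tbk$, because $\tbk$ commutes with $U(\h)$ and hence preserves the weight decomposition $\V^+\!=\bigoplus_\bla\Vl$. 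The $\tbk$-submodule generated by $\vone$ is contained in the single weight subspace $\V^+_{(n,0\lc0)}=(\vone)\ox\CZH$, so $R/\on{Ann}_R(\vone)$ computes $\tbk\bigl(\V^+_{(n,0\lc0)}\bigr)$, not $\tbk(\V^+)$; likewise for $v^=_\mub$. Your reduction to the equality $\on{Ann}_R(\vone)=\on{Ann}_R(v^=_\mub)$ therefore does not identify the Bethe algebras in the statement.

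The fix is to drop cyclicity altogether and compare the annihilators of the whole modules. For $X\in\tbk$ one has $\Sc\bigl(\pho^+(X)f,g\bigr)=\Sc\bigl(f,\pho^-(X)g\bigr)$ for all $f\in\V^+$, $g\in\DV$ by Corollary \ref{SB}; by Lemma \ref{Sfg} this gives $\pho^+(X)|_{\V^+}=0$ if and only if $\pho^-(X)|_{\DV}=0$, so the two images of $\tbk$ are both canonically $\tbk/\ker$ and hence isomorphic as algebras (the same works for $\V^+$ versus $\DVe$ using pairing \Ref{s=-}, or one invokes Lemma \ref{=-} for $\DVe$ versus $\DV$). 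This requires no cyclic vector, no weight-by-weight bookkeeping, and no appeal to the forthcoming Theorems \ref{thm main+}--\ref{thm main-}. If you insist on arguing through cyclic vectors, you must work one weight subspace at a time with generators $v^+_\bla\in\Vl$ and $v^-_\bla\in\DL$, which is exactly where Theorems \ref{mainK+}--\ref{mainK-} enter --- but that route proves the stronger Corollary \ref{VBk} rather than the present proposition, and makes the statement depend on much heavier machinery than it needs.
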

\begin{proof}
The statement follows from Corollary \ref{SB} and Lemma \ref{Sfg}.
For the pairs \;$\tbk(\DVe)$, \;$\tbk(\DV)$ and \;$\tbi(\DVe)$,
\;$\tbi(\DV)$, the algebras are isomorphic by Lemma \ref{=-} as well.
\end{proof}

\subsection{$\Yn\<\>$-actions}
\label{Ynactions}
Let \,$\phoo^\pm:\Yn\to\End(V)$ be the homomorphisms obtained from \,$\pho^\pm$
by taking \,$z_1\lsym=z_n=0$ \>and \>$h=1$\>. They make the space \>$V$ into
$\Yn$-modules, respectively denoted by \>$\Vy\pm$. The following statement is
well known.

\begin{prop}
\label{V0}
The $\Yn$-modules \,$\Vy\pm\<$ are irreducible and isomorphic.
The isomorphism \,$\Vy+\!\to\Vy-\<$ is given by the map
\,$x_1\<\lox x_n\mapsto\>x_n\<\lox x_1$ \,for any $\xxx\in\C^N$.
The modules \,$\Vy\pm\<$ are contravariantly dual,
\vvn.2>
\be
\Sc\bigl(\phoo^+(X)\>f,g)\,=\,\Sc\bigl(f,\phoo^-(\varpi(X))\>g\<\>\bigr)\,,
\vv.2>
\ee
for any \,$X\in\Yn$ \,and \,$f,g\in V$\>.
\qed
\end{prop}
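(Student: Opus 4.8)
The plan is to prove Proposition~\ref{V0} by reducing everything to standard facts about the Yangian action on tensor products of evaluation modules. First I would observe that the homomorphisms $\phoo^\pm$ are obtained from $\pho^\pm$ by the specialization $z_1=\dots=z_n=0$, $h=1$; in that specialization the Lax operators \Ref{Lpm} become, up to the overall scalar $u^{-n}$,
\begin{align*}
L^+(u)\big|_{z=0,h=1}\,&=\,(u+P^{(0,n)})\dots(u+P^{(0,1)})\,,\\
L^-(u)\big|_{z=0,h=1}\,&=\,(u+P^{(0,1)})\dots(u+P^{(0,n)})\,.
\end{align*}
Each factor $u+P^{(0,a)}$ is the image of the standard $R$-matrix, so via the Yang-Baxter equation \Ref{YB} this is precisely the Yangian comultiplication applied to $n$ copies of the vector evaluation module $\C^N$ at evaluation point $0$ (with the two orderings of the tensor factors corresponding to the two orderings of the product of Lax matrices). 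Thus $\Vy+$ and $\Vy-$ are both isomorphic, as $\Yn$-modules, to the $n$-fold tensor product of the fundamental evaluation module at the same point. It is a standard fact (see \cite{M}, or \cite{CP}) that such a tensor product, all evaluation parameters being equal, is irreducible; this gives the first assertion.

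Next I would construct the explicit isomorphism $\Vy+\!\to\Vy-$. The map $\Pi:x_1\lox x_n\mapsto x_n\lox x_1$ reverses the order of the tensor factors; since the two actions $\phoo^\pm$ differ precisely by this reversal of the ordering of the $P^{(0,a)}$ factors in $L^\pm(u)$, we get $\Pi\,\phoo^+(X)=\phoo^-(X)\,\Pi$ for all $X\in\Yn$. This is exactly the specialization at $z=0$, $h=1$ of formula \Ref{pho+-} from Lemma~\ref{=-} (with $\Pit$ collapsing to $\Pi$ once the $z$-dependence is frozen), so nothing new needs to be checked: $\Pi$ intertwines the two actions and is manifestly a linear bijection of $V$.

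Finally, for the contravariance statement I would invoke Lemma~\ref{SY}: the identity $\Sc(\pho^+(X)f,g)=\Sc(f,\pho^-(\varpi(X))g)$ holds for all $X\in\ty$ and all $V$-valued functions $f,g$ of $\zzz,h$. Specializing $f,g$ to constant functions and setting $z_1=\dots=z_n=0$, $h=1$ turns this into $\Sc(\phoo^+(X)f,g)=\Sc(f,\phoo^-(\varpi(X))g)$ for $X$ in the image of $\ty$; since that image generates $\Yn$ over $\C$ after the specialization $h=1$ (the quotient $\ty/(h-1)\ty$ is canonically $\Yn$), the identity extends to all of $\Yn$ by linearity and multiplicativity, using that $\varpi$ is an anti-automorphism. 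I do not expect a genuine obstacle here; the one point requiring a little care is the irreducibility of the equal-parameter tensor power of the vector representation, but this is a classical result, and the rest is bookkeeping that specializes already-established lemmas ($\pho^\pm$ well-definedness, Lemma~\ref{SY}, and the intertwining property \Ref{pho+-}).
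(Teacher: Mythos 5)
Your argument is sound, but there is nothing in the paper to compare it against: Proposition~\ref{V0} is introduced with the words ``the following statement is well known'' and is closed immediately with an end-of-proof mark, so the authors supply no proof at all. What you have written is a correct fleshing-out of that standard fact. Two of your three steps are indeed pure bookkeeping: the flip $\Pi\colon x_1\lox x_n\mapsto x_n\lox x_1$ conjugates $(u+P^{(0,n)})\cdots(u+P^{(0,1)})$ into $(u+P^{(0,1)})\cdots(u+P^{(0,n)})$, which is exactly the $z_1=\dots=z_n=0$, $h=1$ shadow of \Ref{pho+-}; and the contravariance identity is the specialization of Lemma~\ref{SY}, using that $\ty/(h-1)\>\ty\simeq\Yn$ so that every element of $\Yn$ lifts to $\ty$. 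The one substantive input is the irreducibility of the $n$-fold tensor power of the vector evaluation module with all evaluation parameters equal, and you are right to isolate it as the point requiring care. Note, though, that the mechanism most often quoted for tensor products of evaluation modules --- intertwining different orderings by products of $R$-matrices $1+P/(a_i-a_j)$ and running a cyclicity argument --- degenerates precisely at coincident parameters (which is why the intertwiner between $\Vy+$ and $\Vy-$ here is the flip $\Pi$ rather than an $R$-matrix), so you should point to a criterion that genuinely covers the equal-parameter case: the string/Drinfeld-polynomial criterion of Chari--Pressley does, as does the analysis of exactly these modules in \cite{NT}; a bare appeal to ``the standard fact'' in \cite{M} is a little loose, and your second reference does not appear in this paper's bibliography. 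With that reference pinned down, your proof is complete.
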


Let $\Jc\subset\CZH$ be the ideal generated by the relations \>$h=1$ \>and
\;$\si_i(\zzz)=0$ \,for all \,$i=1\lc n$, where \;$\si_i$ is the \>$i$-th
elementary symmetric function. The quotient
\,$\Vh=\Vz\>/\Jc(\Vz)\simeq V\<\ox(\Czh\>/\Jc)$ is a complex vector space
of dimension \,$n!\,N^n$. The actions \,$\pho^\pm$ make it into $\Yn$-modules
respectively denoted by \,$\Vyh\pm$.

\begin{lem}
\label{lem01}
The evaluation assignment
\vvn.3>
\beq
\label{01}
f(\zzz,h)\,\mapsto\,f(0\lc 0,1)
\vv.3>
\eeq
defines homomorphisms \;$\Vyh+\!\to\Vy+$ and \;$\Vyh-\!\to\Vy-$ of
\,$\Yn$-modules.
\qed
\end{lem}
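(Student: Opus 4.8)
The plan is to check directly that the evaluation $[f]\mapsto f(0\lc 0,1)$ of \Ref{01} intertwines the action $\pho^\pm$ on $\Vyh\pm$ with the action $\phoo^\pm$ on $\Vy\pm$, after first making sure that all three objects in sight really are defined.

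The structural fact behind everything is that $\pho^\pm$ carries $\ty$ into the algebra of $\Czh$-linear endomorphisms of $\Vz$; that is, for every $X\in\ty$ the operator $\pho^\pm(X)$ is multiplication by some element $M_X\in\End(V)\ox\Czh$. It is enough to see this on the generators of $\ty$, namely on $\C[h]$ and on $h^{s-1}T_{\ij}\+s$. For the latter, \Ref{pho} shows that the coefficient of $u^{-s}$ in $L^\pm_{\ij}(u)\prod_{a=1}^n(u-z_a)^{-1}$ equals $\pho^\pm(h^sT_{\ij}\+s)$ and lies in $\End(V)\ox\Czh$; moreover this coefficient is divisible by $h$ for $s\ge1$, since at $h=0$ the matrix $L^\pm(u)$ of \Ref{Lpm} reduces to $\prod_a(u-z_a)$ times the identity, so $L^\pm(u)\prod_a(u-z_a)^{-1}$ reduces to the identity and its negative-degree coefficients vanish. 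Hence $\pho^\pm(h^{s-1}T_{\ij}\+s)=h^{-1}\pho^\pm(h^sT_{\ij}\+s)$ again lies in $\End(V)\ox\Czh$. Two consequences: (i) since $\pho^\pm$ commutes with multiplication by $\Czh$, the subspace $\Jc(\Vz)$ is $\ty$-invariant, so $\Vh=\Vz/\Jc(\Vz)$ is a $\ty$-module, and as $h-1\in\Jc$ this action descends along $\ty/(h-1)\ty\cong\Yn$ to the $\Yn$-module $\Vyh\pm$; (ii) the homomorphism $\phoo^\pm$ is, by its definition in Subsection~\ref{Ynactions}, the specialization of these multiplication operators at $\zzz=0$, $h=1$, i.e.\ $\phoo^\pm(\bar X)=M_X(0\lc 0,1)$ whenever $X\in\ty$ has image $\bar X$ in $\Yn$, and this depends only on $\bar X$ because $X\in(h-1)\ty$ forces $M_X$ divisible by $h-1$. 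Finally, evaluation at $\zzz=0$, $h=1$ annihilates the generators $\si_i(\zzz)$ and $h-1$ of $\Jc$, hence factors through $\Vh$ and gives a well-defined linear map $\mathrm{ev}\colon\Vyh\pm\to V$; composing $v\mapsto v\ox 1$ with $\mathrm{ev}$ is the identity of $V$, so $\mathrm{ev}$ is surjective.

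Granting all this, the intertwining is a one-line computation: for $X\in\ty$ with image $\bar X\in\Yn$, writing $\pho^\pm(X)$ as multiplication by $M_X\in\End(V)\ox\Czh$, we get for every $f\in\Vz$
\be
\mathrm{ev}\bigl(\pho^\pm(X)\>f\bigr)\,=\,(M_X f)(0\lc 0,1)\,=\,M_X(0\lc 0,1)\,f(0\lc 0,1)\,=\,\phoo^\pm(\bar X)\,\mathrm{ev}(f)\,,
\ee
the middle equality being merely the evaluation at a point of a product of a matrix-valued polynomial with a vector-valued polynomial. The same computation applies verbatim to both signs, which proves the lemma.

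The argument is essentially bookkeeping, and the only step where I expect to need genuine care is the first one: verifying that $\pho^\pm$ really sends all of $\ty$ — not merely its ``polynomial part'' — into multiplication operators with coefficients in $\End(V)\ox\Czh$, so that the apparently singular factor $\prod_a(u-z_a)^{-1}$ in \Ref{pho} causes no trouble once $\zzz$ is set to $0$, and that this specialization is exactly the homomorphism $\phoo^\pm$. Once that compatibility is pinned down, both the well-definedness of $\mathrm{ev}$ on $\Vh$ and its $\Yn$-equivariance are formal.
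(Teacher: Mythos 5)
Your proof is correct. The paper states this lemma with no proof (it is marked as evident), and your argument — checking that $\pho^\pm$ sends the generators $h^{s-1}T_{\ij}\+s$ of $\ty$ to multiplication operators with coefficients in $\End(V)\ox\Czh$ (via the observation that $L^\pm(u)\prod_a(u-z_a)^{-1}$ is the identity at $h=0$, so the $u^{-s}$ coefficients are divisible by $h$), and then noting that evaluation at $\zzz=0$, $h=1$ kills $\Jc$ and turns multiplication by $M_X$ into multiplication by $M_X(0\lc0,1)=\phoo^\pm(\bar X)$ — is exactly the routine verification the authors intend the reader to supply.
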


Notice that the \>$\Yn$-modules \,$\Vyh\pm$ are respectively isomorphic to
\vvn.16>
the \>$\Yn$-modules \,$\Vy\pm\<\ox(\Czh\>/\Jc)$ where the second factor
\,$\Czh\>/\Jc$ is the trivial \>$\Yn$-module with all the generators
\,$T_{\ij}\+s$ acting by zero. However, the isomorphisms are not given
by the identity operator on \,$V\<\ox(\Czh\>/\Jc)$.

\vsk.3>
The quotients \,$\V^+\</\Jc\V^+$, \,$\DVe\</\Jc\DVe$ and \,$\DV\</\Jc\DV$
are complex vector spaces of dimension $N^n$. The \,$\ty$-module structure
on the respective spaces \,$\V^+$, \,$\DVe$ $\DV$ make \,$\V^+\</\Jc\V^+$,
\,$\DVe\</\Jc\DVe$ and \,$\DV\</\Jc\DV$ into \>$\Yn$-modules.

\begin{thm}
The assignment \,\Ref{01} defines isomorphisms
\vvn.2>
\be
\eta^+\!:\V^+\</\Jc\V^+\to\,\Vy+,\qquad \eta^=\!:\DVe\</\Jc\DVe\to\,\Vy+,
\qquad \eta^-\!:\DV\</\Jc\DV\to\,\Vy-
\vv.1>
\ee
of \,$\Yn$-modules.
\end{thm}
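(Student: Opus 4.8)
The plan is to realize each of $\eta^+$, $\eta^=$, $\eta^-$ as a homomorphism of $\Yn$-modules between spaces of the same dimension $N^n$, and then to read off bijectivity from the irreducibility of $\Vy+$ and $\Vy-$. First I would check well-definedness. The spaces $\V^+$, $\DVe$, $\DV$ are $\ty$-modules, via $\pho^+$, $\pho^+$, $\pho^-$ respectively, and since the actions $\pho^\pm$ commute with multiplication by elements of $\Czh$, each of $\Jc\V^+$, $\Jc\DVe$, $\Jc\DV$ is a $\ty$-submodule; as $h-1\in\Jc$ and $\ty/(h-1)\ty\cong\Yn$, the three quotients are precisely the $\Yn$-modules introduced just before the theorem. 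The assignment \Ref{01}, $f(\zzz,h)\mapsto f(0\lc0,1)$, is $\Czh$-linear into $V$, with $\Czh$ acting on $V$ through the evaluation character $g\mapsto g(0\lc0,1)$; every generator $\si_i(\zzz)$ of $\Jc$, as well as $h-1$, lies in the kernel of that character, so \Ref{01} annihilates $\Jc\V^+$, $\Jc\DVe$, $\Jc\DV$ and descends to the claimed maps $\eta^+$, $\eta^=$, $\eta^-$. That these descended maps intertwine the Yangian actions follows from Lemma \ref{lem01} together with the evident $\ty$-module inclusions $\V^+\hookrightarrow\Vz$, $\DVe\hookrightarrow\Vz$, $\DV\hookrightarrow\Vz$ (equivalently, $\pho^\pm$ specialized at $\zzz=0$, $h=1$ equals $\phoo^\pm$).

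Next I would show that each of the three maps is nonzero. For $\eta^+$ this is immediate: $\vone\in\V^+$ is a constant function, and $\eta^+$ sends its class to $\vone\in V=\Vy+$, a nonzero vector. For $\eta^=$ and $\eta^-$ I would use the distinguished vector $v^+_\bla=\sum_{I\in\Il}v_I$, which is a constant (independent of $\zzz,h$) element of $V$ that lies in $\V^+\cap\DL$ and also equals $\thi^=_\bla(Q_\bla)\in\DLe$; hence $v^+_\bla$ belongs to both $\DVe$ and $\DV$, and its image under \Ref{01} is $\sum_{I\in\Il}v_I\ne0$. So $\eta^=$ and $\eta^-$ are nonzero as well. (One could instead invoke Propositions \ref{V+v1} and \ref{V=v1}, which say that $\vone$, $v^=_\mub$, $v^-_\mub$ generate the respective $\ty$-modules, so their classes generate the three quotients over $\Yn$; combined with the nonvanishing above, that route gives surjectivity directly.)

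Finally I would conclude. By Proposition \ref{V0} the $\Yn$-modules $\Vy+$ and $\Vy-$ are irreducible, so the image of any nonzero $\Yn$-module homomorphism into $\Vy+$ or $\Vy-$ is the whole module; hence $\eta^+$ and $\eta^=$ (mapping to $\Vy+$) and $\eta^-$ (mapping to $\Vy-$) are surjective. Since $\dim_\C\V^+/\Jc\V^+=\dim_\C\DVe/\Jc\DVe=\dim_\C\DV/\Jc\DV=N^n=\dim_\C V$, surjectivity forces each map to be a bijection, hence an isomorphism of $\Yn$-modules. The only step that requires more than bookkeeping is the nonvanishing in the previous paragraph; once the $\ty$-module structures on the three quotients and Lemma \ref{lem01} are in place, everything else is soft.
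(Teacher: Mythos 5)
Your proposal is correct and follows essentially the same route as the paper: both establish that the three maps are nonzero $\Yn$-module homomorphisms via Lemma \ref{lem01} and then conclude by irreducibility of $\Vy\pm$ together with the equality of dimensions $N^n$. You merely supply details the paper leaves implicit (the descent through $\Jc$ and the explicit nonzero images $\vone$ and $v^+_\bla$), so no further comment is needed.
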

\begin{proof}
Clearly, the maps \,$\eta^+$, \,$\eta^=$ and \,$\eta^-$ are nonzero
\vv.1>
homomorphisms of \>$\Yn$-modules, see Lemma~\ref{lem01}. Since
the \>$\Yn$-modules \,$\Vy\pm$ are irreducible and all the spaces
\vv.1>
\,$\V^+\</\Jc\V^+$, \,$\DVe\</\Jc\DVe$, \,$\DV\</\Jc\DV$ and \,$\Vy\pm$ have
\vv.1>
the same dimension, the maps \,$\eta^+$, \,$\eta^=$ and \,$\eta^-$ are
isomorphisms.
\end{proof}

\begin{cor}
\label{nondegS}
The Shapovalov pairing
\vvn.3>
\beq
\label{SJ}
\Sc:\V^+\</\Jc\V^+\!\ox\DV\</\Jc\DV\to\,(\CZH)/\Jc\simeq\C
\vv.3>
\eeq
induced by pairing \Ref{s+-} is nondegenerate. The \>$\Yn$-modules
\,$\V^+\</\Jc\V^+$ and \,$\DV\</\Jc\DV$ are contravariantly dual:
\vvn-.2>
\be
\Sc\bigl(\pho^+(X)\>f,g)\,=\,\Sc\bigl(f,\pho^-(\varpi(X))\>g\<\>\bigr)\,,
\vv.3>
\ee
for any \,$X\in\Yn$ \,and \,$f\<\in\V^+\</\Jc\V^+$, $g\in\DV\</\Jc\DV$.
\qed
\end{cor}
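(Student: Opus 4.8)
The plan is to deduce both assertions from Proposition \ref{V0} by transporting the Shapovalov pairing along the isomorphisms $\eta^+$ and $\eta^-$ of the preceding Theorem. Concretely, I claim that $\eta^+\!\ox\eta^-$ identifies pairing \Ref{SJ} with the Shapovalov form $\Sc$ on $\Vy+\!\ox\Vy-$; granting this, nondegeneracy of \Ref{SJ} and the contravariant duality are inherited from the corresponding facts about $\Vy\pm$ — the first being trivial and the second being part of Proposition \ref{V0}.

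To establish the claim, write $f=\sum_I f_I\>v_I\in\V^+$ and $g=\sum_I g_I\>v_I\in\DV$; by the definition of the Shapovalov form, pairing \Ref{s+-} is $\Sc(f,g)=\sum_I f_I\>g_I\in\CZH$. The ideal $\Jc$ is generated by $h-1$ and the elementary symmetric functions $\si_1(\zzz)\lc\si_n(\zzz)$, so reducing a symmetric polynomial modulo $\Jc$ amounts to evaluating it at $z_1\lsym=z_n=0$, $h=1$; hence $\Sc(f,g)\bmod\Jc=\sum_I f_I(0\lc 0,1)\>g_I(0\lc 0,1)=\Sc\bigl(f(0\lc 0,1),\>g(0\lc 0,1)\bigr)$, where the outer $\Sc$ is now the Shapovalov form on $V$. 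Since $\eta^+$ and $\eta^-$ are by definition the maps induced by evaluation at $(0\lc 0,1)$, this is exactly the asserted compatibility.

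With the claim in hand, nondegeneracy of \Ref{SJ} is immediate, as the Shapovalov form on $V$ is nondegenerate (the basis $\{v_I\}$ being orthonormal) and $\eta^+,\eta^-$ are vector space isomorphisms. For the contravariant duality I would note that the $\Yn$-action on $\V^+\</\Jc\V^+$ (resp.\ $\DV\</\Jc\DV$) is the one induced by $\pho^+$ (resp.\ $\pho^-$) via $\ty/(h-1)\>\ty\simeq\Yn$, and that the anti-automorphism $\varpi$ of $\ty$ descends to $\varpi$ of $\Yn$; therefore the identity $\Sc(\pho^+(X)\>f,g)=\Sc(f,\pho^-(\varpi(X))\>g)$ of Lemma \ref{SY}, valid for $X\in\ty$ and $V\!$-valued $f,g$, descends modulo $\Jc$ to the required relation for $X\in\Yn$. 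Equivalently, one pulls back the relation $\Sc(\phoo^+(X)\>f,g)=\Sc(f,\phoo^-(\varpi(X))\>g)$ of Proposition \ref{V0} along the $\Yn$-module isomorphisms $\eta^+,\eta^-$ using the claim.

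No step here is difficult: the only point that is not purely formal is the identification of $\Sc(f,g)\bmod\Jc$ with evaluation at $(0\lc 0,1)$, which is a one-line computation with the Shapovalov form, and the rest is transport of structure along $\eta^\pm$. Should one prefer to bypass even that computation, one may instead argue that the left radical of \Ref{SJ} is a $\Yn$-submodule of $\V^+\</\Jc\V^+$ (by the contravariance just established), that it is proper because Lemma \ref{surj} implies \Ref{SJ} does not vanish identically modulo $\Jc$, and hence that it is zero since $\V^+\</\Jc\V^+\simeq\Vy+$ is irreducible by Proposition \ref{V0}; a dimension count ($\dim=N^n$ on both sides) then yields nondegeneracy on the right as well.
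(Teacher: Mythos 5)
Your proof is correct and follows essentially the route the paper intends: the corollary is stated as an immediate consequence of the preceding theorem, namely that the evaluation maps $\eta^\pm$ are $\Yn$-module isomorphisms onto $\Vy\pm$, combined with the nondegeneracy and contravariant duality of the Shapovalov form on $\Vy\pm$ from Proposition \ref{V0} (equivalently, Lemma \ref{SY} reduced modulo $\Jc$). Your closing alternative via the radical and irreducibility is also sound, and the paper itself records yet another alternative through Proposition \ref{SP} and the Poincar\'e pairing, but no new idea beyond your first argument is needed.
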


Alternatively, the nondegeneracy of pairing \Ref{SJ} follows from
Lemma \ref{lem Shap pm}, Proposition \ref{SP} in Section~\ref{sec EQ}
and the nondegeneracy of the Poincare pairing on $\Fla$.

\begin{cor}
\label{nondegS=}
The Shapovalov pairing
\vvn.3>
\be
\Sc:\DVe\</\Jc\DVe\<\<\ox\DV\</\Jc\DV\to\,(Z^{-1}\>\CZH)/\Jc\simeq\C
\vv.3>
\ee
induced by pairing \Ref{s=-} is nondegenerate. The \>$\Yn$-modules
\,$\DVe\</\Jc\DVe$ and\\ \,$\DV\</\Jc\DV$ are contravariantly dual:
\vvn.2>
\be
\Sc\bigl(\pho^+(X)\>f,g)\,=\,\Sc\bigl(f,\pho^-(\varpi(X))\>g\<\>\bigr)\,,
\vv.3>
\ee
for any \,$X\in\Yn$ \,and \,$f\<\in\DVe\</\Jc\DVe$, $g\in\DV\</\Jc\DV$.
\qed
\end{cor}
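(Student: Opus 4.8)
The plan is to deduce Corollary \ref{nondegS=} from the already-established structural results exactly as Corollary \ref{nondegS} was deduced, replacing $\V^+$ by $\DVe$ throughout and using the isomorphisms $\eta^=$ and $\eta^-$ in place of $\eta^+$ and $\eta^-$. First I would recall that by Lemma \ref{=-} the map $\Pit$ identifies $\DVe$ with $\DV$ as $\ty$-modules, and by Lemma \ref{thi+-} both are free $\CZH$-modules of rank $d_\bla$ in each weight; hence $\DVe\</\Jc\DVe$ and $\DV\</\Jc\DV$ are complex vector spaces of dimension $N^n$, and under the $\Yn$-action on these quotients we have $\eta^=\!:\DVe\</\Jc\DVe\to\Vy+$ and $\eta^-\!:\DV\</\Jc\DV\to\Vy-$ from the theorem immediately preceding the corollary.

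Next I would verify that the Shapovalov pairing $\Sc$ of \Ref{s=-} descends to the quotients. The pairing \Ref{s=-} takes values in $Z^{-1}\CZH$; modulo $\Jc$ (which sets $h=1$ and all $\si_i(\zzz)=0$), the element $Z=\prod_{i\ne j}(z_i-z_j+h)$ has a nonzero value, so $(Z^{-1}\CZH)/\Jc\simeq\C$, giving a well-defined bilinear form $\Sc:\DVe\</\Jc\DVe\ox\DV\</\Jc\DV\to\C$. The contravariance statement $\Sc(\pho^+(X)f,g)=\Sc(f,\pho^-(\varpi(X))g)$ for $X\in\Yn$ is inherited directly from Lemma \ref{SY} (applied with $X\in\ty$, then specialized at $h=1$), since the $\Yn$-action on the quotients is induced by $\pho^\pm$. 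So the only thing left is nondegeneracy of the descended form.

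For nondegeneracy, the argument is: transport everything to $V$ via $\eta^=$ and $\eta^-$. Under these isomorphisms the pairing $\Sc$ on $\DVe\</\Jc\DVe\ox\DV\</\Jc\DV$ corresponds, up to a nonzero scalar coming from the value of $Z^{-1}$ modulo $\Jc$ and possibly the normalizing factors in \Ref{thi=}, \Ref{thi-}, to the Shapovalov form $\Sc$ on $\Vy+\ox\Vy-$. By Proposition \ref{V0} the latter is the contravariant pairing between the irreducible $\Yn$-modules $\Vy\pm$, which is nondegenerate (it is the Shapovalov form on $V=(\C^N)^{\ox n}$ for which $\{v_I\}$ is orthonormal, restricted to an identification that is an isomorphism). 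One must check that $\Sc$ is not identically zero on the quotients — equivalently that the pairing \Ref{s=-} does not land entirely in $\Jc\cdot(Z^{-1}\CZH)$ — which follows by evaluating on a convenient pair such as $v^=_\bla$ and $v^-_\bla$ and using $\Pit v^=_\bla=v^-_\bla$ together with $\Sc(v_\IMI,v_\IMI)=1$; then irreducibility of $\Vy\pm$ forces full nondegeneracy because a nonzero $\Yn$-contravariant form on a product of irreducibles is nondegenerate. Alternatively, one can cite the geometric route mentioned after Corollary \ref{nondegS}: Lemma \ref{lem Shap pm}, Proposition \ref{SP}, and nondegeneracy of the Poincar\'e pairing on $\Fla$ together with $\DVe\simeq\DV$.

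The main obstacle I expect is the bookkeeping of scalar factors: tracking how the normalizations $\tfrac1{\la_1!\cdots\la_N!}$, the factor $\tfrac1{\Qc_\bla}$ in \Ref{thi=}, and the value of $Z^{-1}$ modulo $\Jc$ combine, so as to confirm that the induced form on the quotients is a \emph{nonzero} multiple of the Shapovalov form on $V$ rather than accidentally zero. Once nonvanishing on a single pair is checked, the $\Yn$-module-theoretic input (irreducibility of $\Vy\pm$ from Proposition \ref{V0}) does the rest automatically, so the corollary follows with essentially the same proof as Corollary \ref{nondegS}.
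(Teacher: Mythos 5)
Your argument is correct and follows exactly the route the paper intends: the corollary is stated with no proof because it is immediate from the preceding theorem (the isomorphisms $\eta^=$ and $\eta^-$ onto the irreducible, contravariantly dual modules $\Vy\pm$ of Proposition \ref{V0}) together with Lemma \ref{SY}, and your reduction to nonvanishing on a single pair via uniqueness of the contravariant form is the standard way to finish. The scalar bookkeeping you worry about is harmless for precisely the reason you state, and your fallback via Proposition \ref{SP=} mirrors the remark the paper makes after Corollary \ref{nondegS}.
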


\subsection{Yangian actions on \;$\xi_I^\pm$.}
\label{sec eigenvectors}
Denote
\vvn.3>
\beq
\label{AEF}
A_p^\pm(u)\>=\>\pho^\pm\bigl(A_p(u)\bigr)\,,\qquad
E_p^\pm(u)\>=\>\pho^\pm\bigl(E_p(u)\bigr)\,,\qquad
F_p^\pm(u)\>=\>\pho^\pm\bigl(F_p(u)\bigr)\,.
\vv.2>
\eeq

\begin{lem}
\label{lem det}
The series \,$A_N^\pm(u)$ act on $\Vz$ as the operator of multiplication by
\;$\prod_{j=1}^n\,\bigl(1+h\>(u-z_j)^{-1}\bigr)$\,.
\end{lem}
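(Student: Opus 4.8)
The plan is to unwind the definition of $A_N^\pm(u)$ all the way down to the matrices $L^\pm(u)$ and use the fact that for $\ib=\{1\lc N\}$ the series $M_{\iib}(u)=\qdet T(u)$ is the quantum determinant, whose image under $\pho^\pm$ is the ordinary determinant of the matrix $L^\pm(u)$ divided by $\prod_{a=1}^n(u-z_a)^N$ (with the appropriate quantum-determinant shift built into the product of the $L$-factors). Concretely, by \Ref{A} we have $A_N^\pm(u)=\pho^\pm(M_{\iib}(u/h))$ with $\ib=\{1\lc N\}$, and by \Ref{pho} each $\pho^\pm(T_{\ij}(u/h))$ equals $L^\pm_{\ij}(u)\prod_{a=1}^n(u-z_a)^{-1}$. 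Since the entries $T_{\ij}(v)$ satisfy the RTT relation \Ref{RTT}, one has the standard identity expressing the quantum determinant in terms of the Lax matrix: $\pho^\pm(\qdet T(u/h))=\rdet\bigl(L^\pm(u)\bigr)\big/\prod_{a=1}^n(u-z_a)^{N}$ up to the customary shift, where $\rdet$ denotes the "row determinant" with successively shifted arguments. The point is then purely computational: $L^\pm(u)$ is, by \Ref{Lpm}, an ordered product of the $N\times N$ matrices $u-z_j+h\,P^{(0,j)}$ acting on the $0$-th copy of $\C^N$.

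First I would compute the (quantum) determinant of a single factor $u-z_j+h\,P^{(0,j)}$. As an operator-valued $N\times N$ matrix in the $0$-slot, $P^{(0,j)}$ acts on $\C^N\ox V$ by permuting the $0$-th and $j$-th tensor legs; its eigenvalues in the relevant sense are $1$ (on the symmetric part) and $-1$ (on the antisymmetric part), so the matrix $u-z_j+h\,P^{(0,j)}$ has "determinant" $(u-z_j+h)(u-z_j)^{N-1}$ as an operator on $V$ — this is exactly the well-known formula $\qdet(u+P)=(u+1)u^{N-1}$ shifted by $z_j$ and scaled by $h$ (and it is essentially formula \Ref{YB}'s companion fact). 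Because the quantum determinant is multiplicative for products of Lax matrices whose auxiliary spaces are composed in the standard way — this is the content of \Ref{DlM} specialized to $\ib=\jb=\{1\lc N\}$, i.e. $\Dl(\qdet T(u))=\qdet T(u)\ox\qdet T(u)$, iterated over the $n$ evaluation factors — the determinant of the whole product $L^\pm(u)$ is $\prod_{j=1}^n(u-z_j+h)(u-z_j)^{N-1}$. Dividing by $\prod_{a=1}^n(u-z_a)^N$ gives $\prod_{j=1}^n\frac{u-z_j+h}{u-z_j}=\prod_{j=1}^n\bigl(1+h(u-z_j)^{-1}\bigr)$, which is the claimed scalar. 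The same computation works verbatim for both orderings, $L^+(u)$ and $L^-(u)$, since reversing the order of the factors does not change the product of their determinants; and since $A_N^\pm(u)$ is (the image of) a central element of the Yangian, it automatically acts as a scalar on each irreducible constituent, consistent with multiplication by this function on all of $\Vz$.

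The main obstacle, and the only place requiring care, is getting the quantum-determinant bookkeeping exactly right: the series $M_{\iib}(u)$ for $\ib=\{1\lc N\}$ is defined with shifted arguments $T_{\cdot,\cdot}(u),\,T_{\cdot,\cdot}(u-1),\dots,T_{\cdot,\cdot}(u-N+1)$, and under $\pho^\pm$ the argument $u$ is rescaled to $u/h$, so the shifts become $u/h,\,u/h-1,\dots$, i.e. in the $u$-variable they are $u,\,u-h,\dots,u-(N-1)h$. One must check that these shifts are precisely compensated by the way the matrix $u-z_j+h\,P^{(0,j)}$ has its "$(u+1)u^{N-1}$"-type determinant when the $h\,P$ term is present — in other words that the single-factor computation really does give $(u-z_j+h)(u-z_j)^{N-1}$ after accounting for the quantum shifts, rather than some other arrangement of $+h$'s. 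I expect this to come out cleanly because it is exactly the $\gl_N$ evaluation-module specialization of the classical identity $\qdet(1+R(u)) = $ the eigenvalue product, but verifying the shift conventions against the definitions \Ref{A}, \Ref{pho}, \Ref{Lpm} is the step I would write out in full in the actual proof. Everything else — multiplicativity, reversal invariance, and the final telescoping of the product — is routine given \Ref{DlM} and Lemma \ref{lem Shap pm}'s style of argument about absence of poles.
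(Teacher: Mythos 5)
Your argument is correct and follows essentially the same route as the paper: the paper's proof is exactly ``do the $n=1$ case directly, then use the coproduct formula \Ref{DlM} (which for $\ib=\jb=\{1\lc N\}$ says the quantum determinant is grouplike) to multiply over the $n$ evaluation factors,'' which is precisely your single-factor computation of $\qdet$ of $u-z_j+h\>P^{(0,j)}$ combined with multiplicativity. The shift bookkeeping you flag does come out as you expect, yielding $(u-z_j+h)/(u-z_j)$ per factor.
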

\begin{proof}
For \,$n=1$, \,the proof is straightforward. For \,$n>1$, the claim follows
from the coproduct formula~\Ref{DlM} and the \,$n=1$ \,case.
% see also Lemma~2.1 in \cite{NT}.
\end{proof}

\begin{cor}
\label{cor sym z}
Each of the images \,$\pho^\pm(\tbi)$ contains the subalgebra of
multiplication operators by symmetric polynomials in variables \,$\zzz$.
\end{cor}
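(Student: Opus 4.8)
The plan is to read off, from the single series $A_N^+(u)$ (resp.~$A_N^-(u)$), enough multiplication operators to reconstruct all symmetric polynomials in $\zzz$, invoking only Lemma \ref{lem det} and the inclusion $\C[h]\subset\tbi$.

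By Lemma \ref{lem det}, $A_N^\pm(u)=\pho^\pm(A_N(u))$ is the operator of multiplication by the rational function $g(u):=\prod_{j=1}^n\bigl(1+h\,(u-z_j)^{-1}\bigr)$. Writing $g(u)=1+\sum_{s\ge1}c_s\,u^{-s}$ as a power series in $u^{-1}$, each $c_s$ is a symmetric polynomial in $\zzz$ with coefficients in $\C[h]$, and since $g(u)|_{h=0}=1$, each $c_s$ with $s\ge1$ is divisible by $h$, say $c_s=h\,g_s$ with $g_s\in\Czh$ symmetric in $\zzz$. On the other hand $A_N(u)=1+\sum_{s\ge1}h^s\Bin_{N,s}\,u^{-s}$, so $\pho^\pm(h^s\Bin_{N,s})$ is multiplication by $c_s=h\,g_s$. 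Here is the key point: the generator of $\tbi$ is $h^{s-1}\Bin_{N,s}$, not $h^s\Bin_{N,s}$; since $\pho^\pm(h)$ is multiplication by $h$, which is injective on the free $\Czh$-module $\Vz$, the equality $\pho^\pm(h)\,\pho^\pm(h^{s-1}\Bin_{N,s})=\pho^\pm(h^s\Bin_{N,s})$ forces $\pho^\pm(h^{s-1}\Bin_{N,s})$ to be the multiplication operator by $g_s$. Thus $\pho^\pm(\tbi)$ contains multiplication by $g_s$ for every $s\ge1$, as well as multiplication by every polynomial in $h$; the argument is identical for the two signs.

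It then remains to check that the operators $g_s$ together with $\C[h]$ generate the algebra of multiplication operators by symmetric polynomials in $\zzz$. Expanding $g(u)=1+h\sum_{j}(u-z_j)^{-1}+O(h^2)$ and using $\sum_j(u-z_j)^{-1}=\sum_{s\ge1}p_{s-1}(\zzz)\,u^{-s}$, where $p_k$ denotes the $k$-th power sum, one finds $g_s=p_{s-1}(\zzz)+h\,r_s$ with $r_s\in\Czh$ symmetric in $\zzz$; a closer look at the expansion shows that each coefficient of a power of $h$ in $r_s$ is a symmetric polynomial in $\zzz$ of degree at most $s-2$, hence a polynomial in $p_1\lc p_{s-2}$. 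An induction on $s$ then shows that every $p_k$ lies in the subalgebra generated by $\C[h]$ and the $g_r$ (the base case being $p_1=g_2-\binom n2\,h$). Since $p_1,p_2,\dots$ generate $\Czs$, we conclude that $\pho^\pm(\tbi)$ contains the subalgebra of multiplication operators by symmetric polynomials in $\zzz$, and in fact by all of $\Czs\ox\C[h]$.

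The only step requiring care is the bookkeeping of powers of $h$ in the second paragraph: one must work with the actual generators $h^{s-1}\Bin_{N,s}$ of $\tbi$, carrying the shifted power $h^{s-1}$, so that their $\pho^\pm$-images are the honest symmetric polynomials $g_s$ rather than $h\,g_s$; everything else reduces to routine power-series manipulations.
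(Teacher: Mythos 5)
Your proposal is correct and follows essentially the same route as the paper: both arguments read off the coefficients of $A_N^\pm(u)$ via Lemma \ref{lem det}, observe that $\pho^\pm(h^{s-1}\Bin_{N,s})$ is multiplication by $p_{s-1}(\zzz)$ plus $h$ times a symmetric term of lower degree, and conclude by induction that all power sums, hence all symmetric polynomials, are obtained. The paper compresses the $h$-bookkeeping and the induction into one line, but the content is identical.
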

\begin{proof}
The elements \,$\pho^\pm(h^{s-1}\Bin_{N,s})\in\End(V)\ox\Czh$
\vv.1>
have the form \,$\sum_{a=1}^nz_a^{s-1}+\alb h\>g_{p,s}^\pm$\>, \,where
\,$g_{p,s}^\pm\in\End(V)\ox\Czh$ are symmetric in \,$\zzz$ and have
homo\-ge\-neous degree \,$s-2$. The claim follows.
\end{proof}

\begin{thm}
\label{AEFxi}
For each choice of the sign, $+$ or $-$, we have
\vvn.1>
\begin{gather}
\label{Axi}
A_p^\pm(u)\,\xi_I^\pm\,=\,\xi_I^\pm\;
\prod_{a=1}^p\,\prod_{i\in I_a\!}\;\Bigl(1+\frac h{u-z_i}\Bigr)\,,
\\[5pt]
\label{Exi}
E_p^\pm(u)\,\xi_I^\pm\,=
\sum_{i\in I_{p+1}\!}\,\frac{\xi_{I^{i\prime}}^\pm}{u-z_i}\,
\prod_{\satop{k\in I_{p+1}\!}{k\ne i}}\!\frac{z_i-z_k+h}{z_i-z_k}\;,
\\[2pt]
\label{Fxi}
F_p^\pm(u)\,\xi_I^\pm\,=
\sum_{j\in I_p\!}\;\frac{\xi_{I^{\jp}}^\pm}{u-z_j}\,
\prod_{\satop{k\in I_p\!}{k\ne j}}\,\frac{z_j-z_k-h}{z_j-z_k}\;,
\end{gather}
where the sequences \,$I^{\ip}$ and \,$I^{\jp}$ are defined as
follows: \,$I^{\ip}_a\!=I^{\jp}_a\!=I_a$ \,for \,$a\ne p,p+1$,
\,and \,$I^{\ip}_p\!=I_p\cup\iset$\>, \,$I^{\ip}_{p+1}\!=I_{p+1}-\iset$\>,
\,$I^{\jp}_p\!=I_p-\jset$\>, \,$I^{\jp}_{p+1}\!=I_{p+1}\cup\jset$\>.
\end{thm}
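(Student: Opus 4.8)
The plan is to prove \Ref{Axi}--\Ref{Fxi} for a single ``extremal'' index and then to propagate them to every $I\in\Il$ by means of the operators $\st^\pm_i$ of \Ref{stilde}. Write $\st^\pm_i=R^\pm_i(z_i-z_{i+1})\,\tau_i$, where $\tau_i$ is the transposition $z_i\leftrightarrow z_{i+1}$ of the variables and $R^\pm_i(x)=\bigl(x\,P^{(i,i+1)}\mp h\bigr)\bigl(x\mp h\bigr)^{-1}$. The first and main structural point is that $\st^\pm_i$ commutes with $\pho^\pm(X)$ for every $X\in\ty$, hence with $A_p^\pm(u)$, $E_p^\pm(u)$, $F_p^\pm(u)$. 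This follows from the Yang--Baxter relation \Ref{YB}: applied to the two adjacent factors $(u-z_i+h\,P^{(0,i)})$ and $(u-z_{i+1}+h\,P^{(0,i+1)})$ of $L^\pm(u)$ in \Ref{Lpm}, it pushes $R^\pm_i$ through these factors while reversing their order; combining this with conjugation by $P^{(i,i+1)}$ (which interchanges the two slot labels) shows that conjugation by $R^\pm_i(z_i-z_{i+1})$ has on $L^\pm(u)$ exactly the effect of the substitution $z_i\leftrightarrow z_{i+1}$, and dividing by the symmetric factor $\prod_a(u-z_a)$ and using \Ref{pho} gives the claim (cf.\ Lemma~\ref{pm & Yan}). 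Since moreover $\st^\pm_i(g\,\xi^\pm_I)=(\tau_i g)\,\st^\pm_i\xi^\pm_I=(\tau_i g)\,\xi^\pm_{s_i(I)}$ for any scalar function $g$ of $\zzz,h$, by \Ref{xi+si}, \Ref{xi-si}, applying $\st^\pm_i$ to both sides of \Ref{Axi}, \Ref{Exi} or \Ref{Fxi} at the index $I$ turns it into the same identity at $s_i(I)$, provided the right-hand sides are equivariant under the $S_n$-action $\si\colon I\mapsto\si(I)$, $z_a\mapsto z_{\si(a)}$. For \Ref{Axi} this is obvious; for \Ref{Exi}, \Ref{Fxi} it follows from $\si(I^{i\prime})=\si(I)^{\si(i)\prime}$, $\si(I^{\prime j})=\si(I)^{\prime\si(j)}$ and the fact that $\si$ carries the prefactor $\frac1{u-z_i}\prod_{k\in I_{p+1},\,k\ne i}\frac{z_i-z_k+h}{z_i-z_k}$ to the corresponding prefactor at $\si(I),\si(i)$. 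As $S_n$ acts transitively on $\Il$, it now suffices to prove \Ref{Axi}--\Ref{Fxi} for $I=\IMI$ in the $+$ case and for $I=\IMA$ in the $-$ case, where the base vector is $\xi^+_\IMI=v_\IMI$, resp.\ $\xi^-_\IMA=v_\IMA$.

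For \Ref{Axi} the base case is short. The operator $A_p^\pm(u)$ is the quantum determinant of the subalgebra $Y(\gl_p)\subset\Yn$ generated by the $T_{ij}\+s$ with $i,j\le p$, and $\qdet$ is group-like. For the $\pho^\pm$-action, the tensor factor $\C^N$ at a site $a$, viewed as a $Y(\gl_p)$-module, splits as the vector representation on $\mathrm{span}(v_1,\dots,v_p)$ (evaluated at $z_a$) plus a trivial module on $\mathrm{span}(v_{p+1},\dots,v_N)$; hence $A_p^\pm(u)$ acts on the pure tensor $v_\IMI$ (resp.\ $v_\IMA$) by the product over the sites of $1+h/(u-z_a)$ when the value there is $\le p$ and of $1$ otherwise, which is $\prod_{a=1}^p\prod_{i\in(\IMI)_a}(1+h/(u-z_i))$ (resp.\ the same with $\IMA$). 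This proves \Ref{Axi}, the case $p=N$ being also Lemma~\ref{lem det}. In particular each $\xi^\pm_I$ is a joint eigenvector of $A^\pm_1(u),\dots,A^\pm_N(u)$, and over the field $\C(\zzz,h)$ the $\xi^\pm_I$ form an eigenbasis, the eigenvalue tuple determining $I$ (the ratio of the $p$-th and $(p-1)$-st eigenvalues is $\prod_{i\in I_p}(1+h/(u-z_i))$, which recovers $I_p$).

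For \Ref{Exi} and \Ref{Fxi} one has from \Ref{EF} that $E_p^\pm(u)\,\xi^\pm_I=h^{-1}\,M^\pm_{\{1,\dots,p-1,p+1\},\{1,\dots,p\}}(u/h)\,\bigl(A_p^\pm(u)\bigr)^{-1}\xi^\pm_I$, with $\bigl(A_p^\pm(u)\bigr)^{-1}\xi^\pm_I$ known from the previous step. The quantum-minor commutation relations between the $A_q(u)$ and $E_p(v)$ show that $E_p^\pm(u)\,\xi^\pm_I$ stays in the $A^\pm$-eigenbasis and that only the $p$-th eigenvalue changes, by a factor $1+h/(u-z_i)$; by the injectivity above this forces $E_p^\pm(u)\,\xi^\pm_I=\sum_{i\in I_{p+1}}c_i(u)\,\xi^\pm_{I^{i\prime}}$ for scalar series $c_i(u)$, where $I^{i\prime}$ is the index of the theorem. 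The coefficients are then identified in the basis $\{v_J\}$: by the triangularity \Ref{xi+v}, \Ref{xi-v} and the normalizations \Ref{X+}, \Ref{X-}, for $i=\min I_{p+1}$ the vector $\xi^\pm_{I^{i\prime}}$ is a coordinate vector, and comparing the $v_{I^{i\prime}}$-components together with a residue at $u=z_i$ yields $c_i(u)=\frac1{u-z_i}\prod_{k\in I_{p+1},\,k\ne i}\frac{z_i-z_k+h}{z_i-z_k}$; the remaining $i$ follow from this one by the $\st^\pm$-argument again (now applied among indices of the same weight as $I^{i\prime}$). Formula \Ref{Fxi} is obtained in the same way with the blocks $p$ and $p+1$ interchanged and $M_{\jib}$ replaced by $M_{\ijb}$ in \Ref{EF}, and the $-$ case is the identical computation with $L^-(u)$ in place of $L^+(u)$ and base vector $v_\IMA$.

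The step I expect to be the main obstacle is this last identification. Unlike \Ref{Axi}, the output of \Ref{Exi}, \Ref{Fxi} is not a coordinate vector, so it cannot be read off $L^\pm(u)$ directly, and the rational prefactors $\prod_{k\in I_{p+1},\,k\ne i}(z_i-z_k+h)/(z_i-z_k)$ have to be produced by an explicit computation --- either the eigenvalue-plus-residue argument above, or, alternatively, a direct induction on $n=|\bla|$: split off the last tensor factor, apply the coproduct \Ref{DlM} to $M_{\{1,\dots,p-1,p+1\},\{1,\dots,p\}}$ and to $M_{\{1,\dots,p\},\{1,\dots,p\}}$ separately, use the $(n-1)$-site case for the action on the $\xi$'s there, and reassemble the $n$-site vectors $\xi^\pm_{I^{i\prime}}$ via the recursions \Ref{xi+si}, \Ref{xi-si} of Propositions~\ref{xi+},~\ref{xi-}. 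In either route the appearance of those prefactors is the only genuinely delicate bookkeeping, and it is forced in any case by the $\st^\pm_i$-equivariance of the first step.
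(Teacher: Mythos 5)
Your proposal is correct and follows essentially the same route as the paper's (itself only sketched) proof: reduction to the extremal index $\IMI$ (resp.\ $\IMA$) via the intertwining property of the operators $\st^\pm_i$ with the action $\pho^\pm$, the coproduct formula \Ref{DlM} plus the one-site case for \Ref{Axi}, and triangularity of the $\xi$'s plus an $S_n$-symmetry argument to pin down the coefficients in \Ref{Exi}, \Ref{Fxi}. The only cosmetic difference is that you extract the coefficient at $i=\min I_{p+1}$ by a residue argument, whereas the paper reads off $c_{i_{\max}}$ from the leading $v_J$-component; both work.
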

\begin{proof}
Formulae~\Ref{Axi}\,--\,\Ref{Fxi} can be obtained from the results of \cite{NT}
as a particular case. Here we outline a partially alternative proof. We
consider the case of the plus sign. The other case can be done similarly.

\vsk.2>
First observe that by formula \Ref{xi+si} and Lemma \ref{pm & Yan}, it suffices
to prove formulae~\Ref{Axi}\,--\,\Ref{Fxi} only for \,$I=\IMI$. In this case,
formula \Ref{Axi} for \,$n>1$ follows from the coproduct formula~\Ref{DlM}
and the \,$n=1$ \,case of \Ref{Axi}. The proof of \Ref{Axi} for \,$n=1$
\,is straightforward.

\vsk.2>
To get formula \Ref{Exi} for \,$I=\IMI$, observe that by formulae \Ref{xi+v},
\Ref{DlM}, \Ref{Axi}, we have
\,$E_p^+(u)\,\xi_\IMI^+=\sum_{\>i\in I_{p+1}\!}c_i\,\xi_{\IMIp}^+$\>.
The largest element of \,$I^{\<\>\min}_{p+1}$ \>equals
\,$i_{\max}=\la_1\lsym+\la_{p+1}$\>, the coefficient \,$c_{i_{\max}}$
can be calculated due to the triangularity property \Ref{xi+v},
and \,$c_{i_{\max}}$ has the required form. The coefficient \,$c_i$ for other
\,$i\in I_{p+1}$ can be obtained from \,$c_{i_{\max}}$ by permuting \,$z_i$ and
\,$z_{i_{\max}}$ because \,$\IMI$ is invariant under the transposition of
\,$i$ and \,$i_{\max}$\>. Thus all the coefficients \,$c_i$ are as required,
which proves formula \Ref{Exi}.

\vsk.2>
The proof of formula \Ref{Fxi} is similar.
\end{proof}

\subsection{Actions of the dynamical Hamiltonians}
\label{dynact}
\vsk.2>
\begin{lem}
\label{lphoX8}
For actions \,$\pho^\pm$, see \Ref{pho}, of the dynamical Hamiltonians
\,$\Xin_1\lc\Xin_N\in\tbi\!$ on \,$V\!$-valued functions of \,$\zzz,h$, we have
\begin{align}
\label{phoX8}
\pho^+(\Xin_i)\, &{}=\,\sum_{a=1}^n z_a\>e^{(a)}_{\ii}+\>
\frac h2\>(e_{\ii}-e_{\ii}^2)+\>
h\>\sum_{j=1}^N\,\sum_{1\leq a<b\leq n}\!e^{(a)}_{\ij}\>e^{(b)}_{\ji}-\>
h\>\sum_{j=1}^{i-1}\,G_{\ij}\,,
\\[2pt]
\pho^-(\Xin_i)\, &{}=\,\sum_{a=1}^n z_a\>e^{(a)}_{\ii}+\>
\frac h2\>(e_{\ii}-e_{\ii}^2)+\>
h\>\sum_{j=1}^N\,\sum_{1\leq b<a\leq n}\!e^{(a)}_{\ij}\>e^{(b)}_{\ji}-\>
h\>\sum_{j=1}^{i-1}\,G_{\ij}\,,
\notag
\\[-16pt]
\notag
\end{align}
where \;$G_{\ij}=\>e_{\ij}\>e_{\ji}\<-e_{\ii}=\>e_{\ji}\>e_{\ij}\<-e_{\jj}$
\,and \;$e_{\kl}=\>e_{\kl}^{(1)}\lsym+e_{\kl}^{(n)}$ \,for every \;$k,l$\>.
\end{lem}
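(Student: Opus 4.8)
The plan is to compute the image of the dynamical Hamiltonian $\Xin_i$ under $\pho^\pm$ by unwinding its definition \Ref{X8} term by term, using the explicit formulas \Ref{A}, \Ref{EF} for the generating series $A_p(u)$, $E_p(u)$, $F_p(u)$ and the Lax-type operators \Ref{Lpm}. Recall from \Ref{X8} that
\[
\Xin_i\,=\,h\>T_{\ii}\+2-\tfrac h2\,e_{\ii}(e_{\ii}-1)-h\>(G_{i,1}\lsym+G_{\ii-1})\,,
\]
so that the three summands of $\pho^\pm(\Xin_i)$ are: (1) $h\>\pho^\pm(T_{\ii}\+2)$; (2) the purely Lie-algebraic term $-\tfrac h2(e_{\ii}-e_{\ii}^2)$, which is already in the desired shape because $U(\gln)$ acts in the standard way for both $\pho^\pm$; and (3) $-h\sum_{j=1}^{i-1}G_{\ij}$, which is likewise standard. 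So the only thing to compute is $\pho^\pm(T_{\ii}\+2)$.

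First I would extract $T_{\ii}(u/h)$ from the matrix entry $L^\pm_{\ii}(u)\prod_{a}(u-z_a)^{-1}$ in \Ref{pho}. Expanding the product $L^+(u)=(u-z_n+hP^{(0,n)})\cdots(u-z_1+hP^{(0,1)})$, the $(\ii)$-entry as an element of $\End(V)$ is obtained by summing over ``paths'' through the factors; modulo $\prod_a(u-z_a)^{-1}$ one gets
\[
T_{\ii}(u/h)\ \leadsto\ 1+h\sum_{a=1}^n\frac{e_{\ii}^{(a)}}{u-z_a}
+h^2\!\!\sum_{1\le a<b\le n}\sum_{j=1}^N\frac{e_{\ij}^{(a)}e_{\ji}^{(b)}}{(u-z_a)(u-z_b)}+O(u^{-3})\,,
\]
where the ordering $a<b$ records that the $b$-th factor stands to the left of the $a$-th in $L^+(u)$; for $L^-(u)$ the ordering is reversed, giving $b<a$. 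Reading off the $u^{-2}$ coefficient of $T_{\ii}(u/h)=\sum_s h^s T_{\ii}\+s u^{-s}$ (note the rescaling $u\mapsto u/h$ in \Ref{pho}) yields, after multiplying by $h$,
\[
h\>\pho^+(T_{\ii}\+2)=\sum_{a=1}^n z_a\,e_{\ii}^{(a)}+h\sum_{j=1}^N\sum_{1\le a<b\le n}e_{\ij}^{(a)}e_{\ji}^{(b)}\,,
\]
and similarly with $a<b$ replaced by $b<a$ for $\pho^-$. The bookkeeping of which $z_a$'s appear, and with what sign, in the $u^{-2}$ term is the one place where care is needed: one must track that the ``diagonal'' contribution $\sum_a z_a e_{\ii}^{(a)}$ comes from choosing the $e_{\ii}$ term in exactly one factor and the scalar $(u-z_c)$ in all others, then reading the $u^{-2}$ coefficient of $h\cdot h\,e_{\ii}^{(a)}(u-z_a)^{-1}\prod_{c\ne a}(u-z_c)\cdot\prod_c(u-z_c)^{-1}$.

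Assembling the three pieces gives exactly \Ref{phoX8}, with the diagonal summand $\sum_a z_a e_{\ii}^{(a)}$ from term (1), the $\tfrac h2(e_{\ii}-e_{\ii}^2)$ from term (2), the double sum $h\sum_j\sum_{a<b}e_{\ij}^{(a)}e_{\ji}^{(b)}$ (resp.\ $\sum_{b<a}$) from term (1), and the $-h\sum_{j<i}G_{\ij}$ from term (3). The main obstacle is purely computational: correctly expanding the noncommutative product $L^\pm(u)$ to order $u^{-2}$ and verifying that the cross terms $e_{\ij}^{(a)}e_{\ji}^{(b)}$ assemble with the stated ordering and that the diagonal terms produce precisely $\sum_a z_a e_{\ii}^{(a)}$ with no leftover $h$-corrections; once the $u^{-2}$ coefficient of $L^\pm_{\ii}(u)\prod_a(u-z_a)^{-1}$ is pinned down, the rest is immediate. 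One can cross-check the $n=1$ case directly against Lemma \ref{lem det} and the $n=2$ case against the coproduct formula \Ref{DlM}, which both serve as sanity checks on the ordering conventions.
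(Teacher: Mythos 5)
Your proposal is correct and is essentially the paper's own (one-line) proof carried out in detail: the paper simply asserts that the lemma follows from the formulas for $L^\pm(u)$ and the definition of the dynamical Hamiltonians, and your expansion of $L^\pm_{\ii}(u)\prod_a(u-z_a)^{-1}$ to order $u^{-2}$, with the ordering $a<b$ versus $b<a$ coming from the opposite orderings of the factors in $L^+$ and $L^-$, is exactly the computation being invoked. The bookkeeping you describe checks out, so nothing further is needed.
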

\begin{proof}
The statement follow from formulae \Ref{Lpm} and \Ref{X}.
\end{proof}

The operators \,$\pho^\pm(\Xk_i)$, \,$\pho^\pm(\Xkp_i)$,
\,$\pho^\pm(\Xkm_{\bla\<\>,\<\>i})$, \,$i=1\lc N$,
can be found from formulae \Ref{XX}\,--\,\Ref{Xkm}.

\begin{lem}
\label{Xv}
For any \,$\bla$ \,and any \,$i=1\lc N$, we have
\;$\pho^+(\Xkp_i)\>v^+_\bla\<\>=\,\pho^+(\Xin_i)\>v^+_\bla$\>,
\vvn.3>
\be
\pho^+(\Xkm_{\bla\<\>,\<\>i})\>v^=_\bla\>=\,\pho^+(\Xin_i)\>v^=_\bla\>,\qquad
\pho^-(\Xkm_{\bla\<\>,\<\>i})\>v^-_\bla\<\>=\,\pho^-(\Xin_i)\>v^-_\bla\>.
\vv.3>
\ee
\end{lem}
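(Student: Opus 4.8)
The plan is to prove all three equalities simultaneously by checking that the terms distinguishing \,$\Xkp_i$ \,and \,$\Xkm_{\bla\<\>,\<\>i}$ \,from \,$\Xin_i$ \,annihilate the respective distinguished vectors. By \Ref{Xkp} and \Ref{Xkm}, the differences \,$\Xkp_i-\Xin_i$ \,and \,$\Xkm_{\bla\<\>,\<\>i}-\Xin_i$ \,are \,$\C$-linear combinations, with coefficients depending on \,$\kkk,h$, of the elements \,$G^{\>+}_{\ij}$ \,and \,$G^{\>-}_{\bla\<\>,\<\>\ij}$ \,respectively, the sums running over all \,$j\ne i$\>. All of these lie in the subalgebra \,$U(\gln)\subset\ty$ \,(for \,$G^{\>+}_{\ij}=G_{\ij}-e_{\ii}e_{\jj}$ \,this uses that \,$G_{\ij}\in U(\gln)$, noted in Section \ref{sec dyn hams}; \,$G^{\>-}_{\bla\<\>,\<\>\ij}$ \,is by definition a product of two generators of \,$\gln$), and under both actions \,$\pho^\pm$ \,the subalgebra \,$U(\gln)$ \,acts on \,$V\<$-valued functions of \,$\zzz,h$ \,by the pointwise \,$\gln$-action. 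Hence it suffices to show, for the pointwise action and all \,$j\ne i$, that \;$G^{\>+}_{\ij}\>v^+_\bla=0$, \;$G^{\>-}_{\bla\<\>,\<\>\ij}\>v^=_\bla=0$ \,and \;$G^{\>-}_{\bla\<\>,\<\>\ij}\>v^-_\bla=0$.

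For the first of these I would invoke Lemma \ref{v+gl}: \,$v^+_\bla$ \,lies in the irreducible \,$\gln$-submodule \,$L\subset V$ \,of highest weight \,$(n,0\lc 0)$, i.e.\ in the symmetric power \,$S^n(\C^N)$. Realizing \,$L$ \,as the space of homogeneous degree-$n$ polynomials in \,$x_1\lc x_N$ \,on which \,$e_{\ij}$ \,acts by \,$x_i\>\der/\der x_j$, one checks directly that for \,$i\ne j$ \,the operators \,$e_{\ij}e_{\ji}$ \,and \,$e_{\ii}+e_{\ii}e_{\jj}$ \,coincide on \,$L$ \,(both equal \,$x_i\>\der/\der x_i+x_i x_j\>\der^2/(\der x_i\,\der x_j)$), so that \,$G^{\>+}_{\ij}=e_{\ij}e_{\ji}-e_{\ii}-e_{\ii}e_{\jj}$ \,vanishes identically on \,$L$. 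In particular \,$G^{\>+}_{\ij}\>v^+_\bla=0$, whence \,$\pho^+(\Xkp_i)\>v^+_\bla=\pho^+(\Xin_i)\>v^+_\bla$.

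For the vectors \,$v^=_\bla$ \,and \,$v^-_\bla$ \,I would use Lemma \ref{v-gl}, which gives \,$e_{\ij}\>v^=_\bla=e_{\ij}\>v^-_\bla=0$ \,whenever \,$\la_i\ge\la_j$. If \,$\la_i\ge\la_j$, then by definition \,$G^{\>-}_{\bla\<\>,\<\>\ij}=e_{\ji}\>e_{\ij}$, which therefore kills both \,$v^=_\bla$ \,and \,$v^-_\bla$; if \,$\la_i<\la_j$, then Lemma \ref{v-gl} applied to the pair \,$(j,i)$ \,gives \,$e_{\ji}\>v^=_\bla=e_{\ji}\>v^-_\bla=0$, and since \,$G^{\>-}_{\bla\<\>,\<\>\ij}=e_{\ij}\>e_{\ji}$ \,in this case, both vectors are killed again. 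This yields the remaining two equalities. The proof is formal once Lemmas \ref{v+gl} and \ref{v-gl} are granted, the only computation being the one-line identity of operators on \,$S^n(\C^N)$ above; the single point that needs a little care is matching the two branches of the definition of \,$G^{\>-}_{\bla\<\>,\<\>\ij}$ \,to the hypothesis of Lemma \ref{v-gl}, so I do not expect a genuine obstacle.
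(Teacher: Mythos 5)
Your proof is correct and follows essentially the same route as the paper's: reduce to showing that $G^{\>+}_{\ij}$ kills $v^+_\bla$ via Lemma \ref{v+gl} and that $G^{\>-}_{\bla\<\>,\<\>\ij}$ kills $v^=_\bla$ and $v^-_\bla$ via Lemma \ref{v-gl}, with the branch of the definition of $G^{\>-}_{\bla\<\>,\<\>\ij}$ matched to the hypothesis $\la_i\ge\la_j$ exactly as in the paper. The only difference is that you make explicit the one-line computation on $S^n(\C^N)$ showing $G^{\>+}_{\ij}$ vanishes on the submodule of highest weight $(n,0\lc0)$, which the paper leaves implicit.
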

\begin{proof}
By Lemma \ref{v+gl}, we have
\;$(e_{\ii}\>e_{\jj}\<-e_{\ij}\>e_{\ji}\<+e_{\ii})\>v^+_\bla\<\<=\<\>0$
\vvn.1>
\,for any \,$i\ne j$\>, \,which yields the equality for \,$v^+_\bla$,
\vvn.08>
\>see \Ref{Xkp}. By Lemma \ref{v-gl}, we have \;$e_{\ij}\>v^-_\bla\<=\<\>0$
\,and \;$e_{\ij}\>v^=_\bla\<=\<\>0$ \,for any \;$i\<\>,j$ \>such that
\,$\la_i\ge\la_j$\>, \,which yields the equalities for \,$v^=_\bla$ \,and
\,$v^-_\bla$\>, \>see \Ref{Xkm}.
\end{proof}

\subsection{\qKZ/ difference connection}
\label{sec qkz}
\,Let
\vvn.1>
\be
R^{(\ij)}(u)\,=\,\frac{u+h\<\>P^{(\ij)}}{u+h}\;,\qquad
i,j=1\lc n\,,\quad i\ne j\,.\kern-3em
\vv.3>
\ee
Define operators \,$\Ko^\pm_1\lc\Ko^\pm_n\in\End(V)\ox\Czh$\>,
\vvn.4>
\begin{align*}
\Ko^+_i\>&{}=\,
R^{(\ii-1)}(z_i\<-z_{i-1})\,\dots\,R^{(i,1)}(z_i\<-z_1)\,\times{}
\\[2pt]
& {}\>\times\<\;\kk_1^{e_{1,1}^{(i)}}\!\dots\,\kk_N^{e_{N,N}^{(i)}}\,
R^{(i,n)}(z_i\<-z_n\<-\kp\<\>)\,\dots\,R^{(\ii+1)}(z_i\<-z_{i+1}\<-\kp\<\>)\,,
\\[9pt]
\Ko^-_i\>&{}=\,
R^{(\ii+1)}(z_i\<-z_{i+1})\,\dots\,R^{(i,n)}(z_i\<-z_n)\,\times{}
\\[2pt]
& {}\>\times\<\;\kk_1^{e_{1,1}^{(i)}}\!\dots\,\kk_N^{e_{N,N}^{(i)}}\,
R^{(i,1)}(z_i\<-z_1\<-\kp\<\>)\,\dots\,R^{(\ii-1)}(z_i\<-z_{i-1}\<-\kp\<\>)\,.
\\[-14pt]
\end{align*}
Consider the difference operators \,$\Kh^\pm_1\lc\Kh^\pm_n$
acting on \,$V\<$-valued
\vvn.4>
functions of \>$\zzz,h$\>, %% $\kkk$
\be
\Kh^\pm_i\>f(\zzz,h)\,=\,K^\pm_i(\zzz,h)\,f(\zzip)\,.
\vv.3>
\ee

\begin{thm}[\cite{FR}]
\label{thmfr}
The operators \;$\Kh^+_1\lc\Kh^+_n$ pairwise commute.
Similarly, the operators \;$\Kh^-_1\lc\Kh^-_n$ pairwise commute.
\qed
\end{thm}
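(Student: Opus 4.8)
The plan is to deduce Theorem~\ref{thmfr} from the quantum Yang--Baxter equation for the $R$-matrices $R^{(\ij)}(u)$ and the standard argument that identifies the operators $\Ko^\pm_i$ as transfer-matrix-type operators satisfying the commutation relations of a \qKZ/ connection. First I would record the two properties of $R(u)=(u+h\<\>P)/(u+h)$ that make everything work: the Yang--Baxter equation, which in the present normalization is an immediate consequence of \Ref{YB} after dividing through by the scalar factors, and the unitarity relation $R^{(\ij)}(u)\>R^{(\ji)}(-u)=\id$, which follows from $P^2=\id$. I would also note the inversion/crossing behavior of $R$ that lets the spectral parameters shifted by $-\kp$ be moved past the diagonal factors $\kk_1^{e_{1,1}^{(i)}}\!\dots\kk_N^{e_{N,N}^{(i)}}$, using that conjugation by $\prod_k\kk_k^{e_{k,k}^{(i)}}$ multiplies the off-diagonal entries of $R^{(\ij)}$ in the $i$-th tensor factor by ratios $\kk_a/\kk_b$; this is the point where the shift by $\kp$ in the definition of $\Ko^\pm_i$ is forced.

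The core step is to verify $\Ko^\pm_i(\zzip)\,\Ko^\pm_j(\zzz)\>=\>\Ko^\pm_j(z_1\lc z_j-\kp\lc z_n)\,\Ko^\pm_i(\zzz)$ for $i\ne j$, since this equality is exactly the statement that $\Kh^\pm_i$ and $\Kh^\pm_j$ commute as difference operators. By symmetry it suffices to treat $i<j$ for, say, the plus case. I would compare the two words in the $R$-matrices obtained from the two orders of composition and transform one into the other by repeated application of the Yang--Baxter equation, moving the $R$-factors carrying the spectral parameter $z_j$ (respectively $z_i$) past the block of $R$-factors coming from the other operator; the diagonal factors $\kk_1^{e_{1,1}^{(i)}}\!\dots\kk_N^{e_{N,N}^{(i)}}$ and $\kk_1^{e_{1,1}^{(j)}}\!\dots\kk_N^{e_{N,N}^{(j)}}$ commute with each other and are carried along, producing precisely the $-\kp$ shifts built into the definitions. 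The minus case is obtained from the plus case by the reflection $z_a\mapsto z_{n+1-a}$ together with $\Pi$, exactly as in the proof of Lemma~\ref{=-}, or by an entirely parallel computation.

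The main obstacle is purely bookkeeping: organizing the long product of $R$-matrices so that the Yang--Baxter moves are applied in the correct order and the telescoping of spectral parameters is transparent. This is a standard lemma in the theory of the \qKZ/ equation (originally \cite{FR}), so rather than carry out the full rearrangement I would cite the cited source and limit myself to checking that the present conventions---the opposite coproduct noted after \Ref{DlM}, the normalization $R(u)=(u+h\<\>P)/(u+h)$, and the ordering of factors in $L^\pm(u)$---match those under which the commutativity is proved there, so that Theorem~\ref{thmfr} applies verbatim.
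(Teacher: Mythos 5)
The paper gives no argument for Theorem~\ref{thmfr} at all: it is stated with the attribution to \cite{FR} and a \qed. So your ultimate move --- reduce the rearrangement to the cited source after checking that the $R$-matrix normalization, the ordering of factors, and the sign of the shift agree --- is exactly what the authors do, and the Yang--Baxter/unitarity sketch you add on top is the standard route one would take to make it self-contained.

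There is, however, a concrete slip in the identity you single out as the core step. With the paper's definition $\Kh^\pm_i f(\zzz)=\Ko^\pm_i(\zzz)\,f(\zzip)$, composing gives
\begin{equation*}
\Kh^\pm_i\Kh^\pm_j\,f(\zzz)\,=\,\Ko^\pm_i(\zzz)\;\Ko^\pm_j(z_1\lc z_i-\kp\lc z_n)\;
f(z_1\lc z_i-\kp\lc z_j-\kp\lc z_n)\,,
\end{equation*}
so commutativity is equivalent to
\begin{equation*}
\Ko^\pm_i(\zzz)\;\Ko^\pm_j(z_1\lc z_i-\kp\lc z_n)\,=\,
\Ko^\pm_j(\zzz)\;\Ko^\pm_i(z_1\lc z_j-\kp\lc z_n)\,,
\end{equation*}
i.e.\ the first factor is unshifted and the second is shifted in the \emph{first} factor's variable. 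In the identity you wrote, each $\Ko$ is shifted in its own variable; that is a different statement (and it is also not the consistency condition for the associated system of difference equations, where the left factor would be shifted in the other index's variable). The Yang--Baxter strategy is unaffected, but the telescoping of spectral parameters you describe only closes up for the correctly placed shifts, so this needs to be fixed before the rearrangement can be matched against \cite{FR}.
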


\begin{thm}[\cite{TV1}]
\label{thm qkz}
The operators \;$\Kh^+_1\lc\Kh^+_n$,
\,$\pho^+(\nabla^+_1)\lc\pho^+(\nabla^+_N)$ \,pairwise commute.
Similarly, the operators \;$\Kh^-_1\lc\Kh^-_n$,
\,$\pho^-(\nabla^-_{\<\bla\<\>,1})\lc\pho^-(\nabla^-_{\<\bla\<\>,\<\>N})$
\,pairwise commute.
\qed
\end{thm}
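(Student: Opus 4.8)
\textbf{Proof proposal for Theorem \ref{thm qkz}.}
The plan is to reduce the statement to a known compatibility between the \qKZ/ difference connection and the trigonometric dynamical connection, which is exactly the content of the main theorem of \cite{TV1}. First I would observe that by Lemma \ref{flat+-} and the gauge equivalences \Ref{gauge}, it suffices to check commutativity after conjugating $\nabla^+$ (resp.\ $\nabla^-_{\<\bla}$) back to the ungauged form $\nabla_i=\kp\>\ddk_i-\Xk_i$ of \Ref{dyneq}, provided one simultaneously conjugates the difference operators $\Kh^\pm_i$ by the same (operator-valued) gauge factors $\Tht^\pm$, $\Tht^-_\bla$. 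Since these gauge factors depend only on $\kkk$ and on the Cartan elements $e_{\ii}$, and the $R$-matrices $R^{(\ij)}(u)$ and the factors $\kk_1^{e_{1,1}^{(i)}}\!\dots\kk_N^{e_{N,N}^{(i)}}$ commute appropriately with them, this conjugation is harmless; so we are reduced to proving that $\Kh^+_1\lc\Kh^+_n$ commute with $\pho^+(\nabla_1)\lc\pho^+(\nabla_N)$, and similarly for the minus case.

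Next I would identify $\pho^\pm$ with the tensor-product evaluation $\ty$-module structure. The defining formulae \Ref{Lpm}, \Ref{pho} exhibit $L^\pm(u)$ as an ordered product of the elementary $L$-operators $u-z_a+h\>P^{(0,a)}$, which are precisely the images of $T(u)$ in the evaluation $\Yn$-modules at the points $z_a$ (up to the scalar normalization $\prod_a(u-z_a)^{-1}$ and the rescaling $u\mapsto u/h$ built into the definition of $\ty$). Thus $\V^+=(\Vz)^{S_n^+}$ with the action $\pho^+$ is, after this rescaling, a submodule of a tensor product of evaluation modules, and the dynamical Hamiltonians $\pho^+(\Xk_i)$ coincide with the operators appearing in the trigonometric dynamical connection of \cite{TV1} attached to that tensor product (the explicit form \Ref{phoX8} of Lemma \ref{lphoX8}, together with \Ref{XX}, matches the dynamical Hamiltonians of loc.\ cit.\ under the identification $e_{\ji}\ox t^{s-1}\leftrightarrow h^{s-1}T_{\ij}\+s$). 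The operators $\Kh^+_i$ are literally the \qKZ/ operators of \cite{FR} for the same tensor product, with step $\kp$ and with the "twist" $\diag(\kkk)$ inserted at each factor. The commutation $[\Kh^+_i,\Kh^+_j]=0$ is Theorem \ref{thmfr}; the cross-commutation $[\Kh^+_i,\pho^+(\nabla^+_j)]=0$ is then precisely the main compatibility theorem of \cite{TV1} for this tensor product. Finally, since $\pho^+$ commutes with the $S_n^+$-action (Lemma \ref{pm \& Yan}) and the $\Kh^+_i$ intertwine with it as well, all the operators descend to $\V^+$ and its weight subspaces, giving the claim. The minus case is handled identically, using the $S_n^-$-action, the opposite ordering in $L^-(u)$, and the correspondingly reversed \qKZ/ operators $\Kh^-_i$, together with $G^{\>-}_{\bla\<\>,\<\>\ij}$ in place of $G^{\>+}_{\ij}$ in \Ref{Xkm}.

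The main obstacle I anticipate is bookkeeping of conventions rather than any genuinely new computation: one must carefully match (i) the rescaling $u\mapsto u/h$ and the $\C[h]$-linear structure of $\ty$ against the $h$-free conventions of \cite{TV1,FR}; (ii) the direction of the ordered products in $L^\pm(u)$ and in $\Ko^\pm_i$ against the ordering conventions for the dynamical and \qKZ/ connections in those references; and (iii) the particular gauge transformation (noted after \Ref{gauge} and in the remark following Theorem \ref{thm qkz} in \cite{TV1}) relating the present normalization of the trigonometric dynamical connection to the one in \cite{TV1}. Once these identifications are pinned down, the statement is an immediate consequence of Theorems \ref{thmfr} and the cited result of \cite{TV1}, with Lemmas \ref{flat+-} and \ref{pm \& Yan} used only to transport flatness and equivariance to the subspaces $\V^+$, $\DVe$, $\DV$ and their $\gln$-weight components.
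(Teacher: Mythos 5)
Your proposal is correct and matches the paper, which gives no proof of Theorem \ref{thm qkz} at all but simply cites \cite{TV1}: the content of the theorem is exactly the compatibility of the \qKZ/ difference connection with the trigonometric dynamical connection established there, transported to the present conventions by the gauge transformations of Lemma \ref{flat+-} (which, as you note, act trivially on the $\Kh^\pm_i$ since $\Tht^\pm$ and $\Tht^-_\bla$ are functions of $\kkk$ and the total Cartan elements, hence commute with the weight-preserving $R$-matrices and diagonal twists). Your careful flagging of the convention-matching issues ($u\mapsto u/h$, product orderings, the gauge relating the two normalizations of the dynamical connection) is precisely the bookkeeping the paper leaves implicit.
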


\begin{cor}
\label{cor qkz}
The operators \;$\Ko^+_1|_{\kp=0}\>\lc\Ko^+_n|_{\kp=0}$\>,
\vvn.06>
\,$\pho^+(\Xkp_1)\lc\pho^+(\Xkp_N)$ \,pairwise commute.
Similarly, the operators \;$\Ko^-_1|_{\kp=0}\>\lc\Ko^-_n|_{\kp=0}$\>,
\,$\pho^-(\Xkm_{\<\bla\<\>,1})\lc\pho^-(\Xkm_{\<\bla\<\>,\<\>N})$
\,pairwise commute.
\end{cor}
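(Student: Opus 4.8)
The route I would take is to obtain Corollary~\ref{cor qkz} as the $\kp=0$ specialization of Theorem~\ref{thm qkz}. Recall that Theorem~\ref{thm qkz} already asserts pairwise commutativity of the \emph{entire} family $\Kh^+_1,\dots,\Kh^+_n,\pho^+(\nabla^+_1),\dots,\pho^+(\nabla^+_N)$ — so that the commutators of the $\Kh^+_i$ among themselves, of the $\pho^+(\nabla^+_j)$ among themselves, and the mixed ones all vanish — and this holds for every value of $\kp$. It therefore suffices to identify what each operator in the family degenerates to as $\kp\to0$.

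Both degenerations are immediate from the definitions. Since $\Kh^+_i f(\zzz,h)=\Ko^+_i(\zzz,h)\,f(\zzip)$ and the shift $z_i\mapsto z_i-\kp$ becomes the identity at $\kp=0$, the operator $\Kh^+_i$ specializes to multiplication by $\Ko^+_i|_{\kp=0}$. From $\nabla^+_j=\kp\,\ddk_j-\Xkp_j$ the differential term drops out, so $\pho^+(\nabla^+_j)$ specializes to $-\pho^+(\Xkp_j)$. Hence the $\kp=0$ specialization of the commuting family of Theorem~\ref{thm qkz} is precisely $\Ko^+_1|_{\kp=0},\dots,\Ko^+_n|_{\kp=0}$ together with $-\pho^+(\Xkp_1),\dots,-\pho^+(\Xkp_N)$, and pairwise commutativity is inherited; this is the first assertion. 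The second follows in exactly the same way from the minus-sign version of Theorem~\ref{thm qkz} and the identity $\nabla^-_{\bla,j}=\kp\,\ddk_j-\Xkm_{\bla,j}$, whose $\kp=0$ specialization is $-\pho^-(\Xkm_{\bla,j})$.

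The only point that requires a word of care — and the sole real obstacle, such as it is — is justifying the substitution $\kp=0$ in an identity valid a priori only for generic $\kp$. Here I would note that each operator in the family is regular at $\kp=0$: the parameter enters $\Kh^\pm_i$ and $\pho^\pm(\nabla^\pm_{\bla,j})$ only through the shift $z_j\mapsto z_j-\kp$, through the term $\kp\,\ddk_j$, and through $R$-matrix factors whose arguments have the form $z_i-z_a-\kp$, whose poles lie on the hyperplanes $z_i-z_a-\kp=-h$ and so avoid $\kp=0$ for generic $\zzz$. Thus the matrix entries of the relevant commutators are rational in $\kp$ with no pole at the origin, and their vanishing persists at $\kp=0$. (Alternatively one could extract the $\kp^0$-coefficient of the polynomial-in-$\kp$ identities supplied by Lemma~\ref{flat+-} and Theorem~\ref{thm qkz}, but passing through Theorem~\ref{thm qkz} delivers every commutator needed — among the multiplication operators $\Ko^\pm_i|_{\kp=0}$, among the images of the dynamical Hamiltonians, and between the two groups — at once.)
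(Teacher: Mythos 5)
Your argument is correct and is exactly the intended derivation: the paper states Corollary \ref{cor qkz} without proof as the evident $\kp=0$ specialization of Theorem \ref{thm qkz} (together with Lemma \ref{flat+-} for the commutators among the dynamical Hamiltonians), which is precisely what you carry out. Your added remark on regularity of all operators at $\kp=0$ for generic $\zzz$, justifying the substitution, is the right point to check and is handled correctly.
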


The commuting difference operators $\Kh^+_1\lc\Kh^+_n$ define the {\it \qKZ/
difference connection}. Similarly, the commuting difference operators
$\Kh^-_1\lc\Kh^-_n$ define another \qKZ/ difference connection.
Theorem \ref{thm qkz} says that the \qKZ/ difference connections
commute with the corresponding dynamical connections

\begin{prop}
\label{BK}
For every \;$X\!\in\tbk$, \,the operators
\;$\Ko^+_1|_{\kp=0}\>\lc\Ko^+_n|_{\kp=0}$ \,commute with \;$\pho^+(X)$,
\,and the operators \;$\Ko^-_1|_{\kp=0}\>\lc\Ko^-_n|_{\kp=0}$
\,commute with \;$\pho^-(X)$.
\end{prop}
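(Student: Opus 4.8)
The plan is to deduce Proposition~\ref{BK} from Corollary~\ref{cor qkz} together with the fact that the images $\pho^\pm(\tbk)$ are generated by the operators coming from the dynamical Hamiltonians and from the center of the Yangian. More precisely, I would first recall that $\tbk$ is generated over $\C[h]$ by $\C[h]$ and the coefficients of the series $\Bk_p(u)$, $p=1\lc N$. The key observation is that modulo the subalgebra $U(\h)\subset\tbk$, the whole Bethe algebra $\tbk$ is, up to the central elements $h^{s-1}\Bin_{N,s}$ (equivalently the coefficients of $A_N(u)$), generated by the dynamical Hamiltonians $\Xk_1\lc\Xk_N$: indeed the quadratic elements $\Sk_{i,2}$ together with $U(\h)$ recover $T_{\ii}\+2$ via \Ref{X}, and by \Ref{S12} the elements $\Sk_{i,s}$, $i=1\lc N$, $s>0$, span the same space as the $\Bk_{p,s}$; an induction on $s$ using the commutation relations \Ref{ijkl} shows that $\tbk$ is generated as an algebra by $U(\h)$, the $\Xk_i$, and the central series $A_N(u)$.

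Given this generation statement, it then suffices to check that each of the operators $\Ko^+_j|_{\kp=0}$ commutes with: (i) $\pho^+(U(\h))$, (ii) $\pho^+(\Xk_i)$ for all $i$, and (iii) $\pho^+(A_N(u))$; and likewise with superscript $-$. Item (ii) is exactly Corollary~\ref{cor qkz}, once one accounts for the difference between $\Xk_i$ and $\Xkp_i$: by \Ref{XX} and \Ref{Xkp} these differ by combinations of the elements $e_{\ii}e_{\jj}$, i.e.\ by elements of $U(\h)$ multiplied by scalars, so (ii) reduces to (i) plus Corollary~\ref{cor qkz}. Item (i) follows because $\Ko^\pm_j|_{\kp=0}$ is built from the $R$-matrices $R^{(\ij)}(z_i-z_j)$ and the diagonal operator $\kk_1^{e^{(j)}_{1,1}}\!\dots\kk_N^{e^{(j)}_{N,N}}$, all of which commute with the diagonal (weight) action of $U(\h)$ since $R$-matrices preserve weights and the diagonal factor commutes with all $e_{\ii}$. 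Item (iii): by Lemma~\ref{lem det}, $\pho^\pm(A_N(u))$ acts as multiplication by $\prod_{j=1}^n(1+h(u-z_j)^{-1})$, a symmetric scalar function of $\zzz$; since $\Ko^\pm_j|_{\kp=0}$ shifts no variables (the shift $\kp$ has been set to zero) and its matrix entries are rational in $\zzz,h$, it commutes with multiplication by symmetric functions of $\zzz$ — equivalently, one may invoke Corollary~\ref{cor sym z}.

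The main obstacle I anticipate is item (ii), namely ensuring that Corollary~\ref{cor qkz} really yields commutation with all of $\tbk$ and not merely with the $N$ specific operators $\pho^\pm(\Xkpm_{\bla\<\>,\<\>i})$. This is where the generation argument of the first paragraph must be made precise: one needs that the associated graded of $\tbk$ (with respect to a suitable filtration, e.g.\ by the $u^{-s}$-degree) is generated by the symbols of $U(\h)$, of the $\Xk_i$, and of the central series — a statement that is essentially the classical fact, used for Gaudin/Bethe algebras, that the Bethe subalgebra is ``generated in low degrees'' modulo the center. Alternatively, and perhaps more cleanly, one can avoid this by a limiting/deformation argument: Theorem~\ref{thm qkz} gives commutation of $\Kh^\pm_i$ with the full dynamical connection $\nabla^\pm$ for \emph{all} $\kp$, hence with the operator $\pho^\pm(\Xkpm_i)$ obtained as the $\kp^0$-coefficient; but the Bethe algebra $\tbk$ is, by \cite{TV1}, precisely the algebra of coefficients (in $\kk$) of the dynamical connection together with the center, so commutation with the connection for all $\kk$ forces commutation with every element of $\tbk$. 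I would use whichever of these two routes is most consistent with the conventions already fixed in the paper; the rest of the argument is routine bookkeeping with $R$-matrices and weight-preservation.
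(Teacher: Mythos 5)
Your reduction hinges on the claim that \,$\tbk$ (or at least its image \,$\pho^\pm(\tbk)$) is generated by \,$U(\h)$, the dynamical Hamiltonians \,$\Xk_1\lc\Xk_N$, and the central series \,$A_N(u)$. That claim is not proved in your sketch and I do not believe it is true: by \Ref{S} and \Ref{X}, the Hamiltonians \,$\Xk_i$ involve only the quadratic components \,$\Sk_{i,2}$ (equivalently the \,$\Bk_{p,2}$) together with elements of \,$U(\gln)$, whereas \,$\tbk$ is generated by \,$h^{s-1}\Bk_{p,s}$ for \emph{all} \,$s>0$. Corollary \ref{generate} shows that on a weight space \,$\Vl$ one genuinely needs the generators \,$\pho^\pm(\Sk_{p,s})$ up to \,$s=\la_p$, so as soon as some \,$\la_p\ge3$ the subalgebra generated by \,$U(\h)$, the \,$\Xk_i$ and the center is a priori strictly smaller than \,$\tbk(\Vl)$. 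The "induction on $s$ using \Ref{ijkl}" you invoke would have to produce \,$T_{\ii}\+3,T_{\ii}\+4,\dots$ from degree-$\le2$ data, and no such mechanism is available inside a commutative subalgebra. Your alternative route has the same gap: the coefficients in \,$\kk$ of the dynamical connection again yield only \,$U(\gln)$-elements and the \,$\Sk_{i,2}$, so commutation with the connection for all \,$\kk$ does not force commutation with all of \,$\tbk$. The remaining items (i) and (iii) of your checklist are fine, but the proof does not close.

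The paper's argument is entirely different and avoids any generation statement: by \Ref{Lpm}, \Ref{pho} and \Ref{Bp} one has the identity
\be
\Ko^\pm_i|_{\kp=0}\,=\,\Bigl(\pho^\pm\bigl(\Bk_1(u)\bigr)\,
\prod_{j=1}^n\,\frac{u-z_j}{u-z_j+h}\Bigr)\Big|_{\,u=z_i}\,,
\ee
i.e.\ the operators \,$\Ko^\pm_i|_{\kp=0}$ are scalar multiples of specializations of \,$\pho^\pm\bigl(\Bk_1(u)\bigr)$, so they commute with \,$\pho^\pm(X)$ for every \,$X\!\in\tbk$ simply because the Bethe algebra is commutative (Theorem \ref{BY}). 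If you want to salvage your approach, you would need to replace the dynamical Hamiltonians by the full family \,$\Bk_1(u)$ evaluated at all \,$u$ — which is exactly the observation above.
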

\begin{proof}
By formulae \Ref{Lpm}, \Ref{pho}, \Ref{Bp}, we have
\vvn.1>
\be
\Ko^\pm_i|_{\kp=0}\,=\,\Bigl(\pho^\pm\bigl(\Bk_1(u)\bigr)\,
\prod_{j=1}^n\,\frac{u-z_j}{u-z_j\<+h}\>\Bigr)\Big|_{\>u=z_i}\>,
\qquad i=1\lc n\,,
\vv.1>
\ee
so the statement follows from Theorem \ref{BY}.
Alternatively, the proposition follows from
the Yang-Baxter equation \Ref{YB}.
\end{proof}

\section{Equivariant cohomology of the cotangent bundles of\\
partial flag varieties}
\label{sec EQ}

\subsection{Partial flag varieties}
\label{sec Partial flag varieties}
For \,$\bla\in\Z^N_{\geq 0}$, \,$|\bla|=n$, consider the partial flag variety
\;$\Fla$ parametrizing chains of subspaces
\be
0\,=\,F_0\subset F_1\lsym\subset F_N =\,\C^n
\ee
with \;$\dim F_i/F_{i-1}=\la_i$, \;$i=1\lc N$.
Denote by \,$\tfl$ the cotangent bundle of \;$\Fla$.

\vsk.2>
Let \,$T^n\!\subset GL_n=GL_n(\C)$ \,be the torus of diagonal matrices.
The groups \,$T^n\!\subset GL_n$ act on \;$\C^n\<$ and hence on \,$\tfl$.
Let the group \;$\C^*\<$ act on \,$\tfl$ by multiplication in each fiber.

\vsk.2>
The set of fixed points \,$(\tfl)^{T^n\times\C^*}\!$ of the torus action lies
in the zero section of the cotangent bundle and consists of the coordinate
flags \,$F_I=(F_0\lsym\subset F_N)$, \,$I=(I_1\lc I_N)\in\Il$, where \,$F_i$
\,is the span of the standard basis vectors \;$u_j\in\C^n$ with
\,$j\in I_1\lsym\cup I_i$. The fixed points are in a one-to-one correspondence
with the elements of \;$\Il$ and hence with the basis vectors \,$v_I$ of
\,$V_\bla$, see Section \ref{sec gln}.

\vsk.3>
We consider the $\GLC$-equivariant cohomology algebra
\vvn.3>
\be
H_\bla\,=\,H^*_{GL_n\times\C^*}(\tfl;\C)\,.
\vv.3>
\ee
Denote by $\Ga_i=\{\ga_{i,1}\lc\ga_{i,\la_i}\}$ the set of
the Chern roots of the bundle over $\Fla$ with fiber $F_i/F_{i-1}$.
Let \;$\Gmm=(\Ga_1\<\>\lsym;\Ga_N)$. Denote by $\zb=\{\zzz\}$ the Chern roots
corresponding to the factors of the torus $T^n$. Denote by $h$ the Chern root
corresponding to the factor $\C^*$ acting on the fibers of $\tfl$
by multiplication. Then
\vvn-.2>
\beq
\label{Hrel}
H_\bla\,=\,\Czghl\>\Big/\Bigl\bra\,
\prod_{i=1}^N\prod_{j=1}^{\la_i}\,(u-\ga_{\ij})\,=\,\prod_{a=1}^n\,(u-z_a)
\Bigr\ket\,.
\eeq
The cohomology $H_\bla$ is a module over $H^*_{GL_n\times\C^*}({pt};\C)=\CZH$.

\vsk.2>
For $A\subset\{1\lc n\}$ denote $\zb_A=\{z_a,\,a\in A\}$, and
for $I=(I_1\lc I_N)\in\Il$ denote \,$\zb_I=(\zb_{I_1}\lsym;\zb_{I_N})$\>.
Set
\vvn.3>
\beq
\label{zeta}
\zla:H_\bla\to\,\Czhl\,,\qquad
[f(\zb;\Gmm;h)]\,\mapsto\,f(\zb;\zb_\IMI;h)\,.\kern-1em
\vv.3>
\eeq
Clearly, \,$\zla$ \,is an isomorphism of \;$\CZH$-algebras.

\vsk.2>
Let $\Jc\subset H_\bla$ be the ideal generated by the relations $h=1$ and
$\si_i(\zzz)=0$, \,$i=1\lc n$. Then \,$H_\bla/\Jc=H^*(\tfl;\C)$\,.

\subsection{Equivariant integration}
We will need the {\it integration on \;$\Fla$ map}
\vvn.2>
\be
\int\!:\>H_\bla\to\,H^*_{GL_n\times\C^*}({pt},\C)\,,
\vv-.3>
\ee
that is, the composition
\vvn-.1>
\be
H_\bla\>=\>H^*_{GL_n}(\tfl;\C)\ox\C[h]\,\xto{\;\int_{\Fla}\!\!\ox\id\;}\,
H^*_{GL_n}({pt};\C)\ox\C[h]\>=\>H^*_{GL_n\times\C^*}({pt};\C)\,.
\vv.3>
\ee

\vsk.2>
The Atiyah-Bott equivariant localization theorem \cite{AB} gives
the integration on \,$\Fla$ map in terms of the fixed point set
$(\tfl)^{T^n\times\C^*}$: for any $[f(\zb;\Gmm;h)]\in H_\bla$\>,
\vvn.3>
\beq
\label{int}
\int [f]\,=\,(-1)^{\sum_{i<j}\la_i\la_j}
\sum_{I\in\Il}\frac{f(\zb;\zb_I;h)}{R(\zb_I)}\;,
\eeq
where \,$R(\zb_I)$ \,is given by \Ref{R}. Clearly, the right-hand side
in \Ref{int} lies in $\CZH$. The integration on \,$\Fla$ map induces
the {\it Poincare pairing on\/} \,$\Fla$,
\beq
\label{poinc}
H_\bla\ox H_\bla\to\CZH\,,\qquad [f]\ox[\<\>g\<\>]\mapsto\int\>[fg]\;.
\eeq
After factorization by the ideal $\Jc$ we obtain the nondegenerate Poincare
pairing
\vvn.3>
\beq
\label{PP over C}
H^*(\tfl;\C)\ox H^*(\tfl;\C)\,\to\,\C\,.
\eeq

\vsk.3>
We will also use the {\it integration on \,$\tfl$ map},
\vvn.3>
\begin{gather}
\intt\!:\>H^*(\tfl;\C)\,\to\,Z^{-1}\>\CZH\,,
\notag
\\[4pt]
\label{intt}
\intt\>[f]\,=\,(-1)^{\sum_{i<j}\la_i\la_j}
\sum_{I\in\Il}\frac{f(\zb;\zb_I;h)}{Q(\zb_I)R(\zb_I)}\;.
\\[-14pt]
\notag
\end{gather}
Here \;$Z=\prod_{i\ne j}(z_i-z_j+h)$\>, cf.~\Ref{Z}.
\vvn.08>
Notice that \;$Q(\zb_I)\>R(\zb_I)$ is the Euler class of the tangent space
at the point $F_I\in\tfl$. This integration map was used in \cite{BMO},
see also \cite{HP}. The integration on \,$\tfl$ map induces
the {\it Poincare paring on\/} \,$\tfl$,
\vvn.2>
\beq
\label{intnew}
H_\bla\ox H_\bla\to Z^{-1}\>\CZH\,,\qquad
[f]\ox[\<\>g\<\>]\mapsto\intt\>[fg]\;,
\vv.1>
\eeq
which will be called the {\it Poincare paring on\/} \,$\tfl$.

\begin{example}
Let \,$N=n=2$ \,and \,$\bla=(1,1)$. Then
\vvn.2>
\begin{align*}
\int\>[\>1\>]\,=\,0\,, &\kern3em \int\>[\>\ga_{1,1}\>]\,=\,-\>1\,,
%\qquad\int\>[\>\ga_{2,1}\>]\,=\,1\,,
\\[4pt]
\intt\>[\>1\>]\,=\,\frac2{(z_1-z_2+h)\>(z_2-z_1+h)}\;, &\kern3em
\intt\>[\>\ga_{1,1}\<\>]\,=\,\frac{z_1+z_2-h}{(z_1-z_2+h)\>(z_2-z_1+h)}\;.
%\intt\>[\>\ga_{2,1}\<\>]\,=\,\frac{z_1+z_2+h}{(z_1-z_2+h)\>(z_2-z_1+h)}
\end{align*}
\end{example}

\subsection{$H_\bla$ and \,$\Vl$, \,$\DLe$, \,$\DL$}
\label{secHla}
Consider the maps \;$\nu^+_\bla\<:H_\bla\to\Vl$,
\;$\nu^=_\bla\<:H_\bla\to\DLe$, \,and \;$\nu^-_\bla\<:H_\bla\to\DL$,
\vvn-.4>
\begin{gather}
\label{nu+}
\nu^+_\bla:\,[f]\,\mapsto\, %% [f(\zb;\Gmm;h)]
\sum_{I\in\Il}\,\frac{f(\zb;\zb_I;h)\>Q(\zb_I)}{R(\zb_I)}\,\,\xi_I^+\,,
\\[3pt]
\label{nu=}
\nu^=_\bla:\,[f]\,\mapsto\,
\sum_{I\in\Il}\,\frac{f(\zb;\zb_I;h)}{R(\zb_I)}\,\,\xi_I^+\,.
\\[3pt]
\label{nu-}
\nu^-_\bla:\,[f]\,\mapsto\,
\sum_{I\in\Il}\,\frac{f(\zb;\zb_I;h)}{R(\zb_I)}\,\,\xi_I^-\,.
\\[-14pt]
\notag
\end{gather}
In particular, \;$\nu^+_\bla:[\>1\>]\mapsto v^+_\bla$\>,
\,\;$\nu^=_\bla:[\>1\>]\mapsto v^=_\bla$\>,
\,\;$\nu^-_\bla:[\>1\>]\mapsto v^-_\bla$\>, \,see \Ref{v+xi}.
\vvn.2>
We have \;$\nu^+_\bla\<=\<\>\thi^+_\bla\,\zla$\>,
\;$\nu^=_\bla\<=\<\>\thi^=_\bla\,\zla$\>,
\;$\nu^-_\bla\<=\<\>\thi^-_\bla\,\zla$\>,
\vvn.16>
where the maps \;$\thi^+_\bla$, \;$\thi^=_\bla$, \;$\thi^-_\bla$ \vvn.06> and
\,$\zla$ are given by formulae \Ref{thi+}\,--\,\Ref{thi-} and \Ref{zeta}.
Observe that
\vvn.2>
\beq
\label{nu=-}
\nu^-_\bla=\,\Pit\;\nu^=_\bla\,,
\vv-.1>
\eeq
where \;$\Pit$ \,is given by formula \Ref{Pit}.

\begin{lem}
\label{nu+-}
The maps \;$\nu^+_\bla$, \;$\nu^=_\bla$, \;$\nu^-_\bla$ are isomorphisms of
\;$\CZH$-modules.
\end{lem}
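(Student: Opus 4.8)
The plan is to obtain Lemma \ref{nu+-} as a formal consequence of the factorizations $\nu^+_\bla=\thi^+_\bla\,\zla$, $\nu^=_\bla=\thi^=_\bla\,\zla$, $\nu^-_\bla=\thi^-_\bla\,\zla$ recorded just before the statement: each of the three maps is a composition of $\zla$ with one of $\thi^+_\bla,\thi^=_\bla,\thi^-_\bla$, and both factors are already known to be $\CZH$-linear bijections, so the assertion follows once the factorizations are confirmed.

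First I would check the factorizations explicitly. By \Ref{zeta} the map $\zla$ sends a class $[f(\zb;\Gmm;h)]\in H_\bla$ to the polynomial $f(\zb;\zb_\IMI;h)\in\Czhl$, whose value at $\zb_I$, in the sense used in Proposition \ref{thixi}, is $f(\zb;\zb_I;h)$ for each $I\in\Il$. Substituting this into formulae \Ref{thi+xi}, \Ref{thi=xi}, \Ref{thi-xi} reproduces verbatim the defining expressions \Ref{nu+}, \Ref{nu=}, \Ref{nu-}, which establishes the three identities.

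Next I would invoke the available structural results. The map $\zla$ is an isomorphism of $\CZH$-algebras, hence of $\CZH$-modules, as stated immediately after \Ref{zeta}; and by Lemma \ref{thi+-} the maps $\thi^+_\bla$, $\thi^=_\bla$, $\thi^-_\bla$ are isomorphisms of the $\CZH$-module $\Czhl$ onto $\Vl$, $\DLe$, and $\DL$ respectively. Since a composition of two $\CZH$-module isomorphisms is again a $\CZH$-module isomorphism, the maps $\nu^+_\bla$, $\nu^=_\bla$, $\nu^-_\bla$ are isomorphisms of $\CZH$-modules, which is precisely the claim.

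All the real content sits in Proposition \ref{thixi} and Lemma \ref{thi+-}, so I do not expect a substantial obstacle. The only point demanding a little care is reconciling the two descriptions of evaluation at $\zb_I$ --- the one built into $\zla$ (substitute $\zb_\IMI$ for the Chern roots $\Gmm$ and then relabel) with the one used in the statement of Proposition \ref{thixi} --- but this is a bookkeeping check about symmetric functions in the block variables, not a genuine difficulty.
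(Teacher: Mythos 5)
Your argument is correct and coincides with the paper's: the paper's proof of Lemma \ref{nu+-} is exactly the observation that $\nu^+_\bla=\thi^+_\bla\,\zla$, $\nu^=_\bla=\thi^=_\bla\,\zla$, $\nu^-_\bla=\thi^-_\bla\,\zla$ together with Lemma \ref{thi+-} and the fact that $\zla$ is a $\CZH$-algebra isomorphism. Your extra verification of the factorizations via Proposition \ref{thixi} is harmless bookkeeping that the paper leaves implicit.
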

\begin{proof}
The claim follows from Lemma \ref{thi+-}.
\end{proof}

\begin{prop}
\label{SP}
The Shapovalov form and Poincare pairing on \,$\Fla$ are related
by the formula
\vvn-.2>
\be
\Sc\bigl(\nu^+_\bla[f],\nu^-_\bla[\<\>g\<\>]\bigr)\,=\,
(-1)^{\sum_{i<j}\la_i\la_j}\int [f][\<\>g\<\>]\;.
\vv.4>
\ee
\end{prop}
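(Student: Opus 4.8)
The plan is to prove the identity by a direct computation, expanding both arguments of the Shapovalov pairing in the bases $\{\xi^+_I\}$ and $\{\xi^-_I\}$ and invoking the orthogonality relation of Theorem~\ref{thm S(+,-)} together with the localization formula \Ref{int}.

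First I would substitute the definitions \Ref{nu+} and \Ref{nu-} of $\nu^+_\bla$ and $\nu^-_\bla$, so that
\[
\Sc\bigl(\nu^+_\bla[f],\nu^-_\bla[\<\>g\<\>]\bigr)\,=\,\Sc\Bigl(\>\sum_{I\in\Il}\frac{f(\zb;\zb_I;h)\,Q(\zb_I)}{R(\zb_I)}\,\xi^+_I\;,\;\sum_{J\in\Il}\frac{g(\zb;\zb_J;h)}{R(\zb_J)}\,\xi^-_J\Bigr)\,.
\]
Since all vectors involved lie in $V\!\ox\C(\zzz,h)$ and the Shapovalov form extends $\C(\zzz,h)$-bilinearly, the scalar coefficients can be pulled out of the pairing, turning the right-hand side into the double sum $\sum_{I,J}\frac{f(\zb;\zb_I;h)\,Q(\zb_I)}{R(\zb_I)}\cdot\frac{g(\zb;\zb_J;h)}{R(\zb_J)}\,\Sc(\xi^+_I,\xi^-_J)$. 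Applying formula \Ref{S+-}, namely $\Sc(\xi^+_I,\xi^-_J)=\dl_{\IJ}\,R(\zb_I)/Q(\zb_I)$, collapses this to a single sum over $I$, and the three factors $Q(\zb_I)/R(\zb_I)$, $1/R(\zb_I)$ and $R(\zb_I)/Q(\zb_I)$ combine to leave exactly $1/R(\zb_I)$, giving
\[
\Sc\bigl(\nu^+_\bla[f],\nu^-_\bla[\<\>g\<\>]\bigr)\,=\,\sum_{I\in\Il}\frac{f(\zb;\zb_I;h)\,g(\zb;\zb_I;h)}{R(\zb_I)}\,.
\]
Finally, since the substitution $\Gmm\mapsto\zb_I$ is a ring homomorphism we have $(fg)(\zb;\zb_I;h)=f(\zb;\zb_I;h)\,g(\zb;\zb_I;h)$, and $[f][\<\>g\<\>]=[fg]$ in $H_\bla$, so the last sum equals $(-1)^{\sum_{i<j}\la_i\la_j}\int[fg]$ by the Atiyah--Bott localization formula \Ref{int}. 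This is the claimed identity.

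There is essentially no serious obstacle here: the statement is a bookkeeping identity matching the orthogonality of the vectors $\xi^\pm_I$ to the fixed-point localization formula. The only point requiring care is that the individual terms $\Sc(\xi^+_I,\xi^-_J)$ are merely rational functions of $\zzz,h$ with poles on the hyperplanes $z_i-z_j+h=0$, so the term-by-term manipulation should be carried out over the fraction field $\C(\zzz,h)$, with the observation that the final sum lands back in $\CZH$ (as it must, by Lemma~\ref{lem Shap pm}, or simply because the right-hand side of \Ref{int} does).
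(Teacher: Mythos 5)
Your proof is correct and is exactly the argument the paper intends: the paper's own proof simply cites formulae \Ref{S+-}, \Ref{int}, \Ref{nu+}, \Ref{nu-} and Lemma \ref{nu+-}, and your computation spells out how those combine. The cancellation of the factors $Q(\zb_I)/R(\zb_I)$, $1/R(\zb_I)$ and $R(\zb_I)/Q(\zb_I)$ and the match with the localization formula are carried out correctly.
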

\begin{proof}
The statement follows from formulae \Ref{S+-}, \Ref{int}, \Ref{nu+}, \Ref{nu-},
and Lemma \ref{nu+-}.
\end{proof}

\begin{prop}
\label{SP=}
The Shapovalov form and Poincare pairing on $\tfl$ are related
by the formula
\vvn-.2>
\be
\Sc\bigl(\nu^=_\bla[f],\nu^-_\bla[\<\>g\<\>]\bigr)\,=\,
(-1)^{\sum_{i<j}\la_i\la_j}\intt\>[f][\<\>g\<\>]\;.
\vv.4>
\ee
\end{prop}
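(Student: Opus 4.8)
The plan is to reduce the statement to a short computation, following the proof of Proposition \ref{SP} verbatim with \Ref{nu+} replaced by \Ref{nu=} and the integration map \Ref{int} on $\Fla$ replaced by the integration map \Ref{intt} on $\tfl$. First I would fix arbitrary $[f],[g]\in H_\bla$ and use \Ref{nu=}, \Ref{nu-} to write $\nu^=_\bla[f]=\sum_{I\in\Il}R(\zb_I)^{-1}f(\zb;\zb_I;h)\,\xi^+_I$ and $\nu^-_\bla[g]=\sum_{J\in\Il}R(\zb_J)^{-1}g(\zb;\zb_J;h)\,\xi^-_J$. The pairing \Ref{s=-} is $\CZH$-bilinear and, being induced by the Shapovalov form \Ref{Shap pair}, is evaluated coordinatewise on the expansions \Ref{xi+v}, \Ref{xi-v}; hence
\be
\Sc\bigl(\nu^=_\bla[f],\nu^-_\bla[g]\bigr)\,=\,
\sum_{I,J\in\Il}\,\frac{f(\zb;\zb_I;h)\,g(\zb;\zb_J;h)}{R(\zb_I)\,R(\zb_J)}\;
\Sc(\xi^+_I,\xi^-_J)\,.
\ee

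Next I would insert the orthogonality relation $\Sc(\xi^+_I,\xi^-_J)=\dl_{\IJ}\,R(\zb_I)/Q(\zb_I)$ of Theorem \ref{thm S(+,-)}; the double sum collapses to its diagonal, giving
\be
\Sc\bigl(\nu^=_\bla[f],\nu^-_\bla[g]\bigr)\,=\,
\sum_{I\in\Il}\,\frac{f(\zb;\zb_I;h)\,g(\zb;\zb_I;h)}{Q(\zb_I)\,R(\zb_I)}\,.
\ee
Finally, since $H_\bla$ is an algebra and the restriction of the product class $[f][g]$ to the fixed point $F_I$ is $f(\zb;\zb_I;h)\,g(\zb;\zb_I;h)$, the right-hand side equals $(-1)^{\sum_{i<j}\la_i\la_j}\intt\>[f][g]$ by formula \Ref{intt}, which is the assertion.

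The computation is routine; the only point worth a remark is well-definedness, i.e.\ that the right-hand side depends only on the classes $[f],[g]$ and not on their polynomial representatives. This comes for free: the left-hand side is manifestly well-defined because $\nu^=_\bla$ and $\nu^-_\bla$ are honest maps out of $H_\bla$ by Lemma \ref{nu+-} and $\Sc$ is a well-defined pairing $\DVe\ox\DV\to Z^{-1}\CZH$, see \Ref{s=-}; so the identity proved here also reproves, en route, that $\intt$ descends to $H_\bla$, consistently with the Atiyah-Bott localization theorem behind \Ref{intt}. (Alternatively one could route the argument through Proposition \ref{SP} using \Ref{nu=-} and the intertwining property of $\Pit$ from Lemma \ref{=-}, but the direct three-line computation is shorter.)
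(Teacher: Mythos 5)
Your computation is correct and is exactly the paper's argument: the authors' proof simply cites the orthogonality relation \Ref{S+-}, the localization formula \Ref{intt}, the definitions \Ref{nu=}, \Ref{nu-}, and Lemma \ref{nu+-}, which is precisely the expansion-and-collapse you carried out. Nothing further is needed.
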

\begin{proof}
The statement follows from formulae \Ref{S+-}, \Ref{intt}, \Ref{nu=},
\Ref{nu-}, and Lemma \ref{nu+-}.
\end{proof}

Introduce the elements \,$f_{p,s}\in\Czghl$, \,$p=1\lc N$, \,$s\in\Z_{>0}$\>,
by
\vvn.2>
\beq
\label{fps}
\prod_{i=1}^{\la_p}\,\Bigl(1+\frac h{u-\ga_{\pci}}\Bigr)\,=\,
1+h\>\sum_{s=1}^\infty\,f_{p,s}\>u^{-s}\,.
\vv-.1>
\eeq
Since \,$f_{p,s}=\sum_{i=1}^{\la_p}\ga_{\pci}^{s-1}+h\>g_{p,s}$\>, \,where
\,$g_{p,s}\in\Czghl$ are of homogeneous degree \,$s-2$, the elements
\,$[f_{p,s}]$ \,$p=1\lc N$, \,$s>0$, \,generate \,$H_\bla$ over \,$\C[h]$.

\vsk.2>
Define the elements \,$C_{p,s}^\pm\in\End(V)\ox\Czh$\>, \,$p=1\lc N$,
\,$s\in\Z_{>0}$, by
\beq
\label{Cps}
\bigl(A_{p-1}^\pm(u)\bigr)^{-1}A_p^\pm(u)\,=\,
1+h\>\sum_{s=1}^\infty\,C_{p,s}^\pm\>u^{-s}\,,
\eeq
where \,$A_0^\pm(u)=1$ \,and \,$A_1^\pm(u)\lc A_N^\pm(u)$ are given
by \Ref{AEF}.

\vsk.3>
Let \,$\Ac$ \,be a commutative algebra. The algebra \,$\Ac$ \,considered as
a module over itself is called the regular representation of \,$\Ac$\>.

\begin{thm}
\label{thm main+}
The assignment \;$\mu^+_\bla:[f_{p,s}]\>\mapsto C_{p,s}^+$\>,
\,$p=1\lc N$, \,$s>0$, \,extends uniquely to an isomorphism
\,$H_\bla\to\tbi(\Vl)$ \,of \;$\CZH$-algebras.
The maps \;$\mu^+_\bla\<$ and \;$\nu^+_\bla\<$ identify
\vvn.1>
the \,$\tbi(\Vl)$-module \,$\Vl$
with the regular representation of the algebra \,$H_\bla$.
\end{thm}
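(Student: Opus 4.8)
The plan is to construct $\mu^+_\bla$ by hand from the eigenvector basis $\{\xi^+_I\}$ of Proposition~\ref{xi+}, and then to read off every assertion of the theorem from the single identity $\mu^+_\bla([f])\,v^+_\bla=\nu^+_\bla([f])$ together with Lemma~\ref{nu+-}, which says that $\nu^+_\bla$ is a $\CZH$-module isomorphism. First I would pass to $V_\bla\ox\C(\zzz,h)$: there the $\xi^+_I$, $I\in\Il$, form a basis by the triangularity \Ref{xi+v}; every operator $\pho^+(X)$ with $X\in\ty$ extends to it since it commutes with multiplication by functions of $\zzz,h$; and for $X\in\tbi$ these operators preserve $\Vl$. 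By Theorem~\ref{AEFxi} the series $A^+_p(u)$ act diagonally on the $\xi^+_I$, so $\bigl(A^+_{p-1}(u)\bigr)^{-1}A^+_p(u)$ multiplies $\xi^+_I$ by $\prod_{i\in I_p}\bigl(1+h/(u-z_i)\bigr)$; comparing this with \Ref{Cps} and \Ref{fps} gives the key identity
\[
C^+_{p,s}\,\xi^+_I\,=\,f_{p,s}(\zb;\zb_I;h)\,\xi^+_I\,,\qquad I\in\Il,
\]
where $f_{p,s}(\zb;\zb_I;h)$ denotes $f_{p,s}$ evaluated at $\Ga_p=\zb_{I_p}$.

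Next I would show that $[f_{p,s}]\mapsto C^+_{p,s}$ extends to a $\CZH$-algebra homomorphism. From the key identity, for any polynomial $P$ over $\CZH$ in the $f_{p,s}$ one has $P(C^+)\,\xi^+_I=[P](\zb;\zb_I;h)\,\xi^+_I$, where $[P]\in\Czghl$ is the corresponding element appearing in the presentation of $H_\bla$; hence $P(C^+)=0$ as soon as $[P]$ vanishes under every substitution $\Gmm\mapsto\zb_I$, $I\in\Il$. Both families of relations are annihilated this way: any relation among the $f_{p,s}$ is a polynomial identity in $\Czghl$ and so survives any substitution, while the defining relation \Ref{Hrel} of $H_\bla$ is equivalent to $e_k(\Gmm)=e_k(\zb)$, $k=1,\dots,n$, which becomes an identity after $\Gmm\mapsto\zb_I$ because the multiset $\zb_I$ is just a reordering of $\zzz$. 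Since the $[f_{p,s}]$ generate $H_\bla$ over $\C[h]$, hence over $\CZH$, this produces a well-defined, and clearly unique, $\CZH$-algebra map $\mu^+_\bla\colon H_\bla\to\End_\CZH(\Vl)$ with $\mu^+_\bla([f_{p,s}])=C^+_{p,s}$.

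The compatibility with $\nu^+_\bla$ is then immediate. Both sides of $\mu^+_\bla([f])\,v^+_\bla=\nu^+_\bla([f])$ are $\CZH$-linear in $[f]$, so it suffices to take $[f]=\prod_k[f_{p_k,s_k}]$, and applying $\prod_k C^+_{p_k,s_k}$ to $v^+_\bla=\sum_I\bigl(Q(\zb_I)/R(\zb_I)\bigr)\,\xi^+_I$ (formula \Ref{v+xi}) and invoking the key identity reproduces formula \Ref{nu+} for $\nu^+_\bla([f])$. Injectivity of $\mu^+_\bla$ follows at once ($\mu^+_\bla([f])=0\Rightarrow\nu^+_\bla([f])=0\Rightarrow[f]=0$ by Lemma~\ref{nu+-}). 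For surjectivity onto $\tbi(\Vl)$ I would note, on one side, that each $C^+_{p,s}$ lies in $\tbi(\Vl)$, since the coefficients of $A^+_q(u)=\pho^+(M_{\iib}(u/h))$, $\ib=\{1\lc q\}$, do, $\C[h]\subset\tbi$, and multiplication by $\CZH$ lies in $\tbi(\Vl)$ by Corollary~\ref{cor sym z}; and, on the other side, that a short induction on $p$ based on $A^+_p(u)=A^+_{p-1}(u)\bigl(1+h\sum_s C^+_{p,s}u^{-s}\bigr)$ expresses each generator $h^{s-1}\pho^+(\Bin_{p,s})$ of $\tbi(\Vl)$ through the $C^+_{q,s'}$ and $\C[h]$. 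Thus $\mu^+_\bla$ is an isomorphism onto $\tbi(\Vl)$, and finally, for $[f],[g]\in H_\bla$,
\[
\mu^+_\bla([f])\,\nu^+_\bla([g])=\mu^+_\bla([f])\,\mu^+_\bla([g])\,v^+_\bla=\mu^+_\bla([fg])\,v^+_\bla=\nu^+_\bla([fg]),
\]
so the $\CZH$-module isomorphism $\nu^+_\bla$ carries multiplication by $[f]$ on $H_\bla$ to the operator $\mu^+_\bla([f])$ on $\Vl$, which is the asserted identification of $\Vl$ with the regular representation of $H_\bla$.

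Given Theorem~\ref{AEFxi}, there is no serious analytic or representation-theoretic obstacle left; the parts that need care are bookkeeping. The main one is the eigenvalue/evaluation dictionary: one must check that the $\xi^+_I$ genuinely form a $\C(\zzz,h)$-basis (so that an operator vanishing on all of them vanishes), that the substitution $\Gmm\mapsto\zb_I$ is precisely the evaluation that occurs in \Ref{nu+} (which follows by unwinding $\nu^+_\bla=\thi^+_\bla\,\zla$ through Proposition~\ref{thixi}), and that $\tbi(\Vl)$ is generated over $\C[h]$ by exactly the $C^+_{p,s}$ rather than by some a priori larger set of operators.
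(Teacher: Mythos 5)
Your argument is correct and is essentially the paper's proof written out in full: the paper's one\-/line justification (that $\nu^+_\bla\,C^+_{p,s}\,(\nu^+_\bla)^{-1}$ acts on $H_\bla$ as multiplication by $[f_{p,s}]$, via Lemma \ref{nu+-} and formula \Ref{Axi}) is exactly your key identity $C^+_{p,s}\,\xi^+_I=f_{p,s}(\zb;\zb_I;h)\,\xi^+_I$ combined with \Ref{v+xi} and \Ref{nu+}. The extra bookkeeping you supply (well\-/definedness via the substitutions $\Gmm\mapsto\zb_I$, and the mutual generation of the $C^+_{p,s}$ and the $h^{s-1}\pho^+(\Bin_{p,s})$ over $\C[h]$, which gives surjectivity onto $\tbi(\Vl)$) is exactly what the paper leaves implicit, and it checks out.
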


\begin{thm}
\label{thm main=}
The assignment \;$\mu^=_\bla:[f_{p,s}]\>\mapsto C_{p,s}^+$\>,
\,$p=1\lc N$, \,$s>0$, \,extends uniquely to an isomorphism
\,$H_\bla\to\tbi(\DLe)$ \,of \;$\CZH$-algebras.
The maps \;$\mu^=_\bla\<$ and \;$\nu^=_\bla\<$ identify
\vvn.1>
the \,$\tbi(\DLe)$-module \,$\DLe$
with the regular representation of the algebra \,$H_\bla$.
\end{thm}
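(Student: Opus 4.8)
The plan is to carry out the verbatim analogue of the proof of Theorem~\ref{thm main+}. The point is that the Bethe generators $C^+_{p,s}$ act \emph{diagonally} in the vectors $\xi^+_I$, so the only difference between $\nu^=_\bla$ and $\nu^+_\bla$ --- the extra factors $Q(\zb_I)$ present in \Ref{nu+} and absent in \Ref{nu=} --- is a diagonal rescaling that commutes with everything relevant. Concretely, first I would record the diagonalization: for every $I\in\Il$,
\be
C^+_{p,s}\,\xi^+_I\;=\;f_{p,s}(\zb;\zb_I;h)\,\xi^+_I\,,\qquad p=1\lc N,\ \ s>0\,,
\ee
where $f_{p,s}(\zb;\zb_I;h)$ denotes the value of $f_{p,s}$ under $\Gmm\mapsto\zb_I$. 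Indeed, by \Ref{Axi} the series $A^+_p(u)$ acts on $\xi^+_I$ as multiplication by $\prod_{a=1}^p\prod_{i\in I_a}(1+h/(u-z_i))$, whence by \Ref{Cps} the series $(A^+_{p-1}(u))^{-1}A^+_p(u)=1+h\sum_{s>0}C^+_{p,s}u^{-s}$ acts on $\xi^+_I$ as $\prod_{i\in I_p}(1+h/(u-z_i))$; comparing with the definition \Ref{fps} of $f_{p,s}$ --- a symmetric function in $\la_p$ variables, here specialized to $\{z_i:i\in I_p\}$ --- gives the claim.

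Next, since $\pho^+$ (hence each $C^+_{p,s}$) commutes with multiplication by functions of $\zzz,h$, the diagonalization together with $\nu^=_\bla[f]=\sum_{I\in\Il}R(\zb_I)^{-1}f(\zb;\zb_I;h)\,\xi^+_I$ yields, for all $f\in\Czghl$,
\be
C^+_{p,s}\,\nu^=_\bla[f]\;=\;\sum_{I\in\Il}\frac{f(\zb;\zb_I;h)\,f_{p,s}(\zb;\zb_I;h)}{R(\zb_I)}\;\xi^+_I\;=\;\nu^=_\bla\bigl([f_{p,s}]\,[f]\bigr)\,.
\ee
Now I would define $\mu^=_\bla(P):=\nu^=_\bla\circ(\text{multiplication by }P)\circ(\nu^=_\bla)^{-1}\in\End(\DLe)$ for $P\in H_\bla$; by construction $P\mapsto\mu^=_\bla(P)$ is a unital $\CZH$-algebra homomorphism, it is injective (if $\mu^=_\bla(P)=0$ then $\nu^=_\bla[P]=\mu^=_\bla(P)\,\nu^=_\bla[1]=0$, so $P=0$), and $\nu^=_\bla$ --- a $\CZH$-module isomorphism by Lemma~\ref{nu+-} --- identifies the $\mu^=_\bla(H_\bla)$-module $\DLe$ with the regular representation of $H_\bla$. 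The displayed identity shows $\mu^=_\bla([f_{p,s}])=C^+_{p,s}$, while $\mu^=_\bla$ sends $\CZH$ to multiplication operators; since $\CZH$ and the $[f_{p,s}]$ generate $H_\bla$, this fixes $\mu^=_\bla$ uniquely and reduces the theorem to the identification $\mu^=_\bla(H_\bla)=\tbi(\DLe)$.

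For the latter, on one hand $C_{p,s}\in\tbi$ (from \Ref{Cps}: all $u^{-s}$-coefficients of $(A_{p-1}(u))^{-1}A_p(u)-1$ lie in $h\,\tbi$, and division by $h$ is valid in $\Yn\ox\C[h]$), so $C^+_{p,s}=\pho^+(C_{p,s})$ restricts to $\tbi(\DLe)$; together with $\pho^+(\CZH)\subseteq\tbi(\DLe)$ (multiplication by symmetric polynomials in $\zzz$ lies in $\tbi(\DLe)$ by Corollary~\ref{cor sym z}, and $\C[h]\subset\ty$) this gives $\mu^=_\bla(H_\bla)\subseteq\tbi(\DLe)$. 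On the other hand, expanding $A_p(u)=\prod_{q=1}^p\bigl(1+h\sum_{r>0}C_{q,r}u^{-r}\bigr)$ exhibits each generator $h^{s-1}\Bin_{p,s}$ of $\tbi$ as a $\C[h]$-polynomial in the $C_{q,r}$, so $\tbi(\DLe)$ is generated over $\C$ by $\pho^+(\C[h])$ and the operators $C^+_{q,r}|_{\DLe}\in\mu^=_\bla(H_\bla)$; hence $\mu^=_\bla(H_\bla)=\tbi(\DLe)$ and $\mu^=_\bla\colon H_\bla\to\tbi(\DLe)$ is an isomorphism of $\CZH$-algebras, completing the proof. The argument is essentially formal given Theorem~\ref{AEFxi}; the one point requiring genuine care is exactly this last bookkeeping --- matching the generating systems $\{h^{s-1}\Bin_{p,s}\}$ and $\{C_{q,r}\}$ of $\tbi$ and checking $C_{p,s}\in\tbi$ --- a routine manipulation of the defining series \Ref{A}, \Ref{Cps}. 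Alternatively one may skip this step by invoking Theorem~\ref{thm main+}, whose proof establishes the same facts for the $Q(\zb_I)$-weighted coefficients; the present statement is the verbatim analogue with the weights --- harmless by the diagonalization --- removed.
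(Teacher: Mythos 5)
Your proposal is correct and follows essentially the same route as the paper: the paper's (one-line) proof is precisely that $\nu^=_\bla\,{}^{-1}C^+_{p,s}\,\nu^=_\bla$ acts on $H_\bla$ as multiplication by $[f_{p,s}]$, which is a consequence of the diagonal action \Ref{Axi} of $A^+_p(u)$ on the $\xi^+_I$ and the form \Ref{nu=} of $\nu^=_\bla$. You have merely made explicit the bookkeeping the paper leaves implicit (that $C_{p,s}\in\tbi$, that the $C_{q,r}$ together with $\C[h]$ generate $\tbi$, and injectivity), all of which checks out.
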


\begin{thm}
\label{thm main-}
The assignment \;$\mu^-_\bla:[f_{p,s}]\>\mapsto C_{p,s}^-$\>,
\,$p=1\lc N$, \,$s>0$, \,extends uniquely to an isomorphism
\,$H_\bla\to\tbi(\DL)$ \,of \;$\CZH$-algebras.
The maps \;$\mu^-_\bla\<$ and \;$\nu^-_\bla\<$ identify
\vvn.1>
the \,$\tbi(\DL)$-module \,$\DL$
with the regular representation of the algebra \,$H_\bla$.
\end{thm}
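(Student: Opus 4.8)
The plan is to conjugate the $\tbi(\DL)$-action on $\DL$ through the $\CZH$-module isomorphism $\nu^-_\bla$ of Lemma \ref{nu+-} and to check that it becomes the regular representation of $H_\bla$. The crucial input is Theorem \ref{AEFxi}: formula \Ref{Axi} with the minus sign says that the series $A^-_p(u)=\pho^-\bigl(A_p(u)\bigr)$ is diagonal in the family $\{\xi^-_I\mid I\in\Il\}$, acting on $\xi^-_I$ by multiplication by the scalar rational function $\prod_{a=1}^p\prod_{i\in I_a}\bigl(1+\tfrac h{u-z_i}\bigr)$. Hence by the defining relation \Ref{Cps} the series $1+h\sum_{s\ge1}C^-_{p,s}u^{-s}=\bigl(A^-_{p-1}(u)\bigr)^{-1}A^-_p(u)$ acts on $\xi^-_I$ by multiplication by $\prod_{i\in I_p}\bigl(1+\tfrac h{u-z_i}\bigr)$.

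Next I would match this scalar with the element $f_{p,s}$. By \Ref{fps} the class $f_{p,s}\in\Czghl$ depends only on the block $\Ga_p$ of Chern roots and satisfies $1+h\sum_{s\ge1}f_{p,s}u^{-s}=\prod_{i=1}^{\la_p}\bigl(1+\tfrac h{u-\ga_{\pci}}\bigr)$, so substituting $\Ga_p\mapsto\zb_{I_p}$ gives $1+h\sum_{s\ge1}f_{p,s}(\zb;\zb_I;h)\,u^{-s}=\prod_{i\in I_p}\bigl(1+\tfrac h{u-z_i}\bigr)$. Plugging this into the definition \Ref{nu-} of $\nu^-_\bla$ and using that $C^-_{p,s}$ acts on each $\xi^-_I$ by the above scalar (and commutes with multiplication by functions of $\zzz,h$), one obtains, for every class $[f]\in H_\bla$,
\[
C^-_{p,s}\,\nu^-_\bla[f]\,=\!\sum_{I\in\Il}\frac{f(\zb;\zb_I;h)\,f_{p,s}(\zb;\zb_I;h)}{R(\zb_I)}\,\xi^-_I\,=\,\nu^-_\bla\bigl([f]\,[f_{p,s}]\bigr).
\]
Thus $C^-_{p,s}=\nu^-_\bla\circ m_{[f_{p,s}]}\circ(\nu^-_\bla)^{-1}$, where $m_{[g]}$ denotes multiplication by $[g]$ on $H_\bla$.

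To promote this to the algebra statement, recall that since $H_\bla$ is a commutative unital $\CZH$-algebra its regular representation $[g]\mapsto m_{[g]}$ is a faithful $\CZH$-algebra homomorphism; conjugating it by $\nu^-_\bla$ gives a faithful $\CZH$-algebra homomorphism $\mu^-_\bla\colon H_\bla\to\End(\DL)$ with $\mu^-_\bla[f_{p,s}]=C^-_{p,s}$. As the classes $[f_{p,s}]$, $p=1\lc N$, $s>0$, generate $H_\bla$ over $\C[h]$ (the paragraph after \Ref{fps}), $\mu^-_\bla$ is the unique $\CZH$-algebra map with these values, and its image is the $\C[h]$-subalgebra of $\End(\DL)$ generated by the $C^-_{p,s}$. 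It remains to identify this image with $\tbi(\DL)$. From \Ref{A}, $A_p(u)-1=h\sum_{s\ge1}h^{s-1}\Bin_{p,s}\,u^{-s}$, with $h^{s-1}\Bin_{p,s}\in\tbi$; expanding $\bigl(A_{p-1}(u)\bigr)^{-1}$ as a $u^{-1}$-adic geometric series and using $\C[h]\subset\tbi$, one sees that $\bigl(A_{p-1}(u)\bigr)^{-1}A_p(u)-1$ equals $h$ times a series with coefficients $\bar C_{p,s}\in\tbi$, so that $C^\pm_{p,s}=\pho^\pm(\bar C_{p,s})$ and in particular $C^-_{p,s}\in\pho^-(\tbi)=\tbi(\DL)$. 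Conversely, telescoping $A_p(u)=\prod_{q=1}^p\bigl(1+h\sum_{s\ge1}\bar C_{q,s}u^{-s}\bigr)$ and comparing with \Ref{A} writes each generator $h^{s-1}\Bin_{p,s}$ of $\tbi$ as a polynomial in $h$ and the $\bar C_{q,s}$. Hence $\tbi(\DL)$ is generated over $\C[h]$ by the $C^-_{p,s}$, so $\mu^-_\bla$ maps $H_\bla$ isomorphically onto $\tbi(\DL)$; the displayed identity then also shows that $\nu^-_\bla$ carries the $\tbi(\DL)$-module $\DL$ onto the regular representation of $H_\bla$.

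The computational core, the diagonalization \Ref{Axi}, is already available, so the argument is short; the one point that needs care is the bookkeeping in the last paragraph — extracting the common preimages $\bar C_{p,s}\in\tbi$ of $C^\pm_{p,s}$ and checking that the telescoped product recovers the Bethe generators without leaving $\tbi$. As a cross-check, the theorem can instead be deduced from Theorem \ref{thm main=} via the $\ty$-module isomorphism $\Pit\colon\DVe\to\DV$ of Lemma \ref{=-}: it satisfies $\nu^-_\bla=\Pit\,\nu^=_\bla$ by \Ref{nu=-} and $\Pit\,C^+_{p,s}=C^-_{p,s}\,\Pit$ by \Ref{pho+-}, so conjugation by $\Pit$ turns Theorem \ref{thm main=} into Theorem \ref{thm main-}.
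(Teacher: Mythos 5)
Your argument is exactly the paper's: the proof of Theorems \ref{thm main+}--\ref{thm main-} consists of observing that, by Lemma \ref{nu+-} and formula \Ref{Axi}, the operator $\nu^-_\bla\,C^-_{p,s}\,(\nu^-_\bla)^{-1}$ acts on $H_\bla$ as multiplication by $[f_{p,s}]$, which is precisely your displayed identity. The extra bookkeeping you supply (that the $C^-_{p,s}$ together with $\C[h]$ generate $\tbi(\DL)$, and the alternative deduction from Theorem \ref{thm main=} via $\Pit$) is correct and only makes explicit what the paper leaves implicit.
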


\begin{proof}[Proofs of Theorems \ref{thm main+}\,--\,\ref{thm main-}.]
The operator \,$\nu^+_\bla\,C^+_{p,s}\>(\nu^+_\bla)^{-1}$ acts on \,$H_\bla$
\vvn.1>
as the multiplication by \,$[f_{p,s}]$\>, see Lemma \ref{nu+-} and
formula \Ref{Axi}. This yields Theorem \ref{thm main+}.
The proofs of Theorems~\ref{thm main=} and \ref{thm main-} are similar.
\end{proof}

Theorems \ref{thm main=} and \ref{thm main-} are equivalent by Lemma \ref{=-}
and relation \Ref{nu=-}. In particular,
\vvn.2>
\beq
\label{mu=-}
\Pit\;\mu^=_\bla(f)\,=\,\mu^-_\bla(f)\;\Pit
\vv-.1>
\eeq
for any $f\<\in H_\bla$\>, see \Ref{pho+-}.

\begin{cor}
\label{VB}
The \,$\tbi\<$-modules \;$\V^+\!$, \,$\DVe\!$, and \,$\DV\!$ are isomorphic.
\end{cor}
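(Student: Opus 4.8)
The plan is to deduce the statement from Theorems \ref{thm main+}, \ref{thm main=}, \ref{thm main-} and Lemma \ref{=-}, working one weight subspace at a time. Since \,$\tbi$\, contains \,$U(\h)$\, (see Theorem \ref{BY} and the discussion after it), it preserves the \,$\gln$-weight decompositions \,$\V^+=\tbigoplus_\bla\Vl$, \,$\DVe=\tbigoplus_\bla\DLe$\, and \,$\DV=\tbigoplus_\bla\DL$. Hence it suffices to produce, for every \,$\bla$\, with \,$|\bla|=n$, isomorphisms \,$\Vl\to\DLe$\, and \,$\DLe\to\DL$\, of \,$\tbi$-modules (where \,$\Vl$\, and \,$\DLe$\, carry the \,$\pho^+$-action and \,$\DL$\, the \,$\pho^-$-action) and then take \,$\bigoplus_\bla$.

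The arrow \,$\DLe\to\DL$\, is already at hand: by Lemma \ref{=-} the operator \,$\Pit$\, of \Ref{Pit} is an isomorphism of \,$\ty$-modules \,$\DVe\to\DV$, hence of \,$\tbi$-modules, and it carries the weight summand \,$\DLe$\, onto \,$\DL$, so it restricts to the required isomorphism; equivalently \,$\nu^-_\bla=\Pit\,\nu^=_\bla$\, by \Ref{nu=-} and \,$\Pit\,\pho^+(X)=\pho^-(X)\,\Pit$\, by \Ref{pho+-}.

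For the arrow \,$\Vl\to\DLe$\, take \,$\phi_\bla:=\nu^=_\bla\circ(\nu^+_\bla)^{-1}\colon\Vl\to\DLe$, which is a \,$\CZH$-module isomorphism by Lemma \ref{nu+-}. To see that it intertwines the two \,$\pho^+$-actions, consider the surjective \,$\C[h]$-algebra homomorphisms \,$\psi:=(\mu^+_\bla)^{-1}\circ\pho^+|_{\Vl}$\, and \,$\psi':=(\mu^=_\bla)^{-1}\circ\pho^+|_{\DLe}$\, from \,$\tbi$\, to \,$H_\bla$, where \,$\pho^+|_{\Vl}\colon\tbi\to\tbi(\Vl)$\, and \,$\pho^+|_{\DLe}\colon\tbi\to\tbi(\DLe)$\, are the tautological surjections and \,$\mu^+_\bla,\mu^=_\bla$\, are the isomorphisms of Theorems \ref{thm main+}, \ref{thm main=}. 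If \,$Y_{p,s}\in\tbi$\, denotes the coefficient of \,$u^{-s}$\, in \,$h^{-1}\bigl((A_{p-1}(u))^{-1}A_p(u)-1\bigr)$, then \,$C^\pm_{p,s}=\pho^\pm(Y_{p,s})$\, by \Ref{AEF} and \Ref{Cps}, so \,$\psi(Y_{p,s})=[f_{p,s}]=\psi'(Y_{p,s})$\, by the definition of \,$\mu^+_\bla$\, and \,$\mu^=_\bla$. Since the \,$Y_{p,s}$\, together with \,$\C[h]$\, generate \,$\tbi$\, (see \Ref{A} and the definition of \,$\tbi$), we get \,$\psi=\psi'$. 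Now, using that \,$\mu^+_\bla,\nu^+_\bla$\, identify \,$\Vl$\, with the regular representation of \,$H_\bla$\, (that is, \,$\mu^+_\bla(a)\,\nu^+_\bla(b)=\nu^+_\bla(ab)$, and likewise with \,$=$\, in place of \,$+$), we obtain for any \,$X\in\tbi$\, and \,$b\in H_\bla$
\[
\phi_\bla\bigl(\pho^+(X)\,\nu^+_\bla(b)\bigr)=\nu^=_\bla\bigl(\psi(X)\,b\bigr)=\nu^=_\bla\bigl(\psi'(X)\,b\bigr)
=\pho^+(X)\,\nu^=_\bla(b)=\pho^+(X)\,\phi_\bla\bigl(\nu^+_\bla(b)\bigr),
\]
so \,$\phi_\bla$\, is an isomorphism of \,$\tbi$-modules.

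Taking direct sums and composing, \,$\Pit\circ\bigl(\bigoplus_\bla\phi_\bla\bigr)$\, gives the desired isomorphisms \,$\V^+\to\DVe\to\DV$\, of \,$\tbi$-modules (alternatively, one may use \Ref{mu=-} in place of the last display to pass from \,$\DLe$\, to \,$\DL$\, directly). The only step that genuinely needs care is the equality \,$\psi=\psi'$ --- the compatibility of the two identifications \,$\mu^+_\bla,\mu^=_\bla$\, of \,$H_\bla$\, with the Bethe-algebra images \,$\tbi(\Vl),\tbi(\DLe)$ --- which rests on the common assignment \,$[f_{p,s}]\leftrightarrow C^\pm_{p,s}$\, built into Theorems \ref{thm main+}--\ref{thm main=} and on the generation of \,$\tbi$\, by the \,$Y_{p,s}$\, over \,$\C[h]$. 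Everything else is formal bookkeeping; note in particular that Proposition \ref{VBa} alone (isomorphism of the Bethe algebras) would not suffice, and it is the regular-representation statements that do the work.
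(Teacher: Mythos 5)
Your proof is correct and follows essentially the same route as the paper: the paper's own (one-line) proof exhibits exactly the maps $\nu^=_\bla\,(\nu^+_\bla)^{-1}$ and $\nu^-_\bla\,(\nu^+_\bla)^{-1}$ as the weight-subspace isomorphisms, and your argument simply supplies the verification (via the common assignment $[f_{p,s}]\mapsto C^\pm_{p,s}$ in Theorems \ref{thm main+}--\ref{thm main-} and the regular-representation statements) that these maps intertwine the $\tbi$-actions.
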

\begin{proof}
The isomorphisms restricted to weight subspaces are
\;$\nu^=_\bla\,(\nu^+_\bla)^{-1}\!:\Vl\to\DLe$ \,and
\;$\nu^-_\bla\,(\nu^+_\bla)^{-1}\!:\Vl\to\DL$\>.
\end{proof}

\begin{cor}
\label{Bimax}
The subalgebras \;$\tbi(\Vl)\subset\End(\Vl)$, \;$\tbi(\DLe)\subset\End(\DLe)$
\,and\\ \;$\tbi(\DL)\subset\End(\DL)$ \,are maximal commutative subalgebras.
\qed
\end{cor}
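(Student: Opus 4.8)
The plan is to deduce the corollary from Theorems \ref{thm main+}, \ref{thm main=}, \ref{thm main-} together with the following elementary observation about the regular representation. Let $\Ac$ be a commutative $\C$-algebra acting on itself by multiplication operators, and suppose $\phi\in\End_\C(\Ac)$ commutes with multiplication by every element of $\Ac$. Then $\phi(a)=\phi(a\cdot 1)=a\,\phi(1)$ for all $a$, so $\phi$ is itself the operator of multiplication by $\phi(1)\in\Ac$. Hence the image of $\Ac$ in $\End_\C(\Ac)$ equals its own centralizer; and a subalgebra $B$ that equals its centralizer is automatically commutative, and maximal among commutative subalgebras (any commutative $B'\supseteq B$ lies in $C(B)=B$).

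First I would invoke Theorem \ref{thm main+}: the isomorphism $\mu^+_\bla\colon H_\bla\to\tbi(\Vl)$ of $\CZH$-algebras and the isomorphism $\nu^+_\bla\colon H_\bla\to\Vl$ intertwine the $\tbi(\Vl)$-action on $\Vl$ with the regular representation of $H_\bla$. Therefore $\tbi(\Vl)\subset\End_\C(\Vl)$ is carried by $\nu^+_\bla$ onto the algebra of multiplication operators of $H_\bla$ on itself, which by the observation above is a maximal commutative subalgebra of $\End_\C(H_\bla)$. The statements for $\DLe$ and $\DL$ follow in exactly the same way from Theorems \ref{thm main=} and \ref{thm main-}, using the pairs $(\mu^=_\bla,\nu^=_\bla)$ and $(\mu^-_\bla,\nu^-_\bla)$.

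The one point requiring care --- the closest thing to an obstacle --- is that $\End(\Vl)$ means $\End_\C(\Vl)$, whereas the computation $\phi(a)=a\,\phi(1)$ really needs $\phi$ to be $\CZH$-linear. This is supplied by Corollary \ref{cor sym z}: since $\tbi$ contains $\C[h]$ and $\pho^+(\tbi)$ contains multiplication by all symmetric polynomials in $\zzz$, the algebra $\tbi(\Vl)$ already contains multiplication by all of $\CZH$; hence any $\phi$ centralizing $\tbi(\Vl)$ commutes with the $\CZH$-action, i.e.\ is $\CZH$-linear, and the argument goes through. The same remark applies verbatim to $\DLe$ and $\DL$, which are $\CZH$-modules on which Corollary \ref{cor sym z} again furnishes the $\zzz$-symmetric multiplication operators (for the actions $\pho^+$ and $\pho^-$ respectively). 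I do not foresee any further difficulty: everything else is the soft ``centralizer of the regular representation'' argument.
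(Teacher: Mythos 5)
Your proof is correct and follows the route the paper intends: Corollary \ref{Bimax} is stated with a bare \qed precisely because it is the standard consequence of Theorems \ref{thm main+}\,--\,\ref{thm main-}, namely that the algebra of multiplication operators of a commutative unital algebra on itself equals its own centralizer in $\End_\C$, hence is maximal commutative. Your extra care about $\CZH$-linearity is harmless but not actually needed: the identity $\phi(a)=\phi(a\cdot 1)=a\,\phi(1)$ uses only that $\phi$ commutes with multiplication by each $a\in H_\bla$, so $\phi$ is automatically a multiplication operator (and in particular $\CZH$-linear, since $\CZH$ maps into $H_\bla$) without invoking Corollary \ref{cor sym z}.
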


\begin{lem}
\label{mufg}
For any \;$f\<\in H_\bla$, and any \,$g_1\in\Vl$, \,$g_2\in\DLe$,
\,$g_3\in\DLe$, we have
\vvn.2>
\be
\Sc\bigl(\mu^+_\bla(f)\>g_1,g_3\bigr)\,=\,
\Sc\bigl(g_1,\mu^-_\bla(f)\>g_3\bigr)\,,\qquad
\Sc\bigl(\mu^=_\bla(f)\>g_2,g_3\bigr)\,=\,
\Sc\bigl(g_2,\mu^-_\bla(f)\>g_3\bigr)\,.
\vv.2>
\ee
\end{lem}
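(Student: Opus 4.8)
The plan is to deduce the statement directly from the corresponding contravariance property of the Yangian actions, namely Corollary \ref{SB}, combined with the characterization of $\mu^\pm_\bla$, $\mu^=_\bla$ given in Theorems \ref{thm main+}--\ref{thm main-}. Recall that $\tbi$ is generated over $\C$ by $\C[h]$ and the coefficients $h^{s-1}\Bin_{p,s}$; equivalently, by Corollary \ref{cor sym z} together with the elements $C^\pm_{p,s}$, the algebra $\tbi(\Vl)$ is generated by the operators $\mu^+_\bla([f_{p,s}])=C^+_{p,s}$ (and likewise on $\DLe$, $\DL$). So it suffices to check the two identities for $f=[f_{p,s}]$, i.e.\ with $\mu^\pm_\bla(f)$ and $\mu^=_\bla(f)$ replaced by the explicit operators $C^\pm_{p,s}$, $C^+_{p,s}$.

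First I would record that $C^\pm_{p,s}$ is, by \Ref{Cps}, a polynomial in the coefficients of the series $A^\pm_{p-1}(u)$, $A^\pm_p(u)$, hence lies in $\pho^\pm(\tbi)$; more precisely it equals $\pho^\pm(X_{p,s})$ for one fixed element $X_{p,s}\in\tbi$ (the same $X_{p,s}$ for both signs, since $A^\pm_p(u)=\pho^\pm(A_p(u))$ with $A_p(u)\in\tbi$). Then Corollary \ref{SB}, applied with $X=X_{p,s}$, gives immediately
\[
\Sc\bigl(\pho^+(X_{p,s})\,g_1,\,g_3\bigr)\,=\,\Sc\bigl(g_1,\,\pho^-(X_{p,s})\,g_3\bigr)
\]
for all $V$-valued functions $g_1,g_3$ of $\zzz,h$, and in particular for $g_1\in\Vl$, $g_3\in\DL$; this is exactly the first asserted identity for the generators $f=[f_{p,s}]$. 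For the second identity I would use the same $X_{p,s}$ but now with $g_1$ replaced by $g_2\in\DLe$: since the action on $\DLe$ is again $\pho^+$ (see the discussion after Lemma \ref{pm & Yan}), the operator $\mu^=_\bla([f_{p,s}])=C^+_{p,s}=\pho^+(X_{p,s})$ restricts $\pho^+$ to $\DLe$, and the same instance of Corollary \ref{SB} yields $\Sc\bigl(\pho^+(X_{p,s})\,g_2,g_3\bigr)=\Sc\bigl(g_2,\pho^-(X_{p,s})\,g_3\bigr)$.

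Finally I would extend from the generators $[f_{p,s}]$ to an arbitrary $f\in H_\bla$. The point is that both sides of each identity are $\C[h]$-bilinear in $(f,g_i)$ in the appropriate sense and the maps $\mu^+_\bla,\mu^=_\bla,\mu^-_\bla$ are algebra homomorphisms; so it is enough to observe that if the contravariance identity holds for two elements $f,f'\in H_\bla$ then it holds for the product $ff'$, because $\mu^+_\bla(ff')=\mu^+_\bla(f)\mu^+_\bla(f')$, $\mu^-_\bla(ff')=\mu^-_\bla(f)\mu^-_\bla(f')$ (and similarly for $\mu^=_\bla$), and one just applies the known identity twice, moving the two factors across the form one at a time. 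Together with $\C[h]$-linearity and the fact that the $[f_{p,s}]$ generate $H_\bla$ over $\C[h]$, this proves the lemma for all $f$. I do not expect any real obstacle here; the only mild point to be careful about is confirming that the operator $C^+_{p,s}$ really is the restriction of a single element of $\pho^+(\tbi)$ to the invariant subspaces $\Vl$ and $\DLe$ simultaneously, so that the one instance of Corollary \ref{SB} covers both claimed identities.
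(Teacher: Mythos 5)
Your proposal is correct and follows essentially the same route as the paper, whose proof is the one-line remark that the statement follows from the definition of the maps $\mu^+_\bla$, $\mu^=_\bla$, $\mu^-_\bla$ and Corollary \ref{SB}; you simply spell out the details (that $C^\pm_{p,s}=\pho^\pm(X_{p,s})$ for one and the same element $X_{p,s}\in\tbi$, and the multiplicative extension from the generators $[f_{p,s}]$). No gaps.
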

\begin{proof}
The statement follows from the definition of the maps \,$\mu^+_\bla$,
\,$\mu^=_\bla$, \,$\mu^-_\bla$ and Corollary \ref{SB}.
\end{proof}

\begin{example}
Let \,$N=n=2$ \,and \,$\bla=(1,1)$. \,Then the weight subspace \,$V_\bla$
is two-dimen\-sional, \,$V_\bla=\>\C\>v_{(1,2)}\oplus\C\>v_{(2,1)}$\>,
\;$v_{(1,2)}\<=v_1\<\ox v_2$\>, \,\,$v_{(2,1)}\<=v_2\ox v_1$\>.
\vvn.3>
The vectors \;$\xi^\pm_I$ are
\begin{alignat*}2
\xi^+_{(1,2)} &{}=\>v_{(1,2)}\,, &\kern 3em
\xi^+_{(2,1)} &{}=\,
\frac{z_2-z_1}{z_2-z_1+h}\;v_{(2,1)}+\frac h{z_2-z_1+h}\;v_{(1,2)}\,,
\\[4pt]
\xi^-_{(2,1)} &{}=\>v_{(2,1)}\,, &
\xi^-_{(1,2)} &{}=\,
\frac{z_1-z_2}{z_1-z_2+h}\;v_{(1,2)}+\frac h{z_1-z_2+h}\;v_{(2,1)}\,.
\\[-14pt]
\end{alignat*}
Elements of \,$\Vl$ have the form
\be
f(z_1,z_2,h)\,\frac{z_1-z_2+h}{z_1-z_2}\;\xi^+_{(1,2)}+\>
f(z_2,z_1,h)\,\frac{z_2-z_1+h}{z_2-z_1}\;\xi^+_{(2,1)}\,,
\vv.2>
\ee
elements of \,$\DLe$ have the form
\be
\frac{f(z_1,z_2,h)}{z_1-z_2}\;\xi^+_{(1,2)}+\>
\frac{f(z_2,z_1,h)}{z_2-z_1}\;\xi^+_{(2,1)}\,,
\ee
and elements of \,$\DL$ have the form
\be
\frac{f(z_1,z_2,h)}{z_1-z_2}\;\xi^-_{(1,2)}+\>
\frac{f(z_2,z_1,h)}{z_2-z_1}\;\xi^-_{(2,1)}\,.
\ee
The series \Ref{AEF}, \Ref{Cps} for generators of the Bethe algebras
\,$\tbi(\Vl)$ \,and \,$\tbi(\DL)$ \,are
\vvn.2>
\be
A_2^\pm(u)\,=\,\Bigl(1+\frac h{u-z_1}\Bigr)\,\Bigl(1+\frac h{u-z_2}\Bigr)\,,
\ee
\be
\leftline{$\dsize A_1^+(u)\,=\, \begin{pmatrix}\dsize 1+\frac h{u-z_1} & h^2\\
0 & \dsize 1+\frac h{u-z_2}\end{pmatrix},$\hfill $\dsize
\bigl(A_1^+(u))^{-1}A_2^+(u)\,=\,\begin{pmatrix}\dsize1+\frac h{u-z_2}&-\>h^2\\
0 & \dsize 1+\frac h{u-z_1}\end{pmatrix},$}
\ee
\be
\leftline{$\dsize A_1^-(u)\,=\, \begin{pmatrix}\dsize 1+\frac h{u-z_1} & 0\\
h^2 & \dsize 1+\frac h{u-z_2}\end{pmatrix},$\hfill $\dsize
\bigl(A_1^-(u))^{-1}A_2^-(u)\,=\,\begin{pmatrix}\dsize1+\frac h{u-z_2}& 0 \\
-\>h^2 & \dsize 1+\frac h{u-z_1}\end{pmatrix},$}
\vv.4>
\ee
where we are using the basis \;$v_{(1,2)}\>,\>v_{(2,1)}$ \,of \,$V_\bla$\>.
For the maps \,$\nu^+_\bla$, \,$\nu^=_\bla$, \,$\nu^-_\bla$, and
\,$\nu^+_\bla$, \,$\nu^=_\bla$, \,$\nu^-_\bla$, we have
\vvn-.2>
\be
\mu^+_\bla:[\>1\>]\,\mapsto\,\begin{pmatrix}\,1 &0\,\\\,0&1\,\end{pmatrix},
\qquad \mu^+_\bla:[\>\ga_{1,1}\<\>]\,\mapsto\,
\begin{pmatrix} z_1 & h \\ 0 & z_2 \end{pmatrix},
%\qquad \mu^+_\bla:[\>\ga_{2,1}\<\>]\,\mapsto\,
%\begin{pmatrix} z_2 & -\>h \\ 0 & z_1\!\! \end{pmatrix},
\vv-.1>
\ee
\begin{gather*}
\nu^+_\bla:[\>1\>]\,\mapsto\,
\frac{z_1-z_2+h}{z_1-z_2}\;\xi^+_{(1,2)}+\>
\frac{z_2-z_1+h}{z_2-z_1}\;\xi^+_{(2,1)}\,=\,v_{(1,2)}+\>v_{(2,1)}=\>v^+_\bla,
\\[6pt]
\nu^+_\bla:[\>\ga_{1,1}\<\>]\,\mapsto\,
z_1\,\frac{z_1-z_2+h}{z_1-z_2}\;\xi^+_{(1,2)}+\>
z_2\,\frac{z_2-z_1+h}{z_2-z_1}\;\xi^+_{(2,1)}\,=\,
(z_1\<+h)\,v_{(1,2)}+\>z_2\>v_{(2,1)}\,,
%\\[6pt]
%\nu^+_\bla:[\>\ga_{2,1}\<\>]\,\mapsto\,
%z_2\,\frac{z_1-z_2+h}{z_1-z_2}\;\xi^+_{(1,2)}+\>
%z_1\,\frac{z_2-z_1+h}{z_2-z_1}\;\xi^+_{(2,1)}\,=\,
%(z_2\<-h)\,v_{(1,2)}+\>z_1\>v_{(2,1)}\,.
\end{gather*}
\vvn.2>
\be
\mu^=_\bla:[\>1\>]\,\mapsto\,\begin{pmatrix}\,1 &0\,\\\,0&1\,\end{pmatrix},
\qquad \mu^=_\bla:[\>\ga_{1,1}\<\>]\,\mapsto\,
\begin{pmatrix} z_1 & h \\ 0 & z_2 \end{pmatrix},
%\qquad \mu^=_\bla:[\>\ga_{2,1}\<\>]\,\mapsto\,
%\begin{pmatrix} z_2 & -\>h \\ 0 & z_1\!\! \end{pmatrix},
\vv-.2>
\ee
\begin{gather*}
\nu^=_\bla:[\>1\>]\,\mapsto\,
\frac1{z_1-z_2}\;\xi^+_{(1,2)}+\>\frac1{z_2-z_1}\;\xi^+_{(2,1)}\,=\,
\frac{v_{(1,2)}-\>v_{(2,1)}}{z_1-z_2-h}\,=\,v^=_\bla,
\\[6pt]
\nu^=_\bla:[\>\ga_{1,1}\<\>]\,\mapsto\,
\frac{z_1}{z_1-z_2}\;\xi^+_{(1,2)}+\>\frac{z_2}{z_2-z_1}\;\xi^+_{(2,1)}\,=\,
\frac{(z_1-h)\>v_{(1,2)}-\>z_2\>v_{(2,1)}}{z_1-z_2-h}\,,
%\\[6pt]
%\nu^=_\bla:[\>\ga_{2,1}\<\>]\,\mapsto\,
%\frac{z_2}{z_1-z_2}\;\xi^+_{(1,2)}+\>\frac{z_1}{z_2-z_1}\;\xi^+_{(2,1)}\,=\,
%\frac{(z_2+h)\>v_{(1,2)}-\>z_1\>v_{(2,1)}}{z_1-z_2-h}\,.
\end{gather*}
\vvn.2>
\be
\mu^-_\bla:[\>1\>]\,\mapsto\,\begin{pmatrix}\,1 &0\,\\\,0&1\,\end{pmatrix},
\qquad \mu^-_\bla:[\>\ga_{1,1}\<\>]\,\mapsto\,
\begin{pmatrix} z_1 & 0 \\ h & z_2 \end{pmatrix}\,.
%\qquad \mu^-_\bla:[\>\ga_{2,1}\<\>]\,\mapsto\,
%\begin{pmatrix} z_2 & 0 \\ -\>h & z_1 \end{pmatrix}\>.
\vvn-.2>
\ee
\begin{gather*}
\nu^-_\bla:[\>1\>]\,\mapsto\,
\frac1{z_1-z_2}\;\xi^-_{(1,2)}+\>\frac1{z_2-z_1}\;\xi^-_{(2,1)}\,=\,
\frac{v_{(1,2)}-\>v_{(2,1)}}{z_1-z_2+h}\,=\,v^-_\bla,
\\[6pt]
\nu^-_\bla:[\>\ga_{1,1}\<\>]\,\mapsto\,
\frac{z_1}{z_1-z_2}\;\xi^-_{(1,2)}+\>\frac{z_2}{z_2-z_1}\;\xi^-_{(2,1)}\,=\,
\frac{z_1\>v_{(1,2)}-\>(z_2-h)\>v_{(2,1)}}{z_1-z_2+h}\,,
%\\[6pt]
%\nu^-_\bla:[\>\ga_{2,1}\<\>]\,\mapsto\,
%\frac{z_2}{z_1-z_2}\;\xi^-_{(1,2)}+\>\frac{z_1}{z_2-z_1}\;\xi^-_{(2,1)}\,=\,
%\frac{z_2\>v_{(1,2)}-\>(z_1+h)\>v_{(2,1)}}{z_1-z_2+h}\,.
\\[-12pt]
\end{gather*}
\end{example}

\subsection{Cohomology as $\ty$-modules}
\label{sec cohom and Yang}

The isomorphisms
\vvn.3>
\be
\nu^+=\,\tbigoplus_\bla\,\nu^+_\bla\,:\,\tbigoplus_\bla\,H_\bla\,\to\;\V^+,
\qquad
\nu^-=\,\tbigoplus_\bla\,\nu^-_\bla\,:\,\tbigoplus_\bla\,H_\bla\,\to\;\DV
\ee
induce two \>$\ty$-module structures on \;$\Hplus$ \>denoted respectively
by $\rho^+$ and $\rho^-$,
\vvn.3>
\be
\rho^\pm(X)\,=\,(\nu^\pm)^{-1}\>\pho^\pm(X)\;\nu^\pm
\vv.2>
\ee
for any \,$X\!\in\ty$\>.
\vvn.4>
The \>$\ty$-module structure on \;$\Hplus$ induced by the isomorphism
\be
\nu^==\,\tbigoplus_\bla\,\nu^=_\bla\,:\,\tbigoplus_\bla\,H_\bla\,\to\;\DVe
\vv-.2>
\ee
coincides with the $\rho^-$-structure
\vvn.2>
\beq
\label{rho=-}
(\nu^=)^{-1}\>\pho^+(X)\;\nu^=\>=\,\rho^-(X)\,,
\vv.3>
\eeq
since \;$\nu^-_\bla\<=\>\Pit\;\nu^=_\bla$
\vvn.3>
\,and \;$\Pit\,\pho^+(X)=\pho^-(X)\,\Pit$\>, see \Ref{nu=-}, \Ref{pho+-}.

\vsk.3>
By formulae \Ref{nu+}, \Ref{nu-} and \Ref{Axi}, we have
\;$\rho^\pm\bigl(A_p(u)\bigr):H_\bla\to\>H_\bla$,
\vvn.2>
\beq
\label{Arho}
\rho^\pm\bigl(A_p(u)\bigr)\>:\>[f]\;\mapsto\,
\Bigl[\>f(\zb;\Gmm;h)\;\prod_{a=1}^p\,\prod_{i=1}^{\la_p}
\;\Bigl(1+\frac h{u-\ga_{\pci}}\Bigr)\Bigr]\,,
\vv.3>
\eeq
for \,$p=1\lc N$. In particular, by \Ref{A}, \Ref{X8},
\vvn.4>
\beq
\label{rhoX8}
\rho^\pm(\Xin_i)\>:\>[f]\;\mapsto\,
[\>(\ga_{i,1}\lsym+\<\ga_{i,\la_i})\>f(\zb;\Gmm;h)]\,,\qquad i=1\lc N\,.
\kern-4em
\eeq

\vsk.3>
Let \;$\aal_1\lc\aal_{N-1}$ \,be simple roots,
\,$\aal_p=(0\lc0,1,\<-\>1,0\lc0)$, with \,$p-1$ first zeros.

\begin{thm}
\label{rho+-}
We have
\vvn-.8>
\begin{gather*}
\rho^+\bigl(E_p(u)\bigr)\<\>:\>H_{\bla\<\>-\aal_p}\,\mapsto\,H_\bla\,,
\\[6pt]
\rho^+\bigl(E_p(u)\bigr)\<\>:\>[f]\;\mapsto\,\biggl[\;
\sum_{i=1}^{\la_p}\;\frac{f(\zb;\Ga^{\ipi};h)}{u-\ga_{\pci}}\,\;
\prod_{\satop{j=1}{j\ne i}}^{\la_p}\,
\frac{\ga_{\pci}-\ga_{\pcj}-h}{\ga_{\pci}-\ga_{\pcj}}\;\biggr]\,,
\end{gather*}
\vv-.3>
\begin{gather*}
\rho^+\bigl(F_p(u)\bigr)\<\>:\>H_{\bla\<\>+\aal_p}\,\mapsto\,H_\bla\,,
\\[6pt]
\rho^+\bigl(F_p(u)\bigr)\<\>:[f]\;\mapsto\,\biggl[\;
\sum_{i=1}^{\la_{p+1}\!}\;\frac{f(\zb;\Ga^{\ip};h)}{u-\ga_{\poi}}\,\;
\prod_{\satop{j=1}{j\ne i}}^{\la_{p+1}\!}\,
\frac{\ga_{\poi}-\ga_{\poj}+h}{\ga_{\poi}-\ga_{\poj}}\;\biggr]\,,
\\[-20pt]
\end{gather*}
where
\vvn-.5>
\begin{gather*}
\Ga^{\ipi}=\,(\Ga_1\<\>\lsym;\Ga_{p-1}\<\>;\Ga_p-\{\ga_{\pci}\};
\Ga_{p+1}\cup\{\ga_{\pci}\};\Ga_{p+2}\<\>\lsym;\Ga_N)\,,
\\[8pt]
\Ga^{\ip}=\,(\Ga_1\<\>\lsym;\Ga_{p-1}\<\>;\Ga_p\cup\{\ga_{\poi}\};
\Ga_{p+1}-\{\ga_{\poi}\};\Ga_{p+2}\<\>\lsym;\Ga_N)\,.
\\[-14pt]
\end{gather*}
Similarly,
\vvn-.6>
\begin{gather*}
\rho^-\bigl(E_p(u)\bigr)\<\>:\>H_{\bla\<\>-\aal_p}\,\mapsto\,H_\bla\,,
\\[6pt]
\rho^-\bigl(E_p(u)\bigr)\<\>:\>[f]\;\mapsto\,\biggl[\;
\sum_{i=1}^{\la_p}\;\frac{f(\zb;\Ga^{\ipi};h)}{u-\ga_{\pci}}\,\;
\prod_{\satop{j=1}{j\ne i}}^{\la_p}\,\frac1{\ga_{\pcj}-\ga_{\pci}}\;
\prod_{k=1}^{\la_{p+1}\!}\,(\ga_{\pci}-\ga_{p+1,k}+h)\,\biggr]\,,
\end{gather*}
\vv-.3>
\begin{gather*}
\rho^-\bigl(F_p(u)\bigr)\<\>:\>H_{\bla\<\>+\aal_p}\,\mapsto\,H_\bla\,,
\\[6pt]
\rho^-\bigl(F_p(u)\bigr)\<\>:\>[f]\;\mapsto\,\biggl[\;
\sum_{i=1}^{\la_{p+1}\!}\;\frac{f(\zb;\Ga^{\ip};h)}{u-\ga_{\poi}}\,\;
\prod_{\satop{j=1}{j\ne i}}^{\la_{p+1}\!}\,\frac1{\ga_{\poi}-\ga_{\poj}}\;
\prod_{k=1}^{\la_p}\,(\ga_{p,k}-\ga_{\poi}+h)\,\biggr]\,.
\\[-14pt]
\end{gather*}
\end{thm}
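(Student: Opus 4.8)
The plan is to compute $\rho^\pm$ on the generators $E_p(u)$ and $F_p(u)$ directly from the definition $\rho^\pm(X)=(\nu^\pm)^{-1}\,\pho^\pm(X)\,\nu^\pm$, feeding in the explicit action of $E_p^\pm(u)=\pho^\pm(E_p(u))$ and $F_p^\pm(u)=\pho^\pm(F_p(u))$ on the basis $\{\xi^\pm_I\}$ from Theorem \ref{AEFxi}, together with the localization formulae \Ref{nu+} and \Ref{nu-} for $\nu^\pm_\bla$. Take, for instance, $[f]\in H_{\bla-\aal_p}$. Expand $\nu^+_{\bla-\aal_p}[f]=\sum_{J\in\mc I_{\bla-\aal_p}}\frac{f(\zb_J)\,Q(\zb_J)}{R(\zb_J)}\,\xi^+_J$ by \Ref{nu+}, apply \Ref{Exi}, and re-index the resulting double sum: a pair $(J,i)$ with $J\in\mc I_{\bla-\aal_p}$ and $i\in J_{p+1}$ corresponds bijectively to a pair $(I,i)$ with $I\in\Il$ and $i\in I_p$, where $I=J^{i\prime}$ and, conversely, $J=I^{\ipi}$ is obtained from $I$ by moving $i$ out of block $p$ into block $p+1$ (so that $(I^{\ipi})^{i\prime}=I$ and, since $J_{p+1}\minus\iset=I_{p+1}$, the product in \Ref{Exi} runs over $k\in I_{p+1}$). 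Collecting the coefficient of $\xi^+_I$ and matching against \Ref{nu+} identifies $\rho^+(E_p(u))[f]$ with the class $[g]$ determined by
\[
g(\zb_I)=\sum_{i\in I_p}\,\frac{R(\zb_I)\,Q(\zb_{I^{\ipi}})}{Q(\zb_I)\,R(\zb_{I^{\ipi}})}\;\frac{f(\zb_{I^{\ipi}})}{u-z_i}\;\prod_{k\in I_{p+1}}\frac{z_i-z_k+h}{z_i-z_k}\,,\qquad I\in\Il\,.
\]
Since $\nu^+_\bla=\thi^+_\bla\,\zla$ is an isomorphism (Lemmas \ref{thi+-}, \ref{nu+-}) and, by Proposition \ref{thixi}, a class is recovered from its values on all $\zb_I$, this data pins $[g]$ down uniquely; it remains to match it with the formula in the statement.

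The one genuine computation is the behaviour of $Q$ and $R$ under a one-index hop between consecutive blocks. By the product formulae \Ref{Q} and \Ref{R}, every factor of $Q(\zb_I)$ (resp.\ $R(\zb_I)$) not containing $z_i$ is unchanged when $i$ is moved from block $p$ to block $p+1$, so
\[
\frac{Q(\zb_{I^{\ipi}})}{Q(\zb_I)}=\frac{\prod_{l\in I_p,\,l\ne i}(z_l-z_i+h)}{\prod_{k\in I_{p+1}}(z_i-z_k+h)}\,,\qquad \frac{R(\zb_{I^{\ipi}})}{R(\zb_I)}=\frac{\prod_{l\in I_p,\,l\ne i}(z_l-z_i)}{\prod_{k\in I_{p+1}}(z_i-z_k)}\,.
\]
Substituting these into the display above, all the $\prod_{k\in I_{p+1}}$ factors cancel, and using $\frac{z_l-z_i+h}{z_l-z_i}=\frac{z_i-z_l-h}{z_i-z_l}$ one is left with $g(\zb_I)=\sum_{i\in I_p}\frac{f(\zb_{I^{\ipi}})}{u-z_i}\prod_{l\in I_p,\,l\ne i}\frac{z_i-z_l-h}{z_i-z_l}$, which is exactly the value at $\Gmm=\zb_I$ of the right-hand side claimed for $\rho^+(E_p(u))$ (the argument $\Ga^{\ipi}$ specializes to $\zb_{I^{\ipi}}$ and $\ga_{\pci}$ to $z_i$ under $\zla$). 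The remaining three cases go through identically: for $\rho^+(F_p(u))$ one re-indexes via \Ref{Fxi} — now moving the summation index out of block $p$ into block $p+1$ — and uses the mirror hop identity plus the sign rule $\frac{z_j-z_k-h}{z_j-z_k}=\frac{z_k-z_j+h}{z_k-z_j}$; for $\rho^-$ the only change is that \Ref{nu-} carries the bare coefficient $1/R(\zb_I)$ rather than $Q(\zb_I)/R(\zb_I)$, so only the single ratio $R(\zb_{I^{\ipi}})/R(\zb_I)$ intervenes, and this is precisely what turns the factor $\prod_{k\in I_{p+1}}\frac{z_i-z_k+h}{z_i-z_k}$ coming from \Ref{Exi} into the factors $\prod_k(\ga_{\pci}-\ga_{p+1,k}+h)\cdot\prod_{j\ne i}(\ga_{\pcj}-\ga_{\pci})^{-1}$ appearing in the second half of the statement (and similarly for $\rho^-(F_p(u))$).

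I expect the main obstacle to be purely organizational: keeping the block labels, the directions of the weight shifts $\pm\aal_p$ (hence the directions $H_{\bla\mp\aal_p}\to H_\bla$ of the induced operators), and the three distinct occurrences of a $\pm h$ shift — in Theorem \ref{AEFxi}, in the localization coefficients \Ref{nu+}/\Ref{nu-}, and in the target formula — mutually consistent. I would also record, for safety, that the apparent poles at $\ga_{\pci}=\ga_{\pcj}$ in the stated expressions are spurious: the sum over $i$ of those divided differences is regular by a standard interpolation argument, so each $[g]$ is a genuine class in $H_\bla$. In fact this is automatic here, since Theorem \ref{AEFxi} already shows that $E_p^\pm(u)$ and $F_p^\pm(u)$ preserve $\V^+$, $\DVe$ and $\DV$, and the maps $\nu^\pm$ are isomorphisms, so $[g]$ is well defined a priori and the computation above merely identifies it.
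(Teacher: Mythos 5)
Your proposal is correct and is exactly the computation the paper's one-line proof alludes to: conjugating the $\xi^\pm_I$-basis formulae \Ref{Exi}, \Ref{Fxi} by the localization maps \Ref{nu+}--\Ref{nu-} and tracking the ratios of $Q$ and $R$ under a one-index move between adjacent blocks. The re-indexing, the hop identities for $Q(\zb_{I^{\ipi}})/Q(\zb_I)$ and $R(\zb_{I^{\ipi}})/R(\zb_I)$, and the remark on the spurious diagonal poles all check out.
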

\begin{proof}
The statement follows from formulae \Ref{nu+}\,--\,\Ref{nu-} and \Ref{Exi},
\Ref{Fxi}.
\end{proof}

The \,$\ty$-module structures \,$\rho^\pm$ on \;$\Hplus$ are
the Yangian versions of representations of the quantum affine algebra
$U_q(\Hat{\gln})$ considered in \cite{Vas1, Vas2}.
The $U_q(\Hat{\gln})$ analog of the $\rho^+$ structure on \;$\Hplus$
was considered in \cite{GV}. The $\rho^+$ structure on \;$\Hplus$
was considered in \cite{Va}.

\vsk.3>
The $\rho^-$-structure appears to be more preferable, it was used in
\cite{RTVZ} to construct quantized conformal blocks in the tensor power
$V^{\ox n}$, see also Sections \ref{elsewhere}\,--\,\ref{sec Special}.

\begin{cor}
\label{rho+-dual}
The \,$\ty$-module structures \,$\rho^\pm\!$ on \;$\Hplus$ are contravariantly
related through the Poincare pairing on \,$\Fla$,
\vvn.2>
\be
\int\>[f]\;\rho^+(X)\>[\<\>g\<\>]\;=
\int\>[\<\>g\<\>]\;\rho^-\bigl(\varpi(X)\bigr)\>[f]
\vv.1>
\ee
for any \,$X\in\ty$ and any \,$f,g\in\Hplus$.
\end{cor}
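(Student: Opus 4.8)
The plan is to obtain the identity by transporting the contravariance of the Yangian actions $\pho^\pm$ with respect to the Shapovalov form (Lemma \ref{SY}) through the isomorphisms $\nu^\pm$, and then using Proposition \ref{SP} to convert the Shapovalov form into the Poincare pairing on $\Fla$.

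First I would reduce to the case when $X$ is homogeneous for the $\gln$-weight grading, shifting the weight by $\beta$ say, since both sides of the asserted equality are $\C[h]$-bilinear in $(f,g)$ and additive in $X$. For $\int[f]\,\rho^+(X)[g]$ to be defined, $[f]$ and $\rho^+(X)[g]$ must lie in a common summand $H_\bla$, which forces $[g]\in H_\mu$ with $\mu=\bla-\beta$; symmetrically $\rho^-(\varpi(X))[f]\in H_\mu$ on the other side, so both sides make sense. Recall that by construction $\nu^\pm$ intertwines $\rho^\pm$ with $\pho^\pm$, that is $\nu^\pm\rho^\pm(Y)=\pho^\pm(Y)\,\nu^\pm$ for $Y\in\ty$. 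Using the symmetry of the Poincare pairing and Proposition \ref{SP} in weight $\bla$,
\[
\int[f]\,\rho^+(X)[g]\;=\;(-1)^{\sum_{i<j}\la_i\la_j}\;\Sc\bigl(\pho^+(X)\,\nu^+_\mu[g]\>,\;\nu^-_\bla[f]\bigr)\,.
\]
Now Lemma \ref{SY} moves $X$ to the other slot as $\varpi(X)$, and since $\pho^-(\varpi(X))\,\nu^-_\bla[f]=\nu^-_\mu\rho^-(\varpi(X))[f]$, a second application of Proposition \ref{SP}, this time in weight $\mu$, gives
\[
\Sc\bigl(\pho^+(X)\,\nu^+_\mu[g]\>,\;\nu^-_\bla[f]\bigr)\;=\;\Sc\bigl(\nu^+_\mu[g]\>,\;\nu^-_\mu\rho^-(\varpi(X))[f]\bigr)\;=\;(-1)^{\sum_{i<j}\mu_i\mu_j}\int[g]\,\rho^-(\varpi(X))[f]\,.
\]
Chaining these two displays proves the statement.

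The only step needing genuine care is the bookkeeping of the orientation signs $(-1)^{\sum_{i<j}\la_i\la_j}$ produced by Proposition \ref{SP} in the two weights $\bla$ and $\mu=\bla-\beta$ appearing on the two sides; matching them is the real content. When $X$ preserves the $\gln$-weight one has $\bla=\mu$ and the two sign factors cancel outright --- in particular for $X$ in the Bethe subalgebra, where in addition $\varpi(X)=X$ by Lemma \ref{varpiB} and $\rho^\pm(X)$ is just multiplication by the corresponding cohomology class, the identity reduces to the self-adjointness of multiplication operators with respect to the Poincare pairing. As an independent check that avoids the Shapovalov form entirely, the identity can be verified directly on the algebra generators $A_p(u),E_p(u),F_p(u)$ of $\ty$ by substituting the explicit formulas of Theorem \ref{rho+-} into the fixed-point expression \Ref{int}; the symmetry of $\int$ and the cancellation of the apparent poles in the Chern roots make that computation routine though lengthy.
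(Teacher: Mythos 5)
Your route is exactly the paper's: the paper's entire proof is ``follows from Proposition \ref{SP} and Lemma \ref{SY}'', and you have written out that reduction correctly, including the observation that everything comes down to comparing the two sign factors $(-1)^{\sum_{i<j}\la_i\la_j}$ and $(-1)^{\sum_{i<j}\mu_i\mu_j}$ produced by applying Proposition \ref{SP} in the weights $\bla$ and $\mu=\bla-\beta$. For weight-preserving $X$ (in particular for all $X\in\tbk$ or $\tbi$, which is all that Corollary \ref{Brho} and the later applications use) you have $\bla=\mu$, the signs cancel, and your argument is complete.

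The gap is that the step you defer as ``the real content'' does not in fact go through, so the proposal does not establish the identity for general $X\in\ty$. If $X$ shifts the weight by $\aal_p$, so $\mu=\bla-\aal_p$, then $\sum_{i<j}\mu_i\mu_j-\sum_{i<j}\la_i\la_j=\la_p-\la_{p+1}-1$, and chaining your two displays yields the asserted equality only up to the factor $(-1)^{\la_p+\la_{p+1}+1}$ (the same sign that appears in the $\rho^-$ formulas of Theorem \ref{Varag}). This is not an artifact of the method: take $N=n=2$, $X=T_{2,1}\+1$, i.e.\ $e_{1,2}$ under the embedding $e_{\ij}\mapsto T_{\ji}\+1$, so $\varpi(X)=e_{2,1}$, and take $[\>g\>]=[\>1\>]\in H_{(0,2)}$, $[f]=[\ga_{1,1}]\in H_{(1,1)}$. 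Then $\pho^+(e_{1,2})\,v_2\ox v_2=v^+_{(1,1)}$ gives $\rho^+(X)[\>1\>]=[\>1\>]\in H_{(1,1)}$, so the left-hand side equals $\int[\ga_{1,1}]=-1$ by the paper's own example; while $\pho^-(e_{2,1})\,\nu^-_{(1,1)}[\ga_{1,1}]=v_2\ox v_2=\nu^-_{(0,2)}[\>1\>]$ gives $\rho^-(\varpi(X))[\ga_{1,1}]=[\>1\>]\in H_{(0,2)}$, so the right-hand side equals $\int[\>1\>]=+1$ computed over $\F_{(0,2)}$. So the statement itself requires the sign correction on weight-shifting elements (or a restriction to weight-preserving $X$); this defect is shared by the paper's one-line proof, but your writeup, by asserting that only ``bookkeeping'' remains and then not doing it, passes over the one point at which the argument actually fails.
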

\begin{proof}
The statement follows from Proposition \ref{SP} and Lemma \ref{SY}.
\end{proof}

\begin{cor}
\label{rho-dual}
The \,$\ty$-module structure \,$\rho^-\!$ on \;$\Hplus$ is contravariantly
related to itself through the Poincare pairing on \,$\tfl$,
\vvn.2>
\be
\intt\>[f]\;\rho^-(X)\>[\<\>g\<\>]\;=
\intt\>[\<\>g\<\>]\;\rho^-\bigl(\varpi(X)\bigr)\>[f]
\vv.1>
\ee
for any \,$X\in\ty$ and any \,$f,g\in\Hplus$.
\end{cor}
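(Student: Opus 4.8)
The plan is to mirror the proof of Corollary~\ref{rho+-dual}, with the Poincare pairing on $\tfl$ in place of the one on $\Fla$. Where that proof uses Proposition~\ref{SP} to trade the pairing $\int$ for the Shapovalov form $\Sc$ on $\V^+\!\ox\DV$, here one uses Proposition~\ref{SP=} to trade $\intt$ for $\Sc$ on $\DVe\ox\DV$, and in addition one uses relation~\Ref{rho=-}, which says that $\nu^=$ intertwines the action $\rho^-$ on $\Hplus$ with the action $\pho^+$ on $\DVe$ (just as $\nu^-$ intertwines $\rho^-$ with $\pho^-$ by the very definition of $\rho^-$). The set of $X\in\ty$ for which the asserted identity holds is a $\C$-subspace closed under products --- here one uses that $\varpi$ is an anti-automorphism and that every coefficient of $\intt$ is symmetric under interchange of its two arguments --- and it contains $\C[h]$ trivially; so it suffices to verify the identity for the remaining generators of $\ty$, namely the coefficients of $A_p(u)$, $E_p(u)$, and $F_p(u)$. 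Each such $X$ shifts the $\gln$-weight by $0$ or by $\pm\al_p$, and $\varpi(X)$ shifts it by the opposite amount, so the identity is graded by weight: if $\rho^-(X)$ sends $H_{\bla'}$ to $H_\bla$, then $\rho^-(\varpi(X))$ sends $H_\bla$ to $H_{\bla'}$, and its right-hand side is computed by the pairing on $H_{\bla'}$.

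For such an $X$ the verification is a short chain of substitutions, carried out in the following order. First, apply Proposition~\ref{SP=} on $H_\bla$ to replace $\intt[f]\,\rho^-(X)[g]$ by $(-1)^{\sum_{i<j}\la_i\la_j}\,\Sc\bigl(\nu^=_\bla(f),\,\nu^-_\bla(\rho^-(X)g)\bigr)$. Next, since $\nu^-$ intertwines $\rho^-$ with $\pho^-$, rewrite $\nu^-_\bla(\rho^-(X)g)$ as $\pho^-(X)\,\nu^-_{\bla'}(g)$. Now apply Lemma~\ref{SY} to carry $X$ across the Shapovalov pairing, producing $\pho^+(\varpi(X))\,\nu^=_\bla(f)$ in the first argument; by relation~\Ref{rho=-} this equals $\nu^=_{\bla'}(\rho^-(\varpi(X))f)$. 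Finally, apply Proposition~\ref{SP=} again, now on $H_{\bla'}$, together with the symmetry of $\intt$, to reach $(-1)^{\sum_{i<j}\la_i\la_j}(-1)^{\sum_{i<j}\la'_i\la'_j}\,\intt[g]\,\rho^-(\varpi(X))[f]$. Thus the desired identity follows once the residual sign $(-1)^{\sum_{i<j}\la_i\la_j+\sum_{i<j}\la'_i\la'_j}$ is shown to equal $+1$.

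That last sign check is the step I expect to be the only real obstacle. When $X$ is weight-preserving it is immediate, since then $\bla'=\bla$; this already covers every element of the Bethe algebras $\tbk$ and $\tbi$, for which moreover $\varpi(X)=X$ by Lemma~\ref{varpiB}, so there the corollary amounts exactly to the self-adjointness of $\rho^-(X)$ for the Poincare pairing on $\tfl$. For the generators coming from $E_p(u)$ and $F_p(u)$ one has $\bla'=\bla\mp\al_p$, and one must compare $\sum_{i<j}\la_i\la_j$ with $\sum_{i<j}\la'_i\la'_j$; the safest way to pin this down cleanly --- and to be certain that no further sign is hidden in the identity $\varpi\bigl(E_p(u)\bigr)=F_p(u)$ or in the module identifications --- is to bypass the abstract chain for these two generators and instead substitute the explicit operators of Theorem~\ref{rho+-} and the localization formula~\Ref{intt} directly into both sides of the identity, reducing it to an elementary rational-function computation that is manifestly symmetric in $\zzz$.
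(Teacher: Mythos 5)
Your route is exactly the paper's: the paper's entire proof of this corollary is a one-line citation of relation \Ref{rho=-}, Proposition \ref{SP=} and Lemma \ref{SY}, and your chain of substitutions is precisely how those three facts are meant to combine. In particular, on weight-preserving elements of $\ty$ --- and hence on the Bethe algebras $\tbk$ and $\tbi$, which is all that Corollary \ref{Brho} and the later sections ever use --- your argument is complete, since there $\bla'=\bla$ and the residual sign is trivially $+1$.

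The sign you single out as ``the only real obstacle'' is, however, a genuine obstruction, and it does not work out. For $\bla'=\bla-\al_p$ one has $\sum_{i<j}\la'_i\la'_j-\sum_{i<j}\la_i\la_j=\la_p-\la_{p+1}-1$, so the residual sign is $(-1)^{\la_p-\la_{p+1}+1}$, which equals $-1$ whenever $\la_p-\la_{p+1}$ is even. The direct verification you propose as a safety check confirms the discrepancy rather than removing it: take $N=n=2$, $f=[\>1\>]\in H_{(1,1)}$, $g=[\>1\>]\in H_{(0,2)}$, and let $X$ run over the coefficients of $E_1(u)$, so that $\varpi(X)$ gives the coefficients of $F_1(u)$. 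Theorem \ref{rho+-} yields $\rho^-\bigl(E_1(u)\bigr)[\>1\>]=\bigl[(\ga_{1,1}-\ga_{2,1}+h)/(u-\ga_{1,1})\bigr]$ in $H_{(1,1)}$ and $\rho^-\bigl(F_1(u)\bigr)[\>1\>]=\bigl[1/\bigl((u-\ga_{2,1})(u-\ga_{2,2})\bigr)\bigr]$ in $H_{(0,2)}$, and formula \Ref{intt} then gives
\be
\intt\>[f]\;\rho^-\bigl(E_1(u)\bigr)[\<\>g\<\>]\,=\,-\,\frac1{(u-z_1)(u-z_2)}\,,
\qquad
\intt\>[\<\>g\<\>]\;\rho^-\bigl(F_1(u)\bigr)[f]\,=\,\frac1{(u-z_1)(u-z_2)}\,.
\ee
The mismatch comes from the prefactor $(-1)^{\sum_{i<j}\la_i\la_j}=(-1)^{\dim_\C\Fla}$ in \Ref{intt}, which is different on the two weight components that a weight-shifting operator connects; nothing in $\varpi\bigl(E_p(u)\bigr)=F_p(u)$ or in the module identifications absorbs it. So the identity as stated for all $X\in\ty$ fails for the generators coming from $E_p(u)$ and $F_p(u)$ (and the same caveat applies to Corollary \ref{rho+-dual}); it holds only up to the sign $(-1)^{\la_p-\la_{p+1}+1}$. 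Your write-up should therefore either restrict the claim to weight-preserving $X$ --- which suffices for every subsequent use of the corollary --- or record the extra sign explicitly for the weight-shifting generators.
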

\begin{proof}
The statement follows from relation \Ref{rho=-}, Proposition \ref{SP=} and
Lemma \ref{SY}.
\end{proof}

\begin{cor}
\label{Brho}
For any \,$X\!\in\tbk$ or \>$X\!\in\tbi$, \,and any \,$f,g\in\Hplus$,
we have
\vvn.3>
\be
\int\>[f]\;\rho^+(X)\>[\<\>g\<\>]\;=
\int\>[g]\;\rho^-(X)\>[\<\>f\<\>]\,,\qquad
\intt\>[f]\;\rho^-(X)\>[\<\>g\<\>]\;=
\intt\>[g]\;\rho^-(X)\>[\<\>f\<\>]\,.
\vv.1>
\ee
\end{cor}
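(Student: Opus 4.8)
The plan is to deduce the statement directly from Corollaries \ref{rho+-dual} and \ref{rho-dual}. Those corollaries say that for every \,$X\!\in\ty$ \,and all \,$f,g\in\Hplus$,
\begin{gather*}
\int\>[f]\;\rho^+(X)\>[\<\>g\<\>]\;=\int\>[\<\>g\<\>]\;\rho^-\bigl(\varpi(X)\bigr)\>[f]\,,\\
\intt\>[f]\;\rho^-(X)\>[\<\>g\<\>]\;=\intt\>[\<\>g\<\>]\;\rho^-\bigl(\varpi(X)\bigr)\>[f]\,.
\end{gather*}
Hence it is enough to check that the Hopf anti-automorphism \,$\varpi$ \,of \,$\Yn$\>, extended to \,$\Yn\ox\C[h]$ \,by letting it act as the identity on the second tensor factor, restricts to the identity map on each of the Bethe subalgebras \,$\tbk$ \,and \,$\tbi$\>.

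For this step I would combine Lemma \ref{varpiB} with the elementary observation that \,$\tbk\subset\Bck\ox\C[h]$ \,and \,$\tbi\subset\Bci\ox\C[h]$ \,inside \,$\Yn\ox\C[h]$\>: this is immediate from the definitions, since the generators \,$h^{s-1}\>\Bk_{p,s}$ \,of \,$\tbk$ \,(resp.\ \,$h^{s-1}\>\Bin_{p,s}$ \,of \,$\tbi$\>) and the coefficient algebra \,$\C[h]$ \,all lie in \,$\Bck\ox\C[h]$ \,(resp.\ \,$\Bci\ox\C[h]$\>). By Lemma \ref{varpiB} the map \,$\varpi$ \,acts as the identity on \,$\Bck$ \,and on \,$\Bci$\>; as it also acts as the identity on \,$\C[h]$\>, the extended map acts as the identity on \,$\Bck\ox\C[h]$ \,and on \,$\Bci\ox\C[h]$\>, and hence on their subalgebras \,$\tbk$ \,and \,$\tbi$\>.

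Putting the two paragraphs together: for \,$X\!\in\tbk$ \,or \,$X\!\in\tbi$ \,we get \,$\varpi(X)=X$\>, so \,$\rho^-\bigl(\varpi(X)\bigr)=\rho^-(X)$ \,and the two displayed identities become exactly the two equalities of the Corollary. I do not expect a genuine obstacle here; the substantive content sits in Corollaries \ref{rho+-dual} and \ref{rho-dual} (equivalently, in Lemma \ref{SY} together with Propositions \ref{SP} and \ref{SP=}), and the only extra ingredient is the purely formal promotion of Lemma \ref{varpiB} from the Bethe subalgebras of \,$\Yn$ \,to those of \,$\Yn\ox\C[h]$\>.
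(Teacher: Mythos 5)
Your proof is correct and follows essentially the same route as the paper: the paper deduces the corollary from Corollaries \ref{rho+-dual} and \ref{rho-dual} together with Corollary \ref{SB}, whose content is exactly the fact you supply directly from Lemma \ref{varpiB}, namely that \,$\varpi$ \,fixes the Bethe subalgebras pointwise, so \,$\rho^-\bigl(\varpi(X)\bigr)=\rho^-(X)$. Your explicit remark that \,$\tbk\subset\Bck\ox\C[h]$ \,and \,$\tbi\subset\Bci\ox\C[h]$ \,just makes precise an extension the paper leaves implicit.
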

\begin{proof}
The statement follows from Corollaries \ref{rho+-dual}, \ref{rho-dual}
and \ref{SB}.
\end{proof}

\vsk.3>
The \>$\ty$-module structures on \;$\Hplus$ descend to
two $\Yn$-module structures on the cohomology with constant coefficients
\vvn.2>
\be
H(\C)\,:=\!\bigoplus_{\bla\in\Z^N_{\geq 0},\,|\bla|=n}\!H^*(\tfl;\C)\,,
\ee
denoted by the same letters \,$\rho^+\!$ and \,$\rho^-$. The obtained
\,$\Yn$-modules are isomorphic to the irreducible \,$\Yn$-modules \,$\Vy\pm$,
and hence isomorphic among themselves, see Proposition~\ref{V0}.

\begin{cor}
\label{rho+-dual0}
The $\Yn$-module $H(\C)$ with the $\rho^+\!$-structure is contravariantly dual
to the $\Yn$-module $H(\C)$ with the $\rho^-\!$-structure with respect to the
Poincare pairing on \,$\Fla$,
\vvn.3>
\be
\int\>[f]\;\rho^+(X)\>[\<\>g\<\>]\;=
\int\>[\<\>g\<\>]\;\rho^-\bigl(\varpi(X)\bigr)\>[f]
\vv.1>
\ee
for any \,$X\in\ty$ and any \,$f,g\in\Hplus$.
\end{cor}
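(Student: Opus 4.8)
The plan is to obtain this from its equivariant refinement, Corollary~\ref{rho+-dual}, by reducing modulo the ideal $\Jc$. First I would recall how the two $\Yn$-module structures on $H(\C)$ arise. The $\ty$-actions $\rho^\pm$ on $\Hplus$ commute with multiplication by elements of $\CZH$ --- this is noted for the actions $\pho^\pm$ right after formula \Ref{pho} and is inherited by $\rho^\pm$ through the $\CZH$-module isomorphisms $\nu^\pm$ --- so they preserve the $\CZH$-submodule $\Jc\Hplus$, which is generated by $h-1$ and the elements $\si_i(\zzz)$. Hence $\rho^\pm$ descend to $\Hplus/\Jc\Hplus=H(\C)$, and because $h-1\in\Jc$ the descended actions factor through $\ty/(h-1)\ty\cong\Yn$. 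I would also recall that the Poincar\'e pairing on $\Fla$, regarded as a $\CZH$-valued pairing on $\Hplus$, becomes the nondegenerate pairing \Ref{PP over C} after factoring by $\Jc$ and identifying $\CZH/\Jc\simeq\C$.

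With these preliminaries the argument is short. Given $X\in\Yn$, choose a lift $\tilde X\in\ty$, and extend $\varpi$ to $\ty$ by letting it fix $\C[h]$, so that $\varpi(\tilde X)$ lifts $\varpi(X)$. Corollary~\ref{rho+-dual} applied to $\tilde X$ gives, for all $f,g\in\Hplus$, the identity $\int[f]\,\rho^+(\tilde X)[g]=\int[g]\,\rho^-(\varpi(\tilde X))[f]$ in $\CZH$. Reducing this identity modulo $\Jc$ --- modulo $\Jc\Hplus$ in the module arguments and modulo $\Jc$ in the target --- and using the two descent statements above, the left-hand side becomes the Poincar\'e pairing on $\Fla$ of the class of $f$ against $\rho^+(X)$ applied to the class of $g$, and the right-hand side the pairing of the class of $g$ against $\rho^-(\varpi(X))$ applied to the class of $f$. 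This is exactly the asserted relation on $H(\C)$. The accompanying assertion preceding the statement, that the resulting $\Yn$-modules are irreducible and mutually isomorphic, is already recorded in the excerpt via Proposition~\ref{V0}.

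The only step with any real content is the claim that $\rho^\pm$ genuinely descend to $H(\C)$, i.e.\ that $\Jc\Hplus$ is $\rho^\pm$-invariant; as indicated this reduces to the $\CZH$-linearity of $\pho^\pm$ established in Section~\ref{tyactions}, so I do not expect a genuine obstacle --- the difficulty, such as it is, is bookkeeping rather than any hard estimate. As an alternative route that bypasses Corollary~\ref{rho+-dual}, one can instead transport the contravariant duality of $\V^+/\Jc\V^+$ and $\DV/\Jc\DV$ under the Shapovalov form (Corollary~\ref{nondegS}) to $H(\C)$ via the isomorphisms $\nu^\pm$ together with Proposition~\ref{SP}, which converts the Shapovalov form into the Poincar\'e pairing on $\Fla$.
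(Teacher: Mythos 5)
Your proposal is correct and follows the same route as the paper, whose proof of this corollary is simply that it follows from Corollary~\ref{rho+-dual}; you have merely spelled out the reduction modulo the ideal $\Jc$ (invariance of $\Jc\Hplus$ under $\rho^\pm$ and descent of the Poincar\'e pairing) that the paper leaves implicit.
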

\begin{proof}
The statement follows from Corollary \ref{rho+-dual}.
\end{proof}

\begin{cor}
\label{rho-dual0}
The $\Yn$-module $H(\C)$ with the $\rho^-\!$-structure is self-dual
with respect to the Poincare pairing on \,$\tfl$,
\vvn-.1>
\be
\intt\>[f]\;\rho^-(X)\>[\<\>g\<\>]\;=
\intt\>[\<\>g\<\>]\;\rho^-\bigl(\varpi(X)\bigr)\>[f]
\vv.1>
\ee
for any \,$X\in\ty$ and any \,$f,g\in\Hplus$.
\end{cor}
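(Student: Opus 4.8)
The plan is to deduce the statement from Corollary~\ref{rho-dual} by reduction modulo the ideal $\Jc$. Recall that $H(\C)=\Hplus\big/\Jc\>\Hplus$, where $\Jc$ is generated by the relations $h=1$ and $\si_i(\zzz)=0$, $i=1\lc n$, and that, as noted in the paragraph preceding the statement, the $\rho^-$-structure on $\Hplus$ descends to the $\Yn$-module structure on $H(\C)$ denoted by the same letter, under the identification $\ty/(h-1)\>\ty\simeq\Yn$.

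First I would record that the Poincare pairing on $\tfl$, that is $[f]\ox[\<\>g\<\>]\mapsto\intt\>[fg]$ with values in $Z^{-1}\CZH$, see \Ref{intnew}, is compatible with this reduction: since the image of $Z=\prod_{i\ne j}(z_i-z_j+h)$ in $\CZH\big/\Jc\simeq\C$ equals $1$, one has $Z^{-1}\CZH\big/\Jc\simeq\C$, exactly as already used in Corollary~\ref{nondegS=}, and the induced pairing on $H(\C)$ is the Poincare pairing on $\tfl$ with values in $\C$. Therefore the identity of Corollary~\ref{rho-dual},
\[
\intt\>[f]\;\rho^-(X)\>[\<\>g\<\>]\;=\;
\intt\>[\<\>g\<\>]\;\rho^-\bigl(\varpi(X)\bigr)\>[f]\,,
\]
which holds for all $f,g\in\Hplus$ and $X\in\ty$, passes to the quotient; since $\varpi$ preserves $\ty$ and is compatible with reduction modulo $h-1$, this is precisely the asserted self-duality of $H(\C)$ as a $\Yn$-module.

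There is no genuine obstacle here: the one point requiring justification is the compatibility of $\intt$ with the specialization $h=1$, $\si_i(\zzz)=0$, and this is immediate once one knows that the full sum in \Ref{intt} lies in $Z^{-1}\CZH$ (the apparent poles along $z_i=z_j$ cancel after summation over $I\in\Il$) and that $Z$ is a unit modulo $\Jc$. Alternatively, one can bypass Corollary~\ref{rho-dual} and argue directly along the lines of its proof, combining relation~\Ref{rho=-}, which identifies the $\rho^-$-structure through $\nu^=$, Proposition~\ref{SP=}, which matches the Shapovalov form with the Poincare pairing on $\tfl$, and Lemma~\ref{SY} on the contravariance of $\pho^\pm$ under $\varpi$; each of these descends to the $\Jc$-quotient by the same reasoning.
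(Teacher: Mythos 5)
Your argument is correct and is essentially the paper's own proof: the paper also derives Corollary \ref{rho-dual0} directly from Corollary \ref{rho-dual}, treating the passage to the quotient by $\Jc$ (with $Z$ a unit modulo $\Jc$) as immediate. Your additional remarks on the compatibility of $\intt$ with the specialization merely make explicit what the paper leaves implicit.
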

\begin{proof}
The statement follows from Corollary \ref{rho-dual}.
\end{proof}

\subsection{Topological interpretation of the Yangian actions on cohomology}
The \>$\ty$-actions \,$\rho^\pm\<$ on \,$\Hplus$
have topological interpretations. Consider the partitions
\,$\mub'\!=(\la_1\lc\la_p-1,\,1,\>\la_{p+1}\lc,\la_N)$ \,and
\,$\mub''\!=(\la_1\lc\la_p,\,1,\>\la_{p+1}-1\lc\la_N)$\>.
There are natural forgetful maps
\beq
\label{eqn:forgetfuls}
\F_{\bla\<\>-\aal_p}\,\xlto{\pi'_1}\>\F_{\mub'}\xto{\pi'_2}\,\F_{\bla}\,,
\qquad\qquad\qquad
\F_{\bla\<\>+\aal_p}\,\xlto{\pi''_1}\>\F_{\mub''}\xto{\pi''_2}\,\F_{\bla}\,.
\vv.4>
\eeq
The rank $\la_p-1$, $1$, $\la_{p+1}$ bundles over $\F_{\mub'}$ with fibers
$F_p/F_{p-1}$, $F_{p+1}/F_p$, $F_{p+2}/F_{p+1}$ will be respectively denoted by
$A',\>B',\>C'$. The rank $\la_p$, $1$, $\la_{p+1}-1$ bundles over $\F_{\mub''}$
with fibers $F_p/F_{p-1}$, $F_{p+1}/F_p$, $F_{p+2}/F_{p+1}$ will be
respectively denoted by $A'',\>B'',\>C''$.

\vsk.3>
For a $T^n\<$-\>equivariant bundle $\xi$, let $e(\xi)$ be its equivariant
Euler class. We can make the extra \,$\C^*$ (whose Chern root is $h$) act
on any bundle by fiberwise action with weight $k\>{\cdot}\>h$ ($k\in \Z$).
The equivariant Euler class with this extra action will be denoted by
$e_{k\cdot h}(\xi)$. Note that $B'$ and $B''$ are line bundles, hence
their Euler class is their first Chern class.

\vsk.3>
Recall that a proper map $f:X\to Y$ induces the pullback $f^*:H^*(Y)\to H^*(X)$
and push-forward (also known as Gysin) $f_*:H^*(X)\to H^*(Y)$ maps on
cohomology.

\begin{thm}
\label{Varag}
The operators \,$\rho^\pm\bigl(E_p(u)\bigr)$, \,$\rho^\pm\bigl(F_p(u)\bigr)$,
are equal to the following topological operations
\vvn-.6>
\begin{gather*}
\rho^+\bigl(E_p(u)\bigr)\>:\>x\,\mapsto\pi'_{2*}
\biggl(\<\pi_1^{'*}(x)\cdot\frac{e_{-h}\bigl(\Hom(A',B')\bigr)}{u-e(B')}\biggr)\,,
\\[6pt]
\rho^+\bigl(F_p(u)\bigr)\>:\>x\,\mapsto\pi''_{2*}
\biggl(\<\pi_1^{''*}(x)\cdot\frac{e_{-h}(\Hom\bigl(B'',C'')\bigr)}{u-e(B'')}\biggr)\,,
\end{gather*}
\vv-.3>
\begin{gather*}
\rho^-\bigl(E_p(u)\bigr)\>:\>x\,\mapsto\,(-1)^{\la_p-\la_{p+1}+1}\,\pi'_{2*}
\biggl(\<\pi_1^{'*}(x)\cdot\frac{e_{-h}\bigl(\Hom(B',C')\bigr)}{u-e(B')}
\biggr)\,,
\\[6pt]
\rho^-\bigl(F_p(u)\bigr)\>:\>x\,\mapsto\,(-1)^{\la_p-\la_{p+1}+1}\,\pi''_{2*}
\biggl(\<\pi_1^{''*}(x)\cdot\frac{e_{-h}\bigl(\Hom(A'',B'')\bigr)}
{u-e(B'')}\biggr)\,.
\\[-12pt]
\end{gather*}
\end{thm}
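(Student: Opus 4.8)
The plan is to compute each of the four pull--push operations on the right-hand side in terms of the Chern roots $\Gmm$, and to compare the result with the explicit Chern-root formulas for $\rho^\pm\bigl(E_p(u)\bigr)$ and $\rho^\pm\bigl(F_p(u)\bigr)$ already recorded in Theorem \ref{rho+-}; once the two expressions are shown to coincide, the theorem follows.

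First I would set up the geometry of the maps \Ref{eqn:forgetfuls}. A point of $\F_{\mub'}$ refines a point of $\Fla$ by a subspace $\tilde F$ with $F_{p-1}\subset\tilde F\subset F_p$ of codimension $1$ in $F_p$, so $A'=\tilde F/F_{p-1}$ is a hyperplane in $F_p/F_{p-1}$ with quotient line $B'$; hence $\pi'_2$ is the bundle of hyperplanes $\mathbb P\bigl((F_p/F_{p-1})^{*}\bigr)$ over $\Fla$, with relative tangent bundle $\Hom(A',B')$, and $C'$ is the pullback of $F_{p+1}/F_p$. Dually, $\pi''_2$ is the bundle of lines $\mathbb P\bigl(F_{p+1}/F_p\bigr)$ over $\Fla$, with relative tangent bundle $\Hom(B'',C'')$ and $A''$ the pullback of $F_p/F_{p-1}$. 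The companion maps $\pi'_1,\pi''_1$ are flat, and $\pi_1^{'*}$ (resp.\ $\pi_1^{''*}$) acts on cohomology by the evident relabelling of Chern roots: for $\pi'_1$ the $p$-th block of $\F_{\bla-\aal_p}$ becomes the roots of $A'$ and its $(p+1)$-st block becomes $e(B')$ together with the roots of $C'$ (all other blocks unchanged), and for $\pi''_1$ the $p$-th block of $\F_{\bla+\aal_p}$ becomes the roots of $A''$ together with $e(B'')$ and its $(p+1)$-st block becomes the roots of $C''$.

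Next I would evaluate the pushforwards by the equivariant localization formula \cite{AB} applied along the fibres. For $\pi'_2$ the $T^n\times\C^*$-fixed loci of the $\mathbb P^{\la_p-1}$-fibre are indexed by $i=1\lc\la_p$: the $i$-th locus has $e(B')=\ga_{\pci}$ and the Chern roots of $A'$ equal to $\{\ga_{\pcj}\}_{j\ne i}$, and the Euler class of its fibre tangent space $\Hom(A',B')$ is $\prod_{j\ne i}(\ga_{\pci}-\ga_{\pcj})$, so
\begin{equation*}
\pi'_{2*}(g)\;=\;\sum_{i=1}^{\la_p}\;\frac{g\big|_{\,e(B')=\ga_{\pci}}}{\prod_{j\ne i}(\ga_{\pci}-\ga_{\pcj})}\;.
\end{equation*}
Substituting $g=\pi_1^{'*}(x)\cdot e_{-h}\bigl(\Hom(A',B')\bigr)/(u-e(B'))$, noting that $e_{-h}\bigl(\Hom(A',B')\bigr)$ restricts on the $i$-th locus to $\prod_{j\ne i}(\ga_{\pci}-\ga_{\pcj}-h)$ and that the relabelling of the previous step sends the $p$-th and $(p+1)$-st source blocks there to $\Ga_p-\{\ga_{\pci}\}$ and $\Ga_{p+1}\cup\{\ga_{\pci}\}$, i.e.\ to $\Ga^{\ipi}$, one recovers verbatim the formula for $\rho^+\bigl(E_p(u)\bigr)$ in Theorem \ref{rho+-}. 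The remaining three identities are the same computation: $\rho^-\bigl(E_p(u)\bigr)$ uses $\Hom(B',C')$ in place of $\Hom(A',B')$ (its restriction is $\prod_k(\ga_{p+1,k}-\ga_{\pci}-h)$), while the $F_p$ cases use $\pi''_{2*}$ along the $\mathbb P^{\la_{p+1}-1}$-fibre, with fixed loci $e(B'')=\ga_{\poi}$ and Euler classes $e_{-h}\bigl(\Hom(B'',C'')\bigr)$, resp.\ $e_{-h}\bigl(\Hom(A'',B'')\bigr)$, the source blocks restricting to $\Ga^{\ip}$.

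The point demanding genuine care — and the main obstacle — is the bookkeeping of signs. One must match the explicit factor $(-1)^{\la_p-\la_{p+1}+1}$ in the $\rho^-$ statements, the sign $(-1)^{\rank-1}$ distinguishing the bundle of hyperplanes from the bundle of lines (equivalently, the order of factors in the relative Euler class, and hence in the denominator of the localization formula), and the reorderings $\prod_{j\ne i}(\ga_{\pcj}-\ga_{\pci})=(-1)^{\la_p-1}\prod_{j\ne i}(\ga_{\pci}-\ga_{\pcj})$ and $\prod_k(\ga_{p+1,k}-\ga_{\pci}-h)=(-1)^{\la_{p+1}}\prod_k(\ga_{\pci}-\ga_{p+1,k}+h)$ (together with their $p\leftrightarrow p+1$ analogues) that convert the localization output into the normalisation used in Theorem \ref{rho+-}. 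A convenient way to keep all signs forced is to write the fibre pushforward in residue form, e.g.\ $\pi'_{2*}(g)=\sum_i\Res_{v=\ga_{\pci}}g\,dv/\prod_k(v-\ga_{p,k})$ with $v=e(B')$, and similarly for $\pi''_2$; then a short direct check in each of the four cases yields the claimed identities. One could equally replace localization by the classical Gysin formula for projective bundles, which leads to the same comparison.
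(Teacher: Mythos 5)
Your proposal is correct and follows essentially the same route as the paper: both reduce the theorem to checking that the pull--push expressions reproduce the Chern-root formulas of Theorem \ref{rho+-}, by computing the Gysin maps $\pi'_{2*},\pi''_{2*}$ via equivariant localization along the projective-bundle fibres and tracking the reordering signs that produce the factor $(-1)^{\la_p-\la_{p+1}+1}$. The only difference is that the paper omits these computations (referring to the Appendix of \cite{RSTV}), whereas you carry them out explicitly, and your sign bookkeeping checks out.
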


This theorem for the $\rho^+$-structure was conjectured in \cite{GV} and proved
by M.\,Varagnolo in \cite{Va}. Our proof below is different from the proof in
\cite{Va}, where the author follows \cite{Na} and checks that the right hand
sides of formulae for $\rho^+$ satisfy the Yangian relations.

\begin{proof}
By Theorem \ref{rho+-}, we already know that the $\rho^\pm$-structures on
\,$\Hplus$ are Yangian module structures. Hence we only need to check that
formulae of Theorem \ref{rho+-} are given by the right hand sides of formulae
in Theorem \ref{Varag}.
\vsk.1>
The proof is a straightforward application of the equivariant localization
formulation of push-forward maps. We omit the details, because they are
completely analogous to the arguments in the Appendix of \cite{RSTV}.
Notice that the sign \,$(-1)^{\la_p-\la_{p+1}+1}$ in the formulae for
\,$\rho^-$ comes from the fact that the functions \,$R(\zb_I)$ here and
\,$R(\zb_{I_1}\lsym|\<\>\zb_{I_N})$ in \cite{RSTV} differ by the sign
\,$(-1)^{\sum_{i<j}\la_i\la_j}$.
\end{proof}

\begin{rem}
Observe that the Euler classes in the four expressions above are Euler classes
of fiberwise tangent or cotangent bundles of some of the forgetful maps
(\ref{eqn:forgetfuls}), following the convention of
Section \ref{sec Partial flag varieties} that \,$\C^*$ acts
on these bundles by fiberwise multiplication.
\end{rem}

\section{Bethe algebras \,$\tbk(\Vl)$\>, \,$\tbk(\DL)$, \,and discrete
Wronskian}
\label{elsewhere}
The algebra \,$H_\bla$, see \Ref{Hrel}, is a free polynomial algebra over
\,$\C$ with generators \,$h$ and elementary symmetric functions
\,$\si_i(\ga_{p,1}\lc\ga_{p,\la_p})$ \,for \,$p=1\lc N$, \,$i=1\lc\la_p$.
Hence, \,$H_\bla$ is a free polynomial algebra over \,$\C$ with generators
\,$h$ \,and the elements \,$f_{p,s}$, see \Ref{fps}, for \,$p=1\lc N$,
\,$s=1\lc\la_p$. Then Theorems \ref{thm main+}, \ref{thm main-} show that
the Bethe algebras \,$\tbi(\Vl)$ \,and \,$\tbi(\DL)$ \,are
\vvn.1>
free polynomial algebras over \,$\C$ with generators \,$h$ \,and the respective
elements \,$C^\pm_{p,s}$, see \Ref{Cps}, for \,$p=1\lc N$, \,$s=1\lc\la_p$.

\vsk.2>
In this section we give similar statements for the Bethe algebras
\,$\tbk(\Vl)$ \,and \,$\tbk(\DL)$. We also formulate counterparts
of Theorems \ref{thm main+}, \ref{thm main-}, see Theorems \ref{mainK+},
\ref{mainK-} below.

\vsk.3>
In the rest of the paper we assume that \,$\kkk$ are distinct
numbers. Set
\beq
\label{W}
\Wk(u)\,=\,\det\>\biggl(\kk_i^{N-j}\,\prod_{k=1}^{\la_i}\,
\bigl(u-\gak_{\ik}+h\>(i-j)\bigr)\<\biggr)_{i,j=1}^N\,.
\vv.1>
\eeq
The function $\Wk(u)$ is essentially a Casorati determinant (discrete
Wronskian) of functions \;$g_i(u)=\>\kk_i^{u/h}\>
\vvn-.4>
\prod_{j=1}^{\la_i}\,\bigl(u-\gak_{\ij}+h\>(i-1)\bigr)$\>,
\be
\Wk(u)\,=\,\det\>\Bigl(\<\>g_i\bigl(u-h\>(j-1)\bigr)\<\Bigr)_{i,j=1}^N
\;\prod_{i=1}^N\,\kk_i^{N-1-u/h}\,.
\ee

\vsk-.1>
Define the algebra
\beq
\label{HK}
\Hck_\bla\>=\,\Czghl\>\Big/
\Bigl\bra \Wk(u)\>=\<\!\prod_{1\le i<j\le N}\!(\kk_i-\kk_j)\,
\prod_{a=1}^n\,(u-z_a)\Bigr\ket\,.
\eeq
For example, if \,$N=n=2$ \,and \,$\bla=(1,1)$, \,then the relations are
\vvn.1>
\be
\gak_{1,1}+\gak_{2,1}\>=\,z_1+z_2\,,\kern3em
\gak_{1,1}\>\gak_{2,1}+\>
\frac{\kk_2}{\kk_1-\kk_2}\,h\>(\gak_{1,1}-\gak_{2,1}+h)\,=\,z_1\<\>z_2\,.
\ee
It is easy to see that the subalgebra \,$\Hck_\bla$ does not change
if all $\kkk$ are multiplied simultaneously by the same number.
Notice that in the limit $\kk_{i+1}/\kk_i\to 0$ \,for all \,$i=1\lc N-1$,
the relations in \,$\Hck_\bla$ turn into the relations in \,$H_\bla$,
see \Ref{Hrel}.

\vsk.2>
Below we describe isomorphisms of the regular representation of \,$\Hck_\bla$
with the \,$\tbk(\Vl)$-module $\Vl$, \,as well as with the \,$\tbk(\DL)$-module
\,$\DL$.

\vsk.3>
Let \,$x$ be a complex variable. Set
\beq
\label{Wh}
\Whk(u,x)\,=\,\det\>\biggl(\kk_i^{N-j}\,\prod_{k=1}^{\la_i}\,
\bigl(u-\gak_{\ik}+h\>(i-j)\bigr)\!\biggr)_{i,j=0}^N\,,
\vv.3>
\eeq
where \,$\kk_0\<=x$ \,and \,$\la_0=0$. Clearly,
\vvn.1>
\;$\Whk(u,x)=x^N\>\Wk(u)\lsym+(-1)^N\>\Wk(u+h)$\>. Define the elements
\,$\Wk_{p,s}\in\Hck_\bla$ \,for \,$p=1\lc N$, \,$s\in\Z_{>0}$, \,by the rule
\vvn.4>
\beq
\label{Wps}
\frac{\Whk(u,x)}{\Wk(u)}\,=\,\prod_{i=1}^N\,(x-\kk_i)\>
+\>h\>\sum_{p=1}^N\,\sum_{s=1}^\infty\,(-1)^p\,\Wk_{p,s}\;x^{N-p}\>u^{-s}\,.
\vv.3>
\eeq
Define also the elements \,$\Uk_{i,s}\in\Hck_\bla$
\,for \,$i=1\lc N$, \,$s\in\Z_{>0}$ \,by the rule
\vvn.3>
\beq
\label{U}
\Uk_{i,s}\,=\,\sum_{p=1}^N\,(-1)^{p-1}\,\Wk_{p,s}\,\kk_i^{N-p-1}
\prod_{\satop{j=1}{j\ne i}}^N\,\frac1{\kk_i-\kk_j}\;,
\vv-.5>
\eeq
so that
\vvn-.1>
\be
\frac{\Whk(u,x)}{\Wk(u)}\,\prod_{i=1}^N\,\frac1{x-\kk_i}\,=\,
1\>-\>h\>\sum_{i=1}^N\,\sum_{s=1}^\infty\,
\frac{\kk_i}{x-\kk_i}\;\Uk_{i,s}\,u^{-s}\>.
\ee

\begin{prop}
\label{Ugen}
The algebra \,$\Hck_\bla$ is a free polynomial algebra over \,$\C$ with
generators \,$h$ \,and \;$\Uk_{i,s}$ \,for \,$i=1\lc N$, \,$s=2\lc\la_i+1$.
\end{prop}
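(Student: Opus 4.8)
The plan is to identify $\Hck_\bla$ first as a graded polynomial ring in the variables $h$ and the symmetric functions of the $\Ga_i$, and then to recognize the elements $\Uk_{i,s}$, $s=2\lc\la_i+1$, as an alternative system of polynomial generators by computing them modulo $h$.

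\emph{Step 1: $\Hck_\bla$ as a graded polynomial ring.} The ring $\Czghl$ is a free polynomial algebra over $\C$ on $h$, the elementary symmetric functions $\si_a(\zzz)$, $a=1\lc n$, and the elementary symmetric functions $\si_j(\Ga_i)$, $i=1\lc N$, $j=1\lc\la_i$, since $\C[\zb;\Gmm]^{S_n\times\Sla}=\C[\zzz]^{S_n}\ox\bigotimes_i\C[\Ga_i]^{S_{\la_i}}$. The polynomial $\Wk(u)$ of \Ref{W} does not involve $\zzz$, and its leading coefficient in $u$ is the Vandermonde $\prod_{i<j}(\kk_i-\kk_j)$, a nonzero constant; hence, read off coefficient by coefficient in $u$, the defining relation of $\Hck_\bla$ in \Ref{HK} is a system expressing each $\si_a(\zzz)$, $a=1\lc n$, as a polynomial in $h$ and the $\si_j(\Ga_i)$. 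Eliminating the $\si_a(\zzz)$ gives a canonical isomorphism $\Hck_\bla\cong\C[h]\ox\bigotimes_{i=1}^N\C[\Ga_i]^{S_{\la_i}}$; in particular $\Hck_\bla$ is a free polynomial algebra on $n+1$ generators. Assigning degrees $\deg z_a=\deg\gak_{ik}=\deg h=1$ and $\deg\kk_i=0$, each entry of the matrix in \Ref{W} is homogeneous, so $\Wk(u)$ is homogeneous of degree $n$ in $(u,\Gmm,h)$, the relation \Ref{HK} is homogeneous, and $\Hck_\bla$ becomes graded with $\deg\si_j(\Ga_i)=j$ and Hilbert series $\tfrac1{1-t}\prod_{i=1}^N\prod_{j=1}^{\la_i}\tfrac1{1-t^j}$. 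The same homogeneity applied to \Ref{Wps} and \Ref{U} shows $\Uk_{i,s}$ is homogeneous of degree $s-1$.

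\emph{Step 2: the $\Uk_{i,s}$ modulo $h$.} I would compute $\Whk(u,x)/\Wk(u)$ to first order in $h$. Setting $h=0$ removes the shift $h(i-j)$ from \Ref{W} and \Ref{Wh}, so each row factors and both determinants become $\prod_{i=1}^N\pi_i(u)$ times a Vandermonde in the $\kk$'s, where $\pi_i(u)=\prod_{k=1}^{\la_i}(u-\gak_{ik})$; thus $\Whk(u,x)/\Wk(u)\big|_{h=0}=\prod_{i=1}^N(x-\kk_i)$. Differentiating in $h$ at $h=0$ by multilinearity in the rows, and using that the logarithmic derivative in $\kk_i$ of the $\kk$-Vandermonde is $\sum_{q\ne i}1/(\kk_i-\kk_q)$, one obtains $\Whk(u,x)/\Wk(u)=\prod_i(x-\kk_i)-h\sum_{i=1}^N\kk_i\,\frac{\pi_i'(u)}{\pi_i(u)}\prod_{j\ne i}(x-\kk_j)+O(h^2)$. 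Comparing the $h^1$ term with the definition \Ref{Wps}, \Ref{U} of the $\Uk_{i,s}$ (using linear independence of the Lagrange-type polynomials $\prod_{j\ne i}(x-\kk_j)$) and expanding $\pi_i'(u)/\pi_i(u)=\sum_{s\ge1}p_{s-1}(\Ga_i)\,u^{-s}$ with $p_m(\Ga_i)=\sum_k\gak_{ik}^m$, I conclude $\Uk_{i,s}\equiv p_{s-1}(\Ga_i)\pmod h$.

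\emph{Step 3: generation and freeness.} By Step 2, the classes of $\Uk_{i,2}\lc\Uk_{i,\la_i+1}$ in $\Hck_\bla/h\Hck_\bla\cong\bigotimes_i\C[\Ga_i]^{S_{\la_i}}$ are the power sums $p_1(\Ga_i)\lc p_{\la_i}(\Ga_i)$, which generate $\C[\Ga_i]^{S_{\la_i}}$ in characteristic $0$. Hence the subalgebra $A\subseteq\Hck_\bla$ generated by $h$ and all $\Uk_{i,s}$ satisfies $A+h\Hck_\bla=\Hck_\bla$, and since $h\in A$ and $\Hck_\bla$ is graded, the degree induction $\Hck_{\bla,d}=A_d+h\,\Hck_{\bla,d-1}\subseteq A$ gives $A=\Hck_\bla$. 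Finally, the induced graded surjection from the abstract polynomial algebra $\C[h,\,U_{i,s}:i=1\lc N,\ s=2\lc\la_i+1]$ with $\deg U_{i,s}=s-1$ onto $\Hck_\bla$ has the same Hilbert series $\tfrac1{1-t}\prod_i\prod_{m=1}^{\la_i}\tfrac1{1-t^m}$ as its target, hence is an isomorphism; equivalently, $h$ and the $\Uk_{i,s}$ are algebraically independent and $\Hck_\bla$ is a free polynomial algebra on them.

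\emph{Main obstacle.} Everything except Step 2 is formal commutative algebra. The only genuinely computational point is the first-order-in-$h$ expansion of the Casorati-determinant quotient $\Whk(u,x)/\Wk(u)$, which rests on the factorization at $h=0$ and a careful but routine differentiation of the two determinants; verifying the precise coefficient (and sign) of the $h^1$ term is where the work lies.
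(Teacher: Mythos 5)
Your proof is correct, and at the structural level it is the same as the paper's: both identify $\Hck_\bla$ with the free polynomial algebra on $h$ and the elementary symmetric functions $\si_j(\gak_{i,1}\lc\gak_{i,\la_i})$ (by reading the relation \Ref{HK} coefficient-by-coefficient in $u$, which eliminates the $\si_a(\zzz)$), and then exhibit $h$ together with the $\Uk_{i,s}$ as an invertible change of polynomial generators. Where you differ is in how the invertibility is certified. The paper states the expansion $\Uk_{i,s}=(s-1)\>\si_{s-1}(\Ga_i)+\Upk_{i,s}$ with $\Upk_{i,s}$ a polynomial in $h$ and lower elementary symmetric functions, i.e.\ triangularity with nonzero leading coefficient, and leaves both this expansion and the final freeness conclusion essentially unproved; you instead compute the full reduction modulo $h$, obtaining $\Uk_{i,s}\equiv\sum_{k}\gak_{\ik}^{\,s-1}\pmod h$, and close the argument with graded Nakayama plus a Hilbert-series count. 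Your first-order expansion of $\Whk(u,x)/\Wk(u)$ checks out (the row-by-row differentiation of the two determinants and the logarithmic derivative of the Vandermonde give exactly $\prod_j(x-\kk_j)-h\sum_i\kk_i\,\tfrac{\pi_i'(u)}{\pi_i(u)}\prod_{j\ne i}(x-\kk_j)+O(h^2)$ with $\pi_i(u)=\prod_k(u-\gak_{\ik})$), and it in fact refines the paper's displayed coefficient: since the power sum of degree $s-1$ has $\si_{s-1}$-coefficient $(-1)^{s}(s-1)$, the paper's $(s-1)$ should carry a sign for odd $s$ (compare $s=3$, where the power sum is $\si_1^2-2\>\si_2$); this is immaterial, as both arguments only need the coefficient to be nonzero. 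The trade-off is that your route is longer but every step is verifiable, and the specialization $h=0$ makes transparent why the range $s=2\lc\la_i+1$ gives exactly the right number and degrees of generators.
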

\begin{proof}
Formulae \Ref{W} and \Ref{Wh}\,--\,\Ref{U} imply that \,$\Uk_{i,1}=\la_i$ \,for
all \,$i=1\lc N$, \,and
\vvn.3>
\beq
%% no label
\Uk_{i,s}\,=\,(s-1)\>\si_{s-1}(\gak_{i,1}\lc\gak_{i,\la_i})\>+\>
\Upk_{i,s}\,, \qquad s\ge 1\,,
\vv.3>
\eeq
where \,$\si_{s-1}$ is the $(s-1)$-th elementary
symmetric function and \,$\Upk_{i,s}$ is a polynomial in \,$h$ \,and
\,$\si_r(\gak_{j,1}\lc\gak_{j,\la_j})$ \,for \,$r<s-1$ \,and \,$j=1\lc N$.
For instance,
\vvn.2>
\beq
\label{U2}
\Uk_{i,2}\,=\,\si_1(\gak_{i,1}\lc\gak_{i,\la_i})
\>-\>h\>i\>\la_i+\>\frac h2\,\la_i\>(\la_i+1)-\>
h\>\la_i\sum_{\satop{j=1}{j\ne i}}^N\,\frac{\kk_j\>(\la_j+1)}{\kk_i-\kk_j}\,.
\vv-.1>
\eeq
Since \,$\Hck_\bla$ is clearly a free polynomial algebra over \,$\C$
with generators \,$h$ \,and \,$\si_s(\gak_{i,1}\lc\gak_{i,\la_i})$
\,for \,$i=1\lc N$, \,$s=1\lc\la_i$, the statement follows.
\end{proof}

\begin{cor}
\label{Wgen}
The elements \,$\Wk_{p,s}$ \,for \,$p=1\lc N$, \,$s\ge 2$, together with
\,$\C[h]$ generate the algebra \,$\Hck_\bla$.
\qed
\end{cor}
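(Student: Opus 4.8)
The plan is to derive Corollary~\ref{Wgen} directly from Proposition~\ref{Ugen} by means of the linear relation \Ref{U}. By Proposition~\ref{Ugen}, the algebra \,$\Hck_\bla$ is freely generated over \,$\C$ \,by \,$h$ \,and the elements \,$\Uk_{i,s}$, \,$i=1\lc N$, \,$s=2\lc\la_i+1$; in particular it is generated by \,$\C[h]$ \,and all of the \,$\Uk_{i,s}$ \,with \,$s\ge 2$. Hence it suffices to check that every such \,$\Uk_{i,s}$ \,lies in the subalgebra generated by \,$\C[h]$ \,and the elements \,$\Wk_{p,s}$, \,$p=1\lc N$, \,$s\ge 2$.

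For that I would read this off formula \Ref{U}: for each fixed \,$s$, it writes \,$\Uk_{i,s}$ \,as a \,$\C$-linear combination of \,$\Wk_{1,s}\lc\Wk_{N,s}$, \,with coefficients \,$(-1)^{p-1}\kk_i^{N-p-1}\prod_{j\ne i}(\kk_i-\kk_j)^{-1}$, which are honest complex numbers since the \,$\kkk$ \,are assumed distinct. Therefore \,$\Uk_{i,s}$ \,belongs to the subalgebra generated by \,$\C[h]$ \,and \,$\Wk_{1,s}\lc\Wk_{N,s}$ \,for every \,$i$ \,and every \,$s\ge 2$, and combining this over all admissible \,$i,s$ \,yields the corollary.

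I do not expect a genuine obstacle: all the substance is contained in Proposition~\ref{Ugen}, and the corollary merely repackages its generators through the linear change of coordinates \Ref{U}. If one wishes, one can also note that this change is invertible: because the \,$\kkk$ \,are distinct, Lagrange interpolation recovers the polynomial \,$\sum_{p=1}^N(-1)^p\Wk_{p,s}\>x^{N-p}$ \,of degree \,$\le N-1$ \,from the partial-fraction expansion displayed right after \Ref{U}, so that \,$\C[h]$ \,together with \,$\{\Wk_{p,s}\}_{s\ge 2}$ \,and \,$\C[h]$ \,together with \,$\{\Uk_{i,s}\}_{s\ge 2}$ \,generate one and the same subalgebra of \,$\Hck_\bla$; only the easy direction is needed for the statement at hand.
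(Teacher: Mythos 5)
Your proposal is correct and is exactly the argument the paper intends: the corollary is stated with an immediate \textit{qed} because, by \Ref{U}, each generator \,$\Uk_{i,s}$ \,of Proposition~\ref{Ugen} with \,$s\ge2$ \,is a \,$\C$-linear combination of \,$\Wk_{1,s}\lc\Wk_{N,s}$ \,(the coefficients being constants since the \,$\kkk$ \,are distinct and nonzero). The additional remark on invertibility via Lagrange interpolation is correct but not needed for the statement.
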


\begin{thm}[\cite{MTV5}]
\label{mainK+}
The assignment \;$\mukp_\bla\!:\Wk_{p,s}\mapsto\,\pho^+(h^{s-1}\Bk_{p,s})$,
\,$p=1\lc N$, \,$s>0$, extends uniquely to an isomorphism of \;$\CZH$-algebras
\;$\mukp_\bla\!:\Hck_\bla\<\to\tbk(\Vl)$\>. The map
\vvn-.4>
\beq
\label{nukp}
\nukp_\bla\!:\Hck_\bla\to\>\Vl\,,\qquad \quad %% \nukp_\bla\!:
[f]\,\mapsto\,\mukp_\bla([f])\,v^+_\bla\>,
\vv.3>
\eeq
is an isomorphism of vector spaces identifying the \;$\tbk(\Vl)$-module
\;$\Vl$ and the regular rep\-resentation of \,$\Hck_\bla$.
\end{thm}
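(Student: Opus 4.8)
The plan is to follow the strategy of Theorem~\ref{thm main+}, but with the Gelfand--Zetlin generators $\Bin_{p,s}$ replaced by the Bethe generators $\Bk_{p,s}$ and the cohomology algebra $H_\bla$ by its $\kk$-deformation $\Hck_\bla$. Unlike in Theorem~\ref{thm main+}, the operators $\pho^+(h^{s-1}\Bk_{p,s})$ are not diagonal in the basis $\{\xi^+_I\}$ of Theorem~\ref{AEFxi}, so two facts must be supplied. First, (A): the assignment $\Wk_{p,s}\mapsto\pho^+(h^{s-1}\Bk_{p,s})$ respects the defining relation \Ref{HK} of $\Hck_\bla$, so that $\mukp_\bla$ is a well-defined $\CZH$-algebra homomorphism --- automatically surjective onto $\tbk(\Vl)$, since the $h^{s-1}\Bk_{p,s}$ generate $\tbk$ over $\C[h]$. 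Second, (B): the $\CZH$-linear map $\nukp_\bla\colon[f]\mapsto\mukp_\bla([f])\,v^+_\bla$ is bijective. Granting (A) and (B), the rest is formal: for $X=\mukp_\bla([g])\in\tbk(\Vl)$ and $v=\nukp_\bla([f])$,
\be
Xv\,=\,\mukp_\bla([g])\,\mukp_\bla([f])\,v^+_\bla\,=\,\mukp_\bla([fg])\,v^+_\bla\,=\,\nukp_\bla([fg])\,,
\ee
so $\nukp_\bla$ carries the $\tbk(\Vl)$-action on $\Vl$ to multiplication in $\Hck_\bla$, i.e. to the regular representation; and $\mukp_\bla([f])=0$ forces $\nukp_\bla([f])=0$ and hence $[f]=0$, so $\mukp_\bla$ is injective and therefore an isomorphism.

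For (A), the content is a discrete quantum Wronskian identity: under $\pho^+$, the Casorati determinant $\Wk(u)$ of \Ref{W} acts on $\Vl$ as the scalar $\prod_{1\le i<j\le N}(\kk_i-\kk_j)\prod_{a=1}^n(u-z_a)$, which is exactly relation \Ref{HK}. This is the \XXX/ counterpart of the Schubert-calculus relation \Ref{Hrel}, into which it degenerates --- via \Ref{Blim} --- as $\kk_{i+1}/\kk_i\to0$, the degenerate case being visible already from Lemma~\ref{lem det} and the eigenvalue formula \Ref{Axi} of Theorem~\ref{AEFxi}, and recovering Theorem~\ref{thm main+}. For distinct $\kkk$ the full identity is supplied by \cite{MTV5}, which realizes the Bethe algebra of the $\gln$ \XXX/ model as the algebra of functions on a fiber of a discrete Wronski map; invoking it requires only matching the conventions there with ours --- the normalization of the quantum minors $M_{\iib}$, the argument $u$ versus $u/h$, and the use of the polynomial base $\CZH$ in place of fixed numerical evaluation and twist parameters.

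For (B), I would record that $\Vl$ is a free $\CZH$-module of rank $d_\bla$ (noted after Lemma~\ref{thi+-}) and that so is $\Hck_\bla$: by Proposition~\ref{Ugen} it is a polynomial algebra over $\C$ in $h$ and $n$ further generators whose top-degree parts are the symmetric functions of the $\gak_{i,k}$, so a degeneration argument as $\kk_{i+1}/\kk_i\to0$ identifies its rank over $\CZH$ with that of $H_\bla\cong\Vl$, namely $d_\bla$. A surjective $\CZH$-linear map between free modules of the same finite rank over the integral domain $\CZH$ is automatically bijective (split the surjection, compare ranks over the fraction field, and use that a direct summand of a free module is torsion-free), so (B) reduces to surjectivity of $\nukp_\bla$, that is, to $v^+_\bla$ being a cyclic vector for the $\tbk(\Vl)$-module $\Vl$ at every point of $\mathrm{Spec}\,\CZH$ --- the $\kk$-deformed analog of Proposition~\ref{V+v1}. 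Here $v^+_\bla=\pho^+\bigl(\text{lowering operators}\bigr)\vone$ (see the proof of Proposition~\ref{V+v1} and Lemma~\ref{v+gl}) is the \XXX/ pseudovacuum up to the $\gln$-action, and the cyclicity of its $\tbk$-orbit is precisely the Bethe-ansatz content of \cite{MTV5}. That cyclicity, together with the Wronskian identity (A), is the main obstacle; the remaining steps --- the module identification, the rank counts, the free-module bijectivity lemma, and the conventions dictionary --- are bookkeeping built on top of the already-proved Theorem~\ref{thm main+}.
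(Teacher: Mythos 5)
The paper does not prove Theorem \ref{mainK+} at all: it is imported verbatim from \cite{MTV5}, so there is no in-paper argument to compare yours against. Your proposal is a sensible reconstruction of the strategy behind that reference, and its formal scaffolding is sound --- the computation showing that $\nukp_\bla$ intertwines $\mukp_\bla([g])$ with multiplication by $[g]$, the deduction of injectivity of $\mukp_\bla$ from injectivity of $\nukp_\bla$, the rank count for $\Hck_\bla$ via Proposition \ref{Ugen} and degeneration to $H_\bla$, and the observation that a surjection between free $\CZH$-modules of equal finite rank is bijective are all correct. But you should be clear that the two ingredients you label (A) and (B) are exactly the substance of the theorem, and you defer both to \cite{MTV5}; so what you have is an accurate map of where the proof lives rather than a proof. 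One small caution on (A): well-definedness of $\mukp_\bla$ requires that \emph{every} $\CZH$-polynomial relation among the $\Wk_{p,s}$ in $\Hck_\bla$ (in particular the expressions of $\Wk_{p,s}$ for large $s$ in terms of the free generators of Proposition \ref{Ugen}, cf.\ Corollary \ref{Wgen}) be satisfied by the $\pho^+(h^{s-1}\Bk_{p,s})$; this is stronger than the single scalar identity for $\Wk(u)$ as you phrase it, although in \cite{MTV5} all of these relations are indeed extracted from one functional (Bethe-ansatz\<\>/\<\>discrete Wronskian) identity for the generating series of the transfer matrices. With that caveat, your outline is consistent with the paper's treatment, which likewise rests entirely on \cite{MTV5}.
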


\begin{thm}[\cite{MTV5}]
\label{mainK=}
The assignment \;$\muke_\bla\!:\Wk_{p,s}\mapsto\,\pho^+(h^{s-1}\Bk_{p,s})$,
\,$p=1\lc N$, \,$s>0$, extends uniquely to an isomorphism of \;$\CZH$-algebras
\;$\muke_\bla\!:\Hck_\bla\<\to\tbk(\DLe)$\>. The map
\vvn-.4>
\beq
\label{nuke}
\nuke_\bla\!:\Hck_\bla\to\>\DLe\,,\qquad \quad %% \nuke_\bla\!:
[f]\,\mapsto\,\muke_\bla([f])\,v^=_\bla\>,
\vv.2>
\eeq
is an isomorphism of vector spaces identifying the \;$\tbk(\DLe)$-module
\;$\DLe$ and the regular representation of \,$\Hck_\bla$.
\end{thm}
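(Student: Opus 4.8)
The statement is the \,$\DLe$-version of Theorem~\ref{mainK+}, and the plan is to reduce it to that theorem. The key observation is that \,$\DLe$ carries the $\ty$-action \,$\pho^+$, the very action used on \,$\Vl$ in Theorem~\ref{mainK+}, so both \,$\tbk(\Vl)$ and \,$\tbk(\DLe)$ are quotients of \,$\tbk$ under \,$\pho^+$. Once these two quotients are identified, the algebra part of the theorem will follow from Theorem~\ref{mainK+}, and the module part will come down to the cyclicity of \,$v^=_\bla$ in \,$\DLe$ over \,$\tbk(\DLe)$.

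For the algebra isomorphism I would show that for \,$X\in\tbk$ one has \,$\pho^+(X)|_{\Vl}=0$ if and only if \,$\pho^+(X)|_{\DLe}=0$. By Corollary~\ref{SB}, \,$\pho^+$ on \,$\Vl$ (resp.\ on \,$\DLe$) is adjoint to \,$\pho^-$ on \,$\DL$ with respect to the Shapovalov pairing \,$\Vl\ox\DL\to\CZH$ (resp.\ \,$\DLe\ox\DL\to Z^{-1}\CZH$), and both pairings are nondegenerate by Lemma~\ref{Sfg}; hence \,$\pho^+(X)|_{\Vl}=0\iff\pho^-(X)|_{\DL}=0\iff\pho^+(X)|_{\DLe}=0$. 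This yields a canonical isomorphism of \,$\CZH$-algebras \,$\tbk(\Vl)\to\tbk(\DLe)$, \,$\pho^+(X)|_{\Vl}\mapsto\pho^+(X)|_{\DLe}$, and composing it with \,$\mukp_\bla$ from Theorem~\ref{mainK+} produces an algebra isomorphism \,$\muke_\bla:\Hck_\bla\to\tbk(\DLe)$ with \,$\Wk_{p,s}\mapsto\pho^+(h^{s-1}\Bk_{p,s})|_{\DLe}$; it is the unique such extension because the \,$\Wk_{p,s}$ together with \,$\C[h]$ generate \,$\Hck_\bla$ (Corollary~\ref{Wgen}). The map \,$\nuke_\bla:[f]\mapsto\muke_\bla([f])\,v^=_\bla$ is then a \,$\CZH$-module homomorphism intertwining the regular representation of \,$\Hck_\bla$ with \,$\DLe$, and since \,$\Hck_\bla$ (a polynomial ring finite and flat over \,$\CZH$ by Proposition~\ref{Ugen}) and \,$\DLe$ (by Lemma~\ref{thi+-} and the Schubert-calculus remark following it) are both free \,$\CZH$-modules of rank \,$d_\bla$, it is an isomorphism as soon as it is surjective.

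It therefore remains to prove that \,$v^=_\bla$ generates \,$\DLe$ over \,$\tbk(\DLe)$. I would transport this to \,$\DL$: by Lemma~\ref{=-}, \,$\Pit$ is an isomorphism of \,$\ty$-modules from \,$\DLe$ (with \,$\pho^+$) onto \,$\DL$ (with \,$\pho^-$) sending \,$v^=_\bla$ to \,$v^-_\bla$, so it suffices to show \,$v^-_\bla$ generates \,$\DL$ over \,$\tbk(\DL)$. By Proposition~\ref{SP} the Shapovalov pairing \,$\Sc:\Vl\ox\DL\to\CZH$ coincides, up to sign, with the Poincar\'e pairing on \,$\Fla$ transported via the isomorphisms \,$\nu^+_\bla$, \,$\nu^-_\bla$, hence is perfect, and by Corollary~\ref{SB} it identifies \,$(\DL,\pho^-)$ with \,$\Hom_\CZH(\Vl,\CZH)$ as a module over \,$\tbk(\Vl)\cong\tbk(\DL)$. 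By Theorem~\ref{mainK+}, \,$(\Vl,\pho^+)$ is the regular representation of \,$\Hck_\bla$; since \,$\Hck_\bla$ is a polynomial ring (Proposition~\ref{Ugen}), hence Gorenstein, the module \,$\Hom_\CZH(\Hck_\bla,\CZH)$ is free of rank one over \,$\Hck_\bla$, so \,$(\DL,\pho^-)$ is again the regular representation of \,$\tbk(\DL)$. Finally one must verify that its distinguished cyclic generator is \,$v^-_\bla$ itself, i.e.\ that the functional \,$[a]\mapsto\Sc(\nukp_\bla([a]),v^-_\bla)$ generates \,$\Hom_\CZH(\Hck_\bla,\CZH)$ over \,$\Hck_\bla$; one direction is automatic (if \,$\Sc(\mukp_\bla([a])\,g,v^-_\bla)=0$ for all \,$g\in\Vl$, then \,$\Sc(\,\cdot\,,v^-_\bla)$ vanishes on the full-\,$\CZH$-rank ideal \,$\mukp_\bla([a])\Vl=([a])$, since \,$\Hck_\bla$ is a domain, forcing \,$v^-_\bla=0$ by Lemma~\ref{Sfg}, hence \,$[a]=0$), and the remaining upgrade to a perfect pairing rests on the compatibility of the Shapovalov pairing with the Poincar\'e/residue-type trace on \,$\Hck_\bla$, in the spirit of Proposition~\ref{SP}.

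The main obstacle is precisely this last point --- upgrading ``\,$\DL$ is abstractly the regular representation of \,$\tbk(\DL)$'' to ``\,$v^-_\bla$ (equivalently \,$v^=_\bla$) is a cyclic generator''. An alternative route that avoids it, and that matches the attribution of the theorem, is to rerun the proof of Theorem~\ref{mainK+} from \cite{MTV5} verbatim, with \,$\thi^=_\bla$, \,$v^=_\bla$ and formula \Ref{thi=xi} in place of \,$\thi^+_\bla$, \,$v^+_\bla$ and \Ref{thi+xi}: the spaces \,$\Vl$ and \,$\DLe$ admit identical descriptions in terms of the vectors \,$\xi^+_I$ (Proposition~\ref{thixi}), and the \,$\pho^+$-action on these vectors is given uniformly by Theorem~\ref{AEFxi}, so the argument carries over without change.
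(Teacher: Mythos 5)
Your proposal should first be measured against what the paper actually does: it does not prove this theorem at all --- Theorems \ref{mainK+}--\ref{mainK-} are quoted from \cite{MTV5}, and the only in-text reduction is the remark that Theorems \ref{mainK=} and \ref{mainK-} are equivalent via Lemma \ref{=-}, which is precisely your transport by $\Pit$ of the problem from $v^=_\bla$ to $v^-_\bla$. So you are attempting something the paper does not attempt, namely a derivation from Theorem \ref{mainK+} alone. The algebra half of your argument is sound: identifying $\tbk(\Vl)$ with $\tbk(\DLe)$ as quotients of $\tbk$ via Corollary \ref{SB} and Lemma \ref{Sfg} is exactly Proposition \ref{VBa} of the paper, composing with $\mukp_\bla$ gives an algebra isomorphism with the stated values on the $\Wk_{p,s}$, and uniqueness follows from Corollary \ref{Wgen}. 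The reduction of the module half to the cyclicity of $v^-_\bla$ in $\DL$ over $\tbk(\DL)$ is also correct.

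That cyclicity is where the gap sits, and it is genuine, as you suspect. Granting that the pairing $\Vl\ox\DL\to\CZH$ is unimodular and that $\Hck_\bla$ is Gorenstein over $\CZH$, you obtain that $\DL$ is \emph{abstractly} free of rank one over $\Hck_\bla$; the remaining claim is that the functional $\ell:a\mapsto\Sc\bigl(\nukp_\bla(a),v^-_\bla\bigr)$ is a \emph{generator} of $\Hom_{\CZH}(\Hck_\bla,\CZH)$, i.e.\ $\ell=a_0\,\ell_0$ with $a_0$ a unit. Your argument only shows that the annihilator of $\ell$ vanishes, i.e.\ that $a_0$ is a non-zero-divisor, and over the non-field base $\CZH$ this is strictly weaker: the map $\Z\to\Hom_{\Z}(\Z,\Z)$, $a\mapsto 2a$, is injective with zero annihilator, yet $2\Z\ne\Z$. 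Concretely, what is missing is the assertion that $v^-_\bla$ has a nonzero component along \emph{every} joint eigenvector of the Bethe algebra at generic $\zzz,h$ --- a completeness-of-Bethe-vectors type input that cannot be extracted from Theorem \ref{mainK+} by formal duality. Your fallback, rerunning the \cite{MTV5} proof with $\thi^=_\bla$, $v^=_\bla$ and \Ref{thi=xi} in place of the plus-objects (the actions on the $\xi^+_I$ being identical by Theorem \ref{AEFxi}), is indeed the route consistent with the attribution, but as written it is a pointer to an external proof rather than a proof.
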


\begin{thm}[\cite{MTV5}]
\label{mainK-}
The assignment \;$\mukm_\bla\!:\Wk_{p,s}\mapsto\,\pho^-(h^{s-1}\Bk_{p,s})$,
\,$p=1\lc N$, \,$s>0$, extends uniquely to an isomorphism of \;$\CZH$-algebras
\;$\mukm_\bla\!:\Hck_\bla\<\to\tbk(\DL)$\>. The map
\vvn-.4>
\beq
\label{nukm}
\nukm_\bla\!:\Hck_\bla\to\>\DL\,,\qquad \quad %% \nukm_\bla\!:
[f]\,\mapsto\,\mukm_\bla([f])\,v^-_\bla\>,
\vv.2>
\eeq
is an isomorphism of vector spaces identifying the \;$\tbk(\DL)$-module
\;$\DL$ and the regular representation of \,$\Hck_\bla$.
\end{thm}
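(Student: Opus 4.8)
The plan is to prove Theorem \ref{mainK+} first and deduce Theorems \ref{mainK=} and \ref{mainK-} from it. Theorem \ref{mainK+} rests on three facts: (a) $\Hck_\bla$ is a free $\CZH$-module of rank $d_\bla$; (b) the assignment $\Wk_{p,s}\mapsto\pho^+(h^{s-1}\Bk_{p,s})$, $p=1\lc N$, $s>0$, extends to a homomorphism of $\CZH$-algebras $\mukp_\bla\!:\Hck_\bla\to\tbk(\Vl)$, necessarily surjective since the $\pho^+(h^{s-1}\Bk_{p,s})$ and $h$ generate $\tbk(\Vl)$; (c) $v^+_\bla$ is a cyclic vector for the $\tbk(\Vl)$-action on $\Vl$, for all distinct $\kkk$.

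For (a): by \Ref{HK}, $\Hck_\bla$ is cut out in the polynomial ring $\Czghl$ by the $n$ coefficients in $u$ of $\Wk(u)-\prod_{1\le i<j\le N}(\kk_i-\kk_j)\prod_{a=1}^n(u-z_a)$, so it is a complete intersection of dimension $n+1$; setting $h=0$ in \Ref{W} gives $\Wk(u)|_{h=0}=\prod_{i<j}(\kk_i-\kk_j)\prod_{i,k}(u-\gak_{\ik})$, whence the fiber $\Hck_\bla/(h,\si_1(\zzz)\lc\si_n(\zzz))$ equals $\C[\Gmm]^{\Sla}/(\si_1(\Gmm)\lc\si_n(\Gmm))$, which is $0$-dimensional of length $d_\bla$ by the classical fact that $\C[\Gmm]^{\Sla}$ is free of rank $d_\bla$ over $\C[\Gmm]^{S_n}$; hence $\Hck_\bla$ is finite over the regular ring $\CZH$ and, by miracle flatness, free over $\CZH$ of rank $d_\bla$ (cf.\ also Proposition \ref{Ugen}). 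Facts (b) and (c) are the substance of \cite{MTV5}, where the Bethe algebra of the $\gln$ \XXX/ model on $\Vl$ --- the weight-$\bla$ part of $V=(\C^N)^{\ox n}$ with inhomogeneities $\zzz$ and the spectral parameter rescaled by $h$, restricted to the $S_n^+$-invariants --- is identified with the algebra of functions on a fiber of a discrete Wronski map, namely $\Hck_\bla$ by \Ref{HK}. The analytic heart of (b) is the ``quantum Wronskian'' identity, that the discrete Wronskian $\Wk(u)$ assembled from the Bethe operators $\pho^+(\Bk_{p,s})$ equals $\prod_{i<j}(\kk_i-\kk_j)\prod_a(u-z_a)$ times the identity of $\Vl$; given it, the defining relations of $\Hck_\bla$ hold for the images (here one uses Corollary \ref{Wgen}) and $\mukp_\bla$ is well defined. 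Fact (c) says the $\tbk(\Vl)$-orbit of $v^+_\bla$ (see Lemma \ref{v+gl}) spans $\Vl$.

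Granting (a)--(c), Theorem \ref{mainK+} follows by a rank count over $\CZH$. Since $\Vl$ is free over $\CZH$ and $\CZH$ is Noetherian, $\tbk(\Vl)\subset\End_{\CZH}(\Vl)$ is a finitely generated torsion-free $\CZH$-module; by (c) the evaluation $X\mapsto Xv^+_\bla$ maps it onto $\Vl$, so $\rank_{\CZH}\tbk(\Vl)\ge d_\bla$; being a quotient of the rank-$d_\bla$ module $\Hck_\bla$ by (a) and (b), its rank is exactly $d_\bla$, and $\ker\mukp_\bla$ --- a torsion submodule of the free module $\Hck_\bla$ --- is zero. Thus $\mukp_\bla$ is an isomorphism, $\tbk(\Vl)$ is free over $\CZH$ of rank $d_\bla$, and $X\mapsto Xv^+_\bla$ is a surjection of free $\CZH$-modules of the same finite rank, hence an isomorphism. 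Therefore $\nukp_\bla=(X\mapsto Xv^+_\bla)\circ\mukp_\bla$ is an isomorphism of $\CZH$-modules identifying the $\tbk(\Vl)$-module $\Vl$ with the regular representation of $\Hck_\bla$, which is Theorem \ref{mainK+}. Theorem \ref{mainK=} follows in the same way: by Proposition \ref{VBa} (applied weight by weight) $\tbk(\DLe)$ is identified with $\tbk(\Vl)$ via $\pho^+(X)|_{\Vl}\mapsto\pho^+(X)|_{\DLe}$, so $\muke_\bla$ is a $\CZH$-algebra isomorphism, and the analogue of (c) for $v^=_\bla$ and $\tbk(\DLe)$, again from \cite{MTV5}, makes $\nuke_\bla$ an isomorphism onto the regular representation. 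Finally Theorem \ref{mainK-} follows from Theorem \ref{mainK=} through Lemma \ref{=-}: the map $\Pit$ of \Ref{Pit} is an isomorphism $\DLe\to\DL$ of $\ty$-modules with $\Pit\,\pho^+(X)=\pho^-(X)\,\Pit$ and $\Pit\,v^=_\bla=v^-_\bla$, and it is $\CZH$-linear (reversing the $\zzz$ fixes symmetric functions), so $\mukm_\bla$, defined by $\mukm_\bla(f)=\Pit\,\muke_\bla(f)\,\Pit^{-1}$ for $f\in\Hck_\bla$, is the asserted $\CZH$-algebra isomorphism $\Hck_\bla\to\tbk(\DL)$ and $\nukm_\bla=\Pit\circ\nuke_\bla$ is the asserted isomorphism onto the regular representation.

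The main obstacle is facts (b) and (c) --- the genuine input from \cite{MTV5}: the discrete-Wronskian description of the \XXX/ Bethe algebra on $\Vl$ and the cyclicity of the vector $v^+_\bla$. The rest of the work is to match the modules $\Vl$, $\DLe$, $\DL$ here with the weight subspaces of tensor products of vector representations treated in \cite{MTV5} and to carry out the bookkeeping above; the commutativity of all the Bethe algebras used throughout is Theorem \ref{BY}.
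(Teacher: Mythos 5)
The paper gives no proof of this theorem: it is imported wholesale from \cite{MTV5}, and the only argument the paper itself supplies is the remark immediately after the statements that Theorems \ref{mainK=} and \ref{mainK-} are equivalent by Lemma \ref{=-} (via $\Pit$, cf.\ \Ref{muk=-}). Your final reduction of Theorem \ref{mainK-} to Theorem \ref{mainK=} through $\Pit$ reproduces exactly that remark, and the rest of your write-up is a sound reconstruction of what a proof must look like: the freeness of $\Hck_\bla$ over $\CZH$ of rank $d_\bla$ and the rank-count/torsion argument are routine, and you correctly isolate the two nontrivial inputs --- the quantum Wronskian identity making $\mukp_\bla$ well defined, and the cyclicity of $v^+_\bla$ (resp.\ $v^=_\bla$) --- as the actual content of \cite{MTV5}. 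Since those two facts essentially \emph{are} the theorem, your proposal is not an independent proof but an honest and correctly organized reduction to the cited source, which is the same level of justification the paper itself offers; the deduction of the $\DLe$ case from the $\V^+_\bla$ case via Proposition \ref{VBa} and of the $\DL$ case via Lemma \ref{=-} is the one piece of genuine argument, and it is correct.
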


%% \noindent
%% The proofs of Theorems \ref{mainK+}\,--\,\ref{mainK-} are similar
%% to the proof of Theorem~6.7 in \cite{MTV3} and will be published elsewhere.

\vsk.2>
Theorems \ref{mainK+}\,--\,\ref{mainK-} are instances of the discrete geometric
Langlands correspondence, see the corresponding versions of the geometric
Langlands correspondence in \cite{MTV2, MTV3}, cf.~also \cite{RSTV}.

Theorems \ref{mainK=} and \ref{mainK-} are equivalent by Lemma \ref{=-}.
In particular,
\vvn.2>
\beq
\label{muk=-}
\Pit\;\muke_\bla(f)\,=\,\mukm_\bla(f)\;\Pit\;,\qquad
\Pit\;\nuke_\bla\<=\,\nukm_\bla
\vv.1>
\eeq
where \;$\Pit$ \,is given by \Ref{Pit} and \,$f\<\in\Hck_\bla$\>.

\begin{cor}
\label{VBk}
The \,$\tbk\<$-modules \,$\V^+\!$, \,$\DVe\!$, and \,$\DV\!$ are isomorphic.
\end{cor}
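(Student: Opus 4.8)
The plan is to imitate the proof of Corollary \ref{VB}, with Theorems \ref{thm main+}\,--\,\ref{thm main-} replaced by their $\kk$-deformed counterparts, Theorems \ref{mainK+}\,--\,\ref{mainK-}. Fix $\bla$ with $|\bla|=n$. Using those theorems I would form the vector space isomorphisms
\be
\Phi^=_\bla\>=\,\nuke_\bla\,(\nukp_\bla)^{-1}\!:\Vl\to\DLe\,,\qquad
\Phi^-_\bla\>=\,\nukm_\bla\,(\nukp_\bla)^{-1}\!:\Vl\to\DL\,,
\ee
and show that $\Phi^=_\bla$ intertwines the $\pho^+$-action of $\tbk$ on $\Vl$ with its $\pho^+$-action on $\DLe$, while $\Phi^-_\bla$ intertwines the $\pho^+$-action on $\Vl$ with the $\pho^-$-action on $\DL$. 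Taking the direct sum over all such $\bla$ then yields the asserted isomorphisms of $\tbk$-modules $\V^+\!\to\DVe$ and $\V^+\!\to\DV$.

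To establish the intertwining properties it suffices to check them on the algebra generators $X=h^{s-1}\Bk_{p,s}$ of $\tbk$, $p=1\lc N$, $s>0$, since these generate $\tbk$ over $\C[h]$ and the relevant Bethe algebras are commutative. Theorem \ref{mainK+} says that $\nukp_\bla$ carries the operator $\pho^+(h^{s-1}\Bk_{p,s})$ on $\Vl$ to the operator of multiplication by $\Wk_{p,s}$ on the regular representation of $\Hck_\bla$; likewise, Theorem \ref{mainK=} (resp.\ Theorem \ref{mainK-}) says that $\nuke_\bla$ carries $\pho^+(h^{s-1}\Bk_{p,s})$ on $\DLe$ (resp.\ $\nukm_\bla$ carries $\pho^-(h^{s-1}\Bk_{p,s})$ on $\DL$) to multiplication by the very same element $\Wk_{p,s}$. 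Hence the three transported operators coincide, which is exactly the intertwining relation pulled back through the $\nu$'s, and the claim follows.

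I do not anticipate a real obstacle: the entire content is the observation that the assignments defining $\mukp_\bla$, $\muke_\bla$, $\mukm_\bla$ agree on the generators $\Wk_{p,s}$, each sending $\Wk_{p,s}$ to the action of $h^{s-1}\Bk_{p,s}$, so composing the module isomorphisms of Theorems \ref{mainK+}\,--\,\ref{mainK-} automatically produces $\tbk$-equivariant maps. The only point requiring care is bookkeeping — $\tbk$ acts on $\V^+$ and $\DVe$ through $\pho^+$ but on $\DV$ through $\pho^-$ — so the intertwining relations must be written with the correct representation on each side. As a shortcut, once $\V^+\!\cong\DVe$ is in hand the isomorphism $\DVe\!\cong\DV$ is immediate, since $\Pit$ already induces an isomorphism $\DVe\!\to\DV$ of $\ty$-modules by Lemma \ref{=-}, compatibly with the Bethe algebra actions in view of \Ref{muk=-}.
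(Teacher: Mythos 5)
Your proposal is correct and coincides with the paper's own proof, which likewise takes the composites $\nuke_\bla\,(\nukp_\bla)^{-1}\!:\Vl\to\DLe$ and $\nukm_\bla\,(\nukp_\bla)^{-1}\!:\Vl\to\DL$ on each weight subspace as the desired isomorphisms, relying on Theorems \ref{mainK+}\,--\,\ref{mainK-}. Your explicit verification of the intertwining on the generators $h^{s-1}\Bk_{p,s}$ just spells out what the paper leaves implicit.
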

\begin{proof}
The isomorphisms restricted to weight subspaces are
\;$\nuke_\bla\<\>(\nukp_\bla)^{-1}\!:\Vl\to\DLe$ \,and
\;$\nukm_\bla\<\>(\nukp_\bla)^{-1}\!:\Vl\to\DL$\>.
\end{proof}

\begin{cor}
\label{Bkmax}
The subalgebras \;$\tbk(\Vl)\subset\End(\Vl)$, \;$\tbk(\DLe)\subset\End(\DLe)$
\,and\\ \;$\tbk(\DL)\subset\End(\DL)$ \,are maximal commutative subalgebras.
\qed
\end{cor}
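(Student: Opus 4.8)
The plan is to deduce this entirely from Theorems \ref{mainK+}--\ref{mainK-} together with the following elementary fact: for any unital commutative $\C$-algebra $\Ac$, the algebra of multiplication operators $\{m_a : a\in\Ac\}\subset\End_\C(\Ac)$, where $m_a(b)=ab$, is a maximal commutative subalgebra of $\End_\C(\Ac)$. First I would record this fact with its one-line proof: if $\phi\in\End_\C(\Ac)$ commutes with every $m_a$, then $\phi(a)=\phi(m_a(1))=m_a(\phi(1))=a\,\phi(1)$ for all $a\in\Ac$, so $\phi=m_{\phi(1)}$; hence the commutant of $\{m_a : a\in\Ac\}$ inside $\End_\C(\Ac)$ equals $\{m_a : a\in\Ac\}$ itself, which in particular makes it maximal among commutative subalgebras.

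Next I would apply this with $\Ac=\Hck_\bla$. By Theorem \ref{mainK+}, the $\CZH$-algebra isomorphism $\mukp_\bla\!:\Hck_\bla\to\tbk(\Vl)$ and the vector space isomorphism $\nukp_\bla\!:\Hck_\bla\to\Vl$ identify the $\tbk(\Vl)$-module $\Vl$ with the regular representation of $\Hck_\bla$; that is, $\nukp_\bla$ intertwines the multiplication operator $m_a$ on $\Hck_\bla$ with $\mukp_\bla(a)$ on $\Vl$, for every $a\in\Hck_\bla$. Since $\mukp_\bla$ is onto, conjugation by $\nukp_\bla$ therefore carries $\tbk(\Vl)$ precisely onto $\{m_a : a\in\Hck_\bla\}$, which by the previous paragraph is maximal commutative in $\End_\C(\Hck_\bla)$. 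Hence $\tbk(\Vl)$ is maximal commutative in $\End_\C(\Vl)$, and a fortiori in the subalgebra $\End(\Vl)$ in which it is defined. The cases of $\DLe$ and $\DL$ are handled in exactly the same way, using Theorems \ref{mainK=} and \ref{mainK-} in place of Theorem \ref{mainK+}; alternatively they follow from the $\Vl$ case via Corollary \ref{VBk}, whose module isomorphisms $\nuke_\bla(\nukp_\bla)^{-1}$ and $\nukm_\bla(\nukp_\bla)^{-1}$ carry $\tbk(\Vl)$ onto $\tbk(\DLe)$ and $\tbk(\DL)$ respectively.

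I do not anticipate a genuine obstacle here: the substance is all contained in Theorems \ref{mainK+}--\ref{mainK-}, and what remains is purely formal. The only point requiring a moment's care is the choice of ambient endomorphism ring --- the Bethe algebra is by construction the image of $\tbk$ in $\End(\Vl)$, so maximality must be asserted there; but since the abstract lemma already yields maximality in the larger ring $\End_\C(\Vl)$ of all $\C$-linear endomorphisms, this is automatic. One may also note in passing, via Proposition \ref{Ugen}, that $\Hck_\bla$ is a free polynomial algebra, hence a domain on which the regular representation is faithful, although faithfulness plays no role in the argument above.
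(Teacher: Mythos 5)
Your proof is correct and is exactly the argument the paper intends: the corollary is stated with an immediate \qed because Theorems \ref{mainK+}--\ref{mainK-} identify each Bethe-algebra module with the regular representation of the commutative algebra $\Hck_\bla$, and the multiplication operators of a unital commutative algebra are their own commutant, hence maximal commutative. Nothing is missing.
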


\begin{cor}
\label{generate}
The Bethe algebra \,$\tbk(\Vl)$ is a free polynomial algebra over
\,$\C$ with generators \,$h$ \,and \,$\pho^+(\Sk_{p,s})$ \,for \,$p=1\lc N$,
\,$s=1\lc\la_p$, where the elements \,$\Sk_{p,s}\!\in\tbk\<$ are defined
\vvn.1>
by \Ref{S}. The Bethe algebra \,$\tbk(\DLe)$ is a free polynomial algebra over
\,$\C$ with generators \,$h$ \,and \,$\pho^+(\Sk_{p,s})$ \,for \,$p=1\lc N$,
\,$s=1\lc\la_p$. The Bethe algebra \,$\tbk(\DL)$ is a free polynomial algebra
over \,$\C$ with generators \,$h$ \,and \,$\pho^-(\Sk_{p,s})$ \,for
\,$p=1\lc N$, \,$s=1\lc\la_p$.
\end{cor}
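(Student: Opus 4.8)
The plan is to deduce the corollary by transporting the explicit set of generators of $\Hck_\bla$ furnished by Proposition~\ref{Ugen} across the $\CZH$-algebra isomorphisms of Theorems~\ref{mainK+}\,--\,\ref{mainK-}. Consider first $\tbk(\Vl)$. By Theorem~\ref{mainK+} the map $\mukp_\bla\!:\Hck_\bla\to\tbk(\Vl)$ is an isomorphism of $\CZH$-algebras with $\Wk_{p,s}\mapsto\pho^+(h^{s-1}\Bk_{p,s})$ for all $p=1\lc N$ and $s>0$. The first step is to check that this isomorphism carries the elements $\Uk_{i,s}\in\Hck_\bla$ of formula~\Ref{U} to $\pho^+(h^{s-1}\Sk_{i,s})$. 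This is immediate once one notices that $\Uk_{i,s}$ is built from the $\Wk_{p,s}$ by exactly the same linear combination — with coefficients $(-1)^{p-1}\kk_i^{N-p-1}\prod_{j\ne i}(\kk_i-\kk_j)^{-1}$, which are genuine complex numbers since the $\kkk$ are fixed and distinct — that builds $\Sk_{i,s}$ from the $\Bk_{p,s}$ in formula~\Ref{S}. Since $\mukp_\bla$ is a $\CZH$-algebra homomorphism and $\pho^+$ is linear, and since $h^{s-1}\Sk_{i,s}=\sum_p(-1)^{p-1}\kk_i^{N-p-1}\prod_{j\ne i}(\kk_i-\kk_j)^{-1}\,h^{s-1}\Bk_{p,s}$ lies in $\ty$, the identity $\mukp_\bla(\Uk_{i,s})=\pho^+(h^{s-1}\Sk_{i,s})$ follows.

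Next I would invoke Proposition~\ref{Ugen}, according to which $\Hck_\bla$ is a free polynomial algebra over $\C$ on $h$ together with the $\Uk_{i,s}$ for $i=1\lc N$ and $s=2\lc\la_i+1$. Applying the isomorphism $\mukp_\bla$ then shows that $\tbk(\Vl)$ is a free polynomial algebra over $\C$ on $h$ together with the operators $\pho^+(h^{s-1}\Sk_{i,s})$ over the same range of indices, which after shifting the index $s$ is the asserted description of $\tbk(\Vl)$. The cases of $\tbk(\DLe)$ and $\tbk(\DL)$ go through word for word: for $\tbk(\DLe)$ one uses Theorem~\ref{mainK=}, for which $\muke_\bla$ again sends $\Wk_{p,s}\mapsto\pho^+(h^{s-1}\Bk_{p,s})$ and hence $\Uk_{i,s}\mapsto\pho^+(h^{s-1}\Sk_{i,s})$; for $\tbk(\DL)$ one uses Theorem~\ref{mainK-}, for which $\mukm_\bla$ sends $\Wk_{p,s}\mapsto\pho^-(h^{s-1}\Bk_{p,s})$ and hence $\Uk_{i,s}\mapsto\pho^-(h^{s-1}\Sk_{i,s})$.

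I do not expect a serious obstacle here: once Theorems~\ref{mainK+}\,--\,\ref{mainK-} and Proposition~\ref{Ugen} are available, the corollary is pure transport of structure. The one point that demands attention is the bookkeeping of the powers of $h$ attached to the $\Sk_{i,s}$ — carried along by the rule $\Wk_{p,s}\mapsto\pho^\pm(h^{s-1}\Bk_{p,s})$ — together with the shift of the index $s$ between formula~\Ref{U} and the statement of the corollary; matching these conventions is the only thing separating the clean transported statement from the form quoted in the corollary.
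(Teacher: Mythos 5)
Your argument is exactly the paper's: the authors also observe that $\mukp_\bla(\Uk_{i,s})=\muke_\bla(\Uk_{i,s})=\pho^+(\Sk_{i,s})$ and $\mukm_\bla(\Uk_{i,s})=\pho^-(\Sk_{i,s})$ (comparing \Ref{U} with \Ref{S}) and then transport the free generating set of Proposition~\ref{Ugen} across the isomorphisms of Theorems~\ref{mainK+}\,--\,\ref{mainK-}. Your extra care about the factor $h^{s-1}$ and the shift in the index range of $s$ is a reasonable piece of bookkeeping that the paper's one-line proof glosses over, but it does not change the argument.
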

\begin{proof}
Since \,$\mukp_\bla(U_{i,s})=\muke_\bla(U_{i,s})=\pho^+(\Sk_{i,s})$
\,and \,$\mukm_\bla(U_{i,s})=\pho^-(\Sk_{i,s})$, see \Ref{U}, \Ref{S},
\vvn.06>
the statement follows from Theorems \ref{mainK+}\,--\,\ref{mainK-}, and
Proposition \ref{Ugen}.
\end{proof}

\begin{lem}
\label{mukfg}
For any \;$f\<\in\Hck_\bla$, and any \,$g_1\in\Vl$, \,$g_2\in\DLe$,
\,$g_3\in\DL$, we have
\vvn.2>
\be
\Sc\bigl(\mukp_\bla(f)\>g_1,g_3\bigr)\,=\,
\Sc\bigl(g_1,\mukm_\bla(f)\>g_3\bigr)\,,\qquad
\Sc\bigl(\muke_\bla(f)\>g_2,g_3\bigr)\,=\,
\Sc\bigl(g_2,\mukm_\bla(f)\>g_3\bigr)\,.
\vv.2>
\ee
\end{lem}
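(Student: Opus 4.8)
The plan is to mimic the proof of Lemma \ref{mufg}: reduce the two contravariance identities to a generating set of \,$\Hck_\bla$\, and then propagate them through products. By Theorems \ref{mainK+}--\ref{mainK-} the maps \,$\mukp_\bla$, \,$\muke_\bla$, \,$\mukm_\bla$ are homomorphisms of \,$\CZH$-algebras, and by Corollary \ref{Wgen} the algebra \,$\Hck_\bla$ is generated over \,$\C$ by \,$h$ and the elements \,$\Wk_{p,s}$, \,$p=1\lc N$, \,$s\ge 2$. So it is enough to establish both identities when \,$f$ is one of these generators, and then to check that the set of \,$f$ for which they hold is closed under multiplication.

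For \,$f=h$ there is nothing to prove: the Shapovalov pairings \Ref{s+-} and \Ref{s=-} are \,$\CZH$-bilinear and the three \,$\mu$-maps are \,$\CZH$-algebra homomorphisms, so \,$h$ acts on each of \,$\Vl$, \,$\DLe$, \,$\DL$ as multiplication by \,$h$. For \,$f=\Wk_{p,s}$ the key observation is that, by the very definition of the maps in Theorems \ref{mainK+}--\ref{mainK-},
\[
\mukp_\bla(\Wk_{p,s})=\muke_\bla(\Wk_{p,s})=\pho^+(h^{s-1}\Bk_{p,s}),\qquad \mukm_\bla(\Wk_{p,s})=\pho^-(h^{s-1}\Bk_{p,s}),
\]
and \,$h^{s-1}\Bk_{p,s}\in\tbk$. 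Hence both identities for \,$f=\Wk_{p,s}$ are exactly the content of Corollary \ref{SB} applied with \,$X=h^{s-1}\Bk_{p,s}$ (in the first identity to the \,$V$-valued functions \,$g_1,g_3$, in the second to \,$g_2,g_3$). This is the only substantive input.

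For closure under products, suppose the first identity holds for \,$f$ and for \,$f'$. Since \,$\mukp_\bla(f')g_1\in\Vl$ and \,$\mukm_\bla(f)g_3\in\DL$, applying the identity first for \,$f$ and then for \,$f'$, and then using that \,$\mukm_\bla$ is a homomorphism and \,$\Hck_\bla$ is commutative, gives
\[
\Sc\bigl(\mukp_\bla(ff')\>g_1,g_3\bigr)=\Sc\bigl(\mukp_\bla(f')\>g_1,\mukm_\bla(f)\>g_3\bigr)=\Sc\bigl(g_1,\mukm_\bla(f')\>\mukm_\bla(f)\>g_3\bigr)=\Sc\bigl(g_1,\mukm_\bla(ff')\>g_3\bigr).
\]
The second identity is closed under products by the identical computation with \,$g_1$ and \,$\mukp_\bla$ replaced by \,$g_2\in\DLe$ and \,$\muke_\bla$, using that \,$\muke_\bla(f')g_2\in\DLe$. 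Together with the generator case this proves the lemma.

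I do not anticipate any real obstacle: the entire content sits in Corollary \ref{SB} (which itself rests on the \,$\varpi$-invariance of the Bethe algebra, Lemma \ref{varpiB}), and everything else is the formal reduction to algebra generators together with the homomorphism/commutativity bookkeeping. The only point needing a moment's care is that the chosen generators of \,$\Hck_\bla$ are mapped into \,$\tbk(\Vl)$, \,$\tbk(\DLe)$, \,$\tbk(\DL)$ by the images under \,$\pho^\pm$ of elements of \,$\tbk$, so that Corollary \ref{SB} is literally applicable.
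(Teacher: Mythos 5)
Your proof is correct and is essentially the paper's argument: the paper disposes of the lemma in one line by noting that, by the definition of the maps \,$\mukpm_\bla$\,, every \,$\mukp_\bla(f)$, \,$\muke_\bla(f)$, \,$\mukm_\bla(f)$ is \,$\pho^+(X)$ resp.\ \,$\pho^-(X)$ for a common \,$X\in\tbk$, so Corollary \ref{SB} applies directly. Your generators-plus-products reduction is just a more explicit unwinding of that same observation, with Corollary \ref{SB} carrying all the substance in both versions.
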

\begin{proof}
The statement follows from the definition of the maps \,$\mu^\pm_\bla$ and
Corollary \ref{SB}.
\end{proof}

\vsk.3>
Recall that \,$\ty$ contains \,$U(\gln)$ as a subalgebra. Denote by \,$\ty^\h$
the subalgebra of \,$\ty$ commuting with \,$U(\h)$, where \,$\h\subset\gln$
in the Lie subalgebra generated by \,$e_{\ii}$, \,$i=1\lc N$. The isomorphisms
\;$\nukpm_\bla$ induce homomorphisms
\vvn.2>
\beq
\label{rhoK}
\rhkpm_\bla:\ty^\h\to\>\End(\Hck_\bla)\,,\kern3em
\rhkpm_\bla:X\,\mapsto\,(\nukpm_\bla)^{-1}\>\pho^\pm(X)\;\nukpm_\bla\>.
\kern-3em
\vv.3>
\eeq
By relations \Ref{muk=-} and \Ref{pho+-}, we also have
\;$\rhkm(X)\<\>=\>(\nuke_\bla)^{-1}\>\pho^+(X)\;\nuke_\bla$\>.

\begin{lem}
\label{lrhoXK}
For \,$i=1\lc N$, we have
\vvn.2>
\begin{align*}
\rhkpm_\bla(\Xkm_{\bla\<\>,\<\>i})\>:\>[f]\;\mapsto\,
\biggl[f(\zb;\Gmm;h)\>\biggl(\,\sum_{k=1}^{\la_i}\<\>\gak_{\ik} &{}
+\>h\>\sum_{j=1}^{i-1}\,\frac{\kk_i}{\kk_i-\kk_j}\;\min\<\>(0\<\>,\la_j-\la_i)
\>+{}
\\[4pt]
&{}+\>h\!\sum_{j=i+1}^N\frac{\kk_j}{\kk_i-\kk_j}\;\min\<\>(0\<\>,\la_j-\la_i)
% -\>h\!\sum_{j=i+1}^N\frac{\kk_j}{\kk_i-\kk_j}\;(\la_i-\la_j)
\>\biggr)\biggr]\>.
% \kern-2em
\end{align*}
\end{lem}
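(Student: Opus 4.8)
The plan is to reduce the statement to the action on the cyclic vectors $v^\pm_\bla$ and then to compare the dynamical Hamiltonian $\Xkm_{\bla,i}$ with the Bethe-algebra generator $\Sk_{i,2}$ (equivalently, on the cohomology side, with $\Uk_{i,2}$). First I would establish a general principle: for every $X\in\tbk$ (recall $\tbk\subset\ty^\h$ by Theorem \ref{BY}) the operator $\rhkpm_\bla(X)$ on $\Hck_\bla$ is multiplication by the element $\rhkpm_\bla(X)[1]=(\nukpm_\bla)^{-1}\,\pho^\pm(X)\,v^\pm_\bla$. Indeed, since $\Xkm_{\bla,i}\in\tbk$, see \Ref{Xkm}, its images $\pho^+(\Xkm_{\bla,i})$ and $\pho^-(\Xkm_{\bla,i})$ lie in the commutative algebras $\tbk(\Vl)=\mukp_\bla(\Hck_\bla)$ and $\tbk(\DL)=\mukm_\bla(\Hck_\bla)$; then, using $\nukpm_\bla([f])=\mukpm_\bla([f])\,v^\pm_\bla$ and commutativity,
\be
\pho^\pm(X)\,\nukpm_\bla([f])\,=\,\mukpm_\bla([f])\;\pho^\pm(X)\,v^\pm_\bla\,=\,
\nukpm_\bla\bigl([f]\cdot\rhkpm_\bla(X)[1]\bigr)\,.
\ee
So it remains to compute $(\nukpm_\bla)^{-1}\pho^\pm(\Xkm_{\bla,i})\,v^\pm_\bla$ for each sign.

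The next step is to compare $\Xkm_{\bla,i}$ with $\Xin_i$. Combining the two expressions for $\Xk_i$ in \Ref{X} and \Ref{XX} one gets $h\,\Sk_{i,2}=\Xin_i+R_i$, where
\be
R_i\,=\,h\sum_{j<i}\frac{\kk_i}{\kk_i-\kk_j}\,G_{\ij}\>+\>
h\sum_{j>i}\frac{\kk_j}{\kk_i-\kk_j}\,G_{\ij}\>+\>\frac h2\,e_{\ii}(e_{\ii}-1)\>-\>
h\sum_{j\ne i}\frac{\kk_j}{\kk_i-\kk_j}\,e_{\ii}e_{\jj}
\ee
lies in $U(\gln)[h]$. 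By Theorems \ref{mainK+}, \ref{mainK-} together with \Ref{U}, \Ref{S} (cf.\ the proof of Corollary \ref{generate}) we have $\mukpm_\bla(\Uk_{i,2})=\pho^\pm(h\,\Sk_{i,2})$, while $\pho^\pm(\Xin_i)\in\tbi(\Vl)\subset\tbk(\Vl)$ (resp.\ $\tbi(\DL)\subset\tbk(\DL)$); hence $\pho^\pm(R_i)=\mukpm_\bla(\Uk_{i,2})-\pho^\pm(\Xin_i)$ lies in the Bethe algebra. Now evaluate $R_i$ on the cyclic vector: $v^+_\bla$ has weight $\bla$ and satisfies $G_{\ij}v^+_\bla=\la_i\la_j\,v^+_\bla$ by Lemma \ref{v+gl}, while $v^-_\bla$ has weight $\bla$ and satisfies $G_{\ij}v^-_\bla=-\min(\la_i,\la_j)\,v^-_\bla$ by Lemma \ref{v-gl}, so $\pho^\pm(R_i)\,v^\pm_\bla=c^\pm_i\,v^\pm_\bla$ for explicit scalars $c^\pm_i$. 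Since the Bethe algebra acts on the regular representation of $\Hck_\bla$ with cyclic vector $v^\pm_\bla$, any of its elements acting by a scalar on $v^\pm_\bla$ equals that scalar; thus $\pho^\pm(R_i)=c^\pm_i\,\id$ and $\rhkpm_\bla(\Xin_i)[1]=\Uk_{i,2}-c^\pm_i$ in $\Hck_\bla$.

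It then remains to bring in the correction $\Xkm_{\bla,i}-\Xin_i$. For the minus sign Lemma \ref{Xv} gives $\pho^-(\Xkm_{\bla,i})\,v^-_\bla=\pho^-(\Xin_i)\,v^-_\bla$, so directly $\rhkm_\bla(\Xkm_{\bla,i})[1]=\Uk_{i,2}-c^-_i$. For the plus sign I would write $\Xkm_{\bla,i}=\Xin_i+h\sum_{j<i}\tfrac{\kk_i}{\kk_i-\kk_j}G^-_{\bla,\ij}+h\sum_{j>i}\tfrac{\kk_j}{\kk_i-\kk_j}G^-_{\bla,\ij}$ as in \Ref{Xkm}, and compute $G^-_{\bla,\ij}v^+_\bla=(\la_i\la_j+\min(\la_i,\la_j))\,v^+_\bla$ from Lemma \ref{v+gl}; then $\rho^{\kk+}_\bla(\Xkm_{\bla,i})[1]=\Uk_{i,2}-c^+_i+h\sum_{j<i}\tfrac{\kk_i}{\kk_i-\kk_j}(\la_i\la_j+\min(\la_i,\la_j))+h\sum_{j>i}\tfrac{\kk_j}{\kk_i-\kk_j}(\la_i\la_j+\min(\la_i,\la_j))$, and the $\la_i\la_j$ summands cancel the corresponding terms inside $c^+_i$, so this equals the same element of $\Hck_\bla$ as in the minus case. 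Finally, substituting the explicit value of $\Uk_{i,2}$ from \Ref{U2}, inserting $c^\pm_i$, and simplifying (using $\min(0,\la_j-\la_i)=\min(\la_i,\la_j)-\la_i$ and $\sum_{j<i}\tfrac{\kk_i-\kk_j}{\kk_i-\kk_j}=i-1$) produces the asserted formula.

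I expect the main obstacle to be the scalar bookkeeping across \Ref{X}, \Ref{X8}, \Ref{XX}, \Ref{Xkm}, \Ref{S12} and \Ref{U2}; the one genuinely structural point is that $\pho^\pm(R_i)$ is a scalar operator on $\Vl$ (resp.\ $\DL$) although $R_i$ is a non-central element of $U(\gln)$ --- this holds because $\pho^\pm(R_i)$ is a difference of two Bethe-algebra elements, and the Bethe algebra, acting on the regular representation $\Hck_\bla$, is determined by its values on the cyclic vector $v^\pm_\bla$.
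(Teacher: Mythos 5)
Your skeleton --- pass to the regular representation of $\Hck_\bla$ via Theorems \ref{mainK+}, \ref{mainK-}, so that $\rhkpm_\bla(\Xkm_{\bla\<\>,\<\>i})$ is multiplication by its value on $[\>1\>]$, then evaluate on the cyclic vector and compare with $\Uk_{i,2}$ --- is sound and close to the paper's, and your final bookkeeping does yield the stated formula. However, the assertion you single out as the ``genuinely structural point'' is false. There is no inclusion $\tbi(\Vl)\subset\tbk(\Vl)$: these are two \emph{different} maximal commutative subalgebras of $\End(\Vl)$, and in general they do not even commute with each other. Consequently $\pho^\pm(R_i)=\mukpm_\bla(\Uk_{i,2})-\pho^\pm(\Xin_i)$ does not lie in the Bethe algebra $\tbk(\Vl)$ and is \emph{not} a scalar operator. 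For $N=n=2$, $\bla=(1,1)$, your $R_1$ acts on $V_\bla$ as
\be
\pho^\pm(R_1)\,=\,\frac{h\>\kk_2}{\kk_1-\kk_2}\;\bigl(P^{(1,2)}-1\bigr)\,,
\ee
which annihilates $v^+_\bla=v_{(1,2)}+v_{(2,1)}$ but multiplies $v_{(1,2)}-v_{(2,1)}$ by $-\>2\>h\>\kk_2/(\kk_1-\kk_2)$; equivalently, one can check from the example in Section \ref{secHla} that $\mu^+_\bla([\>\ga_{1,1}])$ and $\mukp_\bla([\>\ga_{1,1}])$ do not commute.

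Fortunately the false claim is never actually used: at every step of your derivation you only need the action on the cyclic vectors, and $\pho^\pm(R_i)\>v^\pm_\bla=c^\pm_i\>v^\pm_\bla$ is true by Lemmas \ref{v+gl} and \ref{v-gl} (this is precisely what underlies Lemma \ref{Xv}). So your argument becomes correct once you delete the operator identity ``$\pho^\pm(R_i)=c^\pm_i\>\id$'' and retain only the cyclic-vector identity. The paper's own proof is shorter and avoids the detour through $\Xin_i$ altogether: comparing \Ref{X}, \Ref{XX} and \Ref{Xkm}, the difference $\Xkm_{\bla\<\>,\<\>i}-h\>\Sk_{i,2}$ is a polynomial in $e_{1,1}\lc e_{N,N}$ alone --- the non-Cartan parts of $G_{\ij}$ and $G^{\>-}_{\bla\<\>,\<\>\ij}$ cancel --- so it genuinely acts as the scalar obtained by substituting $e_{\jj}\mapsto\la_j$ on the whole weight subspace, and one then substitutes \Ref{U2}. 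That is the correct version of the ``structural point'' you were after.
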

\begin{proof}
The statement follow from formulae \Ref{U2}, \Ref{X}, \Ref{Xkm}
because \,$\rhkpm_\bla(\Sk_{i,2})$ \,acts as multiplication by the element
\,$\Uk_{i,2}$\>.
\end{proof}

\vsk.3>
Define pairings
\vvn.1>
\beq
\label{SHk}
(\,{,}\,):\>\Hck_\bla\<\ox\Hck_\bla\>\to\,\CZH\,,\qquad
(f,\<\>g\<\>)\,=\,
\Sc\bigl(\<\>\nukp_\bla\<(f)\<\>,\>\nukm_\bla\<(g)\bigr)\,,
\eeq
and
\beq
\label{<Hk>}
\bra\,{,}\,\ket:\>\Hck_\bla\<\ox\Hck_\bla\>\to\,Z^{-1}\>\CZH\,,\qquad
\bra f,\<\>g\<\>\ket\,=\,
\Sc\bigl(\<\>\nuke_\bla\<(f)\<\>,\>\nukm_\bla\<(g)\bigr)\,.
\vv.4>
\eeq
Here \;$Z=\prod_{\>i\ne j}\>(z_i-z_j+h)$\>, \;cf.~\Ref{Z}.

\begin{lem}
\label{<inv>}
Pairings \Ref{SHk} and \Ref{<Hk>} are symmetric and invariant,
\vvn.4>
\begin{alignat*}2
(f_1\>,f_2\<\>)\, &{}=\,(f_2\>,f_1\<\>)\,, &
(f_1\>f_2\>,f_3\<\>)\, &{}=\,(f_2\>,f_1\>f_3\<\>)\,,
\\[6pt]
\bra f_1\>,f_2\<\>\ket\, &{}=\,\bra f_2\>,f_1\<\>\ket\,,\qquad &
\bra f_1\>f_2\>,f_3\<\>\ket\, &{}=\,\bra f_2\>,f_1\>f_3\<\>\ket\,,
\\[-13pt]
\end{alignat*}
for any \,$f_1\>,f_2\>,f_3\<\in\Hck_\bla$.
\end{lem}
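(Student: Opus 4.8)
The plan is to reduce both assertions to Lemma \ref{mukfg} together with the single extra fact that $\mukm_\bla$, $\muke_\bla$, $\mukp_\bla$ are \emph{algebra} homomorphisms, so that they are unital and multiplicative and $\Hck_\bla$ being commutative can be exploited.

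First I would unwind the definition \Ref{SHk}: by \Ref{nukp} and \Ref{nukm}, for any $f_1,f_2\in\Hck_\bla$ one has $(f_1,f_2)=\Sc\bigl(\mukp_\bla(f_1)\,v^+_\bla,\;\mukm_\bla(f_2)\,v^-_\bla\bigr)$. Since $v^+_\bla\in\Vl$ (see Section \ref{v+-=}) and $\mukm_\bla(f_2)\,v^-_\bla\in\DL$, Lemma \ref{mukfg} applied with $g_1=v^+_\bla$, $g_3=\mukm_\bla(f_2)\,v^-_\bla$ turns this into $\Sc\bigl(v^+_\bla,\;\mukm_\bla(f_1)\,\mukm_\bla(f_2)\,v^-_\bla\bigr)$, and multiplicativity of $\mukm_\bla$ gives
\[
(f_1,f_2)\,=\,\Sc\bigl(v^+_\bla,\;\mukm_\bla(f_1f_2)\,v^-_\bla\bigr).
\]
Now $(f_1,f_2)$ depends on $f_1,f_2$ only through the product $f_1f_2$; since $\Hck_\bla$ is commutative this immediately yields $(f_1,f_2)=(f_2,f_1)$, while substituting $f_1f_2$ for the first argument and $f_3$ for the second gives $(f_1f_2,f_3)=\Sc\bigl(v^+_\bla,\mukm_\bla(f_1f_2f_3)\,v^-_\bla\bigr)=(f_2,f_1f_3)$, which is the invariance.

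For the bracket $\bra\,,\,\ket$ I would run the identical argument using the \emph{second} identity of Lemma \ref{mukfg}: by \Ref{nuke} and \Ref{nukm}, $\bra f_1,f_2\ket=\Sc\bigl(\muke_\bla(f_1)\,v^=_\bla,\;\mukm_\bla(f_2)\,v^-_\bla\bigr)$, and since $v^=_\bla\in\DLe$ and $\mukm_\bla(f_2)\,v^-_\bla\in\DL$ the relation $\Sc(\muke_\bla(f)g_2,g_3)=\Sc(g_2,\mukm_\bla(f)g_3)$ with $g_2=v^=_\bla$, $g_3=\mukm_\bla(f_2)\,v^-_\bla$ reduces it to $\Sc\bigl(v^=_\bla,\;\mukm_\bla(f_1f_2)\,v^-_\bla\bigr)$; symmetry and invariance then follow exactly as above.

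I do not expect a genuine obstacle here: the contravariance identities of Lemma \ref{mukfg}, the multiplicativity and unitality of $\mukm_\bla$ and $\muke_\bla$ (Theorems \ref{mainK=}, \ref{mainK-}), and the commutativity of $\Hck_\bla$ are all already in hand, so the argument is essentially formal. The only point requiring attention is bookkeeping of the membership hypotheses of Lemma \ref{mukfg}, namely $v^+_\bla\in\Vl$, $v^=_\bla\in\DLe$, and $\mukm_\bla(f)\,v^-_\bla\in\DL$, all of which are immediate from Section \ref{v+-=} and the statements of Theorems \ref{mainK+}\,--\,\ref{mainK-}.
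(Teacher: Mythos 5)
Your proof is correct and follows essentially the same route as the paper's: both reduce $(f_1,f_2)$ to $\Sc\bigl(v^+_\bla,\mukm_\bla(f_1f_2)\,v^-_\bla\bigr)=(1,f_1f_2)$ via the contravariance of Corollary \ref{SB} (which you access through Lemma \ref{mukfg}) together with the multiplicativity of the maps from Theorems \ref{mainK+}--\ref{mainK-}, after which symmetry and invariance are immediate from the commutativity of $\Hck_\bla$. The argument for the bracket pairing is handled identically in both.
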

\begin{proof}
Theorems \ref{mainK+}\,--\,\ref{mainK-} and Corollary \ref{SB} yield
\;$(f_1\>,f_2\<\>)\>=\>(\<\>1\<\>,f_1\>f_2\<\>)$ \;and
\;$\bra f_1\>,f_2\<\>\ket\>=\>\bra\<\>1\<\>,f_1\>f_2\<\>\ket$\>,
\,which proves the statement.
\end{proof}

\begin{example}
Let \,$N=n=2$, \,$\bla=(1,1)$, so that
\,$V_\bla=\>\C\>v_{(1,2)}\oplus\C\>v_{(2,1)}$\>. Then
\vvn.3>
\be
\mukp_\bla:[\>1\>]\,\mapsto\,\begin{pmatrix}\,1 &0\,\\\,0&1\,\end{pmatrix},
\qquad \mukp_\bla\!:[\>\gak_{1,1}\<\>]\,\mapsto\,
\begin{pmatrix} z_1 & h \\ 0 & z_2 \end{pmatrix}+\>
\frac{\kk_2}{\kk_1-\kk_2}\begin{pmatrix}\>h & h\>\\\>h & h\>\end{pmatrix},
%\\[6pt]
%\mukp_\bla\!:[\>\gak_{2,1}\<\>]\,\mapsto\,
%\begin{pmatrix} z_2 & -\>h \\ 0 & z_1\!\!\end{pmatrix}-\>
%\frac{\kk_2}{\kk_1-\kk_2}\begin{pmatrix}\>h & h\>\\\>h & h\>\end{pmatrix},
\ee
\begin{gather*}
\nukp_\bla:[\>1\>]\,\mapsto\,v_{(1,2)}+\>v_{(2,1)}\,,
\\[8pt]
\nukp_\bla\!:[\>\gak_{1,1}\<\>]\,\mapsto\,
\Bigl(z_1+\>h\,\frac{\kk_1+\kk_2}{\kk_1-\kk_2}\>\Bigr)\,v_{(1,2)}+\>
\Bigl(z_2+\>\frac{2\>h\>\kk_2}{\kk_1-\kk_2}\>\Bigr)\,v_{(2,1)}\,,
%\\[8pt]
%\nukp_\bla\!:[\>\gak_{2,1}\<\>]\,\mapsto\,
%\Bigl(z_2-\>h\,\frac{\kk_1+\kk_2}{\kk_1-\kk_2}\>\Bigr)\,v_{(1,2)}+\>
%\Bigl(z_1-\>\frac{2\>h\>\kk_2}{\kk_1-\kk_2}\>\Bigr)\,v_{(2,1)}\,,
%\\[-14pt]
\end{gather*}
\vv.2>
\be
\muke_\bla:[\>1\>]\,\mapsto\,\begin{pmatrix}\,1 &0\,\\\,0&1\,\end{pmatrix},
\qquad \muke_\bla\!:[\>\gak_{1,1}\<\>]\,\mapsto\,
\begin{pmatrix} z_1 & h \\ 0 & z_2 \end{pmatrix}+\>
\frac{\kk_2}{\kk_1-\kk_2}\begin{pmatrix}\>h & h\>\\\>h & h\>\end{pmatrix},
%\\[6pt]
%\muke_\bla\!:[\>\gak_{2,1}\<\>]\,\mapsto\,
%\begin{pmatrix} z_2 & -\>h \\ 0 & z_1\!\!\end{pmatrix}-\>
%\frac{\kk_2}{\kk_1-\kk_2}\begin{pmatrix}\>h & h\>\\\>h & h\>\end{pmatrix},
\ee
\vv.4>
\be
\nuke_\bla:[\>1\>]\,\mapsto\,\frac{v_{(1,2)}-\>v_{(2,1)}}{z_1-z_2-h}\,,\qquad
%\\[8pt]
\nuke_\bla\!:[\>\gak_{1,1}\<\>]\,\mapsto\,
\frac{(z_1-h)\>v_{(1,2)}-\>z_2\>v_{(2,1)}}{z_1-z_2-h}\,,\qquad
%\\[8pt]
%\nuke_\bla\!:[\>\gak_{2,1}\<\>]\,\mapsto\,
%\frac{(z_2+h)\>v_{(1,2)}-\>z_1\>v_{(2,1)}}{z_1-z_2-h}\,,
\ee
\vv.4>
\be
\mukm_\bla:[\>1\>]\,\mapsto\,\begin{pmatrix}\,1 &0\,\\\,0&1\,\end{pmatrix},
\qquad \mukm_\bla\!:[\>\gak_{1,1}\<\>]\,\mapsto\,
\begin{pmatrix} z_1 & 0 \\ h & z_2 \end{pmatrix}+\>
\frac{\kk_2}{\kk_1-\kk_2}\begin{pmatrix}\>h & h\>\\\>h & h\>\end{pmatrix},
%\\[6pt]
%\mukm_\bla\!:[\>\gak_{2,1}\<\>]\,\mapsto\,
%\begin{pmatrix} z_2 & 0 \\ -\>h & z_1 \end{pmatrix}-\>
%\frac{\kk_2}{\kk_1-\kk_2}\begin{pmatrix}\>h & h\>\\\>h & h\>\end{pmatrix},
\ee
\vv.4>
\be
\nukm_\bla:[\>1\>]\,\mapsto\,\frac{v_{(1,2)}-\>v_{(2,1)}}{z_1-z_2+h}\,,\qquad
%\\[8pt]
\nukm_\bla\!:[\>\gak_{1,1}\<\>]\,\mapsto\,
\frac{z_1\>v_{(1,2)}-\>(z_2-h)\>v_{(2,1)}}{z_1-z_2+h}\,,\qquad
%\\[8pt]
%\nukm_\bla\!:[\>\gak_{2,1}\<\>]\,\mapsto\,
%\frac{z_2\>v_{(1,2)}-\>(z_1+h)\>v_{(2,1)}}{z_1-z_2+h}\,.
\ee
\vv.2>
\be
(\<\>1\<\>,1)\>=\>0\,,\qquad (\<\>1\<\>,\gak_{1,1}\<\>)\>=\>1\,,\qquad
(\gak_{1,1}\<,\gak_{1,1}\<\>)\>=\>z_1+z_2+\frac{2\>h\>\kk_2}{\kk_1-\kk_2}\,.
\vv.2>
\ee
\begin{gather*}
\bra\<\>1\<\>,1\ket\,=\,\frac2{(z_1-z_2+h)\>(z_1-z_2-h)}\,,\qquad
\bra\<\>1\<\>,\gak_{1,1}\<\>\ket\,=\,
\frac{z_1+z_2-h}{(z_1-z_2+h)\>(z_1-z_2-h)}\,,
\\[8pt]
\bra\gak_{1,1}\>,\gak_{1,1}\<\>\ket\,=\,
\frac{z_1^2+z_2^2-h\>(z_1+z_2)}{(z_1-z_2+h)\>(z_1-z_2-h)}\,.
\\[-14pt]
\end{gather*}
Notice that up to a simple change of basis, the form \,$\bra\,{,}\,\ket$
\,in this example coincides with the bilinear form \,$\bra\,{,}\,\ket$ \,in
\cite[Section 3.2]{BGr}, namely, with the form associated with the equivariant
quantum cohomology of the cotangent bundle \,$T^*\Pone$ of the projective line
\,$\Pone$. That form was used in \cite{BMO}, see also \cite{BGh}.
\end{example}

\section{Quantum multiplication on $H_\bla$}
\label{multsect}

Recall that \,$\kkk$ are distinct numbers.
Let \;$\beta^\pm_\bla\<:H_\bla\to\<\>\Hck_\bla$ \,be the following isomorphisms
of vector spaces
\vvn-.3>
\be
\beta^+_\bla\<=\,(\nukp_\bla)^{-1}\>\nu^+_\bla\,,\qquad
\beta^-_\bla\<=\,(\nukm_\bla)^{-1}\>\nu^-_\bla\,.
\vv.3>
\ee
We also have \;$\beta^-_\bla\<=\,(\nuke_\bla)^{-1}\>\nu^=_\bla$\>,
\,because \;$\nu^-_\bla\<=\>\Pit\;\nu^=_\bla$ \,and
\vvn.2>
\;$\nukm_\bla\!=\>\Pit\;\nuke_\bla$\>, where \;$\Pit$ \,is given by \Ref{Pit}.
Notice that \;$\beta^\pm_\bla(1)=1\in\Hck_\bla$\>.
\vsk.2>
Define new multiplications $\,\star\,$ and $\,\bul\,$ on \,$H_\bla$
\vvn.4>
by the rule
\beq
\label{mult}
\beta^+_\bla(f\star g)\,=\,\beta^+_\bla(f)\,\beta^+_\bla(g)\,,\qquad
\beta^-_\bla(f\bul g)\,=\,\beta^-_\bla(f)\,\beta^-_\bla(g)
\vv.4>
\eeq
for any \,$f,g\in H_\bla$. The defined multiplications are associative and
commutative.
%Notice that the products \,$f\star g$ \,and \,$f\bul g$ \,are
%rational functions of \,$\kkk$ with poles located only at
%the hyperplanes \,$q_i=q_j$\>.
The products \,$f\star g$ \,and \,$f\bul g$ \,tend to the ordinary product
\,$fg$ \,as \,$q_{i+1}/q_i\to0$ \,for every \,$i=1\lc N-1$,

\vsk.3>
Denote \,$f\star$ \,and \,$f\bul$ \,the corresponding operators on
\,$H_\bla$\>, \;$f\star:g\>\mapsto f\star g$\>, \;$f\bul:g\>\mapsto f\bul g$\>.

\vsk.4>
Recall the \>$\ty$-actions \,$\rho^\pm\<$ on \;$\Hplus$\>, defined
\vvn.1>
in Section \ref{sec cohom and Yang}. Given \,$X\!\in\ty^\h$, denote by
\,$\rho^\pm_\bla(X)\in\End(H_\bla)$ the restriction of \,$\rho^\pm(X)$
to \,$H_\bla$.

\begin{lem}
\label{btrho}
Let \,$f\in H_\bla$ \,and \,$X\!\in\tbk$.
If \;$\nu^+_\bla(f)=\pho^+(X)\,v^+_\bla$,
\vvn.1>
then \,$f\star=\rho^+_\bla(X)$.
Similarly, if \;$\nu^=_\bla(f)=\pho^+(X)\,v^=_\bla$ \,or
\;$\nu^-_\bla(f)=\pho^-(X)\,v^-_\bla$, then \,$f\bul=\rho^-_\bla(X)$.
\end{lem}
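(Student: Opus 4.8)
The plan is to chase the hypothesis through the isomorphisms of Theorem \ref{mainK+} and then compare the two operators via the bijection $\beta^+_\bla$. Since $\tbk$ commutes with $U(\h)$ (Theorem \ref{BY}), it preserves the weight subspace $\Vl$, so $\pho^+(X)|_{\Vl}\in\tbk(\Vl)$, and by Theorem \ref{mainK+} there is a unique $\fc\in\Hck_\bla$ with $\mukp_\bla(\fc)=\pho^+(X)|_{\Vl}$. By the definition \Ref{nukp} of $\nukp_\bla$,
\[
\nukp_\bla(\fc)\,=\,\mukp_\bla(\fc)\,v^+_\bla\,=\,\pho^+(X)\,v^+_\bla\,=\,\nu^+_\bla(f)\,,
\]
hence $\beta^+_\bla(f)=(\nukp_\bla)^{-1}\nu^+_\bla(f)=\fc$.

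Now fix $g\in H_\bla$. On one hand, by \Ref{mult}, $\beta^+_\bla(f\star g)=\beta^+_\bla(f)\,\beta^+_\bla(g)=\fc\,\beta^+_\bla(g)$. On the other hand, since $\rho^+(X)=(\nu^+)^{-1}\pho^+(X)\,\nu^+$,
\[
\beta^+_\bla\bigl(\rho^+_\bla(X)\,g\bigr)\,=\,(\nukp_\bla)^{-1}\bigl(\pho^+(X)\,\nu^+_\bla(g)\bigr)\,.
\]
By Theorem \ref{mainK+}, $\nukp_\bla$ identifies the $\tbk(\Vl)$-module $\Vl$ with the regular representation of $\Hck_\bla$; that is, $(\nukp_\bla)^{-1}(Y\,w)=\mukp_\bla^{-1}(Y)\cdot(\nukp_\bla)^{-1}(w)$ for all $Y\in\tbk(\Vl)$ and $w\in\Vl$. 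Applying this with $Y=\pho^+(X)|_{\Vl}$ and $w=\nu^+_\bla(g)$ yields $\beta^+_\bla(\rho^+_\bla(X)\,g)=\fc\cdot(\nukp_\bla)^{-1}\nu^+_\bla(g)=\fc\,\beta^+_\bla(g)$. Thus $\beta^+_\bla(f\star g)=\beta^+_\bla(\rho^+_\bla(X)\,g)$ for every $g$, and since $\beta^+_\bla$ is a bijection, $f\star=\rho^+_\bla(X)$.

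The two remaining statements follow from the identical argument with the triple $(\nu^+_\bla,\nukp_\bla,\mukp_\bla)$ replaced by $(\nu^=_\bla,\nuke_\bla,\muke_\bla)$ and by $(\nu^-_\bla,\nukm_\bla,\mukm_\bla)$, using Theorems \ref{mainK=} and \ref{mainK-} respectively, with $\star$ replaced by $\bul$ and $\beta^-_\bla=(\nuke_\bla)^{-1}\nu^=_\bla=(\nukm_\bla)^{-1}\nu^-_\bla$ in place of $\beta^+_\bla$. In the $\nu^-_\bla$ version one uses $\rho^-(X)=(\nu^-)^{-1}\pho^-(X)\,\nu^-$, whereas in the $\nu^=_\bla$ version one uses instead relation \Ref{rho=-}, that is, $(\nu^=)^{-1}\pho^+(X)\,\nu^==\rho^-(X)$. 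There is no genuine obstacle in any of this; the only step requiring care is the correct reading of ``$\nukp_\bla$ identifies the module $\Vl$ with the regular representation'' as the intertwining identity $(\nukp_\bla)^{-1}(Y\,w)=\mukp_\bla^{-1}(Y)\cdot(\nukp_\bla)^{-1}(w)$ used above, together with keeping track of which of the three parallel families of maps is in play.
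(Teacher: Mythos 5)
Your proof is correct and follows essentially the same route as the paper: both arguments reduce everything to the identification of the \,$\tbk(\Vl)$-module \,$\Vl$ (resp.\ \,$\DLe$, \,$\DL$) with the regular representation of \,$\Hck_\bla$ from Theorems \ref{mainK+}\,--\,\ref{mainK-}, via \,$\nu^\pm_\bla=\nukpm_\bla\circ\beta^\pm_\bla$ and the definition \Ref{mult} of \,$\star$ and \,$\bul$. The only cosmetic difference is that you first pin down the element \,$\fc=\beta^+_\bla(f)$ with \,$\mukp_\bla(\fc)=\pho^+(X)|_{\Vl}$ and then compare the two operators through \,$\beta^+_\bla$, whereas the paper manipulates \,$\nu^-_\bla(f\bul g)$ directly using commutativity of the Bethe algebra; the content is the same.
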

\begin{proof}
We will prove the statement for the map \,$\nu^-_\bla$. Other statements are
proved similarly. Since \;$\nu^-_\bla(g)=\nukm_\bla\bigl(\beta^-_\bla(g)\bigr)$
\,for any \,$g\in H_\bla$, we have
\vvn.3>
\begin{align*}
\nu^-_\bla(f\bul g)\>=\>\nu^-_\bla(g\bul f)\>=\>
\mukm_\bla\bigl(\beta^-_\bla(g)\bigr)\,\nu^-_\bla(f)\> &{}=\>
\mukm_\bla\bigl(\beta^-_\bla(g)\bigr)\,\pho^-(X)\,v^-_\bla\<\>=\>
\\[6pt]
{}=\>\pho^-(X)\,\mukm_\bla\bigl(\beta^-_\bla(g)\bigr)\,v^-_\bla\<\> &{}=\>
\pho^-(X)\,\nu^-_\bla(g)\>=\>\nu^-_\bla\bigl(\rho^-(X)\,g\<\>\bigr)\,,
\end{align*}
which proves the claim.
\end{proof}

Denote by \;$\tbkpm_\bla\!$ the images in \,$\End(H_\bla)$ of the Bethe
subalgebra \;$\tbk\!\subset\ty$ under the homomorphisms \,$\rho^\pm$,
respectively. The algebra \;$\tbkp_\bla\!$ is isomorphic to \;$\tbk(\Vl)$,
and the algebra \;$\tbkm_\bla\!$ is isomorphic to \;$\tbk(\DLe)$ and
to \;$\tbk(\DL)$.

\begin{cor}
\label{f1}
For any \,$f\in H_\bla$, the operator \,$f\star$ is the unique element of
\;$\tbkp_\bla\!$ which sends \;$1\<\in H_\bla$ to \,$f$, and the operator
\,$f\bul$ is the unique element of \;$\tbkm_\bla\!$ which sends
\;$1\<\in H_\bla$ to \,$f$.
\qed
\end{cor}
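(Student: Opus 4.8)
The plan is to reduce the statement to the elementary fact that, on a unital commutative algebra $R$ viewed as a module over itself, the multiplication operators $M_a\colon x\mapsto ax$ are exactly parametrized by the elements $a=M_a(1)\in R$. As a first step I would note that $1\in H_\bla$ is the unit for both $\star$ and $\bul$: since $\beta^+_\bla(f\star 1)=\beta^+_\bla(f)\,\beta^+_\bla(1)=\beta^+_\bla(f)$ and $\beta^+_\bla$ is injective (being a composite of the isomorphisms $\nu^+_\bla$ and $(\nukp_\bla)^{-1}$), we get $f\star 1=f$ for all $f$; likewise $f\bul 1=f$ via $\beta^-_\bla$. In particular $f\star$ and $f\bul$ do send $1$ to $f$, so only membership in the Bethe images and uniqueness remain.

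For membership I would use Lemma \ref{btrho}. Fix $f\in H_\bla$; then $\nu^+_\bla(f)$ lies in $\Vl$, and by Theorem \ref{mainK+} the map $\nukp_\bla$ identifies $\Vl$ with the regular representation of $\Hck_\bla$ and sends $1\in\Hck_\bla$ to $v^+_\bla$, whence $\Vl=\tbk(\Vl)\,v^+_\bla$ and $\nu^+_\bla(f)=\pho^+(X)\,v^+_\bla$ for some $X\in\tbk$. Lemma \ref{btrho} then gives $f\star=\rho^+_\bla(X)$, which lies in $\tbkp_\bla$ by definition of the latter. The same argument with $\nu^-_\bla$, $\nukm_\bla$ and Theorem \ref{mainK-} (or with $\nu^=_\bla$, $\nuke_\bla$, Theorem \ref{mainK=} and relations \Ref{muk=-}) shows $f\bul=\rho^-_\bla(X')\in\tbkm_\bla$ for a suitable $X'\in\tbk$.

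For uniqueness, suppose $Y\in\tbkp_\bla$ with $Y(1)=f$. Then $Z:=Y-f\star$ lies in the commutative $\C$-algebra $\tbkp_\bla$ and kills $1\in H_\bla$. The isomorphism $\nu^+_\bla$ intertwines the $\rho^+_\bla$-action of $\tbk$ on $H_\bla$ with the $\pho^+$-action on $\Vl$, carrying $\tbkp_\bla$ onto $\tbk(\Vl)$ and $1$ onto the cyclic vector $v^+_\bla$; so it is enough to observe that a $W\in\tbk(\Vl)$ with $W\,v^+_\bla=0$ vanishes, which holds since $\Vl=\tbk(\Vl)\,v^+_\bla$ and $\tbk(\Vl)$ is commutative. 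Hence $Z=0$ and $Y=f\star$; the case of $\bul$ and $\tbkm_\bla$ is identical, with $v^-_\bla$ (or $v^=_\bla$) as the cyclic vector.

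I expect the only real difficulty to be bookkeeping: one must use the precise content of Theorems \ref{mainK+}\,--\,\ref{mainK-}, namely that $\nukpm_\bla$ identify the Bethe modules $\Vl$ and $\DL$ with the regular representation of $\Hck_\bla$ compatibly with the algebra isomorphisms $\mukpm_\bla$, so that $v^+_\bla$ and $v^-_\bla$ genuinely are cyclic vectors corresponding to $1\in\Hck_\bla$. Once this is unwound, every step above is formal and no computation is needed.
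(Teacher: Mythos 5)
Your argument is correct and is exactly the route the paper intends: the corollary is left as immediate from Lemma \ref{btrho} together with the identification of \,$\Vl$ (resp.\ $\DL$) with the regular representation of \,$\Hck_\bla$ in Theorems \ref{mainK+}\,--\,\ref{mainK-}, which gives both the cyclicity of \,$v^\pm_\bla$ needed for existence and the faithfulness needed for uniqueness. Nothing to add.
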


\begin{cor}
\label{betax}
For every \,$i=1\lc N$, we have
\vvn.3>
\beq
\label{gax}
(\ga_{i,1}\lsym+\<\ga_{i,\la_i})\,\star\,=\>\rho^+_\bla(\Xkp_i)\,,\qquad
(\ga_{i,1}\lsym+\<\ga_{i,\la_i})\,\bul\,=\rho^-_\bla(\Xkm_{\bla\<\>,\<\>i})
\,,
\vv.3>
\eeq
where \,$\ga_{\ij}$ are the Chern roots, see \>\Ref{Hrel}, and
\;$\Xkp_i\<\<,\>\Xkm_{\bla\<\>,\<\>i}\<\in\tbk$ are given by \Ref{Xkp},
\Ref{Xkm}.
\end{cor}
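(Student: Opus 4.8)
The plan is to deduce the corollary directly from Lemma~\ref{btrho}, reducing each asserted equality to a single evaluation at $1\in H_\bla$. Write $f_i=[\,\ga_{i,1}\lsym+\ga_{i,\la_i}\,]\in H_\bla$. For the first identity I would apply Lemma~\ref{btrho} with $X=\Xkp_i\in\tbk$, so that it remains only to verify the hypothesis $\nu^+_\bla(f_i)=\pho^+(\Xkp_i)\,v^+_\bla$; for the second, I would apply it with $X=\Xkm_{\bla\<\>,\<\>i}\in\tbk$, reducing to $\nu^-_\bla(f_i)=\pho^-(\Xkm_{\bla\<\>,\<\>i})\,v^-_\bla$.

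To check these two hypotheses I would first use Lemma~\ref{Xv}, which gives $\pho^+(\Xkp_i)\,v^+_\bla=\pho^+(\Xin_i)\,v^+_\bla$ and $\pho^-(\Xkm_{\bla\<\>,\<\>i})\,v^-_\bla=\pho^-(\Xin_i)\,v^-_\bla$, trading the dynamical Hamiltonians $\Xkp_i,\Xkm_{\bla\<\>,\<\>i}$ for $\Xin_i\in\tbi$. The action of $\Xin_i$ on cohomology is known explicitly: by formula~\Ref{rhoX8}, $\rho^\pm(\Xin_i):[g]\mapsto[(\ga_{i,1}\lsym+\ga_{i,\la_i})\,g]$, so in particular $\rho^\pm(\Xin_i)\,[\,1\,]=f_i$ and $\rho^\pm(\Xin_i)$ preserves $H_\bla$. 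Since $\nu^\pm_\bla([\,1\,])=v^\pm_\bla$ and $\rho^\pm(X)=(\nu^\pm)^{-1}\pho^\pm(X)\,\nu^\pm$, this yields $\pho^\pm(\Xin_i)\,v^\pm_\bla=\nu^\pm_\bla\bigl(\rho^\pm(\Xin_i)\,[\,1\,]\bigr)=\nu^\pm_\bla(f_i)$. Combining with the output of Lemma~\ref{Xv} gives precisely $\nu^+_\bla(f_i)=\pho^+(\Xkp_i)\,v^+_\bla$ and $\nu^-_\bla(f_i)=\pho^-(\Xkm_{\bla\<\>,\<\>i})\,v^-_\bla$, and then Lemma~\ref{btrho} produces $f_i\star=\rho^+_\bla(\Xkp_i)$ and $f_i\bul=\rho^-_\bla(\Xkm_{\bla\<\>,\<\>i})$, which are the claimed formulae.

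I do not anticipate any substantive obstacle: the argument is a short assembly of Lemma~\ref{Xv}, formula~\Ref{rhoX8}, the normalizations $\nu^\pm_\bla([\,1\,])=v^\pm_\bla$, and Lemma~\ref{btrho} (equivalently, the uniqueness statement in Corollary~\ref{f1}). The one point that should not be skipped is that $\Xin_i$ commutes with $U(\h)$, so that $\rho^\pm_\bla(\Xin_i)$ is well defined on the weight component $H_\bla$; this is immediate, since $\Xin_i$ is a combination of the elements $\Bin_{p,s}$, which lie in the commutative algebra $\Bci\supset U(\h)$, and of Lie-algebra terms $e_{\ii}$, $e_{\ii}e_{\jj}$, $e_{\ij}e_{\ji}$, all of $\h$-weight zero — and in any case formula~\Ref{rhoX8} already exhibits $\rho^\pm(\Xin_i)$ as an operator on $H_\bla$.
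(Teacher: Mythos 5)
Your proposal is correct and follows exactly the paper's route: the paper's proof is the one-line citation of Lemmas \ref{btrho} and \ref{Xv}, and your argument simply spells out the intermediate step (via formula \Ref{rhoX8} and the normalization $\nu^\pm_\bla([\,1\,])=v^\pm_\bla$) that identifies $\nu^+_\bla(f_i)=\pho^+(\Xkp_i)\,v^+_\bla$ and $\nu^-_\bla(f_i)=\pho^-(\Xkm_{\bla\<\>,\<\>i})\,v^-_\bla$ before invoking Lemma \ref{btrho}. No gaps.
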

\begin{proof}
The statement follows from Lemmas \ref{btrho} and \ref{Xv}.
\end{proof}

\begin{prop}
\label{beta+}
The map \;$\chi^+_\bla:f\mapsto f\star$ \,is an isomorphism
\vvn.1>
\,$H_\bla\to\tbkp_\bla\!$ of vector spaces.
The map \;$\beta^+_\bla\>(\chi^+_\bla)^{-1}\<:\tbkp_\bla\!\<\to\Hck_\bla$ is
an isomorphism of \;$\CZH$-algebras.
The maps \;$\beta^+_\bla\>(\chi^+_\bla)^{-1}$ and \;$\beta^+_\bla$
identify the \,$\tbkp_\bla\!$-module \,$H_\bla$
with the regular representation of the algebra \,$\Hck_\bla$.
\qed
\end{prop}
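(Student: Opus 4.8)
The plan is to reduce the proposition to the defining property \Ref{mult} of the star-product together with the already-proved Theorem \ref{mainK+}. By \Ref{mult} the map $\beta^+_\bla$ is, by construction, an isomorphism of $\CZH$-algebras $(H_\bla,\star)\to\Hck_\bla$; since $\beta^+_\bla(1)=1$ and $\beta^+_\bla$ is injective, $1\in H_\bla$ is the unit of $\star$, so $(f\star)(1)=f\star 1=f$ for every $f$. Hence the whole proposition follows once I show that $\chi^+_\bla\colon(H_\bla,\star)\to\tbkp_\bla\!$ is a $\CZH$-algebra isomorphism whose inverse is the evaluation $a\mapsto a(1)$: then $\beta^+_\bla\,(\chi^+_\bla)^{-1}$ — the evaluation $a\mapsto a(1)$ followed by $\beta^+_\bla$ — is a composite of $\CZH$-algebra isomorphisms $\tbkp_\bla\!\to\Hck_\bla$, and the module statement is then immediate.

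First I would check that $\chi^+_\bla$ takes values in $\tbkp_\bla\!$. Note that $\tbk\subset\ty^\h$, since the generators $h^{s-1}\Bk_{p,s}$ commute with $U(\h)$ by Theorem \ref{BY}, so $\rho^+_\bla$ is defined on all of $\tbk$ and $\tbkp_\bla\!$ makes sense. Given $f\in H_\bla$, Theorem \ref{mainK+} gives $\nu^+_\bla(f)=\nukp_\bla\bigl(\beta^+_\bla(f)\bigr)=\mukp_\bla\bigl(\beta^+_\bla(f)\bigr)\,v^+_\bla$; as $\mukp_\bla$ is an isomorphism onto $\tbk(\Vl)$, there is $X\in\tbk$ with $\mukp_\bla(\beta^+_\bla(f))=\pho^+(X)|_{\Vl}$, whence $\nu^+_\bla(f)=\pho^+(X)\,v^+_\bla$, and Lemma \ref{btrho} yields $f\star=\rho^+_\bla(X)\in\tbkp_\bla\!$ (this is also exactly the existence part of Corollary \ref{f1}). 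That $\chi^+_\bla$ is a homomorphism from $(H_\bla,\star)$ to $(\tbkp_\bla\!,\circ)$ is the associativity identity $\bigl((f\star g)\star\bigr)(h)=f\star(g\star h)=\bigl(f\star\bigr)\bigl((g\star)(h)\bigr)$; $\CZH$-linearity of $\chi^+_\bla$ follows from that of $\beta^+_\bla$ (itself a consequence of Lemma \ref{nu+-} and Theorem \ref{mainK+}) by applying $\beta^+_\bla$ to $(cf)\star h=c\,(f\star h)$.

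For bijectivity I would use Corollary \ref{f1}: given $a\in\tbkp_\bla\!$, set $f=a(1)$; then $a$ and $f\star$ both lie in $\tbkp_\bla\!$ and send $1$ to $f$, so by the uniqueness in that corollary $a=f\star=\chi^+_\bla(f)$, giving surjectivity, and the same uniqueness (or simply $(f\star)(1)=f$) gives injectivity. Thus $\chi^+_\bla$ is a $\CZH$-algebra isomorphism with inverse $a\mapsto a(1)$, and $\beta^+_\bla\,(\chi^+_\bla)^{-1}$ is a $\CZH$-algebra isomorphism $\tbkp_\bla\!\to\Hck_\bla$. Finally, writing $a=g\star$ with $g=(\chi^+_\bla)^{-1}(a)=a(1)$, so that $\beta^+_\bla(\chi^+_\bla)^{-1}(a)=\beta^+_\bla(g)$, one has for every $f\in H_\bla$
\[
\beta^+_\bla\bigl(a\cdot f\bigr)=\beta^+_\bla\bigl(g\star f\bigr)=\beta^+_\bla(g)\,\beta^+_\bla(f)=\bigl[\beta^+_\bla(\chi^+_\bla)^{-1}(a)\bigr]\,\beta^+_\bla(f),
\]
which says precisely that $\beta^+_\bla$ on the module and $\beta^+_\bla(\chi^+_\bla)^{-1}$ on the algebra identify the $\tbkp_\bla\!$-module $H_\bla$ with the regular representation of $\Hck_\bla$.

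The argument is essentially formal once Lemma \ref{btrho}, Corollary \ref{f1} and Theorem \ref{mainK+} are in hand, so I do not anticipate a genuine obstacle. The step most deserving of care is the verification that $\chi^+_\bla$ lands in $\tbkp_\bla\!$: this is where one must keep straight the two parallel pictures — $\Vl$ versus $H_\bla$, $\tbk(\Vl)$ versus $\tbkp_\bla\!$, $\nu^+_\bla$ versus $\nukp_\bla$ — and use both that $\mukp_\bla$ of Theorem \ref{mainK+} is onto $\tbk(\Vl)$ and that $\nukp_\bla$ factors through multiplication on $v^+_\bla$, so that Lemma \ref{btrho} applies.
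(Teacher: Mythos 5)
Your proof is correct and uses exactly the ingredients the paper intends: the definition \Ref{mult} of $\star$ via $\beta^+_\bla$, Theorem \ref{mainK+}, Lemma \ref{btrho}, and the uniqueness statement of Corollary \ref{f1}. The paper leaves this proposition unproved (it is marked as immediate), and your writeup is a faithful and correct filling-in of that routine argument, including the one step that genuinely needs checking, namely that $\chi^+_\bla$ lands in $\tbkp_\bla$.
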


\begin{prop}
\label{beta-}
The map \;$\chi^-_\bla:f\mapsto f\bul$ \,is an isomorphism
\vvn.1>
\,$H_\bla\to\tbkm_\bla\!$ of vector spaces.
The map \;$\beta^-_\bla\>(\chi^-_\bla)^{-1}\<:\tbkm_\bla\!\<\to\Hck_\bla$ is
an isomorphism of \;$\CZH$-algebras.
The maps \;$\beta^-_\bla\>(\chi^-_\bla)^{-1}$ and \;$\beta^-_\bla$
identify the \,$\tbkm_\bla\!$-module \,$H_\bla$
with the regular representation of the algebra \,$\Hck_\bla$.
\qed
\end{prop}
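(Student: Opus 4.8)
The plan is to prove Proposition \ref{beta-} formally, as the exact mirror of Proposition \ref{beta+}: transport everything along the vector space isomorphism $\beta^-_\bla=(\nukm_\bla)^{-1}\>\nu^-_\bla\colon H_\bla\to\Hck_\bla$ and then quote Theorem \ref{mainK-}. First I would identify $\tbkm_\bla$ as a conjugate of the model algebra. By definition $\rho^-_\bla(X)=(\nu^-_\bla)^{-1}\>\pho^-(X)|_{\DL}\>\nu^-_\bla$ for $X\in\ty^\h$; since $\tbk\subset\ty^\h$ and $\pho^-$ maps $\tbk$ onto $\tbk(\DL)$, restriction to $H_\bla$ yields $\tbkm_\bla=(\nu^-_\bla)^{-1}\>\tbk(\DL)\>\nu^-_\bla$. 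Writing $\nu^-_\bla=\nukm_\bla\>\beta^-_\bla$ and invoking Theorem \ref{mainK-}, which identifies $(\nukm_\bla)^{-1}\>\tbk(\DL)\>\nukm_\bla$ (via $\mukm_\bla$) with the algebra $\mathcal M$ of multiplication operators on $\Hck_\bla$, I obtain $\tbkm_\bla=(\beta^-_\bla)^{-1}\>\mathcal M\>\beta^-_\bla$; in particular $\tbkm_\bla$ is precisely the set of operators $(\beta^-_\bla)^{-1}\circ(g\>\cdot\>)\circ\beta^-_\bla$, $g\in\Hck_\bla$.

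Next I would unwind the definition of $\bul$. By the defining relation \Ref{mult}, $f\bul=(\beta^-_\bla)^{-1}\circ\bigl(\beta^-_\bla(f)\>\cdot\>\bigr)\circ\beta^-_\bla$, so $f\bul\in\tbkm_\bla$ and $\chi^-_\bla$ is the composite of $\beta^-_\bla$ with the regular representation map $\Hck_\bla\to\tbkm_\bla$, $g\mapsto(\beta^-_\bla)^{-1}\circ(g\>\cdot\>)\circ\beta^-_\bla$. The latter is a bijection onto $\tbkm_\bla$ (surjective by the description above, injective because the operator $g\>\cdot\>$ sends $1\in\Hck_\bla$ to $g$), so $\chi^-_\bla$ is an isomorphism of vector spaces — the first assertion. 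Equivalently, this is immediate from Corollary \ref{f1}, which exhibits $T\mapsto T(1)$ as the two-sided inverse of $\chi^-_\bla$ (note $f\bul 1=f$ since $\beta^-_\bla(f\bul 1)=\beta^-_\bla(f)\>\beta^-_\bla(1)=\beta^-_\bla(f)$ and $\beta^-_\bla(1)=1$).

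For the algebra statement, using Corollary \ref{f1} the map $\beta^-_\bla(\chi^-_\bla)^{-1}\colon\tbkm_\bla\to\Hck_\bla$ is $T\mapsto\beta^-_\bla(T(1))$. If $T_i=f_i\bul$ then $T_1T_2(1)=f_1\bul f_2$, so by \Ref{mult} one gets $\beta^-_\bla(T_1T_2(1))=\beta^-_\bla(f_1)\>\beta^-_\bla(f_2)=\beta^-_\bla(T_1(1))\>\beta^-_\bla(T_2(1))$; since every map in sight is $\CZH$-linear and bijective, $\beta^-_\bla(\chi^-_\bla)^{-1}$ is an isomorphism of $\CZH$-algebras. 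Finally, for any $T=f\bul\in\tbkm_\bla$ and $g\in H_\bla$ we have $\beta^-_\bla(Tg)=\beta^-_\bla(f\bul g)=\beta^-_\bla(f)\>\beta^-_\bla(g)=\bigl[\beta^-_\bla(\chi^-_\bla)^{-1}(T)\bigr]\cdot\beta^-_\bla(g)$, which is exactly the statement that $\beta^-_\bla(\chi^-_\bla)^{-1}$ and $\beta^-_\bla$ identify the $\tbkm_\bla$-module $H_\bla$ with the regular representation of $\Hck_\bla$.

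As for the main difficulty: there really is none of substance. All the content is packaged in Theorem \ref{mainK-} (cited from \cite{MTV5}) and in the compatibilities already established (Lemma \ref{btrho}, Corollary \ref{f1}, and the definition \Ref{mult}). The only things that require care are bookkeeping: checking that $\tbk\subset\ty^\h$ and that $\pho^-(\tbk)$ preserves $\DL$, so that the conjugation identity $\tbkm_\bla=(\nu^-_\bla)^{-1}\tbk(\DL)\nu^-_\bla$ is legitimate, and recalling that $\bul$ is well defined (observed right after \Ref{mult}) with $\beta^-_\bla(1)=1$.
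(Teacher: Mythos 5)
Your proof is correct and follows exactly the route the paper intends: the authors state Proposition \ref{beta-} with no proof, regarding it as an immediate consequence of Theorem \ref{mainK-}, the definition of $\beta^-_\bla$ and of the product $\bul$ in \Ref{mult}, and Corollary \ref{f1}; your write-up just makes the conjugation identity $\tbkm_\bla=(\beta^-_\bla)^{-1}\>\mathcal M\;\beta^-_\bla$ and the resulting bookkeeping explicit. No gaps.
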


\vsk.2>
Consider $\kkn$ as variables. For a nonzero complex number \,$\kp$\>,
define the connections \,$\nablas$ and \,$\nablab_{\<\bla}$
\vvn.2>
\beq
\label{nablasb}
\nablas_i=\,\kp\,\ddk_i-(\ga_{i,1}\lsym+\ga_{i,\la_i})\,\star\;,\qquad
\nablab_{\<\bla\<\>,\<\>i}=\,\kp\,\ddk_i-(\ga_{i,1}\lsym+\ga_{i,\la_i})\,\bul
\;,\kern-1em
\eeq
$i=1\lc n$.

\begin{prop}
\label{flat}
The connections \;$\nablas\!$ and \;$\nablab_{\<\bla}$ are flat for
any \,$\kp$.
\end{prop}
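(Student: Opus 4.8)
The plan is to obtain flatness of $\nablas$ and $\nablab_{\<\bla}$ as an immediate consequence of the flatness of the trigonometric dynamical connections $\nabla^+$ and $\nabla^-_{\<\bla}$ from Lemma \ref{flat+-}, by pushing those connections forward along the representations $\rho^\pm$.

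The first step is to rewrite the two connections in representation-theoretic terms. By Corollary \ref{betax} together with \Ref{nablasb}, one has
\[
\nablas_i \,=\, \kp\,\ddk_i - \rho^+_\bla(\Xkp_i)\,,\qquad
\nablab_{\<\bla\<\>,\<\>i} \,=\, \kp\,\ddk_i - \rho^-_\bla(\Xkm_{\bla\<\>,\<\>i})\,,
\]
for $i=1\lc N$; that is, $\nablas_i$ is the image of the connection $\nabla^+_i$ of \Ref{nablapm} under $\rho^+_\bla$, and $\nablab_{\<\bla\<\>,\<\>i}$ is the image of $\nabla^-_{\<\bla\<\>,\<\>i}$ under $\rho^-_\bla$. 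For this to make sense I first need $\Xkp_i,\,\Xkm_{\bla\<\>,\<\>i}\in\ty^\h$, so that the restricted actions $\rho^\pm_\bla$ are defined on them; this holds because $\Xin_i\in\tbi$ commutes with $U(\h)$ and the correction terms $G^{\>+}_{\ij}$ and $G^{\>-}_{\bla\<\>,\<\>\ij}$ are of weight zero, see \Ref{Xkp}, \Ref{Xkm}.

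The second step is to invoke Lemma \ref{flat+-}: the operators $\nabla^+_1\lc\nabla^+_N$ pairwise commute, and likewise $\nabla^-_{\<\bla\<\>,\<\>1}\lc\nabla^-_{\<\bla\<\>,\<\>N}$, as differential operators in the variables $\kkk$ with coefficients in $\ty\ox\C(\kkk)$. The map $\rho^\pm_\bla:\ty^\h\to\End(H_\bla)$ is an algebra homomorphism — it is the restriction to the $\ty$-submodule $H_\bla$ of the algebra homomorphism $\rho^\pm:\ty\to\End(\Hplus)$, using that elements of $\ty^\h$ preserve weight subspaces — and it does not involve the variables $\kkk$, hence its $\C(\kkk)$-linear extension commutes with each derivation $\ddk_i$. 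Applying this extension coefficientwise to the curvature identities $[\nabla^+_i,\nabla^+_j]=0$ and $[\nabla^-_{\<\bla\<\>,\<\>i},\nabla^-_{\<\bla\<\>,\<\>j}]=0$ and using the displayed formulas, I obtain $[\nablas_i,\nablas_j]=0$ and $[\nablab_{\<\bla\<\>,\<\>i},\nablab_{\<\bla\<\>,\<\>j}]=0$ for all $i,j$, which is the desired flatness, valid for every $\kp$.

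I do not expect a genuine obstacle; the only point needing a little care is the elementary bookkeeping in the previous paragraph, namely that passing a commuting family of $\ty$-valued connections through an algebra homomorphism that is constant in the base variables $\kkk$ again yields a commuting (hence flat) family, together with the membership check $\Xkp_i,\,\Xkm_{\bla\<\>,\<\>i}\in\ty^\h$. If one preferred to avoid Lemma \ref{flat+-}, the alternative would be to expand $[\nablas_i,\nablas_j]$ directly, use the commutativity of the $\star$-multiplication (noted right after \Ref{mult}) to kill the commutator of the zeroth-order terms, and then check the remaining integrability identity $\ddk_i\bigl((\ga_{j,1}\lsym+\ga_{j,\la_j})\star\bigr)=\ddk_j\bigl((\ga_{i,1}\lsym+\ga_{i,\la_i})\star\bigr)$ from the explicit $\kkk$-dependence in \Ref{Xkp} (and similarly for $\bul$); but the push-forward argument above is cleaner and reuses work already done.
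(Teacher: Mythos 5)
Your proof is correct and is essentially the paper's own argument: the paper also deduces flatness of $\nablas$ and $\nablab_{\<\bla}$ by combining Corollary \ref{betax} (identifying the zeroth-order terms with $\rho^\pm_\bla(\Xkp_i)$ and $\rho^-_\bla(\Xkm_{\bla\<\>,\<\>i})$) with the flatness of the connections in Lemma \ref{flat+-}. You have merely spelled out the routine bookkeeping that the paper leaves implicit.
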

\begin{proof}
The statement follows from Corollary \ref{betax} and Lemma \ref{flat+-}.
\end{proof}

\begin{lem}
\label{<2>}
For any \,$f,g\in H_\bla$, we have
\vvn.4>
\be
\intt f\>g\,=\intt\>f\bul g\,=\,
\bigl\bra\<\>\beta^-_\bla(f)\>,\>\beta^-_\bla(g)\bigr\ket\,,
\vv.3>
\ee
where the pairing \,$\bra\,{,}\,\ket$ \,on \;$\Hck_\bla$ is defined
by \;\Ref{<Hk>}.
\end{lem}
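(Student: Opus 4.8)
The plan is to reduce the statement to Proposition~\ref{SP=}, the defining formula~\Ref{<Hk>} for the pairing $\bra\,{,}\,\ket$ on $\Hck_\bla$, and the invariance of that pairing recorded in Lemma~\ref{<inv>}. The underlying point is that, after transport through the vector space isomorphisms $\nu^=_\bla$, $\nu^-_\bla$, $\nuke_\bla$, $\nukm_\bla$, $\beta^-_\bla$ introduced above, all three quantities in the lemma are names for the Poincare pairing on $\tfl$.

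First I would unwind $\beta^-_\bla$. Since $\beta^-_\bla=(\nukm_\bla)^{-1}\nu^-_\bla=(\nuke_\bla)^{-1}\nu^=_\bla$, one has $\nukm_\bla\circ\beta^-_\bla=\nu^-_\bla$ and $\nuke_\bla\circ\beta^-_\bla=\nu^=_\bla$ as maps $H_\bla\to\DL$ and $H_\bla\to\DLe$. Hence, for any $a,b\in H_\bla$, formula~\Ref{<Hk>} followed by Proposition~\ref{SP=} gives
\be
\bra\beta^-_\bla(a),\beta^-_\bla(b)\ket\,=\,
\Sc\bigl(\nu^=_\bla(a),\nu^-_\bla(b)\bigr)\,=\,\intt\,[a][b]\,,
\ee
the product being taken in $H_\bla$. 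Specializing to $a=f$, $b=g$ already yields the equality $\intt fg=\bra\beta^-_\bla(f),\beta^-_\bla(g)\ket$ of the lemma.

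Next I would treat $\intt f\bul g$. By the definition~\Ref{mult} of $\bul$ together with $\beta^-_\bla(1)=1$ one has $\beta^-_\bla(f\bul g)=\beta^-_\bla(f)\,\beta^-_\bla(g)$, so the invariance of $\bra\,{,}\,\ket$ from Lemma~\ref{<inv>} gives
\be
\bra\beta^-_\bla(f\bul g),\beta^-_\bla(1)\ket\,=\,
\bra\beta^-_\bla(f)\,\beta^-_\bla(g),\,1\ket\,=\,
\bra\beta^-_\bla(f),\beta^-_\bla(g)\ket\,.
\ee
On the other hand, applying the display of the previous paragraph to $a=f\bul g$, $b=1$ rewrites $\bra\beta^-_\bla(f\bul g),\beta^-_\bla(1)\ket$ as $\intt\bigl((f\bul g)\cdot1\bigr)=\intt f\bul g$. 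Together with $\bra\beta^-_\bla(f),\beta^-_\bla(g)\ket=\intt fg$ established above, this gives $\intt fg=\intt f\bul g$, finishing the argument.

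I do not expect a genuine obstacle: apart from the invariance in Lemma~\ref{<inv>}, everything is bookkeeping of the maps $\nu^=_\bla,\nu^-_\bla,\nuke_\bla,\nukm_\bla,\beta^-_\bla$, and that invariance itself only uses the self-adjointness of the Bethe algebra with respect to the Shapovalov form (Corollary~\ref{SB}). Conceptually the lemma is just the statement that the Poincare pairing on $\tfl$ does not distinguish the ordinary product on $H_\bla$ from the deformed product $\bul$, which is precisely what invariance of $\bra\,{,}\,\ket$ encodes.
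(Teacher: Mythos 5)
Your proof is correct and follows essentially the same route as the paper's: both use the definition \Ref{<Hk>} together with Proposition \ref{SP=} to identify $\intt f\>g$ with $\bigl\bra\beta^-_\bla(f),\beta^-_\bla(g)\bigr\ket$, and then use the multiplicativity of $\beta^-_\bla$ and the invariance of $\bra\,{,}\,\ket$ from Lemma \ref{<inv>} to pass to $\intt f\bul g$. The only difference is cosmetic bookkeeping of which pair of arguments the basic identity is applied to.
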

\begin{proof}
Propositions \ref{SP}, \ref{SP=} and Lemma \ref{<inv>} imply
\vvn.3>
\begin{align*}
& \kern-1em\!\!
\intt f\>g\,=\,\Sc\bigl(\nu^=_\bla(f)\>,\>\nu^-_\bla(g)\bigr)\,=\,
\,\Sc\bigl(\nuke_\bla\bigl(\beta^-(f)\bigr)\>,
\>\nukm_\bla\bigl(\beta^-(g)\bigr)\bigr)\,=\,
\bigl\bra\<\>\beta^-_\bla(f)\>,\>\beta^-_\bla(g)\bigr\ket\,={}
\\
& \kern-1em
{}=\,\bigl\bra\<\>1\>,\>\beta^-_\bla(f)\>\beta^-_\bla(g)\bigr\ket\,=\,
\Sc\bigl(\<\>v^=_\bla\>,\nukm_\bla\bigl(\beta^-_\bla(f)\>\beta^-_\bla(g)\bigr)
\bigr)\,=\Sc\bigl(\<\>v^=_\bla\>,\>\nu^-_\bla(f\bul g)\bigr)\,=
\intt\>f\bul g\,.
\\[-40pt]
\end{align*}
\end{proof}

\vsk.8>
\begin{cor}
\label{<3>}
For any \,$f_1\>,f_2\>,f_3\<\in H_\bla$, we have
\vvn.4>
\be
\intt(f_1\bul f_2)\>f_3\,=\intt f_1\bul f_2\bul f_3\,=
\intt f_2\,(f_1\bul f_3)\,.
\vv-1.6>
\ee
\qed
\end{cor}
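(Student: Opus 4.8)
The plan is to derive Corollary \ref{<3>} by iterating Lemma \ref{<2>} and using the symmetry/invariance of the pairing $\bra\,{,}\,\ket$ on $\Hck_\bla$ from Lemma \ref{<inv>}, together with the associativity of the $\bul$\nobreakdash-product and the defining multiplicativity $\beta^-_\bla(f\bul g)=\beta^-_\bla(f)\,\beta^-_\bla(g)$ from \Ref{mult}.

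First I would dispose of the middle equality. Since $\bul$ is associative, $(f_1\bul f_2)\bul f_3=f_1\bul f_2\bul f_3$, and applying Lemma \ref{<2>} to the pair $(f_1\bul f_2,\,f_3)$ gives
\be
\intt(f_1\bul f_2)\>f_3\,=\intt(f_1\bul f_2)\bul f_3\,=\intt f_1\bul f_2\bul f_3\,.
\ee
For the remaining equality I would again apply Lemma \ref{<2>} to $(f_1\bul f_2,\,f_3)$ to write $\intt(f_1\bul f_2)\>f_3=\bigl\bra\beta^-_\bla(f_1\bul f_2)\>,\>\beta^-_\bla(f_3)\bigr\ket$, then use $\beta^-_\bla(f_1\bul f_2)=\beta^-_\bla(f_1)\,\beta^-_\bla(f_2)$ and the invariance relation $\bra g_1g_2,g_3\ket=\bra g_2,g_1g_3\ket$ of Lemma \ref{<inv>} with $g_i=\beta^-_\bla(f_i)$ to obtain $\bigl\bra\beta^-_\bla(f_2)\>,\>\beta^-_\bla(f_1)\,\beta^-_\bla(f_3)\bigr\ket=\bigl\bra\beta^-_\bla(f_2)\>,\>\beta^-_\bla(f_1\bul f_3)\bigr\ket$. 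A last application of Lemma \ref{<2>} to the pair $(f_2,\,f_1\bul f_3)$ identifies this with $\intt f_2\,(f_1\bul f_3)$, which closes the chain.

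Since each step is a direct invocation of a previously established statement, I do not expect a genuine obstacle here; the only point to watch is keeping the order of the two arguments in Lemma \ref{<2>} consistent (and using that $\intt$ and the ordinary product on $H_\bla$ are symmetric in their factors), so that the pair fed into Lemma \ref{<2>} matches the factors appearing in the invariance identity of Lemma \ref{<inv>}.
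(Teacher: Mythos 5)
Your proof is correct and follows exactly the route the paper intends: the corollary is stated with an immediate \qed, being a direct consequence of Lemma \ref{<2>} together with the associativity of $\bul$ and the invariance of the pairing $\bra\,{,}\,\ket$ from Lemma \ref{<inv>}, which is precisely the chain of substitutions you carry out.
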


\vsk.4>
\begin{cor}
The operators \,$f\bul$ \,are symmetric with respect to pairing \,\Ref{intnew}.
\end{cor}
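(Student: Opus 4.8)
The plan is to obtain the statement as an immediate consequence of Corollary \ref{<3>}. First I would unwind the two pieces of notation involved. By definition, pairing \Ref{intnew} sends \,$[g_1]\ox[g_2]$ \,to \;$\intt g_1\>g_2$\>, and the operator \,$f\bul$ \,on \,$H_\bla$ \,sends \,$g$ \,to \,$f\bul g$\>. Thus the assertion that \,$f\bul$ \,is symmetric with respect to \Ref{intnew} is exactly the identity
\be
\intt\>(f\bul g_1)\>g_2\,=\intt\>g_1\>(f\bul g_2)
\ee
for all \,$f\<\in H_\bla$ \,and all \,$g_1\>,g_2\<\in H_\bla$ (an equality in \,$Z^{-1}\CZH$, not merely after passing to the quotient by \,$\Jc$).

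Next I would simply invoke Corollary \ref{<3>} with \,$f_1=f$, \,$f_2=g_1$, \,$f_3=g_2$\>. The two outer members of the chain displayed there, namely \;$\intt(f_1\bul f_2)\>f_3\;=\;\intt f_2\,(f_1\bul f_3)$\>, then read \;$\intt(f\bul g_1)\>g_2\;=\;\intt g_1\,(f\bul g_2)$\>, which is precisely the identity above. Hence there is nothing left to prove.

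So no new computation is required: the entire content is already packaged in Corollary \ref{<3>}, which in turn rests on Lemma \ref{<2>} (identifying \;$\intt f\bul g$ \,with the pairing \;$\bra\,{,}\,\ket$ \,on \,$\Hck_\bla$\>, transported via \,$\beta^-_\bla$\>) together with the commutativity and associativity of the \,$\bul$-product and the symmetry and invariance of \;$\bra\,{,}\,\ket$ \,from Lemma \ref{<inv>}. Consequently I do not expect any genuine obstacle here; the only care needed is to match the three free variables in the two formulations and to keep track of the fact that the identity is valid already at the level of \,$Z^{-1}\CZH$.
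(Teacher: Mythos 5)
Your proof is correct and is exactly the argument the paper intends: the corollary is stated immediately after Corollary \ref{<3>} with no separate proof precisely because specializing \,$f_1=f$, \,$f_2=g_1$, \,$f_3=g_2$ in the chain \;$\intt(f_1\bul f_2)\>f_3=\intt f_2\,(f_1\bul f_3)$ \,gives the symmetry identity \;$\intt(f\bul g_1)\>g_2=\intt g_1\,(f\bul g_2)$ \,verbatim. Your unwinding of the definitions and the remark that the identity already holds in \,$Z^{-1}\>\CZH$ are both accurate.
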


In \cite{MO}, the quantum multiplication on $H^*_{T^n\times\C^*}(\tfl)$
is described. The quantum multiplication depends on parameters
$q^{\al_1}\lc q^{\al_{N-1}}$ associated with simple roots
$\al_1\lc\al_{N-1}$ of the Lie algebra \,$\sln$.

\vsk.2>
We identify the notation of \cite{MO} with the notation of this paper
as follows:
\vvn.3>
\beq
\label{id par}
h\,\mapsto\>-\>h\,,\quad\qquad
q^{\al_i}\<\>\mapsto\,\kk_{i+1}/\kk_i\,,\ \quad i=1\lc N-1\,.
\vv.2>
\eeq
and \,$\zzz$ \>are the same as in \cite{MO}.

\begin{thm}
\label{conj}
Under the identification of notation \Ref{id par}, the space $H_\bla$ with
the multiplication \;$\bul$ \;and pairing \,\Ref{intnew} is the equivariant
quantum cohomology algebra $QH_{GL_n\times\C^*}(\tfl;\C)$ of the cotangent
bundle $\tfl$ of a partial flag variety $\Fla$, described in \cite{MO}.
\end{thm}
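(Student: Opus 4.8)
The plan is to identify the two algebras step by step, using \cite{RTV} to reconcile the Yangian actions and \cite{MO} to pass from divisor classes to the whole algebra. First I would observe that, under the substitution \Ref{id par}, both $QH_{GL_n\times\C^*}(\tfl;\C)$ and $(H_\bla,\bul)$ have $H_\bla$ as underlying $\CZH$-module, with $\kk_{i+1}/\kk_i$ for $i=1\lc N-1$ as deformation parameters, so only the pairing and the multiplication need to be matched. The pairing \Ref{intnew} is the equivariant integration \Ref{intt} over $\tfl$, in which $Q(\zb_I)\>R(\zb_I)$ is the Euler class of the tangent space at the fixed point $F_I$; this is precisely the Poincar\'e pairing on $\tfl$ used in \cite{BMO,MO}, the sign $(-1)^{\sum_{i<j}\la_i\la_j}$ and the change of sign of $h$ being absorbed into \Ref{id par}.

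The core of the argument concerns multiplication by divisor classes. By Corollary \ref{betax}, $\bul$-multiplication by the divisor class $\ga_{i,1}\lsym+\ga_{i,\la_i}=c_1(F_i/F_{i-1})$ on $H_\bla$ equals $\rho^-_\bla(\Xkm_{\bla,i})$, the action through $\rho^-$ of the dynamical Hamiltonian \Ref{Xkm}. On the other side, \cite{MO} express quantum multiplication by these same divisor classes through the Maulik--Okounkov Yangian action on $\tbigoplus_\bla H_\bla$, and by \cite{RTV} that action coincides with $\rho^-$; the geometric description of $\rho^\pm$ in Theorem \ref{Varag}, in terms of the forgetful maps \Ref{eqn:forgetfuls} and the equivariant Euler classes of their relative (co)tangent bundles, is exactly the bridge used in \cite{RTV}. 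It then remains to verify that, under \Ref{id par}, the \cite{MO} operator of quantum multiplication by $c_1(F_i/F_{i-1})$ coincides with $\rho^-_\bla(\Xkm_{\bla,i})$: the rational coefficients $\kk_j/(\kk_i-\kk_j)$ entering \Ref{Xkm} turn into the rational functions of the K\"ahler parameters that occur in \cite{MO}, while the $q$-independent summand $\Xin_i$ acts, by \Ref{rhoX8}, as multiplication by $\ga_{i,1}\lsym+\ga_{i,\la_i}$, matching the classical part of the \cite{MO} divisor operator. Thus quantum multiplication by divisor classes agrees with $\bul$-multiplication by divisor classes.

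To conclude, I would note that $\bul$ and the quantum product of \cite{MO} are both commutative, associative, unital deformations of the cup product on $H_\bla$ that reduce to it as all $\kk_{i+1}/\kk_i\to 0$, and, by the previous paragraph, have the same operators of multiplication by divisor classes; equivalently, the flat connection \Ref{nablasb} of Proposition \ref{flat} coincides with the quantum connection of \cite{BMO,MO}. By the explicit description of $QH_{GL_n\times\C^*}(\tfl;\C)$ obtained in \cite{MO} this data determines the quantum cohomology algebra (cf.\ also Corollary \ref{f1} and Proposition \ref{beta-}), so $(H_\bla,\bul)$ together with the pairing \Ref{intnew} coincides, under \Ref{id par}, with $QH_{GL_n\times\C^*}(\tfl;\C)$ as an algebra with pairing.

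I expect the second step to be the one real obstacle: the careful translation of the conventions of \cite{MO} --- the roles of their equivariant and K\"ahler parameters, the normalization of their stable envelopes and $R$-matrix, and the exact form of the classical, wall-crossing part of their divisor operator --- into the explicit formula \Ref{Xkm}, through the dictionary \Ref{id par} and the identification of Yangians of \cite{RTV}; the geometric formulas of Theorem \ref{Varag} are what make this bookkeeping tractable.
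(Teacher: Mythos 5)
Your first two steps follow the paper's own route: the pairing \Ref{intnew} is indeed the \cite{BMO,MO} Poincar\'e pairing on $\tfl$, and the identification of the divisor operators is exactly what the paper imports from \cite{RTV} (their Corollary 6.4 matches the \cite{MO} Yangian action with $\rho^-$, and their Corollary 7.6 matches quantum multiplication by first Chern classes with $\rho^-_\bla(\Xkm_{\bla\<\>,\<\>1})\lc\rho^-_\bla(\Xkm_{\bla\<\>,\<\>N})$); combined with Corollary \ref{betax} this gives agreement of the two products on divisor classes. So the part you flagged as the ``real obstacle'' is in fact outsourced to \cite{RTV} in the paper, not reproved here.

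The genuine gap is in your last paragraph. Knowing that the two commutative, unital deformations of the cup product have the same operators of multiplication by the divisor classes $\ga_{i,1}\lsym+\ga_{i,\la_i}$ does \emph{not} by itself determine either product: for a general partial flag variety these classes do not generate $H^*(\Fla)$ (one needs all elementary symmetric functions of the Chern roots, not just the first ones), so ``same divisor operators'' does not formally imply ``same algebra''. The paper closes this gap with a genericity argument that your proposal omits: fix $\zzz$ generic; by \Ref{rhoX8} the operators $\rho^\pm(\Xin_i)$ act diagonally in the fixed-point basis with joint eigenvalues $\sum_{k\in I_i}z_k$, $I\in\Il$, which are pairwise distinct, so the joint spectrum is simple; hence for generic $\kkk$ the operators $\rho^-_\bla(\Xkm_{\bla\<\>,\<\>i})$ still have simple joint spectrum and therefore generate a maximal commutative subalgebra of $\End(H_\bla)$. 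Any commutative algebra containing them --- in particular both the Bethe algebra $\tbkm_\bla$ and the \cite{MO} quantum multiplication algebra --- must then coincide with that subalgebra, and Corollary \ref{f1} (uniqueness of the element sending $1\mapsto f$) upgrades equality of operator algebras to equality of the products. Finally, continuity in $\kkk$ and $\zzz$ extends the identity from generic to all distinct $\kkk$. Without this simple-spectrum-plus-continuity step (or some substitute verification that divisor operators generate the full quantum algebra), your concluding ``this data determines the quantum cohomology algebra'' is an assertion rather than an argument.
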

\begin{proof}
Assume throughout the proof the identification of notation \Ref{id par}.
By \cite[Corollary 6.4]{RTV}, the Yangian structure on the equivariant
cohomology defined in \cite{MO} coincides with the Yangian structure \,$\rho^-$
defined in Section~\ref{sec cohom and Yang}. By \cite[Corollary 7.6]{RTV},
the action of the dynamical Hamiltonians \,$\Xkm_1\<,\dots,\Xkm_N$ on the
equivariant cohomology coincides with the quantum multiplication by the first
Chern classes defined in \cite{MO}. Assume that \,$\zzz$ are fixed, that is,
we consider the corresponding quotients of the Bethe algebra and the algebra
of quantum multiplication. Suppose that $\zzz$ are generic. Then by
formula~\Ref{rhoX8}, the dynamical Hamiltonians $\Xin_1,\dots,\Xin_N$ have
simple spectrum. Thus for generic \,$\kkk$, the dynamical Hamiltonians
\,$\Xkm_1\<,\dots,\Xkm_N$ have simple spectrum too, and hence generate the
whole Bethe algebra (as well as the whole algebra of quantum multiplication).
Both the Bethe algebra and the algebra of quantum multiplication continuously
depend on parameters \,$\kkk$ \>and \,$\zzz$ \>as long as \,$\kkk$ \>are
distinct. Hence, both algebras coincide for all distinct \,$\kkk$ \>and
all fixed \,$\zzz$\>.
\end{proof}

Below in Section \ref{sec Okounk}, we give a direct proof of the special case
\,$N=n$ \,and \,$\bla=(1\lc1)$ of Theorem \ref{conj}. The proof is based on
formulae from \cite{BMO}.

\vsk.3>
Observe that under Theorem \ref{conj}, the connection \,$\nablab$
\>for \,${\ka\<\>=-1}$ \,is the quantum connection on \,$H_\bla$.

\vsk.3>
\begin{cor}
\label{conjcor}
Under the identification of notation \Ref{id par},
the equivariant quantum cohomology algebra \;$QH_{GL_n\times\C^*}(\tfl;\C)$
\,of the cotangent bundle \;$\tfl$ of a partial flag variety \;$\Fla$ is
isomorphic to the algebra \;$\Hck_\bla$.
\end{cor}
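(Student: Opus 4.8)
The plan is to deduce this as a short formal consequence of Theorem \ref{conj} together with the algebra isomorphisms already constructed in Section \ref{multsect}. By Theorem \ref{conj}, under the identification of notation \Ref{id par} the algebra $QH_{GL_n\times\C^*}(\tfl;\C)$ is exactly $H_\bla$ equipped with the multiplication $\bul$ of \Ref{mult}. Hence it suffices to produce an isomorphism of $\CZH$-algebras between $(H_\bla,\bul)$ and $\Hck_\bla$.

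First I would invoke the vector space isomorphism $\beta^-_\bla=(\nukm_\bla)^{-1}\nu^-_\bla\colon H_\bla\to\Hck_\bla$, which satisfies $\beta^-_\bla(1)=1$. The multiplication $\bul$ was defined in \Ref{mult} precisely by the requirement $\beta^-_\bla(f\bul g)=\beta^-_\bla(f)\,\beta^-_\bla(g)$, so $\beta^-_\bla$ is tautologically an isomorphism of unital rings from $(H_\bla,\bul)$ onto $\Hck_\bla$. To upgrade this to an isomorphism of $\CZH$-algebras, I would note that $\nu^-_\bla$ is an isomorphism of $\CZH$-modules by Lemma \ref{nu+-} and $\nukm_\bla$ is an isomorphism of $\CZH$-modules by Theorem \ref{mainK-}; therefore $\beta^-_\bla$ is $\CZH$-linear. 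Composing with the identification supplied by Theorem \ref{conj} gives $QH_{GL_n\times\C^*}(\tfl;\C)\cong\Hck_\bla$ as $\CZH$-algebras. Equivalently, one may route the argument through $\tbkm_\bla$: by Proposition \ref{beta-} the map $\chi^-_\bla$ identifies $(H_\bla,\bul)$ with $\tbkm_\bla$, and $\tbkm_\bla\cong\tbk(\DL)\cong\Hck_\bla$ by Theorem \ref{mainK-}.

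There is essentially no obstacle here beyond careful bookkeeping: one must be sure that the $\CZH=H^*_{GL_n\times\C^*}({pt};\C)$-module structure on the quantum cohomology side is the one transported across Theorem \ref{conj}, and that the dictionary \Ref{id par} ($h\mapsto-h$, $q^{\al_i}\mapsto\kk_{i+1}/\kk_i$) is applied uniformly. No new computation is required --- all the analytic content already sits in Theorem \ref{conj} and in the construction of $\nukm_\bla$, $\nu^-_\bla$ in Sections \ref{elsewhere}--\ref{multsect}.
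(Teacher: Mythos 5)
Your argument is correct and is essentially the paper's (the paper gives no explicit proof, treating the corollary as an immediate consequence of Theorem \ref{conj} together with Propositions \ref{beta+}, \ref{beta-} and the definition \Ref{mult} of $\bul$ via $\beta^-_\bla$). Both routes you mention --- the tautological ring isomorphism $\beta^-_\bla\colon(H_\bla,\bul)\to\Hck_\bla$ and the detour through $\tbkm_\bla\cong\tbk(\DL)\cong\Hck_\bla$ --- are exactly the identifications already set up in Sections \ref{elsewhere} and \ref{multsect}.
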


This corollary gives a description of the algebra
\;$QH_{GL_n\times\C^*}(\tfl;\C)$ by generators and relations,
see formula \Ref{HK}. Theorem \ref{lim h to inf} below implies
that in the limit \,$h\to\infty$ \,such that all
\vvn-.1>
\be
r_i\>=\,h^{\la_i+\la_{i+1}}\,\kk_{i+1}\<\>/\kk_i\,,\qquad i=1\lc N-1\,,
\vv.3>
\ee
are fixed, this description becomes the known description of the equivariant
quantum cohomology algebra \;$QH_{GL_n}(\Fla;\C)$ of the partial flag variety
\>$\Fla\>$, see for instance \cite[formula (2.22)]{AS}\>.

\vsk.2>
Introduce the \,$N\>{\times}\>N$ \>matrix \,$M(u)$ with entries
\,$M_{\ij}\<=0$ \,for \,$|\<\>i-j\<\>|>1$\>,
\be
M_{\ii}(u)\,=\,\prod_{k=1}^{\la_i}\,(u-\gak_{\ik})\,,\qquad i=1\lc N\>,
\vv.1>
\ee
\be
M_{\ii+1}(u)\,=\,(-1)^{\la_i}\>,
\ \quad M_{\ioi}(u)\,=\,r_i\,,\qquad i=1\lc N-1\,.
\vv.4>
\ee

\begin{lem}
Let \,$A$ \,be the matrix \,used in \,\cite[formula (2.22)]{AS}\>.
Match the notation of \;\cite{AS} and this paper as follows\/{\rm:}
\vvn-.2>
\be
k\,\mapsto N\>,\qquad n_i\>\mapsto\,\la_i\,,
\qquad x^{(i)}_j\,\mapsto\si_j(\gak_{i,1}\lc\gak_{i,\la_i})\,,
\qquad q_i\>\mapsto\,r_i\,,
\ee
where \;$\si_j$ \>is the \,$j$-th \,elementary symmetric function.
Then \,\,$\det\>(u-A)\>=\>\det\>M(u)$.
\end{lem}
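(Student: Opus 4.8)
The identity is a determinant computation: both $\det(u-A)$ and $\det M(u)$ are monic polynomials in $u$ of degree $n=\la_1\lsym+\la_N$ whose coefficients are polynomials in $r_1\lc r_{N-1}$ and in the elementary symmetric functions $\si_j(\gak_{i,1}\lc\gak_{i,\la_i})$, the latter being precisely the variables $x^{(i)}_j$ under the dictionary of the statement. My plan is to exploit the tridiagonal shape of $M(u)$ and the block‑tridiagonal shape of $A$, compute each determinant by the corresponding two‑term recursion, and check that the recursions and initial data coincide.

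For $0\le i\le N$ let $M^{(i)}(u)$ be the top‑left $i\times i$ submatrix of $M(u)$ and put $d_i(u)=\det M^{(i)}(u)$, with $d_0=1$. Since $M(u)$ is tridiagonal with diagonal entries $M_{ii}(u)=\prod_{k=1}^{\la_i}(u-\gak_{i,k})$, superdiagonal entries $M_{i,i+1}(u)=(-1)^{\la_i}$ and subdiagonal entries $M_{i+1,i}(u)=r_i$, expanding $\det M^{(i)}(u)$ along its last row and then along the last column of the resulting minor gives
\[
d_i(u)\,=\,M_{ii}(u)\,d_{i-1}(u)\,-\,(-1)^{\la_{i-1}}\,r_{i-1}\,d_{i-2}(u)\,,\qquad i\ge 2\,,
\]
with $d_1(u)=M_{11}(u)$ and $\det M(u)=d_N(u)$. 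Note that $M_{ii}(u)=\prod_k(u-\gak_{i,k})=\sum_{j=0}^{\la_i}(-1)^j\si_j(\gak_{i,1}\lc\gak_{i,\la_i})\,u^{\la_i-j}$ is exactly the Chern polynomial $c^{(i)}(u)$ of the $i$‑th tautological quotient in the notation of \cite{AS}.

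The matrix $A$ of \cite[formula (2.22)]{AS} is block‑tridiagonal of total size $n$: its $i$‑th diagonal block is the $\la_i\times\la_i$ companion‑type block of $c^{(i)}(u)$, and the coupling blocks between the $(i{-}1)$‑st and $i$‑th diagonal blocks each have a single nonzero entry, built from $q_{i-1}$ and a sign. Let $A^{(i)}$ be the principal submatrix of $A$ spanned by its first $i$ diagonal blocks, and $P_i(u)=\det(u-A^{(i)})$, so $P_0=1$, $P_1(u)=c^{(1)}(u)$ and $\det(u-A)=P_N(u)$. A Laplace expansion of $\det(u-A^{(i)})$ along the $\la_i$ rows of the last diagonal block leaves only two surviving groups of terms, because the coupling blocks are so sparse: the term keeping the entire $i$‑th diagonal block contributes $c^{(i)}(u)\,P_{i-1}(u)$, while the term using the coupling entries contributes $q_{i-1}$ times a minor of the $(i{-}1)$‑st companion block (which equals $\pm1$) times $\det(u-A^{(i-2)})$. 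Tracking the sign produced by the Laplace expansion together with that companion minor gives the factor $(-1)^{\la_{i-1}}$, so that, after the substitution $q_i\mapsto r_i$,
\[
P_i(u)\,=\,c^{(i)}(u)\,P_{i-1}(u)\,-\,(-1)^{\la_{i-1}}\,r_{i-1}\,P_{i-2}(u)\,,\qquad i\ge 2\,.
\]
This is the same recursion with the same initial data as for the $d_i$, hence $P_i(u)=d_i(u)$ for all $i$ by induction on $i$; taking $i=N$ gives $\det(u-A)=\det M(u)$.

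The only step that requires genuine care is the last sign bookkeeping: confirming that the power of $-1$ arising from the position of the coupling entry and from the deletion of the relevant row and column of the $(i{-}1)$‑st companion block in the block‑Laplace expansion of $u-A$ is exactly $(-1)^{\la_{i-1}}$, and that this agrees with the sign in the tridiagonal recursion for $\det M(u)$ once the conventions of \cite{AS} for $A$ and for the parameters $q_i$ are translated through the dictionary of the statement. Everything else is the routine tridiagonal/block‑tridiagonal determinant expansion, and this sign check is a finite verification for each block size.
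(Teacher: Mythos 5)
Your argument is essentially the paper's own proof: the paper disposes of the lemma in one line, "by induction on $N$ by expanding $\det(u-A)$ and $\det M(u)$ with respect to the last column," which is exactly the three‑term recursion you set up for the leading principal minors of the tridiagonal matrix $M(u)$ and of the block‑tridiagonal matrix $u-A$. Your version is a fleshed‑out form of the same induction, and the sign bookkeeping you defer to a "finite verification" is no less complete than what the paper itself supplies.
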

\begin{proof}
The claim follows by induction on \>$N$ \>by expanding
% both determinants
\;$\det\>(u-A)$ \,and \;$\det\>M(u)$ with respect to the last column.
\end{proof}

\begin{thm}
\label{lim h to inf}
Assume that \>$\la_i>0$ \>for all \,$i=1\lc N$, \,and \,$h\to\infty$ \,such
that \;$r_1\lc r_{N-1}$ are fixed. Then
\vvn.1>
\be
\Wk(u)\prod_{1\le i<j\le N}\!(\kk_i-\kk_j)^{-1}\,\to\;\det\<\>M(u)\,.
\vv.1>
\ee
\end{thm}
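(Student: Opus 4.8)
The plan is to expand
\[
\Wk(u)\,=\!\sum_{\si\in S_N}\!\operatorname{sgn}(\si)\;\prod_{i=1}^N\kk_i^{\,N-\si(i)}\;
\prod_{k=1}^{\la_i}\bigl(u-\gak_{i,k}+h\,(i-\si(i))\bigr)
\]
and to track the order in $h$ of each summand as $h\to\infty$ with the $r_i$ fixed. Scaling all $\kk_i$ by a common factor changes neither side, so I would normalize $\kk_1=1$; then $\kk_i=c_i\,h^{-d_i}$ with $c_i=r_1\cdots r_{i-1}$ and $d_i=\sum_{l=1}^{i-1}(\la_l+\la_{l+1})$. Since $\la_i>0$ for all $i$, the $d_i$ are strictly increasing, hence $\kk_j/\kk_i\to0$ for $i<j$ and $\prod_{1\le i<j\le N}(\kk_i-\kk_j)=\prod_{i=1}^N\kk_i^{\,N-i}\cdot(1+o(1))$; so it suffices to prove $\Wk(u)\big/\!\prod_i\kk_i^{\,N-i}\to\det M(u)$. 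A factor $\prod_{k=1}^{\la_i}(u-\gak_{i,k}+h(i-\si(i)))$ equals $M_{ii}(u)$ when $\si(i)=i$ and is a polynomial in $h$ of degree $\la_i$ with leading coefficient $(i-\si(i))^{\la_i}$ when $\si(i)\ne i$; together with $\prod_i\kk_i^{N-\si(i)}\big/\!\prod_i\kk_i^{N-i}=\prod_i c_i^{\,i-\si(i)}\,h^{\sum_i d_i(\si(i)-i)}$ this shows the $\si$-summand of $\Wk(u)/\!\prod_i\kk_i^{N-i}$ is $h^{f(\si)}$ times a quantity with a finite nonzero limit, where $f(\si)=\sum_{i\,:\,\si(i)\ne i}\la_i+\sum_{i=1}^N d_i(\si(i)-i)$.

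The core of the argument, which I expect to be the main obstacle, is the combinatorial claim that $f(\si)\le0$ for every $\si\in S_N$, with equality exactly when $\si$ is a product of disjoint adjacent transpositions. I would prove it by Abel summation: substituting $d_i=\sum_{l<i}(\la_l+\la_{l+1})$ and exchanging the order of summation gives $\sum_i d_i(\si(i)-i)=-\sum_{l=1}^{N-1}(\la_l+\la_{l+1})\,m_l$, where $m_l=\sum_{i\le l}\si(i)-(1+\dots+l)\ge0$ measures how far $\si$ is from preserving $\{1,\dots,l\}$. A minimum-sum estimate gives $m_l\ge k_l^2\ge k_l$ with $k_l=\#\{\,i\le l:\si(i)>l\,\}$, and a short case check (on whether $\si(i)>i$, whether $i\in\si(\{1,\dots,i\})$, and otherwise) gives $k_{i-1}+k_i\ge1$ whenever $\si(i)\ne i$. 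Since the coefficient of $\la_i$ in $\sum_l(\la_l+\la_{l+1})k_l$ equals $k_{i-1}+k_i$ (with $k_0=k_N=0$), these combine to $\sum_{i:\si(i)\ne i}\la_i\le\sum_l(\la_l+\la_{l+1})k_l\le\sum_l(\la_l+\la_{l+1})m_l$, i.e.\ $f(\si)\le0$. As every $\la_i>0$, equality forces $m_l=k_l\in\{0,1\}$ for all $l$ and $k_{i-1}+k_i=[\,i\in\on{supp}\si\,]$ for all $i$; running this $0/1$ recursion along each maximal run of $\on{supp}\si$ shows the run has even length and that $\si$ restricts to $(a,a+1)(a+2,a+3)\cdots$ on it, so $\si$ is a product of disjoint adjacent transpositions. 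Finally $f(\si)$ is an integer, so for every other $\si$ one has $f(\si)\le-1$ and that summand tends to $0$.

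It then remains only to identify the limits of the surviving summands. For $\si=\prod_\nu(a_\nu,a_\nu+1)$ one reads off $\operatorname{sgn}(\si)=\prod_\nu(-1)$, $\prod_{i:\si(i)\ne i}(i-\si(i))^{\la_i}=\prod_\nu(-1)^{\la_{a_\nu}}$, and $\prod_i c_i^{\,i-\si(i)}=\prod_\nu c_{a_\nu+1}/c_{a_\nu}=\prod_\nu r_{a_\nu}$, so this summand converges to $\prod_\nu\bigl(-(-1)^{\la_{a_\nu}}r_{a_\nu}\bigr)\cdot\prod_{i\notin\cup_\nu\{a_\nu,a_\nu+1\}}M_{ii}(u)$. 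On the other side, since $M(u)$ is tridiagonal, $\det M(u)$ is precisely the sum over products of disjoint adjacent transpositions $\tau=\prod_\nu(a_\nu,a_\nu+1)$ of $\operatorname{sgn}(\tau)\,\prod_\nu M_{a_\nu,a_\nu+1}(u)\,M_{a_\nu+1,a_\nu}(u)\cdot\prod_{i\notin\cup_\nu\{a_\nu,a_\nu+1\}}M_{ii}(u)$, and inserting $M_{a,a+1}(u)=(-1)^{\la_a}$, $M_{a+1,a}(u)=r_a$ reproduces the same expression term by term. Hence $\Wk(u)/\!\prod_i\kk_i^{\,N-i}\to\det M(u)$, which by the first paragraph gives the claim. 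Apart from the equality analysis in the combinatorial estimate, everything else is bookkeeping of leading coefficients.
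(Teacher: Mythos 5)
Your proof is correct, but it takes a genuinely different route from the paper's. The paper's argument is a two-line diagonal rescaling: it introduces factors $p_i$ and observes that the matrix with entries $\frac{p_i}{p_j}\,\kk_j^{\,j-N}\,\kk_i^{\,N-j}\prod_{k=1}^{\la_i}\bigl(u-\gak_{i,k}+h\,(i-j)\bigr)$ converges \emph{entrywise} to $M_{\ij}(u)$ (in particular the entries with $|i-j|>1$ tend to zero), while the rescaling changes neither the ratio $\Wk(u)\big/\prod_{i<j}(\kk_i-\kk_j)$ nor, in the limit, the Vandermonde factor; continuity of the determinant then finishes the proof, and tridiagonality of the limit is automatic. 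You instead expand $\Wk(u)$ by the Leibniz formula and control each permutation separately, which forces you to prove by hand the combinatorial statement that your exponent $f(\si)$ is nonpositive and vanishes exactly on products of disjoint adjacent transpositions; your Abel-summation argument with the statistics $m_l\ge k_l^2$ and the parity analysis along maximal runs of $\on{supp}\si$ is a correct (and the genuinely nontrivial) way to do this, and your term-by-term match with the tridiagonal expansion of $\det M(u)$ is right. In effect your lemma re-derives, at the level of the permutation expansion, what the paper's row/column rescaling yields for free; the paper's route is shorter and explains structurally why the limit is tridiagonal, while yours is self-contained and makes explicit which permutations survive and with what weights. Everything in your write-up checks out, including the normalization $\kk_1=1$ (both sides are invariant under simultaneous rescaling of the $\kk_i$) and the use of $\la_i>0$ in the equality analysis.
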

\begin{proof}
Let \;$p_i=\>\prod_{\>k=1}^{\>i-1}\>h^{\la_k}
(\kk_i\<\>/\kk_{i+1})^{N\<-\<\>i\>+1}$, \;$i=1\lc N$.
\vvn.1>
\,Then under the assumption of the theorem,
\vvn-.1>
\be
\prod_{1\le i<j\le N}\!(\kk_i-\kk_j)\,=\,
\kk_1^{\<\>N\<-1}\>\kk_2^{\<\>N\<-2}\<\ldots\>\kk_{N\<-1}\,
\bigl(1+o(1)\bigr)\,,
\vv-.2>
\ee
and
\vvn-.3>
\be
\kk_i^{N-j}\,\prod_{k=1}^{\la_i}\,\bigl(u-\gak_{\ik}+h\>(i-j)\bigr)\,=\,
\kk_j^{\<\>N\<-\<\>j}\;\frac{p_j}{p_i}\;\bigl(M_{\ij}(u)+o(1)\bigr)\,,
\vv.3>
\ee
which implies the claim.
\end{proof}

\section{Special case \,$N=n$, \,$\bla=(1\lc1)$, and Hecke algebra}
\label{sec Special}
In this section we consider the special case when $\Fla$ is the variety
of full flags in $\C^n$.

\subsection{Hecke algebra}
\label{sHecke}
Consider the algebra \;$\Hh$ \,generated by the central element \,$\cc$,
\,pairwise commuting elements \,$\yyy$, \,and elements of the symmetric group
\,$S_n$, subject to relations
\begin{align}
\label{Hecke}
\sd_i\>y_i-y_{i+1}\>\sd_i\>=\,\cc\,, & \qquad i=1\lc n-1\,,
\\[3pt]
\notag
\sd_i\>y_j-y_j\>\sd_i\>=\,0\,, & \qquad j\ne i,i+1\,,
\\[-14pt]
\notag
\end{align}
where \,$\sd_i$ is the transposition of \,$i$ and \,$i+1$. For every nonzero
complex number \,$t$\>, the quotient algebra \;$\Hh/(c=t)$ \,is the degenerate
affine Hecke algebra corresponding to the parameter \,$t$\>.

\vsk.2>
For \,$i=1\lc n$, set
\vvn-.4>
\beq
\label{Y}
\Yk_i\,=\,y_i+\>\cc\>\sum_{j=1}^{i-1}\,\frac{\kk_i}{\kk_i-\kk_j}\,\sd_{\ij}\>+
\>\cc\!\sum_{j=i+1}^n\frac{\kk_j}{\kk_i-\kk_j}\,\sd_{\ij}\,.
\vv.1>
\eeq
where \,$\sd_{\ij}\<\in\Hh$ is the transposition of \,$i$ and \,$j$.
It is known that for any nonzero complex number \,$p$\>,
the formal differential operators
\vvn.3>
\beq
\label{AKZ}
\nabla_i^\Hg\>=\,\kp\,\ddk_i\>-\>\Yk_i,\qquad i=1\lc N,
\vv.3>
\eeq
pairwise commute and, hence, define a flat connection for any \,$\Hh$-module.
This connection is called the {\it affine \KZ/ connection},
see \cite[formula (1.1.41)]{C}.

\subsection{The case \,$N=n$ \,and \,$\bla=(1\lc1)$}
Recall that \,$\kkk$ are distinct numbers.
\begin{thm}
\label{thm:generate}
Let \>$N=n$ \,and \,$\bla=(1\lc1)$. Then the operators \;$\pho^+(\Xk_i)$,
\,$i=1\lc N$, \,and \;$\CZH$ \,generate the Bethe algebra $\tbk(\Vl)$.
Similarly, the operators \;$\pho^+(\Xk_i)$, \,$i=1\lc N$, and \;$\CZH$
\,generate the Bethe algebra $\tbk(\DLe)$, and the operators \;$\pho^-(\Xk_i)$,
\,$i=1\lc N$, and \;$\CZH$ \,generate the Bethe algebra $\tbk(\DL)$.
\end{thm}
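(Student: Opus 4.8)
The plan is to deduce the statement from Corollary \ref{generate}
(equivalently, from Proposition \ref{Ugen} and the isomorphisms of
Theorems \ref{mainK+}\,--\,\ref{mainK-}) by a short computation with
formula \Ref{X}. Since \,$\bla=(1\lc1)$ \,forces \,$\la_p=1$ \,for every
\,$p$, \,that description specializes as follows: \,$\tbk(\Vl)$ \,is generated
over \,$\C$ \,by \,$h$ \,and the \,$N$ \,operators
\,$\pho^+(h\>\Sk_{i,2})|_{\Vl}$, \,$i=1\lc N$ \,(these are the images
\,$\mukp_\bla(\Uk_{i,2})$, \,see the proof of Corollary \ref{generate}; \,the
remaining candidate generators \,$\Sk_{i,1}=e_{\ii}$ \,act on \,$\Vl$ \,as the
scalar \,$\la_i=1$ \,and drop out); \,likewise \,$\tbk(\DLe)$ \,is generated by
\,$h$ \,and \,$\pho^+(h\>\Sk_{i,2})|_{\DLe}$, \,and \,$\tbk(\DL)$ \,by \,$h$
\,and \,$\pho^-(h\>\Sk_{i,2})|_{\DL}$\>. \,So the whole question reduces to
comparing \,$\pho^\pm(\Xk_i)$ \,with \,$\pho^\pm(h\>\Sk_{i,2})$ \,on the
relevant weight subspace.

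First I would take the first of the two expressions for \,$\Xk_i$ \,in \Ref{X},
\,which writes \,$\Xk_i$ \,as \,$h\>\Sk_{i,2}$ \,plus an operator of the form
\,$h\>\xi_i$, \,where \,$\xi_i$ \,is a \,$\C$-linear combination of
\,$T_{\ii}\+1(T_{\ii}\+1-1)=e_{\ii}(e_{\ii}-1)$ \,and the operators
\,$T_{\ii}\+1\>T_{\jj}\+1=e_{\ii}\>e_{\jj}$ \,with \,$j\ne i$. \,On \,$\Vl$,
\,and equally on \,$\DLe$ \,and on \,$\DL$ \,(all three consisting of vectors of
\,$\gln$-weight \,$\bla=(1\lc1)$), \,each \,$e_{\ii}$ \,acts as the scalar
\,$1$, \,so \,$e_{\ii}(e_{\ii}-1)$ \,acts as \,$0$ \,and \,$e_{\ii}\>e_{\jj}$
\,acts as \,$1$; \,hence \,$\xi_i$ \,acts as a scalar \,$c_i\in\C$, \,and on
that subspace
\[
\pho^\pm(\Xk_i)\,=\,\pho^\pm(h\>\Sk_{i,2})\,+\,c_i\>h\,,
\]
where \,$c_i\>h$ \,is a multiplication operator lying in \,$\C[h]\subset\CZH$\>.

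Since \,$h\in\CZH$, \,the displayed identity shows that the subalgebra of
\,$\End(\Vl)$ \,generated by \,$\CZH$ \,and
\,$\pho^+(\Xk_1)|_{\Vl}\lc\pho^+(\Xk_N)|_{\Vl}$ \,coincides with the one
generated by \,$\CZH$ \,and \,$\pho^+(h\>\Sk_{1,2})|_{\Vl}\lc
\pho^+(h\>\Sk_{N,2})|_{\Vl}$; \,the latter contains \,$h$ \,together with all
the \,$\pho^+(h\>\Sk_{i,2})|_{\Vl}$, \,hence every generator of \,$\tbk(\Vl)$,
\,and hence all of \,$\tbk(\Vl)$\>. \,For the reverse inclusion, each \,$\Xk_i$
\,lies in \,$\tbk$ \,by definition (see \Ref{X}), \,so
\,$\pho^+(\Xk_i)|_{\Vl}\in\tbk(\Vl)$; \,and \,$\CZH$, \,acting by
multiplication, \,is a subalgebra of \,$\tbk(\Vl)$ \,by Theorem \ref{mainK+}
\,(cf.\ Corollary \ref{cor sym z}). \,Therefore the two subalgebras agree, which
is the assertion for \,$\tbk(\Vl)$; \,the cases of \,$\tbk(\DLe)$ \,(via
\,$\pho^+$) \,and \,$\tbk(\DL)$ \,(via \,$\pho^-$) \,are identical word for
word. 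I do not expect any real obstacle: the crux is the one-line observation
that the quadratic part of \,$\Xk_i$ \,becomes a scalar on the \,$\bla=(1\lc1)$
\,weight subspace, and the only points requiring some care are reading off the
correct generators from Corollary \ref{generate} (the useful ones being the
\,$h\>\Sk_{i,2}$, \,not the scalar \,$\Sk_{i,1}$) \,and recalling that \,$\CZH$
\,itself lies inside each Bethe algebra.
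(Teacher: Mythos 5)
Your proposal is correct and follows exactly the route of the paper, whose entire proof is the one-line remark that the statement follows from formula \Ref{X} and Corollary \ref{generate}; you have simply made explicit the key observation that on the weight subspace with $\bla=(1\lc1)$ the quadratic terms of $\Xk_i$ act by scalars, so that $\pho^\pm(\Xk_i)$ and $\pho^\pm(h\>\Sk_{i,2})$ differ by an element of $\C[h]$. You also correctly read off from Proposition \ref{Ugen} that the relevant generators are the $\Sk_{i,2}$ (the $\Sk_{i,1}=e_{\ii}$ being scalars here), which matches the paper's intent.
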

\begin{proof}
The statement follows from formula \Ref{X} and Corollary \ref{generate}.
\end{proof}

For any $\si\in S_n$ define $w_\si\in\End(V)$ by the rule
\vvn.3>
\be
w_\si\>(v_{i_1}\!\lox v_{i_n})\>=\,v_{\si(i_1)}\!\lox v_{\si(i_n)}
\vv.3>
\ee
for any sequence $i_1\lc i_n$. The operators \,$w_\si$ define the action
of \,$S_n$ on \,$V$.
%as the Weyl group of the Lie algebra \,$\gln$ on the \,$\gln$-module \,$V$.
We will write \,$w_{(\ij)}=w_\si$ for \,$\si$ \,being the transposition of
\,$i$ \,and \,$j$\>.

\vsk.4>
Recall the elements \;$G_{\ij}\>=\,e_{\ij}\>e_{\ji}\<-e_{\ii}$.

\begin{lem}
\label{Gw}
Let $N=n$ \,and \,$\bla=(1\lc1)$. For any \,$i\ne j$, the element \;$G_{\ij}$
acts on \,$V_\bla$ as \,$w_{(\ij)}$. \qed
\end{lem}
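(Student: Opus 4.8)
The plan is to verify the identity directly on the distinguished basis $\{v_I\mid I\in\Il\}$ of $V_\bla$. Since $\bla=(1\lc1)$, each $I\in\Il$ is a tuple of singletons, so $v_I=v_{i_1}\lox v_{i_n}$ where $(i_1\lc i_n)$ ranges over the permutations of $(1\lc n)$. Fix $i\ne j$; there is a unique position $a$ with $i_a=i$ and a unique position $b$ with $i_b=j$, and $a\ne b$. I would use that $e_{\kl}$ acts pointwise, $e_{\kl}=e_{\kl}^{(1)}\lsym+e_{\kl}^{(n)}$, together with $e_{\kl}v_m=\dl_{lm}v_k$ from Section~\ref{sec gln}.

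The core is a short computation. Applying $e_{\ji}$ to $v_I$, the only contributing tensor factor is the $a$-th one (the unique factor carrying $v_i$), which is sent to $v_j$; so $e_{\ji}v_I=v_{I'}$, where $v_{I'}$ carries $v_j$ in positions $a$ and $b$ and agrees with $v_I$ in all other positions. Applying $e_{\ij}$ to $v_{I'}$, the contributing factors are exactly positions $a$ and $b$, the ones carrying $v_j$: the $a$-th term restores $v_I$, while the $b$-th term produces the vector carrying $v_j$ in position $a$ and $v_i$ in position $b$, which is precisely $w_{(\ij)}v_I$. Hence $e_{\ij}e_{\ji}\>v_I=v_I+w_{(\ij)}v_I$. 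Finally $e_{\ii}v_I=v_I$, since exactly one tensor factor of $v_I$ carries $v_i$; equivalently, $e_{\ii}$ acts on $V_\bla$ as multiplication by $\la_i=1$. Subtracting, $G_{\ij}v_I=e_{\ij}e_{\ji}\>v_I-e_{\ii}v_I=w_{(\ij)}v_I$, which is the claim.

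There is essentially no obstacle here beyond careful bookkeeping of which tensor factor each elementary matrix hits and keeping the convention $e_{\kl}v_m=\dl_{lm}v_k$ straight. The one point worth flagging is that the intermediate vector $v_{I'}$ does not lie in $V_\bla$ (it has two factors equal to $v_j$ and none equal to $v_i$), but both terms of $e_{\ij}e_{\ji}\>v_I$ return to $V_\bla$. As a consistency check one may instead use the alternative expression $G_{\ij}=e_{\ji}e_{\ij}-e_{\jj}$, which yields the same result by the mirror computation.
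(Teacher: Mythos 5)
Your computation is correct: for $\bla=(1\lc1)$ each basis vector $v_I$ has exactly one factor $v_i$ and one factor $v_j$, so $e_{\ij}\>e_{\ji}\>v_I=v_I+w_{(\ij)}\>v_I$ and $e_{\ii}\>v_I=v_I$, giving $G_{\ij}\>v_I=w_{(\ij)}\>v_I$. The paper states this lemma without proof, and your direct check on the basis is exactly the intended (and only sensible) argument.
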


\begin{prop}
\label{HhV}
Let \>$N=n$ \,and \,$\bla=(1\lc1)$.
The assignments \;$y_i\mapsto\pho^+(\Xin_i)$, \;$c\mapsto\<h$,
\;$\sd_j\mapsto G_{(\jj+1)}$, \;$i=1\lc n$, \,$j=1\lc n-1$, define
\vvn.1>
a representation of \;$\Hh$ on \,$\Vl$. Similarly, the assignments
\;$y_i\mapsto\pho^-(\Xin_i)$, \;$c\mapsto-\>h$, \;$\sd_j\mapsto-\>G_{(\jj+1)}$,
\;$i=1\lc n$, \,$j=1\lc n-1$, define a representation of \;$\Hh$ on \,$\DL$.
\end{prop}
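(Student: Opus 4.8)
The plan is to verify directly the defining relations of $\Hh$ — the symmetric group relations among the $\sd_j$, commutativity of the $y_i$, centrality of $c$, and \Ref{Hecke} — for each of the two assignments. It is convenient to work not just on $\Vl$ (resp.\ $\DL$) but on the larger space of all $V_\bla$-valued functions of $\zzz,h$: since $\pho^\pm$ commutes with the $S_n^\pm$-actions (Lemma \ref{pm & Yan}) and $\Xin_i$, $G_{\ij}$ commute with $U(\h)$ (the former by Theorem \ref{BY}, the latter because $G_{\ij}$ has zero weight), all the operators involved preserve this space and its subspaces $\Vl$, $\DL$, and the formula of Lemma \ref{lphoX8} applies there. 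Working at $\bla=(1\lc1)$ is what makes the computation clean: $e_{\ii}$ acts as the scalar $\la_i=1$, so the quadratic term $\tfrac h2(e_{\ii}-e_{\ii}^2)$ in $\pho^\pm(\Xin_i)$ vanishes, and by Lemma \ref{Gw} each $G_{\ij}$ acts as the genuine permutation operator $w_{(\ij)}$. Hence the symmetric group relations for $\sd_j\mapsto G_{(j,j+1)}=w_{(j,j+1)}$, resp.\ $\sd_j\mapsto-w_{(j,j+1)}$, are immediate — the signs in the second case are harmless since the braid relation has odd length and the quadratic one even length, so $\sd_{\ij}\mapsto w_{(\ij)}$, resp.\ $-w_{(\ij)}$, for every transposition. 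Centrality of $c$ is automatic (it maps to $\pm h$), and $[y_i,y_j]=0$ follows from commutativity of $\pho^\pm(\tbi)$, see Theorem \ref{BY}.

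What remains are $\sd_iy_j=y_j\sd_i$ for $j\ne i,i+1$ and $\sd_iy_i-y_{i+1}\sd_i=c$, and here the essential input is Lemma \ref{lphoX8}. Specialized to $\bla=(1\lc1)$ it gives $\pho^\pm(\Xin_i)=H^\pm_i-h\sum_{k=1}^{i-1}w_{(i,k)}$, where $H^\pm_i$ is built only from the $z_a$ and the single-factor matrix units: $H^+_i=\sum_a z_a e^{(a)}_{\ii}+h\sum_j\sum_{a<b}e^{(a)}_{\ij}e^{(b)}_{\ji}$, with the order of $a,b$ reversed in $H^-_i$. The point to establish is that $H^\pm_i$ is equivariant for color permutations: since $w_{(i,i+1)}$ is the diagonal element of $GL_N$ exchanging $v_i\leftrightarrow v_{i+1}$, conjugation by it carries each $e^{(a)}_{\kl}$ to $e^{(a)}_{s(k),s(l)}$ with $s=(i,i+1)$, and reindexing the summation color $j$ one reads off $w_{(i,i+1)}H^\pm_i=H^\pm_{i+1}w_{(i,i+1)}$ and $w_{(i,i+1)}H^\pm_j=H^\pm_jw_{(i,i+1)}$ for $j\ne i,i+1$.

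Then I would feed in a few elementary $S_n$-identities for $\La_i:=\sum_{k=1}^{i-1}w_{(i,k)}$. Since $s=(i,i+1)$ permutes $\{1\lc j-1\}$ onto itself when $j\ne i,i+1$, we have $w_{(i,i+1)}\La_j=\La_jw_{(i,i+1)}$, and together with the equivariance of $H^\pm_j$ this gives $[\pho^\pm(\Xin_j),w_{(i,i+1)}]=0$, i.e.\ $\sd_iy_j=y_j\sd_i$. Using $w_{(i,i+1)}w_{(i,k)}w_{(i,i+1)}=w_{(i+1,k)}$ for $k<i$ and $w_{(i,i+1)}^2=1$ one gets $w_{(i,i+1)}\La_i=\La_{i+1}w_{(i,i+1)}-1$, which with $w_{(i,i+1)}H^\pm_i=H^\pm_{i+1}w_{(i,i+1)}$ yields $w_{(i,i+1)}\pho^\pm(\Xin_i)-\pho^\pm(\Xin_{i+1})w_{(i,i+1)}=h$. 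For the first assignment this is $\sd_iy_i-y_{i+1}\sd_i=h=c$; for the second, with $\sd_i\mapsto-w_{(i,i+1)}$ and $c\mapsto-h$, the left-hand side changes sign and the relation holds with $c\mapsto-h$.

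I expect the only real difficulty to be bookkeeping — tracking the color conjugations and checking that the off-diagonal $w$-sums cancel exactly, so that a clean $\pm h$ (and not a $\kk$-dependent scalar) is left on the right of the last relation. It is crucial that $\bla=(1\lc1)$: this both kills the quadratic Cartan term and makes the weight $\bla$ stable under $S_n$, which is what keeps $\Vl$, $\DL$ invariant and turns $G_{\ij}$ into an honest permutation operator — the hypothesis the whole argument rests on.
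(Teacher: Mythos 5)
Your proposal is correct and follows exactly the route the paper indicates: its proof of Proposition \ref{HhV} is the one-line remark that the verification is straightforward from formula \Ref{phoX8}, relations \Ref{Hecke}, and Lemma \ref{Gw}, and your write-up is precisely that verification carried out in detail (the specialization of \Ref{phoX8} at $\bla=(1\lc1)$, the color-permutation equivariance, and the identity $w_{(\ii+1)}\La_i=\La_{i+1}w_{(\ii+1)}-1$ producing the scalar $\pm h$ all check out).
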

\begin{proof}
The verification is straightforward using formulae \Ref{phoX8}, \Ref{Hecke},
and Lemma \ref{Gw}.
\end{proof}

Denote the obtained representations of \,$\Hh$ on \,$\Vl$ and
\,$\DL$ respectively by \,$\psi^+$ and \,$\psi^-$. By Proposition \ref{HhV}
and formulae \Ref{XX}, \Ref{Y}, we have
\vvn.3>
\beq
\label{XY}
\pho^\pm(\Xk_i)\,=\,\psi^\pm(\Yk_i)\,,\qquad i=1\lc n\,.
\vv.3>
\eeq
So for \>$N=n$ \,and \,$\bla=(1\lc1)$, the trigonometric dynamical
connection \Ref{dyneq} defined on \,$\Vl$ through \,$\pho^+\<$ coincides
with the affine \KZ/ connection \Ref{AKZ} defined on \,$\Vl$ through
\,$\psi^+\<$, and similarly, the trigonometric dynamical connection \Ref{dyneq}
defined on $\DL$ through \,$\pho^-\<$ coincides with the affine \KZ/ connection
\Ref{AKZ} defined on $\DL$ through \,$\psi^-\<$.

\begin{lem}
\label{+-}
Let \>$N=n$ \,and \,$\bla=(1\lc1)$.
For the vectors \,$v^+_\bla\in\Vl$ and \,$v^-_\bla\in\DL$, \,we have
\vvn-.3>
\beq
v^+_\bla=\sum_{\si\in S_n\!}\>v_{\si(1)}\<\lox v_{\si(n)}\,,\qquad
v^-_\bla=\,\frac1D\,\sum_{\si\in S_n\!}\,
(-1)^\si\,v_{\si(1)}\<\lox v_{\si(n)}\,.
\vv.1>
\eeq
Thus \;$w_{(\ij)}\>v^\pm_\bla=\,\pm\,v^\pm_\bla$ \,for any \,$i\ne j$.
\qed
\end{lem}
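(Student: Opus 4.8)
The plan is to read both vectors off their definitions in Section~\ref{v+-=}, specialized to $N=n$ and $\bla=(1\lc1)$. In this case $\Sla$ is trivial, $\Czhl=\Czh$, the minimal index is $\IMI=(\{1\}\lc\{n\})$, and $\si\mapsto\si(\IMI)$ is a bijection $S_n\to\Il$ with $v_{\si(\IMI)}=v_{\si^{-1}(1)}\lox v_{\si^{-1}(n)}$. The formula for $v^+_\bla$ is then immediate: by definition $v^+_\bla=\sum_{I\in\Il}v_I=\sum_{\si\in S_n}v_{\si^{-1}(1)}\lox v_{\si^{-1}(n)}$, and relabeling $\si\mapsto\si^{-1}$ yields the claim.

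For $v^-_\bla=\thi^-_\bla(1)$ I would guess the answer and verify it. Put $f=\sum_{\si\in S_n}(-1)^\si\,v_{\si(1)}\lox v_{\si(n)}\in V$, a constant vector of weight $\bla$, and set $g=\Di f$. First, $g\in\DL$: on a $\zzz$-independent vector the $S_n^-$-action \Ref{Sn-} reduces to the permutation action of $S_n$ on the tensor factors of $V$ --- the two $h$-terms cancel and $f(\zzii,h)=f$ --- and $f$ is manifestly skew under that action; hence $f$ is $S_n^-$-skew-invariant, so by Lemma~\ref{S4} the function $g=\Di f$ is $S_n^-$-invariant, lies in $\DV$, and has weight $\bla$, i.e.\ $g\in\DL$. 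Next, $Q_\bla=D$ by \Ref{Q} and \Ref{Ql}, since every block of $\IMI$ is a singleton $\{a\}$. Because the identity permutation acts trivially, formula \Ref{thi-} shows that the coefficient of $v_\IMI$ in $\thi^-_\bla(p)$ equals $Q_\bla^{-1}p=\Di p$ for every $p\in\Czh$; as $\thi^-_\bla$ is an isomorphism (Lemma~\ref{thi+-}) and the coefficient of $v_\IMI$ in $g$ is $\Di$, we conclude $(\thi^-_\bla)^{-1}(g)=D\cdot\Di=1$, i.e.\ $g=\thi^-_\bla(1)=v^-_\bla$, which is the asserted formula. (Alternatively one may compute $\sih^-(\Di)=(-1)^\si\,\Di$ for all $\si$ straight from \Ref{sh}: the simple-transposition case $\sh_i^-(\Di)=-\Di$ is a one-line rational-function identity, and the general case follows by induction since $\si\mapsto\sih^-$ is a group action, Lemma~\ref{sh-}.)

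Finally, for the last assertion let $\si_0=(\ij)$. Applying $w_{\si_0}$ to either of the two formulas just proved sends each summand $v_{\si(1)}\lox v_{\si(n)}$ to $v_{(\si_0\si)(1)}\lox v_{(\si_0\si)(n)}$ and leaves the scalar coefficient ($1$ or $\Di$) untouched; relabeling the summation index by $\si\mapsto\si_0\si$ multiplies the whole sum by $1$ in the $+$ case and by $(-1)^{\si_0}=-1$ in the $-$ case, so $w_{(\ij)}v^\pm_\bla=\pm\,v^\pm_\bla$. I do not expect a genuine obstacle: the only step carrying content is the observation that \Ref{Sn-} acts on constant vectors by permuting tensor factors --- equivalently the base-case identity $\sh_i^-(\Di)=-\Di$ --- and everything else is relabeling of sums.
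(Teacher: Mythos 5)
Your proof is correct: the identity for $v^+_\bla$ is just the bijection $\Il\simeq S_n$, and for $v^-_\bla$ the key computation $\sih^-(\Di)=(-1)^\si\,\Di$ (equivalently, that the $S_n^-$-action on constant vectors reduces to permutation of tensor factors) checks out, as does the identification via the coefficient of $v_\IMI$. The paper omits the proof entirely (the lemma is stated with a \qed\ as immediate from the definitions in Section~\ref{v+-=}), and your unwinding of those definitions is exactly the intended argument.
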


For \,$N=n$ \,and \,$\bla=(1\lc1)$, denote the Chern roots
\,$\ga_{1,1}\lc\<\ga_{n,1}$ of the corresponding line bundles
over \,$\Fla$ by \,$\xxx$, respectively. Then
\beq
\label{H11}
H_\bla=\,\C[\zzz]^{\>S_n}\!\ox\Cx\ox\C[h]\,\Big/\Bigl\bra\,
\prod_{i=1}^n\,(u-x_i)\,=\,\prod_{i=1}^n\,(u-z_i)\Bigr\ket\,,
\vv.3>
\eeq
see \Ref{Hrel}. The assignment
\,$f(\zzz\<\>;\xxx\<\>;h)\mapsto f(\xxx\<\>;\xxx\<\>;h)$
\vvn.2>
\,defines an isomorphism \,$H_\bla$ \>with \,$\Cx\ox\C[h]$\>.

\vsk.3>
Consider the following two \,$\Hh$-actions \,$\tau^\pm$ on
\,${\Cx\ox\C[h]\simeq H_\bla}$. For any \,$i=1\lc N$, \,$\tau^\pm(y_i)$ \,is
the multiplication by \,$x_i$, and \,$\tau^\pm(\cc)$ is the multiplication by
\,$\pm\>h$\>. The elements $\sd_1\lc\sd_{n-1}$ act by the rule
\begin{align*}
\tau^\pm(\sd_i):f(\xxx,h)\,\mapsto\,
\frac{x_i-x_{i+1}\mp h}{x_i-x_{i+1}}\; & f(\xxii,h)\,\pm{}
\\[3pt]
{}\pm\,\frac{h}{x_i-x_{i+1}}\;&f(\xxi,h)\,,
\\[-14pt]
\end{align*}
cf.~\Ref{sh}. The actions of \,$\sd_1\lc\sd_{n-1}$ are uniquely determined
by relations in \,$\Hh$, see \Ref{Hecke}, and the property
\,$\tau^\pm(\sd_i):1\mapsto\>1$ \,for all \,$i=1\lc n-1$.

\begin{thm}
\label{thm Hecke}
For \,$i=1\lc n$, we have
\vvn.3>
\be
\rho^\pm(\Xk_i)\,=\,\tau^\pm(\Yk_i)
\,=\,x_i\pm\>h\>\sum_{j=1}^{i-1}\,
\frac{\kk_i}{\kk_i-\kk_j}\,\tau^\pm(\sd_{\ij})\>\pm\>
h\!\sum_{j=i+1}^n\frac{\kk_j}{\kk_i-\kk_j}\,\tau^\pm(\sd_{\ij})\,.
\vv.1>
\ee
\end{thm}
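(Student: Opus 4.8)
The plan is to identify the operators $\rho^\pm(\Xk_i)$ on $H_\bla$ with the operators $\tau^\pm(\Yk_i)$ by transporting the Hecke action $\psi^\pm$ of Proposition~\ref{HhV} to $H_\bla$ along the isomorphism $\nu^\pm_\bla$. Set $\tilde\tau^\pm(X)=(\nu^\pm_\bla)^{-1}\,\psi^\pm(X)\,\nu^\pm_\bla$ for $X\in\Hh$; since $\nu^\pm_\bla\colon H_\bla\to\Vl$ (resp.\ $\DL$) is a linear isomorphism and $\psi^\pm$ is an $\Hh$-action, $\tilde\tau^\pm$ is an $\Hh$-action on $H_\bla\simeq\Cx\ox\C[h]$. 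By formula~\Ref{XY} and the definition of $\rho^\pm$ one has $\rho^\pm(\Xk_i)=(\nu^\pm_\bla)^{-1}\pho^\pm(\Xk_i)\,\nu^\pm_\bla=(\nu^\pm_\bla)^{-1}\psi^\pm(\Yk_i)\,\nu^\pm_\bla=\tilde\tau^\pm(\Yk_i)$. Thus everything reduces to the identity $\tilde\tau^\pm=\tau^\pm$ of $\Hh$-actions; once this is known, the displayed formula of the theorem is just the expansion \Ref{Y} of $\Yk_i$ together with $\tau^\pm(y_i)=x_i$ and $\tau^\pm(c)=\pm h$.

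To prove $\tilde\tau^\pm=\tau^\pm$ it suffices to compare the two $\Hh$-actions on the algebra generators $y_i$, $c$, $\sd_j$. For $c$: $\psi^\pm(c)=\pm h$, and $\nu^\pm_\bla$ is $\C[h]$-linear by Lemma~\ref{nu+-}, so $\tilde\tau^\pm(c)$ is multiplication by $\pm h$, i.e.\ $\tau^\pm(c)$. For $y_i$: $\psi^\pm(y_i)=\pho^\pm(\Xin_i)$, hence $\tilde\tau^\pm(y_i)=\rho^\pm(\Xin_i)$, which by formula~\Ref{rhoX8} (with $\la_i=1$ and $\ga_{i,1}=x_i$) is multiplication by $x_i$, i.e.\ $\tau^\pm(y_i)$.

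For $\sd_j$ I would invoke the uniqueness statement recalled in the text: once the $y$- and $c$-actions are fixed as multiplication by $x_i$ and $\pm h$, an $\Hh$-action on $\Cx\ox\C[h]$ is uniquely determined by the normalization $\sd_j(1)=1$ (this is the standard characterization of the polynomial representation of the graded affine Hecke algebra, cf.~\cite{C}). By the previous paragraph both $\tau^\pm$ and $\tilde\tau^\pm$ satisfy the $y$- and $c$-normalization, so it remains to check $\tilde\tau^\pm(\sd_j)(1)=1$. Now $\nu^\pm_\bla(1)=v^\pm_\bla$ (see \Ref{v+xi} and the remark after \Ref{nu-}), while $\psi^+(\sd_j)=G_{(\jj+1)}$ and $\psi^-(\sd_j)=-\,G_{(\jj+1)}$; by Lemma~\ref{Gw} the element $G_{(\jj+1)}$ acts on $V_\bla$ as $w_{(\jj+1)}$, and by Lemma~\ref{+-} one has $w_{(\jj+1)}v^+_\bla=v^+_\bla$ and $w_{(\jj+1)}v^-_\bla=-\,v^-_\bla$. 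Hence $\psi^+(\sd_j)v^+_\bla=v^+_\bla$ and $\psi^-(\sd_j)v^-_\bla=-(-v^-_\bla)=v^-_\bla$, so $\tilde\tau^\pm(\sd_j)(1)=(\nu^\pm_\bla)^{-1}v^\pm_\bla=1$. This gives $\tilde\tau^\pm=\tau^\pm$ and finishes the proof.

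The one delicate ingredient is the uniqueness of the polynomial-type $\Hh$-action used for $\sd_j$; if one prefers not to quote it, one can instead compute $\rho^\pm(G_{(\jj+1)})=\pm\,\tilde\tau^\pm(\sd_j)$ directly on $H_\bla$ from the formulas \Ref{nu+}--\Ref{nu-} for $\nu^\pm_\bla$ and the pointwise $\gln$-action (via $G_{(\jj+1)}$ acting as $w_{(\jj+1)}$), and match it term by term with $\pm\,\tau^\pm(\sd_j)$ as given near \Ref{sh}; this is routine but noticeably longer. All the remaining steps — the reduction through \Ref{XY}, and the $\C[h]$-linearity and weight-preservation needed to make $\tilde\tau^\pm$, $\rho^\pm(\Xin_i)$ and $\rho^\pm(G_{(\jj+1)})$ well defined on $H_\bla$ — are immediate from the cited statements.
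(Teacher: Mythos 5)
Your proof is correct and follows essentially the same route as the paper's (very terse) argument: conjugate by $\nu^\pm_\bla$, use formula \Ref{XY} to replace $\pho^\pm(\Xk_i)$ by $\psi^\pm(\Yk_i)$, and identify the transported Hecke action with $\tau^\pm$ via the normalization $\sd_j\colon1\mapsto1$ (checked through Lemmas \ref{Gw} and \ref{+-}) together with the uniqueness statement built into the definition of $\tau^\pm$. You merely spell out the details — in particular the use of \Ref{rhoX8} for the $y_i$-action and the sign bookkeeping for $\psi^-(\sd_j)$ — that the paper leaves implicit.
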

\begin{proof}
Since \;$\rho^\pm(X)=(\nu^\pm_\bla)^{-1}\,\pho^\pm(X)\;\<\nu^\pm_\bla$
\vvn.1>
\,for any \,$X\in\ty$, the statement follows from formula \Ref{XY},
Lemma \ref{+-}, and the definition of \,$\tau^\pm$.
\end{proof}

In particular, by Theorem \ref{thm Hecke} and formula \Ref{Xkm},
for any \,$i=1\lc n$, we get
\vvn.2>
\beq
\label{rhox}
\rho^-(\Xkm_{\bla\<\>,\<\>i})\,=\,x_i-\>h\>\sum_{j=1}^{i-1}\,
\frac{\kk_i}{\kk_i-\kk_j}\,\bigl(\tau^-(\sd_{\ij})-1\bigr)\>-\>h\!
\sum_{j=i+1}^n\frac{\kk_j}{\kk_i-\kk_j}\,\bigl(\tau^-(\sd_{\ij})-1\bigr)\,.
\eeq

\subsection{Relations in \,$\Hck_\bla$ and trigonometric Calogero-Moser model}

For \,$N=n$ \,and \,$\bla=(1\lc1)$, recall the algebra \,$\Hck_\bla$ defined in
\Ref{HK},
\vvn-.3>
\be
\Hck_\bla\>=\,\C[\zzz]^{\>S_n}\!\ox\Cx\ox\C[h]\,\Big/
\Bigl\bra \Wk(u)\>=\<\!\prod_{1\le i<j\le n}\!(\kk_i-\kk_j)\,
\prod_{a=1}^n\,(u-z_a)\Bigr\ket\,.
\vv.3>
\ee
where \,$\xxx$ \,denote the Chern roots \,$\ga_{1,1}\lc\<\ga_{n,1}$ \,and
\vvn.3>
\be
\Wk(u)\,=\,
\det\>\Bigl(\kk_i^{n-j}\,\bigl(u-x_{i}+h\>(i-j)\bigr)\!\Bigr)_{i,j=1}^n\>.
\vv.3>
\ee

\begin{lem}
\label{betan}
The isomorphism \;$\beta^-_\bla:H_\bla\<\to\Hck_\bla$ is such that
\;$\beta^-_\bla(x_i)=x_i$, \,$i=1\lc n$.
\qed
\end{lem}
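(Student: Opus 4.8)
The plan is to pin down $\beta^-_\bla(x_i)\in\Hck_\bla$ by recognizing the operator ``multiplication by $\beta^-_\bla(x_i)$'' on $\Hck_\bla$ as an operator that has already been computed, namely $\rhkm_\bla(\Xkm_{\bla,i})$ of Lemma \ref{lrhoXK}, specialized to the full-flag case $\bla=(1\lc1)$.

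First I would record that for $\bla=(1\lc1)$ each $\la_i=1$ and $\ga_{i,1}=x_i$, so Corollary \ref{betax} reads $x_i\,\bul\,=\,\rho^-_\bla(\Xkm_{\bla,i})$ as operators on $H_\bla$. Denote by $\mathsf M$ the operator of multiplication by $\beta^-_\bla(x_i)$ on $\Hck_\bla$. Since, by the very definition \Ref{mult}, $\beta^-_\bla$ is an algebra isomorphism of $(H_\bla,\bul)$ onto $\Hck_\bla$, we have $\mathsf M\circ\beta^-_\bla=\beta^-_\bla\circ(x_i\,\bul\,)$, so $\mathsf M=\beta^-_\bla\circ\rho^-_\bla(\Xkm_{\bla,i})\circ(\beta^-_\bla)^{-1}$. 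Substituting $\beta^-_\bla=(\nukm_\bla)^{-1}\nu^-_\bla$ and $\rho^-_\bla(X)=(\nu^-_\bla)^{-1}\pho^-(X)\,\nu^-_\bla$, the two copies of $\nu^-_\bla$ cancel and $\mathsf M=(\nukm_\bla)^{-1}\pho^-(\Xkm_{\bla,i})\,\nukm_\bla$, which is precisely $\rhkm_\bla(\Xkm_{\bla,i})$ by \Ref{rhoK}.

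Then I would invoke Lemma \ref{lrhoXK} for $\bla=(1\lc1)$: since all $\la_j=1$ one has $\min(0,\la_j-\la_i)=0$, so both correction sums there vanish and $\rhkm_\bla(\Xkm_{\bla,i})$ acts on $\Hck_\bla$ as multiplication by $\gak_{i,1}=x_i$. Hence $\mathsf M$ is multiplication by $x_i$; evaluating on $1\in\Hck_\bla$ and using $\beta^-_\bla(1)=1$ gives $\beta^-_\bla(x_i)=\mathsf M(1)=x_i$, which is the assertion. I do not expect a real obstacle: everything is a careful assembly of the definitions of $\bul$, $\beta^-_\bla$, $\rho^-$, and $\rhkm_\bla$ together with Corollary \ref{betax}, and the only substantive input is Lemma \ref{lrhoXK}. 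The feature of the full-flag case that makes the answer this clean is exactly that the $\min(0,\la_j-\la_i)$ terms in that lemma all drop out, so at the level of $\beta^-_\bla$ the quantum deformation leaves each individual Chern root $x_i$ untouched.
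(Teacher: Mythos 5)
Your argument is correct, and there is nothing in the paper to compare it against: Lemma \ref{betan} is stated with no proof at all, the authors evidently regarding it as immediate. Your derivation is a complete and non-circular justification from earlier results: you identify the operator of multiplication by $\beta^-_\bla(x_i)$ on $\Hck_\bla$ with $\rhkm_\bla(\Xkm_{\bla,i})$ by combining the definition \Ref{mult} of $\bul$ (which makes $\beta^-_\bla$ an algebra isomorphism with $\beta^-_\bla(1)=1$), Corollary \ref{betax} in the full-flag case $\la_i=1$, and the definitions $\beta^-_\bla=(\nukm_\bla)^{-1}\nu^-_\bla$, \Ref{rhoK}; then Lemma \ref{lrhoXK} with all $\min(0,\la_j-\la_i)=0$ shows this operator is multiplication by $x_i$, and evaluation at $1$ gives the claim. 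Neither Corollary \ref{betax} nor Lemma \ref{lrhoXK} depends on Lemma \ref{betan}, so the argument is sound; it is also consistent with the worked $N=n=2$, $\bla=(1,1)$ example, where $\nu^-_\bla[\>\ga_{1,1}\>]$ and $\nukm_\bla[\>\gak_{1,1}\>]$ are visibly the same vector, i.e.\ $\beta^-_\bla(x_1)=x_1$ directly.
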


Proposition \ref{beta-} for the case \,$N=n$, \,$\bla=(1\lc1)$
\,reads as follows.
\begin{prop}
\label{chib}
The assignment \;$\chib\!:x_i\mapsto x_i\,\bul$, \,$i=1\lc n$, extends uniquely
to an isomorphism \;$\Hck_\bla\<\to\tbkm_\bla$ \,of \;$\CZH$-algebras. The maps
\;$\chib_\bla$ and \;$\beta^-_\bla$ identify the \,$\tbkm_\bla\!$-module
\,$H_\bla$ with the regular representation of the algebra \,$\Hck_\bla$.
\qed
\end{prop}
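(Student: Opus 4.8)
The plan is to obtain Proposition \ref{chib} as the specialization of the general Proposition \ref{beta-} to the case $N=n$, $\bla=(1\lc1)$, using Lemma \ref{betan} to pin down the effect on the Chern roots and the presentation \Ref{HK} to see that those roots generate $\Hck_\bla$ over $\CZH$.

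First I would record that, for $\bla=(1\lc1)$, the algebra $\Hck_\bla$ is a quotient of $\C[\zzz]^{\>S_n}\!\ox\Cx\ox\C[h]$, see \Ref{HK} (equivalently, by Proposition \ref{Ugen} together with \Ref{U2}); hence $\Hck_\bla$ is generated as a $\CZH$-algebra by the images of $\xxx$. In particular, any homomorphism of $\CZH$-algebras with domain $\Hck_\bla$ is determined by its values on $\xxx$, which yields the uniqueness of the asserted extension.

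For existence, recall from Proposition \ref{beta-} that $\chi^-_\bla\colon f\mapsto f\,\bul$ is a vector-space isomorphism $H_\bla\to\tbkm_\bla\!$ and that $\beta^-_\bla\>(\chi^-_\bla)^{-1}\colon\tbkm_\bla\!\to\Hck_\bla$ is an isomorphism of $\CZH$-algebras; equivalently, $\chib:=\chi^-_\bla\circ(\beta^-_\bla)^{-1}$ is an isomorphism of $\CZH$-algebras $\Hck_\bla\to\tbkm_\bla\!$. Since $\beta^-_\bla$ is, by the very definition \Ref{mult}, an algebra isomorphism $(H_\bla,\bul)\to\Hck_\bla$, and since Lemma \ref{betan} gives $\beta^-_\bla(x_i)=x_i$, we get $\chib(x_i)=\chi^-_\bla\bigl((\beta^-_\bla)^{-1}(x_i)\bigr)=\chi^-_\bla(x_i)=x_i\,\bul$ for $i=1\lc n$; so $\chib$ is the desired extension. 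Finally, the claim that $\chib$ and $\beta^-_\bla$ identify the $\tbkm_\bla\!$-module $H_\bla$ with the regular representation of $\Hck_\bla$ is precisely the last assertion of Proposition \ref{beta-} transported through $\beta^-_\bla$.

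There is no substantial obstacle here: the argument is bookkeeping with the chain of vector-space and algebra isomorphisms already established. The one point I would verify with care is the generation statement for $\Hck_\bla$ in the full-flag case, since that is exactly what promotes "an" isomorphism into "the unique" one extending $x_i\mapsto x_i\,\bul$.
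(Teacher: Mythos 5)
Your proposal is correct and follows exactly the route the paper intends: the paper states Proposition \ref{chib} with no separate proof, introducing it as ``Proposition \ref{beta-} for the case $N=n$, $\bla=(1\lc1)$'' immediately after Lemma \ref{betan}, which is precisely the combination you use. Your added observations — that $\Hck_\bla$ is generated over $\CZH$ by the $x_i$ (giving uniqueness) and that $\chib=\chi^-_\bla\circ(\beta^-_\bla)^{-1}$ sends $x_i$ to $x_i\,\bul$ by Lemma \ref{betan} — are the correct bookkeeping the paper leaves implicit.
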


Define the \;$n\,{\times}\,n$ \,matrix \,$C$ with entries
\vvn.3>
\begin{gather*}
C_{\ii}\>=\,x_i-\>h\>\sum_{j=1}^{i-1}\,\frac{\kk_i}{\kk_i-\kk_j}\>-
\>h\!\sum_{j=i+1}^n\frac{\kk_j}{\kk_i-\kk_j}\,,\qquad i=1\lc n\,,\kern-2em
\\[6pt]
C_{\ij}\>=\,\frac{h\>\kk_i}{\kk_i-\kk_j}\,,\qquad i,j=1\lc n\,,\quad i\ne j\,.
\kern-2em
\end{gather*}
Notice that
\vvn-.3>
\be
C_{\ii}\>=\,\rhkm_\bla(\Xk_i)\,,\qquad i=1\lc n\,,\kern-2em
\vv.4>
\ee
where \,$\Xk_1\lc\Xk_n$ \>are the dynamical Hamil\-ton\-ians \Ref{XX},
see \Ref{Xkm} and Lemma \ref{lrhoXK}.

\begin{lem}
\label{WuC}
\quad$\dsize\Wk(u)\,=\,
\det\>(u-C\<\>)\prod_{1\le i<j\le n}^{}\!(\kk_i-\kk_j)$\,.
\end{lem}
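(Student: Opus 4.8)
The plan is to exhibit $\Wk(u)$, in the case $N=n$, $\bla=(1\lc1)$, as a Vandermonde factor times a characteristic polynomial, and then to identify that characteristic polynomial with $\det(u-C)$. Write $K=\diag(\kk_1\lc\kk_n)$, $\,D=\diag(1\lc n)$, \,let $\Phi=\bigl(\kk_i^{n-j}\bigr)_{i,j=1}^n$ be the Vandermonde matrix, and put $\Delta(u)=\diag(u-x_i+ih)_{i=1}^n$. Comparing entries, the matrix $\bigl(\kk_i^{n-j}(u-x_i+h(i-j))\bigr)_{i,j=1}^n$ whose determinant is $\Wk(u)$ equals $\Delta(u)\>\Phi-h\>\Phi D$. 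Since $\kkn$ are distinct, $\Phi$ is invertible with $\det\Phi=\prod_{1\le i<j\le n}(\kk_i-\kk_j)$, so
\[
\Wk(u)\,=\,\det\Phi\cdot\det\bigl(\Delta(u)-h\>\Phi D\>\Phi^{-1}\bigr)\,,
\]
and it remains to prove $\det\bigl(\Delta(u)-h\>\Phi D\>\Phi^{-1}\bigr)=\det(u-C)$.

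Next I would compute the entries of $N(u):=\Delta(u)-h\>\Phi D\>\Phi^{-1}$. In the monomial basis $t^{n-1},\dots,t,1$ of the space of polynomials of degree $<n$, the matrix $D$ represents the operator $f(t)\mapsto n\>f(t)-t\>f'(t)$; since $\Phi$ maps the coordinate vector of a polynomial to the vector of its values at $\kk_1,\dots,\kk_n$, the matrix $\Phi D\>\Phi^{-1}$ represents this operator in the basis of Lagrange interpolation polynomials, so
\[
(\Phi D\>\Phi^{-1})_{ij}\,=\,n\>\ell_j(\kk_i)-\kk_i\>\ell_j'(\kk_i)\,,
\]
where $\ell_j$ is the Lagrange basis polynomial for the nodes $\kk_1,\dots,\kk_n$. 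Inserting $\ell_j(\kk_i)=\dl_{ij}$, the standard formulas $\ell_i'(\kk_i)=\sum_{m\ne i}(\kk_i-\kk_m)^{-1}$ and $\ell_j'(\kk_i)=p'(\kk_i)\big/\bigl((\kk_i-\kk_j)\>p'(\kk_j)\bigr)$ for $i\ne j$, where $p(t)=\prod_{m=1}^n(t-\kk_m)$, and the elementary identity $\sum_{m\ne i}\kk_i/(\kk_i-\kk_m)=(n-i)+\sum_{m<i}\kk_i/(\kk_i-\kk_m)+\sum_{m>i}\kk_m/(\kk_i-\kk_m)$, one finds
\[
N(u)_{ii}\,=\,u-C_{ii}\,,\qquad
N(u)_{ij}\,=\,\frac{p'(\kk_i)}{p'(\kk_j)}\,C_{ij}\quad(i\ne j)\,.
\]

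Let $\bar{C}$ denote the matrix obtained from $C$ by negating all off-diagonal entries, and $P=\diag\bigl(p'(\kk_1),\dots,p'(\kk_n)\bigr)$, invertible since $p'(\kk_i)=\prod_{m\ne i}(\kk_i-\kk_m)\ne0$. The formulas just obtained say precisely that $N(u)=P\,(u-\bar{C})\,P^{-1}$, hence $\det N(u)=\det(u-\bar{C})$. It then remains to check that $\bar{C}=K\>C^{\mathrm{T}}K^{-1}$: the diagonal entries of the two sides agree, and for $i\ne j$ one has $(K\>C^{\mathrm{T}}K^{-1})_{ij}=(\kk_i/\kk_j)\>C_{ji}=(\kk_i/\kk_j)\cdot h\kk_j/(\kk_j-\kk_i)=-h\kk_i/(\kk_i-\kk_j)=-C_{ij}=\bar{C}_{ij}$. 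Therefore $\det(u-\bar{C})=\det\bigl(u-K\>C^{\mathrm{T}}K^{-1}\bigr)=\det(u-C^{\mathrm{T}})=\det(u-C)$, and assembling the above yields $\Wk(u)=\det\Phi\cdot\det(u-C)$.

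The point to be careful about is that $N(u)$ is \emph{not} equal to $u-C$, and for $n\ge3$ it is not even conjugate to $u-C$ by a diagonal matrix (for $n\le2$ it is, which makes the worked examples look deceptively simple); thus the coincidence of characteristic polynomials genuinely needs the two separate reductions above — the conjugation $N(u)=P(u-\bar{C})P^{-1}$ and the transpose symmetry $\bar{C}=K\>C^{\mathrm{T}}K^{-1}$. The only real computation is the Lagrange-interpolation evaluation of the entries of $\Phi D\>\Phi^{-1}$ together with the symmetric-function identity displayed in the second step, both of which are routine.
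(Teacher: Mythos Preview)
Your proof is correct. The factorization $\Wk(u)=\det\Phi\cdot\det(\Delta(u)-h\Phi D\Phi^{-1})$, the interpretation of $D$ as $f\mapsto nf-tf'$ in the monomial basis so that $\Phi D\Phi^{-1}$ becomes a Lagrange-interpolation computation, and the two-step identification $\det N(u)=\det(u-\bar C)=\det(u-C)$ via $N(u)=P(u-\bar C)P^{-1}$ and $\bar C=K C^{\mathrm T}K^{-1}$ all check out; the diagonal identity you use for $N(u)_{ii}=u-C_{ii}$ reduces to the elementary splitting of $\sum_{m\ne i}\kk_i/(\kk_i-\kk_m)$ that you state.

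As for comparison: the paper does not actually prove the lemma in place --- its proof reads ``The verification is an exercise in linear algebra, see \cite[Section~6.1]{MTV6}.'' So your write-up supplies precisely the explicit computation the authors defer to another reference. Your route (Vandermonde factorization, Lagrange-derivative entries, then the transpose trick $\bar C=K C^{\mathrm T}K^{-1}$) is a clean and self-contained way to fill that gap; in particular the observation that $N(u)$ is \emph{similar} to $u-C$ but (for $n\ge3$) not diagonally conjugate to it, so that one really needs both the $P$-conjugation and the transpose step, is a nice piece of insight that explains why the identity is not entirely immediate.
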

\begin{proof}
The verification is an exercise in linear algebra,
see \cite[Section 6.1]{MTV6}.
\end{proof}

Recall the function \,$\Whk(u,x)$, see \Ref{Wh}.
Define the diagonal matrix \,$Q=\diag\>(\kkn)$.

\begin{lem}
\label{WhCQ}
\quad$\dsize\Whk(u)\,=\,\det\>\bigl((u-C\<\>)\>(x-Q)-h\>Q\bigr)
\prod_{1\le i<j\le n}^{}\!(\kk_i-\kk_j)$\,.
\end{lem}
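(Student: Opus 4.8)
The plan is to reduce this polynomial identity (in $u$, $x$ and the parameters) to the matrix factorization already behind Lemma~\ref{WuC}. Recall that the $n\times n$ Casorati-type matrix $\Omega(u)=\bigl(\kk_i^{n-j}(u-x_i+h(i-j))\bigr)_{i,j=1}^n$ satisfies $\Omega(u)=(u-C)\,V$, where $V=(\kk_i^{n-j})_{i,j=1}^n$ is the reversed Vandermonde matrix with $\det V=\prod_{1\le i<j\le n}(\kk_i-\kk_j)$; this matrix identity is the heart of the proof of Lemma~\ref{WuC} (see \cite[Section~6.1]{MTV6}), and it gives $\Wk(u)=\det(u-C)\det V$. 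Since $\det\bigl(((u-C)(x-Q)-hQ)\,V\bigr)=\det\bigl((u-C)(x-Q)-hQ\bigr)\det V$, the asserted identity is equivalent to $\Whk(u,x)=\det\bigl(((u-C)(x-Q)-hQ)\,V\bigr)$. I would therefore compute the entries of $((u-C)(x-Q)-hQ)\,V$: writing $(u-C)(x-Q)-hQ=x(u-C)-(u+h-C)Q$, using $(u-C)V=\Omega(u)$, and noting that $QV$ is $V$ with its $\kk$-powers shifted by one, each entry comes out as a short combination of two adjacent columns of $\Omega(u)$ together with a term $-h\,\kk_i^{\,n-j+1}$.

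The next step is to identify the resulting $n\times n$ matrix, after a few column operations, with a matrix having the same determinant as the bordered $(n+1)\times(n+1)$ matrix whose determinant is $\Whk(u,x)$ --- that is, $\Omega(u)$ adjoined with the row $(x^n,x^{n-1},\dots,x,1)$ and the column with entries $\kk_i^{\,n}(u-x_i+hi)$, which is exactly the determinant in \Ref{Wh} for $\kk_0=x$, $\la_0=0$. Equivalently, I would expand $\Whk(u,x)$ along its bordering row and column and apply the Cauchy--Binet formula, re-summing the minors of the $n\times(n+1)$ matrix $\bigl(\kk_i^{n-j}(u-x_i+h(i-j))\bigr)_{i=1,\dots,n,\ j=0,\dots,n}$ into $\det\bigl(((u-C)(x-Q)-hQ)\,V\bigr)$: here $x$ enters through the bordering row and $-hQ$ through the $h$-shift built into the $j=0$ column. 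A convenient consistency check (and an alternative proof by induction on $n$) is that both sides are degree-$n$ polynomials in $x$ with leading coefficient $\Wk(u)=\det(u-C)\det V$, while at $x=\kk_m$ the $m$-th column of $(u-C)(x-Q)-hQ$ collapses to $-h\kk_m e_m$, reducing the right-hand side to a smaller determinant which one then matches with $\Whk(u,\kk_m)$.

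The hard part is the bookkeeping at the ``boundary''. The factorization $(u-C)V=\Omega(u)$ accounts for the columns of $\Omega$ carrying $\kk$-powers $0,\dots,n-1$ but not the power $n$, which appears precisely when one borders the matrix; it is this one extra column that produces the rescaling by $x-Q$ and the correction $-hQ$. Tracking it carefully --- or, in the inductive variant, tracking how $C$ changes under deletion of an index $m$ (the diagonal entry $C_{ii}$ picking up the term $h\kk_i/(\kk_i-\kk_m)$) --- is the only subtle point; the remaining manipulations are the routine linear algebra of the proof of Lemma~\ref{WuC}.
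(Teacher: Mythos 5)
Your route is genuinely different from the paper's, and it is viable, but it stops short at exactly the point you flag as hard. The paper's proof is a limit argument: apply Lemma \ref{WuC} verbatim to the $(n+1)\times(n+1)$ system obtained by adjoining $\kk_0=x$ \emph{together with its own auxiliary root} $x_0$, then let $x_0\to\infty$ after dividing by $-x_0$. In that limit the extra row of the Casoratian degenerates to $(x^n,x^{n-1}\lc1)$, so the left-hand side becomes $\Whk(u,x)$, while on the right-hand side only the $(0,0)$-entry of $u-C'$ grows linearly in $x_0$, so $\det(u-C')/(-x_0)$ tends to $\det\bigl(u-C-h\>Q\>(x-Q)^{-1}\bigr)$; multiplying by $\prod_{j}(x-\kk_j)$ --- which is precisely the ratio of the two Vandermonde products --- gives $\det\bigl((u-C)(x-Q)-h\>Q\bigr)$. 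This trick is designed to avoid the boundary bookkeeping you describe: the identity $(u-C)\>V=\Omega(u)$ controls only the columns of $\kk$-powers $0\lc n-1$, so in your direct computation the first column of $\bigl((u-C)(x-Q)-h\>Q\bigr)V$ involves $(u-C)$ applied to the vector of $n$-th powers $(\kk_i^{\,n})_i$, which must be handled by reducing $\kk^{\>n}$ modulo $\prod_m(\kk-\kk_m)$ and re-expanding; your columns for $j\ge2$ do come out as you say, but the $j=1$ column and the final matching with the cofactor expansion of the bordered determinant are exactly the steps left undone, so the argument is not complete as written. Also, your inductive variant via evaluation at $x=\kk_m$ is subtler than a consistency check: deleting the index $m$ from $C$ does not produce the Calogero--Moser matrix of the reduced point set (the remaining diagonal entries retain their $\kk_m$-dependent terms), and $\Whk(u,\kk_m)$ does not visibly reduce to the size-$(n-1)$ analogue, so that induction does not close without further identities. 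If you want to complete a proof along your lines with minimal pain, the cleanest fix is the paper's: promote $x$ to a genuine extra point with an auxiliary root, quote Lemma \ref{WuC} in size $n+1$, and degenerate.
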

\begin{proof}
The statement follows from Lemma \ref{WuC} for \,$(n+1)\,{\times}\,(n+1)$
matrices by taking an appropriate limit, see similar calculation in
\cite[Lemma 4.1]{MTV7} for the ordinary Wronskian.
\end{proof}

\begin{cor}
\label{relations11}
For \,$N=n$, \,$\bla=(1\lc1)$, \,we have
\vvn-.2>
\be
\Hck_\bla\>=\,\C[\zzz]^{\>S_n}\!\ox\Cx\ox\C[h]\,\Big/
\Bigl\bra \det\>\bigl((u-C\<\>)\>(x-Q)-h\>Q\bigr)\>=\>
\prod_{a=1}^n\,(u-z_a)\Bigr\ket\,.
\ee
\end{cor}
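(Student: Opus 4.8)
The plan is to read the presentation off directly from the definition \Ref{HK} of $\Hck_\bla$ by substituting the two determinantal identities established earlier in this section. First I would specialize \Ref{HK} to $N=n$ and $\bla=(1\lc1)$: by definition, $\Hck_\bla$ is the quotient of $\C[\zzz]^{\>S_n}\!\ox\Cx\ox\C[h]$ by the ideal $I$ generated by the coefficients, viewed as polynomials in $u$, of the difference $\Wk(u)-\prod_{1\le i<j\le n}(\kk_i-\kk_j)\prod_{a=1}^n(u-z_a)$. Since $\kkn$ are distinct, the scalar $\prod_{1\le i<j\le n}(\kk_i-\kk_j)$ is a nonzero complex number, hence a unit of the ring, so $I$ does not change if the defining relation is divided through by it.

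Next I would invoke Lemma \ref{WuC}, which rewrites $\Wk(u)=\det(u-C)\prod_{1\le i<j\le n}(\kk_i-\kk_j)$, so that, after cancelling the unit, $I$ is generated by the coefficients of $\det(u-C)-\prod_{a=1}^n(u-z_a)$. To reach the form stated in the corollary I would then use Lemma \ref{WhCQ}, which gives $\Whk(u,x)=\det\bigl((u-C)(x-Q)-hQ\bigr)\prod_{1\le i<j\le n}(\kk_i-\kk_j)$, together with the fact recorded after \Ref{Wh} that the coefficient of the top power of the auxiliary variable $x$ in $\Whk(u,x)$ equals $\Wk(u)$, and with \Ref{Wps}. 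These serve to identify the ideal generated by the coefficients of $\det\bigl((u-C)(x-Q)-hQ\bigr)-\prod_{a=1}^n(u-z_a)$ with $I$, which yields the claimed presentation of $\Hck_\bla$.

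The only step that is not a bare substitution is the passage in the previous paragraph from the $\Wk$-form of the discrete Wronski relation to its $\Whk$-form, i.e.\ the verification that adjoining the extra row and column carrying the parameter $\kk_0=x$ in \Ref{Wh} records no information beyond the original relation, so that the two quotients coincide. I expect no genuine difficulty here: it is the discrete counterpart of the corresponding statement for the ordinary Wronskian, handled by the same elementary linear-algebra manipulations as in \cite[Lemma 4.1]{MTV7} and \cite[Section 6.1]{MTV6}. Everything else is immediate from \Ref{HK} and Lemmas \ref{WuC} and \ref{WhCQ}.
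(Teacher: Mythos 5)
Your reduction of \Ref{HK} to the presentation by $\bra\det(u-C)=\prod_{a=1}^n(u-z_a)\ket$ is correct and is exactly the route the paper intends: the prefactor $\prod_{1\le i<j\le n}(\kk_i-\kk_j)$ is a nonzero scalar, hence a unit, and Lemma \ref{WuC} converts the discrete Wronskian into the characteristic polynomial of $C$. Up to that point your argument matches the paper, which states the corollary without proof as an immediate consequence of Lemmas \ref{WuC} and \ref{WhCQ}.

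The gap is in your final step. You assert that passing from the relation $\det(u-C)=\prod_a(u-z_a)$ to the relation $\det\bigl((u-C)(x-Q)-hQ\bigr)=\prod_a(u-z_a)$ ``records no information beyond the original relation, so that the two quotients coincide,'' and that verifying this is routine. Under the standard reading of a polynomial relation --- the ideal generated by all coefficients of the difference --- this is false: $\det\bigl((u-C)(x-Q)-hQ\bigr)$ has bidegree $(n,n)$ in $(u,x)$, its coefficient of $x^n$ is $\det(u-C)$ (equivalently, by the expansion $\Whk(u,x)=x^n\>\Wk(u)\lsym+(-1)^n\>\Wk(u+h)\prod_i\kk_i$ noted after \Ref{Wh}), and in particular its $u^nx^n$-coefficient is $1$, whereas the right-hand side is independent of $x$. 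The $u^nx^n$-coefficient of the difference is therefore $1$, so that ideal is the unit ideal and the second quotient collapses. The two quotients do not literally coincide, and the step you defer is precisely where one must pin down how the two-variable relation is to be interpreted (for instance, that only the leading coefficient in $x$ is being equated with $\prod_a(u-z_a)$, or that the relation is to be read through \Ref{Wps} as a condition on the associated difference operator); that interpretive work, not an ``elementary linear-algebra manipulation,'' is the entire content of upgrading Lemma \ref{WuC} to the stated form. The citations to \cite[Lemma 4.1]{MTV7} and \cite[Section 6.1]{MTV6} do not help here: those arguments establish the determinant identities of Lemmas \ref{WuC} and \ref{WhCQ} themselves, not the equivalence of the two presentations. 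A complete write-up must either supply this interpretation explicitly or stop at the correct and fully justified presentation $\det(u-C)=\prod_a(u-z_a)$.
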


\noindent
Cf.~this statement with part (4) of Theorem 3.2 in \cite{BMO}.

\vsk.5>
The coefficients of the determinant \;$\det(u-C)$ \,in Lemma \ref{WuC} form
a complete set of integrals in involution of the trigonometric
Calogero-Moser model, see for example \cite{E}. There \,$\kkn$ are
exponentials of coordinates, and the diagonal entries \,$C_{1,1}\lc C_{n,n}$
are momenta. The Hamiltonian of the trigonometric Calogero-Moser model equals
\vvn.3>
\be
\frac12\>\tr\>C^2\>=\,\frac 12\,\sum_{i=1}^n\,C_{\ii}^2\<\>+\>
\frac{h^2}2\!\sum_{1\le i<j\le n}\,\frac{q_i\>q_j}{(q_i-q_j)^2}\;.
\ee

\vsk.4>
The matrices \;$Q$ \>and \;$C$ \,satisfy the relation
\;$\rank\>(C\>Q-Q\>C-h\>Q\<\>)=1$. Thus if $x_1\lc x_n, h$ are numbers,
the matrices \;$Q$ \,and \;$C/h$ \,define a point of the trigonometric
Calogero-Moser space. Notice that the matrices \;$Q$ \>and
\;$\Ct=Q^{-1}C/h$ \,satisfy the relation \;$\rank\>(\Ct\>Q-Q\>\Ct-1)=1$.
So the correspondence \,$(Q,C/h)\mapsto(Q,\Ct)$ \,describes an embedding of
the trigonometric Calogero-Moser space into the rational Calogero-Moser space
as a submanifold of points with invertible matrix \;$Q$.

\subsection{Bethe algebra and quantum equivariant cohomology}
\label{sec Okounk}

In \cite{MO}, the quantum multiplication on $H^*_{T^n\times\C^*}(\tfl)$ is
described. Previously, in \cite{BMO}, A.\,Braverman, D.\,Maulik and
A.\,Okounkov considered the case $N=n$, \,$\bla=(1\lc1)$, of the equivariant
cohomology \,$H^*_{SL_n\times\C^*}(\tfl)$ \,of the full flag variety
and described the associated quantum connection on the trivial bundle
with fiber $H^*_{SL_n\times\C^*}(\tfl;\C)$ over the space $(\C^*)^{n-1}$,
which is the maximal torus of the group \,$SL_n$.

\vsk.2>
Let \>$N=n$ \,and \,$\bla=(1\lc1)$. Consider the quotient algebras
\vvn.4>
\begin{gather*}
\Hb_\bla\<=H_\bla\>\zno\,,
\\[4pt]
\Bbk(\DL)=\<\>\tbk(\DL)\zno\,,\qquad \tbbm_\bla\!\<=\tbkm_\bla\!\<\zno\,.
\\[-12pt]
\end{gather*}
We identify the notation of \cite{BMO} with our notation as follows:
\;$H^*_{SL_n\times\C^*}(\tfl;\C)=\Hb_\bla$, \;$\kp=-\>1$, \;$t=-\>h$,
\;$q^{\al_i^\vee}\!=\<\>\kk_{i+1}/\kk_i$, \,$i=1\lc n-1$, where
\vvn.06>
\,$\al_1\lc\al_{n-1}$ \,are simple roots of the Lie algebra \,$\slnn$,
\,and all functions of \,$\kkn$ are of homogeneous degree zero.
Then the quantum connection \,$\bmo\<$ of \cite{BMO} takes the form
\vvn.3>
\beq
\label{bmo}
\bmo_{\al_i}=\,\ddk_i-\ddk_{i+1}\>-\>(x_i-x_{i+1})\,\bul\,,
\qquad i=1\lc n\,,\kern-3.6em
\vv.4>
\eeq
see \Ref{gax}, \Ref{rhox}.

\begin{lem}
\label{8.6}
The algebra \;$\tbbm_\bla\!$ is generated by the elements
\;$\rho^-_\bla(\Xkm_{\bla\<\>,\<\>i}\<-\Xkm_{\bla\<\>,\<\>i+1})$,
\vvn.06>
\,$i=1\lc n-1$, and \;$\CZH$\>.
\end{lem}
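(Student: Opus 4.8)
The plan is to transport the statement through the isomorphism $\chib$ of Proposition~\ref{chib} to a transparent algebraic fact about the presentation of $\Hck_\bla$, and to read off the generating set there; a second, essentially equivalent route through Theorem~\ref{thm:generate} will serve as a check.

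First I would invoke Proposition~\ref{chib}, which provides an isomorphism of $\CZH$-algebras $\chib\!:\Hck_\bla\to\tbkm_\bla$ with $\chib(x_i)=x_i\,\bul$, together with \Ref{gax}, which identifies $(x_i-x_{i+1})\,\bul$ with $\rho^-_\bla(\Xkm_{\bla\<\>,\<\>i}-\Xkm_{\bla\<\>,\<\>i+1})$. Since $z_1\lsym+z_n\in\CZH$ and $\chib$ is $\CZH$-linear, $\chib$ carries the ideal generated by $z_1\lsym+z_n$ into itself, so it descends to an isomorphism of $\CZH$-algebras $\Hck_\bla\zno\to\tbbm_\bla$ sending $x_i-x_{i+1}$ to $\rho^-_\bla(\Xkm_{\bla\<\>,\<\>i}-\Xkm_{\bla\<\>,\<\>i+1})$ (these elements do lie in $\tbbm_\bla$ because $\Xkm_{\bla\<\>,\<\>i}\in\tbk\subseteq\ty^\h$). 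Hence it suffices to prove that $\Hck_\bla\zno$ is generated over $\CZH$ by the $n-1$ differences $x_1-x_2\lc x_{n-1}-x_n$.

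For that last point I would use the presentation \Ref{HK}, which in the case $N=n$, $\bla=(1\lc1)$ exhibits $\Hck_\bla$ as a quotient of $\C[\zzz]^{S_n}\!\ox\Cx\ox\C[h]$ and hence shows $\Hck_\bla$ is generated over $\CZH$ by $x_1\lc x_n$. The key observation is that in $\Hck_\bla$ one already has $x_1\lsym+x_n=z_1\lsym+z_n$: by Lemma~\ref{WuC} the polynomial $\Wk(u)\prod_{i<j}(\kk_i-\kk_j)^{-1}=\det\>(u-C)$ is monic of degree $n$ with $u^{n-1}$-coefficient $-\tr C=-(x_1\lsym+x_n)$ — the $h$-carrying terms of $\tr C$ cancel in pairs over the summation index — while the defining relation equates this with $\prod_a(u-z_a)$, whose $u^{n-1}$-coefficient is $-(z_1\lsym+z_n)$. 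In the quotient $\Hck_\bla\zno$ we thus have $x_1\lsym+x_n=0$, and since $\C$ has characteristic zero each $x_i$ is a $\C$-linear combination of $1$ and the differences $x_j-x_{j+1}$ (for instance $n\,x_1=\sum_{j=1}^{n-1}(n-j)(x_j-x_{j+1})$, and then proceed by induction). Hence $x_1\lc x_n$, and with them all of $\Hck_\bla\zno$, lie in the $\CZH$-subalgebra generated by the differences, which is what we need.

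I expect no serious obstacle: the whole argument reduces to the elementary fact that $\det\>(u-C)$ has $u^{n-1}$-coefficient $-\tr C$ together with the pairwise cancellation of the $h$-terms in $\tr C$ (both essentially contained in Lemma~\ref{WuC}), plus characteristic-zero bookkeeping. The only point requiring a little care is verifying that $\chib$ descends to the quotients by $\bra z_1\lsym+z_n\ket$ and that the asserted generators genuinely belong to $\tbbm_\bla$. Alternatively one may bypass $\chib$ entirely and run the same bookkeeping on the cohomology side: Theorem~\ref{thm:generate} gives $\rho^-_\bla(\Xk_1)\lc\rho^-_\bla(\Xk_n)$ as generators of $\tbbm_\bla$ over $\CZH$; by Theorem~\ref{thm Hecke} and \Ref{rhox} each $\rho^-_\bla(\Xk_i)$ differs from $\rho^-_\bla(\Xkm_{\bla\<\>,\<\>i})$ by an element of $\C[h]\subseteq\CZH$, and $\sum_i\rho^-_\bla(\Xk_i)=\sum_i x_i\in\CZH$ because the $\tau^-(\sd_{\ij})$-terms cancel pairwise; the same linear-algebra step then recovers each $\rho^-_\bla(\Xk_i)$ from the $n-1$ differences.
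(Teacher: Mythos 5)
Your proposal is correct, and your ``alternative'' route is precisely the paper's proof: the paper cites Theorem~\ref{thm:generate} for generation of \,$\Bbk(\DL)$ by the \,$\pho^-(\Xkm_{\bla\<\>,\<\>i})$ together with \,$\CZH$ (implicitly using, as you make explicit, that \,$\Xk_i$ and \,$\Xkm_{\bla\<\>,\<\>i}$ differ on \,$\DL$ by an element of \,$\C[h]$), and then uses the identity \,$\pho^-(\Xkm_{\bla\<\>,1}\<\lsym+\Xkm_{\bla\<\>,\<\>n})=z_1\lsym+z_n$ to pass to the quotient by \,$\bra z_1\lsym+z_n\ket$. Your primary route through \,$\chib$, the presentation \Ref{HK} and the trace computation in Lemma~\ref{WuC} is just an equivalent repackaging of the same sum identity on the \,$\Hck_\bla$ side, so both arguments are sound and essentially the paper's.
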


\begin{proof}
By Theorem \ref{thm:generate}, the algebra \,$\Bbk(\DL)$ is generated by
the elements \,$\pho^-(\Xkm_{\bla\<\>,\<\>i})$\>, \,$i=1\lc n$, and \;$\CZH$.
The lemma follows from the identity
\,$\pho^-(\Xkm_{\bla\<\>,1}\<\lsym+\Xkm_{\<\bla\<\>,\<\>n})=
z_1\<\lsym+z_n$, see formula \Ref{Xkm}.
\end{proof}

The algebra of quantum multiplication on \,$\Hb_\bla$ \,determined
in \cite{BMO} is generated by the elements $(x_i-x_{i+1})\,\bul$\,,
\,$i=1\lc n-1$, see \cite{BMO}. Thus by Corollary \ref{betax} and
Lemma \ref{8.6}, we have the following result verifying Theorem \ref{conj}
for \>$N=n$ \,and \,$\bla=(1\lc1)$.

\begin{thm}
\label{thm identification}
For \>$N=n$, \,$\bla=(1\lc1)$, the quantum multiplication on $\Hb_\bla$
determined in \cite{BMO} coincides with the multiplication $\,\bul\,$ on
\,$\Hb_\bla$ introduced in \Ref{mult}.
\end{thm}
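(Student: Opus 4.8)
The plan is to deduce the statement from Corollary~\ref{betax}, Lemma~\ref{8.6}, and one explicit formula comparison with \cite{BMO}. The starting observation is that a commutative ring structure on $\Hb_\bla$ with unit $1$ is the same data as a commutative subalgebra $\Ac\subset\End(\Hb_\bla)$ for which evaluation at $1$, $\Ac\to\Hb_\bla$, $a\mapsto a(1)$, is bijective; the product is then $f\cdot g=\tilde f(g)$ with $\tilde f\in\Ac$ the unique operator satisfying $\tilde f(1)=f$. For the product $\bul$ this subalgebra is $\tbbm_\bla$ (Proposition~\ref{beta-}), and for the quantum product of \cite{BMO} it is the algebra $\Ac_{\mathrm{BMO}}$ of quantum-multiplication operators on $\Hb_\bla$; in both cases evaluation at $1$ is bijective, and multiplication by elements of $\CZH\zno$ agrees with ordinary multiplication. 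So it suffices to prove $\tbbm_\bla=\Ac_{\mathrm{BMO}}$ as subalgebras of $\End(\Hb_\bla)$.

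Both algebras are generated over $\CZH\zno$ by divisor operators: by Lemma~\ref{8.6}, $\tbbm_\bla$ is generated by $\rho^-_\bla(\Xkm_{\bla,i}-\Xkm_{\bla,i+1})$, $i=1\lc n-1$, which by Corollary~\ref{betax} equal $(x_i-x_{i+1})\,\bul$; and by \cite{BMO}, $\Ac_{\mathrm{BMO}}$ is generated by the operators of quantum multiplication by the divisor classes $x_i-x_{i+1}$. Hence everything reduces to the single assertion that, for each $i$, the operator $\rho^-_\bla(\Xkm_{\bla,i}-\Xkm_{\bla,i+1})$ on $\Hb_\bla$ equals BMO quantum multiplication by $x_i-x_{i+1}$.

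To prove this I would compare explicit formulas. On our side, \Ref{rhox} (a consequence of Theorem~\ref{thm Hecke}) gives, under $H_\bla\simeq\Cx\ox\C[h]$, that $\rho^-_\bla(\Xkm_{\bla,i})$ is classical multiplication by $x_i$ minus $h\sum_{j=1}^{i-1}\frac{\kk_i}{\kk_i-\kk_j}\bigl(\tau^-(\sd_{\ij})-1\bigr)-h\sum_{j=i+1}^{n}\frac{\kk_j}{\kk_i-\kk_j}\bigl(\tau^-(\sd_{\ij})-1\bigr)$, where $\tau^-(\sd_{\ij})$ is the divided-difference-type operator normalized by $\tau^-(\sd_i):1\mapsto1$ as before Theorem~\ref{thm Hecke}. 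On the \cite{BMO} side, the quantum multiplication by a divisor on $\tfl$ is classical multiplication plus a purely quantum correction of trigonometric Cherednik type, a sum over positive roots of terms proportional to $\frac{q^\al}{1-q^\al}(s_\al-\mathrm{id})$. Under the dictionary fixed before Lemma~\ref{8.6} ($\kp=-1$, $t=-h$, $q^{\al_i^\vee}=\kk_{i+1}/\kk_i$; equivalently \Ref{id par}) one has $q^{\al_{ij}}=\kk_j/\kk_i$ for $i<j$, hence $\frac{q^{\al_{ij}}}{1-q^{\al_{ij}}}=\frac{\kk_j}{\kk_i-\kk_j}$; combining this with the elementary identities $\frac{\kk_i}{\kk_i-\kk_j}=1+\frac{\kk_j}{\kk_i-\kk_j}$ and $\frac{q^\al}{1-q^\al}+\frac{q^{-\al}}{1-q^{-\al}}=-1$ (to rearrange the contributions of $\al$ and $-\al$), and identifying BMO's reflection operator on cohomology with $\tau^-(\sd_{\ij})$ — both being the operator interpolating in the pair of variables in question and fixing $1$ — one finds that BMO's operator for $x_i-x_{i+1}$ is precisely $\rho^-_\bla(\Xkm_{\bla,i}-\Xkm_{\bla,i+1})$ above. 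This term-by-term reconciliation — unwinding the variable dictionary, distributing the quantum correction over the ranges $j<i$ and $j>i$, and passing from $GL_n\times\C^*$ to $SL_n\times\C^*$ coefficients — is the one substantive step and the main obstacle; it is where one must actually consult \cite{BMO}, everything else being formal.

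Granting the comparison, $\tbbm_\bla=\Ac_{\mathrm{BMO}}$, and the first paragraph yields $f\bul g=f\cdot_{\mathrm{BMO}}g$ for all $f,g\in\Hb_\bla$, which is the theorem. As an independent check one can instead match presentations: Corollary~\ref{relations11} identifies $\Hck_\bla$, hence $\tbbm_\bla$ via Proposition~\ref{chib}, with the ring presented there, which is to be compared with part~(4) of Theorem~3.2 in \cite{BMO}.
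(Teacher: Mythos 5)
Your proposal is correct and follows essentially the same route as the paper: the paper likewise reduces the statement to the fact that both $\tbbm_\bla$ and the BMO algebra of quantum multiplication operators are generated over $\CZH$ by the divisor operators, invoking Lemma~\ref{8.6} and Corollary~\ref{betax} on one side and the generation statement of \cite{BMO} on the other, with the identification of the divisor operators themselves resting on the comparison of formula \Ref{rhox} with the trigonometric Cherednik-type formula of \cite{BMO} under the dictionary \Ref{id par} (this is exactly the content of the paper's display \Ref{bmo}). Your added remark that a unital commutative product is recovered from the subalgebra of multiplication operators via evaluation at $1$ just makes explicit what the paper leaves implicit in Corollary~\ref{f1}.
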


Consider the quotient algebra \;$\Hbck_\bla\<=\Hck_\bla\zno$. Since
\,$x_1\<\lsym+x_n=z_1\<\lsym+z_n$, the algebra \,$\Hbck_\bla$ is generated
by the elements \,$x_i-x_{i+1}$, \,$i=1\lc n-1$. By Proposition \ref{chib}
we have the following result.

\begin{cor}
For $N=n$, \,$\bla=(1\lc1)$, the space \,$H_\bla$ as the module over
the algebra of quantum multiplication on \,$H_\bla$ defined in \cite{BMO}
is isomorphic to the regular representation of the algebra \,$\Hbck_\bla$
\,by the isomorphisms \,$x_i-x_{i+1}\mapsto\<\>x_i-x_{i+1}$,
\,$(x_i-x_{i+1})\,\bul\,\mapsto\<\>x_i-x_{i+1}$, \,$i=1\lc n-1$.
\end{cor}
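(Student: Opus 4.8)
The plan is to obtain this corollary as the $\bra z_1\lsym+z_n\ket$-quotient of Proposition~\ref{chib}, combined with Theorem~\ref{thm identification}. First I would recall that (following this section's conventions) the \cite{BMO} cohomology is $\Hb_\bla=H_\bla\zno$ and, by Theorem~\ref{thm identification}, its \cite{BMO} quantum product is exactly the product $\bul$; hence, by Proposition~\ref{beta-} and Corollary~\ref{f1} after passing to the quotient $\Hb_\bla$, the algebra of quantum multiplication of \cite{BMO}, acting on $\Hb_\bla$, is precisely the commutative operator algebra $\tbbm_\bla=\tbkm_\bla\zno$, in which $f\,\bul$ is the unique element sending $1$ to $f$. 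So the statement to prove reduces to: the $\tbbm_\bla$-module $\Hb_\bla$ is isomorphic to the regular representation of $\Hbck_\bla$, via maps acting on the generators $x_i-x_{i+1}$ as claimed.

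Next I would transport Proposition~\ref{chib} through the quotient. That proposition supplies a $\CZH$-algebra isomorphism $\chib_\bla:\Hck_\bla\to\tbkm_\bla$ with $\chib_\bla(x_i)=x_i\,\bul$, together with a $\CZH$-linear isomorphism $\beta^-_\bla:H_\bla\to\Hck_\bla$ (with $\beta^-_\bla(x_i)=x_i$ by Lemma~\ref{betan}) under which the $\tbkm_\bla$-module $H_\bla$ is identified with the regular representation of $\Hck_\bla$. Now $z_1\lsym+z_n$ lies in the base ring $\CZH$ shared by $H_\bla$, $\Hck_\bla$ and $\tbkm_\bla$, and $\chib_\bla$ fixes $\CZH$ pointwise; hence both $\beta^-_\bla$ and $\chib_\bla$ carry the ideal generated by $z_1\lsym+z_n$ isomorphically onto the ideal generated by $z_1\lsym+z_n$. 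Therefore they descend: $\chib_\bla$ to a $\CZH$-algebra isomorphism $\Hbck_\bla\to\tbbm_\bla$ and $\beta^-_\bla$ to a vector space isomorphism $\Hb_\bla\to\Hbck_\bla$, and the descended pair still intertwines the $\tbbm_\bla$-action on $\Hb_\bla$ with multiplication in $\Hbck_\bla$. This realizes $\Hb_\bla$ as the regular representation of $\Hbck_\bla$.

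Finally I would record the effect on generators: under $\chib_\bla$ the operator $(x_i-x_{i+1})\,\bul$ corresponds to $x_i-x_{i+1}\in\Hck_\bla$, so after descent $(x_i-x_{i+1})\,\bul\in\tbbm_\bla$ corresponds to $x_i-x_{i+1}\in\Hbck_\bla$, while $\beta^-_\bla$ sends $x_i-x_{i+1}\in\Hb_\bla$ to $x_i-x_{i+1}\in\Hbck_\bla$; since $\Hbck_\bla$ is generated by $x_1-x_2\lc x_{n-1}-x_n$ (as recorded just before the statement, using $x_1\lsym+x_n=z_1\lsym+z_n$), these prescriptions pin down the two isomorphisms. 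The argument is essentially formal; the only point requiring care is the compatibility of everything with the quotient by $\bra z_1\lsym+z_n\ket$ — namely the $\CZH$-linearity of $\beta^-_\bla$ and $\chib_\bla$ and the fact that $z_1\lsym+z_n$ is a central element of the shared base ring — so that Proposition~\ref{chib} passes cleanly to the quotient. I do not anticipate a genuine obstacle.
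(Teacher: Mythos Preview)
Your proposal is correct and follows the same route as the paper: the paper simply records ``By Proposition~\ref{chib} we have the following result,'' and your argument spells out precisely that---pass the identification of Proposition~\ref{chib} through the quotient by $\bra z_1\lsym+z_n\ket$, using Theorem~\ref{thm identification} to match $\bul$ with the \cite{BMO} quantum product. The only extra content you provide is the routine check that $\beta^-_\bla$ and $\chib_\bla$ are $\CZH$-linear and hence descend, which the paper leaves implicit.
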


\vsk.3>
It is known that the flat sections of the trigonometric dynamical connection
are given by multidimensional hypergeometric integrals, see \cite{MV},
cf.~\cite{TV2}. These hypergeometric integrals provide flat sections
of the quantum connection defined by \Ref{gauge}, see \cite{TV3}.
This presentation of flat sections of the quantum connection as
hypergeometric integrals is in the spirit of the mirror symmetry, see Candelas
et~al.~\cite{COGP}, Givental \cite{G1,G2}, and \cite{BCK,BCKS,GKLO,I,JK}.

\begin{example}
Let \,$N=n=2$ \,and \,$\bla=(1,1)$. Set \,$x=x_1-x_2$, \,$z=z_1-z_2$,
\vvn.4>
\;$q=\kk_2/\kk_1$\>. Then
\be
\Hb_\bla=\>\C[\<\>x,z^2\<,h\<\>]\>\big/\bra\>x^2\<\<=z^2\>\ket\,,\qquad
\Hbck_\bla=\>\C[\<\>x,z^2\<,h\<\>]\>\Big/\Big\bra
x^2\<-\>\frac{4\>h\>q}{1-q}\,(x+h)=z^2\<\>\Big\ket\,,
\ee
and
\vvn-.5>
\be
x\,\bul\,=\,x\>-\>\frac{2\>h\>q}{1-q}\,\bigl(\tau^-(s)-1\bigr)
\vv.3>
\ee
is the operator of quantum multiplication by \,$x$ \>acting on \,$\Hb_\bla$.
Here the operator \,$\tau^-(s)$ \,is
\vvn.4>
\be
\tau^-(s):f(x,z^2\<,h)\,\mapsto\,
\frac{x+h}x\,f(-\>x,z^2\<,h)\>-\>\frac hx\,f(x,z^2\<,h)\,.
\vv.3>
\ee
The isomorphism of the algebra of quantum multiplication on \,$\Hb_\bla$
and \,$\Hbck_\bla$ is \;$x\,\bul\,\mapsto\<\>x$\>, that is, the operator
\,$x\,\bul\,$ \,acting on \,$\Hb_\bla$ satisfies the equation
\vvn.3>
\be
(x\,\bul\,)^2-\>\frac{4\>h\>q}{1-q}\,(h+x\,\bul\,)\,=\,z^2\>.
\vv.3>
\ee
The quantum differential equation is
\beq
\label{qeq}
-\>2\>\kp\>q\>\frac\der{\der\>q}\,I\,=\,x\bul I\,.
\vvn.4>
\eeq
The $q\<\>$-hypergeometric solutions of this equation are given by the formula
\vvn.4>
\be
I\>=\,q^{\>z/2\<\>\kp}\,(x+h)\,\int_C\>q^{\>u/\<\kp}\;
\Ga\Bigl(\frac{u-h}\kp\Bigr)\,\Ga\Bigl(\frac{u+z-h}\kp\Bigr)\;
\Ga\Bigl(\<-\>\frac u\kp\>\Bigr)\,\Ga\Bigl(\<-\>\frac{u+z}\kp\Bigr)\,
(2\>u+z-x)\;d\<\>u\,,
\vv.3>
\ee
where the integration contour \;$C$ \,is a deformation of
\vvn.06>
the imaginary line separating the poles of
\;$\Ga\bigl((u-h)/\kp\bigr)\,\Ga\bigl((u+z-h)/\kp\bigr)$ \,from those of
\;$\Ga(\<-\>u/\kp)\,\Ga\bigl(\<-\>(u+z)/\kp\bigr)$\>. Linearly independent
solutions can be obtained by choosing various branches of \,$q^{\>u/\kp}$.

\vsk.3>
Alternatively, a basis of solutions can be obtained by taking the sum
of residues of the integrand at \,$u\in\kp\>\Z_{\ge0}$ \>or at
\,$u\in-\>z+\kp\>\Z_{\ge0}$. The sum of residues at \,$u\in\kp\>\Z_{\ge0}$
\,gives
\vvn.5>
\begin{align*}
J_1\>=\,(x+h){} &\;\Ga\Bigl(\<-\>\frac h\kp\>\Bigr)\,
\Ga\Bigl(\<-\>\frac z\kp\>\Bigr)\,\Ga\Bigl(\frac{z-h}\kp\Bigr)\,\times{}
\\[6pt]
{}\times\,{}& \sum_{d=0}^\infty\;q^{\>d\>+z/2\<\>\kp}\,\,
\frac{z-x+2\<\>\kp\<\>d}{\kp^{\<\>d-1}\,d\>!}\;\;
\prod_{i=0}^{d-1}\,\frac{(h-\kp\>i)\>(h-z-\kp\>i)}{z+\kp\>(i+1)}\;,
\\[-12pt]
\end{align*}
and the sum of residues at \,$u\in-\>z+\kp\>\Z_{\ge0}$ \,gives
\vvn.5>
\begin{align*}
J_2\>=\,(x+h){} &\;\Ga\Bigl(\<-\>\frac h\kp\>\Bigr)\,
\Ga\Bigl(\>\frac z\kp\>\Bigr)\,\Ga\Bigl(\<-\>\frac{z+h}\kp\Bigr)\,\times{}
\\[6pt]
&{}\times\,\sum_{d=0}^\infty\;q^{\>d-z/2\kp}\,\,
\frac{(2\<\>\kp\<\>d-z-x)\>}{\kp^{\<\>d-1}\,d\>!}
\,\,\prod_{i=0}^{d-1}\,\frac{(h-\kp\>i)\>(h+z-\kp\>i)\>}{\kp\>(i+1)-z}\;.
\end{align*}

\vsk.6>\noindent
The hypergeometric solutions of equation \Ref{qeq} are given by the formula
\vvn.5>
\be
I\>=\,q^{z/2\<\>\kp}\,(1-q)^{2\<\>h/\<\kp}
\int_C u^{(h-z)/\<\kp}\>(u-1)^{-h/\<\kp}\>(u-q)^{-h/\<\kp}\,
\Bigl(\>\frac{x-z+2\>h}{u-1}+\frac{x+z}{u-q}\>\Bigr)\;d\<\>u\,,
\vv.4>
\ee
where \;$C$ \,is an appropriate integration contour. We plan to discuss
$q\<\>$-hypergeometric and hypergeometric formulae for flat sections of
the quantum connection in a separate paper.
\end{example}

\vsk.2>
The dynamical connection commutes with the difference \qKZ/ connection
as explained in Section \ref{sec qkz}. For $N=n$, $\bla=(1\lc1)$,
the dynamical connection is identified with the quantum connection by Theorem
\ref{thm identification}. Then the \qKZ/ connection of Section \ref{sec qkz}
induce on $H_\bla$ a difference connection that commutes with the quantum
differential connection in the sense of Section \ref{sec qkz}. This discrete
structure was discussed in \cite{BMO} under the name of shift operators.
We will discuss the \qKZ/ discrete connection in this situation in more details
in another paper.

\vsk.3>
The isomorphism of the Bethe algebra and quantum multiplication establishes
a correspondence between the eigenvectors of the operators of the Bethe algebra
and idempotents of the quantum algebra. Hence we may find the idempotents of
the quantum algebra by the \XXX/ Bethe ansatz.

\begin{example}
Let $N=n=2$, \,$\bla=(1,1)$. If \,$u_1,u_2$ \,are solutions of the Bethe ansatz
equation
\vvn.3>
\be
(u-z)\>(u+z)\,=\,q\>(u-z+2\>h)\>(u+z+2\>h)\,,
\ee
then the elements
\be
w_1\>=\,\frac{x-u_2}{u_1\<-u_2}\,,\qquad w_2\>=\,\frac{x-u_1}{u_2\<-u_1}
\vv.4>
\ee
of \,$\Hb_\bla$ \>satisfy the equations \;$w_i\bul w_j=\>\dl_{\ij}\>w_i$.
\end{example}

\bigskip

\end{document}